\let\oldsquare\square 
\renewcommand{\square}{\oldsquare}
\numberwithin{equation}{section}
\newtheorem{theorem}{Theorem}[section]
\newtheorem{corollary}[theorem]{Corollary}
\newtheorem{proposition}[theorem]{Proposition}
\newtheorem{lemma}[theorem]{Lemma}
\theoremstyle{definition}
\newtheorem{definition}[theorem]{Definition}
\newtheorem{remark}[theorem]{Remark}
\let\originalleft\left
\let\originalright\right
\renewcommand{\left}{\mathopen{}\mathclose\bgroup\originalleft}
\renewcommand{\right}{\aftergroup\egroup\originalright}
\newcommand{\vertiii}{\vert\kern-0.3ex\vert\kern-0.25ex\vert}
\newcommand*{\N}{\ensuremath{\mathbb{N}}}
\newcommand*{\R}{\ensuremath{\mathbb{R}}}
\newcommand*{\Zd}{\ensuremath{\mathbb{Z}^d}}
\newcommand*{\Rd}{\ensuremath{\mathbb{R}^d}}
\newcommand{\eps}{\varepsilon}
\renewcommand*{\tilde}{\widetilde}
\renewcommand*{\hat}{\widehat}
\newcommand{\f}{\mathbf{f}}
\newcommand{\g}{\mathbf{g}}
\newcommand{\ep}{\eps}
\DeclareSymbolFont{boldoperators}{OT1}{cmr}{bx}{n}
\renewcommand{\a}{\mathbf{a}}
\newcommand{\ahom}{\bar{\a}}
\newcommand{\uhom}{\bar{u}}
\newcommand{\cu}{\square}
\newcommand{\F}{\mathcal{F}}
\renewcommand{\P}{\mathbb{P}}
\newcommand{\E}{\mathbb{E}}
\newcommand{\X}{\mathcal{X}}
\renewcommand{\O}{\mathcal{O}}
\newcommand{\Y}{\ensuremath{\mathcal{Y}}}
\newcommand{\indc}{1}
\DeclareMathOperator{\dist}{dist}
\DeclareMathOperator{\var}{var}
\DeclareMathOperator{\cov}{cov}
\DeclareMathOperator{\size}{sl}
\newcommand\bigDiamond{\mathop{\mathpalette\bigDi@mond\relax}}
\newcommand\bigDi@mond[2]{%
	\vcenter{\hbox{\m@th
			\scalebox{\ifx#1\displaystyle 2\else1.2\fi}{$#1\Diamond$}%
	}}%
}
\newcommand\bigLozenge{\mathop{\mathpalette\bigL@zenge\relax}}
\newcommand\bigL@zenge[2]{%
	\vcenter{\hbox{\m@th
			\scalebox{\ifx#1\displaystyle 2\else1.2\fi}{$#1\blacklozenge$}%
	}}%
}
\newcommand{\avsum}{\mathop{\mathpalette\avsuminner\relax}\displaylimits}
\newcommand\avsuminner[2]{%
  {\sbox0{$\m@th#1\sum$}%
   \vphantom{\usebox0}%
   \ooalign{%
     \hidewidth
     \smash{\,\rule[.23em]{8.8pt}{1.1pt} \relax}%
     \hidewidth\cr
     $\m@th#1\sum$\cr
   }%
  }%
}
\newcommand\avsuminnerr[2]{%
  {\sbox0{$\m@th#1\sum$}%
   \vphantom{\usebox0}%
   \ooalign{%
     \hidewidth
     \smash{\,\rule[.23em]{6pt}{0.7pt} \relax}%
     \hidewidth\cr
     $\m@th#1\sum$\cr
   }%
  }%
}
\def\Xint#1{\mathchoice
{\XXint\displaystyle\textstyle{#1}}%
{\XXint\textstyle\scriptstyle{#1}}%
{\XXint\scriptstyle\scriptscriptstyle{#1}}%
{\XXint\scriptscriptstyle\scriptscriptstyle{#1}}%
\!\int}
\def\XXint#1#2#3{{\setbox0=\hbox{$#1{#2#3}{\int}$}
\vcenter{\hbox{$#2#3$}}\kern-.5\wd0}}
\def\fint{\Xint-}
\newcommand{\negphantom}{\v@true\h@true\negph@nt} 
\newcommand{\neghphantom}{\v@false\h@true\negph@nt} 
\newcommand{\negph@nt}{\ifmmode\expandafter\mathpalette 
  \expandafter\mathnegph@nt\else\expandafter\makenegph@nt\fi} 
\newcommand{\makenegph@nt}[1]{%
  \setbox\z@\hbox{\color@begingroup#1\color@endgroup}\finnegph@nt} 
\newcommand{\finnegph@nt}{%
  \setbox\tw@\null 
  \ifv@ \ht\tw@\ht\z@\dp\tw@\dp\z@\fi \ifh@\wd\tw@-\wd\z@\fi\box\tw@} 
\newcommand{\mathnegph@nt}[2]{%
  \setbox\z@\hbox{$\m@th #1{#2}$}\finnegph@nt} 
\newcommand{\rv}[1]{\textcolor{black}{#1}}
\newcommand{\NN}{\mathbb{N}}
\newcommand{\ZZ}{\mathbb{Z}}
\newcommand{\RR}{\mathbb{R}}
\newcommand{\al}{\alpha}
\newcommand{\ga}{\gamma}
\newcommand{\la}{\lambda}
\newcommand{\addperiod}[1]{#1.}
\titleformat*{\subsection}{\normalfont\large}
\titleformat{\subsubsection}[runin]
  {\bfseries}
  {\thesubsubsection.}
  {0.5em}
  {\addperiod}
\titleformat*{\subsubsection}{\bfseries}
\titleformat*{\paragraph}{\bfseries}
\titleformat*{\subparagraph}{\large\bfseries}
\title{\bf \Large Quantitative homogenization and large-scale regularity of {P}oisson point clouds}
\author{Scott Armstrong
\thanks{Courant Institute of Mathematical Sciences, New York University.
{\footnotesize \href{mailto:scotta@cims.nyu.edu}{scotta@cims.nyu.edu}.}
}
\and 
Raghavendra Venkatraman
\thanks{Courant Institute of Mathematical Sciences, New York University.
{\footnotesize \href{mailto:raghav@cims.nyu.edu}{raghav@cims.nyu.edu}.}
}
}
\date{\today}
\begin{document}

\maketitle

\begin{abstract}
We prove quantitative homogenization results for harmonic functions on supercritical continuum percolation clusters---that is, Poisson point clouds with edges connecting points which are closer than some fixed distance. We show that, on large scales, harmonic functions resemble harmonic functions in Euclidean space with sharp quantitative bounds on their difference. In particular, for every point cloud which is supercritical (meaning that the intensity of the Poisson process is larger than the critical parameter which guarantees the existence of an infinite connected component), we obtain optimal corrector bounds, homogenization error estimates and large-scale regularity results.
\end{abstract}

\setcounter{tocdepth}{2}  

%
%
\section{Introduction}
\subsection{Motivation and overview}
In this paper, we use quantitative homogenization methods to study the large-scale behavior of harmonic functions on random geometric graphs. We consider a model of a random geometric graph that is obtained by taking the Poisson process on~$\Rd$ with intensity~$\lambda>0$ and connecting points which are separated by a distance of at most one. The intensity~$\lambda$ is assumed to be larger then the percolation threshold which ensures an infinite connected component. This model is a standard example of a \emph{continuum percolation} model~\cite{MR} and goes by the names \emph{the Poisson blob model}, \emph{the Poisson Boolean model} and, in two dimensions, \emph{the Gilbert disk model}. It was introduced by Gilbert~\cite{Gil} as a mathematical model of wireless networks. 

\smallskip

Harmonic functions on random graphs have been of recent interest to the graph-based machine learning community, see for instance~\cite{CL,GGHS,CGL,CG22} and the references therein.
These papers typically consider a weighted random geometric graph built from an~i.i.d.~sample of points on a closed manifold (the so-called ``ground truth'') by joining points within a certain threshold distance. They seek to obtain quantitative large data limit theorems on the spectrum of the graph laplacian, towards the spectrum of the (weighted) Laplace-Beltrami operator on the manifold. Such theorems identify settings under which one can consistently make statistical inference (like classification or regression) with high probability theoretical guarantees.

These results consider the ``very supercritical'' regimes in which the density of points grows as the scale increases. This kind of assumption makes the geometric structure of the graph very simple, as one is very far from the percolation threshold and every vertex has a large number of edges. In this setting, one does not see \emph{homogenization}, but rather \emph{averaging}, and  quantitative estimates can be obtained by Taylor expansion.  The decorrelation and mixing properties of the random graph play less of a role. The goal of the present work is to demonstrate that quantitative homogenization methods are able to obtain sharper results which are applicable to much sparser graphs. 

\smallskip

Throughout, we fix~$d\in\N$ with~$d\geq 2$, a parameter~$\lambda\in (0,\infty)$ and we let~$\P$ be the probability measure corresponding to the Poisson point process (\rv{also referred to, in the sequel, as Poisson point cloud}) on~$\Rd$ with intensity~$\lambda$. One way of characterizing this is to consider a family~$\{ N_z \,:\, z\in\Zd\}$ of independent Poisson random variables with mean~$\lambda$ and then choosing~$N_z$ points in the cube~$z+ (\sfrac12,\sfrac12)^d$, sampled according to the uniform measure. Then~$\P$ is the law of this random point cloud on~$\Rd$. 
For each realization of the Poisson point cloud, we create a graph by connecting any pair of points separated by a Euclidean distance of less than one. 
Almost surely, this random graph will have an infinite number of bounded connected components, and at most one infinite connected component. Our aim is to study the behavior of harmonic functions on ``large'' connected components of this graph, which we refer to as the \emph{Poisson blob model}.
This model has large-scale connectivity properties qualitatively similar to bond percolation on~$\Zd$ in dimension~$d\geq 2$, with the parameter~$\lambda$ playing the role of the probability~$p$ of an open edge in the latter. Indeed, it is well-known (see~\cite{MR}) that there exists a critical value~$\lambda_c(d)>0$ such that, with probability one, the graph has a unique unbounded connected component if~$\lambda > \lambda_c$ (the supercritical regime) and no unbounded connected component if~$\lambda< \lambda_c$ (the subcritical regime). 

\smallskip

Our primary interest is in the behavior of harmonic functions on the infinite cluster in the supercritical case when~$\lambda>\lambda_c$.
This random graph may of course exhibit very irregular, non-Euclidean behavior on small scales, particularly if~$\lambda$ is close to~$\lambda_c$, but on large scales it behaves rather like the ambient Euclidean space~$\Rd$ in which it is embedded. We should therefore expect that harmonic functions on the cluster behave, on large scales, like harmonic functions on~$\Rd$.

\smallskip

This picture fits squarely within the realm of homogenization theory, with the  irregular local geometry of the graph, rather than an oscillating coefficient field, creating the small-scale heterogeneities which must be homogenized. Unfortunately, the lack of uniformity of the geometry of the random graph on small scales means that the equation we are homogenizing is degenerate in a very strong sense, and thus the classical methods of qualitative stochastic homogenization are not immediately applicable. The lack of uniform ellipticity also limits the applicability of certain quantitative approaches to homogenization (such as~\cite{GO,GNO}).

\smallskip

The aim of this paper is to demonstrate that the coarse-graining methods developed in~\cite{AS,AKM19} in the context of quantitative stochastic homogenization for elliptic equations in the continuum are well-suited to the study of harmonic functions on random graphs, and in particular to continuum supercritical percolation clusters. These methods are flexible because, similar to renormalization group arguments, they do not look too closely at the small scales where the disorder is strongest. In fact, they are indifferent to the precise nature of the microscopic medium (e.g., whether it is a finite difference equation on a discrete graph or a PDE in the continuum). One just needs that the environment, in the case of a discrete random graph, \rv{resembles} Euclidean space and \rv{is} ``elliptic'' in a suitable coarse-grained sense. In the case of supercritical continuum percolation, we can get these estimates relatively easily from well-known results in the literature: in particular, we use ``renormalization'' estimates developed in the setting of bond percolation in~\cite{GM,AP} and extended to continuum percolation in~\cite{Penrose}.

\smallskip

This point has, in fact, already been made in the work~\cite{AD16}, which adapted the ideas of~\cite{AS,AKM19} to study harmonic functions on supercritical bond percolation clusters. That paper used the renormalization estimates of~\cite{AP} to find a good scale on which the supercritical cluster was well-behaved, and then ran the arguments of~\cite{AKM19}. Our paper adapts the strategy of~\cite{AD16} to the case of continuum percolation, which is conceptually quite similar. The arguments here are however simpler than those of~\cite{AD16} and clarify that the methods used are not restricted to special geometries (such graphs which are subsets of the lattice~$\Zd$).

\smallskip

The main results, which are stated precisely in the next subsection, are roughly summarized as follows:

\begin{itemize}
	
	\item In Theorem~\ref{t.large-scale.C01.intro}, we present a large-scale regularity result which implies that the finite difference of a harmonic function across a typical edge in the graph is bounded by a multiple of the (suitably normalized)~$L^2$ oscillation of the function on a macroscopic scale. This is analogous to the classical~$C^{0,1}$ estimate for harmonic \rv{functions}, and such an estimate was first proved in the context of stochastic homogenization in~\cite{AS}. 
	
\item In Theorem~\ref{t.corrector.bounds} we give optimal estimates on the first-order correctors, which depend on dimension and match the scalings found in the uniformly elliptic setting.

	\item In Theorem~\ref{t.homogenization.intro}, we give an explicit, quantitative estimate on the (random) minimal length scale at which homogenization begins to be seen; on scales above this random scale, we show that the difference of harmonic functions on the \rv{supercritical} continuum percolation cluster and harmonic functions on~$\Rd$ becomes small at the optimal rate.

\end{itemize}

The proof of Theorem~\ref{t.large-scale.C01.intro} is based on the strategy of~\cite{AS,AD16}. We define suitable coarse-grained coefficients and perform an iteration up the scales to demonstrate convergence with an algebraic rate. This sub-optimal rate of homogenization allows us to approximate arbitrary solutions by harmonic functions sufficiently well that we obtain an improvement of regularity on large scales, leading to the statement of Theorem~\ref{t.large-scale.C01.intro}. 

\smallskip

The large-scale ~$C^{0,1}$ estimate can be combined with concentration inequalities to obtain optimal bounds on the first-order correctors, leading to the statement of Theorem~\ref{t.corrector.bounds}. Finally, Theorem \ref{t.homogenization.intro} is obtained by using these corrector estimates and a classical two-scale expansion-type argument. 

\smallskip

There have several other recent works studying harmonic functions on Poisson point clouds. 
Braides and Carroccia~\cite{BC} obtained qualitative homogenization results in two dimensions and Faggionato~\cite{Faggio} proved qualitative homogenization results in all dimensions. Both of these papers use arguments which do not give quantitative information and thus cannot access finer properties (such as large-scale regularity and error estimates).

\smallskip 
	
The arguments in the present paper can be generalized far beyond the particular setting of Poisson point clouds. One may consider more general random graphs, with longer correlations and with long-range edges and (possibly random) conductivities which are neither bounded uniformly from above nor from below. Such features will increase the complexity of the proof, and may require first adapting the works of~\cite{AP,Pisztora} to the particular random graph under consideration, but will  not alter the overall strategy of the homogenization/renormalization approach. As an application of our results, in a subsequent  paper~\cite{AV3} we obtain optimal convergence rates for the spectrum of the graph Laplacian for the Dirichlet problem in Euclidean domains. This improves on the results of~\cite{BN,Singer,BIK,shi2015convergence,GGHS,Dobson,WR,CG22,CGL,Lu}, allowing sparse graphs down to the percolation threshold, which is made possible  by the homogenization estimates in this paper.

\subsection{Problem formulation and statements of the main results}
\label{ss.theorems}

We begin with some notation. 
Throughout, we work in dimension~$d \geq 2$.
A \emph{cube} is a set of the form 
\begin{equation*}
	\Zd \cap \bigl(z + [ 0,N)^d \bigr)\,,  \quad z \in \Zd, N \in \N\,. 
\end{equation*}
The \emph{center} of the cube $\cu$ in the preceding display is $z$, and its \emph{size} \rv{or side length}, denoted $\size(\cu) := N.$ 
A \emph{triadic cube} is a cube of the form 
\begin{equation*}
	\cu_m(z) := \Zd \cap \Bigl( z + \Bigl[-\frac12 3^m, \frac12 3^m\Bigr)^d \Bigr)\,, \quad z \in 3^m \Zd, \ m \in \N\,. 
\end{equation*}
When $z = 0,$ we will simply write $\cu_m.$ \rv{Sometimes we will consider~$\cu_m$ to be a subset of Euclidean space; that is, we will also use~$\cu_m$ to denote~$[-\frac12 3^m, \frac12 3^m)^d$ when no confusion arises. }

\rv{The notation~$B_R(x),$ as usual, denotes the open Euclidean ball centered at~$x,$ of radius~$R.$ When~$x=0$, we simply write~$B_R.$ }

It is convenient to use the $\O_s$ notation introduced in \cite{AKM19} to measure the sizes and stochastic integrability of random variables. Given $s,\theta > 0$ and a nonnegative random variable $\X,$ we write 
\begin{equation*}
	\X = \O_s(\theta) \quad \iff \quad \E \Bigl[ \exp \Bigl( \frac{\X}{\theta}\Bigr)^s\Bigr] \leqslant 2\,. 
\end{equation*}
By Markov's inequality, 
\begin{equation*}
\X = \O_s(\theta) \quad \implies \quad	\P[ \X \geqslant \theta t] \leqslant 2 \exp (-t^s)\,, \ \forall t  > 0.
\end{equation*}

As mentioned above, we work with a Poisson point process~$\eta$ on~$\Rd$ with intensity~$\lambda > 0.$ It is well known that there is a critical~$\lambda_c(d) > 0$ depending only on dimension such that if~$\lambda > \lambda_c(d),$ then the graph built by connecting points within unit distance, has a unique unbounded connected component (and no such unbounded connected component if~$\lambda < \lambda_c$). 

\rv{Our definition of the graph Laplacian requires identifying a well-connected component of the percolation cluster that localizes and has well-controlled Euclidean geometry on large-scales. Definition~\ref{d.wellconnected} identifies \emph{well-connected cubes} $\cu,$ identified by the property of having a unique connected component of~$\eta \cap \cu$ with diameter at least $\tfrac{1}{100}\size(\cu):$ this component is denoted by~$\eta_*(\cu).$ In addition, in a well-connected cube, the number of points in~$\eta_*(\cu)$ is \rv{comparable to} the volume of the cube, and finally, each point in the cube $\cu$ has a neighbor in~$\eta_*(\cu)$ no further than distance $\tfrac{1}{100}\size(\cu)$ away. }

\smallskip

\rv{
	\begin{definition} \label{d.wcdomain}
		Given a Borel set~$U \subseteq \Rd,$ we let~$\eta_*(U)$ denote the largest connected component of~$\eta_*(\cu) \cap U$,
		where $\cu$ is the smallest triadic cube containing~$U$ such that~$\cu$ is well-connected in the sense of Definition~\ref{d.wellconnected}. 
	\end{definition}
The cluster~$\eta_*(U)$ is well-defined, thanks to the notion of well-connected cubes in Definition~\ref{d.wellconnected}. It also agrees with~$\eta_*(\cu)$ when~$U = \cu$ is a triadic cube. Finally, we set~$\eta_*:= \eta_*(\Rd).$} Results on the geometry of continuum percolation clusters studied in~Penrose~\cite{Penrose} and Penrose and Pisztora~\cite{PP} imply that large cubes are well-connected except on an event with probability which is exponentially small in the size of the cube---see Proposition~\ref{p.Penrose} below for the precise statement.
These works were, in turn, based on previous  works of Pisztora~\cite{Pisztora} and Antal and Pisztora~\cite{AP} which provided analogous estimates for \rv{supercritical} percolation clusters on~$\Zd$.  Proposition~\ref{p.Penrose} tames the large-scale geometry of the \rv{supercritical} percolation cluster. It tells us essentially that, on large scales, the geometry is not very different from Euclidean space. By combining Proposition~\ref{p.Penrose} with very simple cube decompositions, we will obtain Poincar\'e and Sobolev-type functional inequalities on all sufficiently large subsets of the random graph; these imply compactness properties and are obviously quite useful for studying harmonic functions on the graph. 

\smallskip

If we zoom out to large scales on which the geometry of the random graph is tame, then we can implement the quantitative homogenization methods of~\cite{AKM19,AK22}. This is the same strategy as the one previously used in~\cite{AD16} to obtain quantitative results for supercritical bond percolation on the lattice~$\Zd$. 

\smallskip

If $U$ is a Borel set and $f : \eta_*(U) \to \R$ is a function, then we write 
\begin{equation*}
	(f)_U \equiv \avsum_{x \in \eta_*(U)} f(x) :=  \frac{1}{|U|} \sum_{x \in \eta_*(U)} f(x)\,.
\end{equation*}
 By a convenient abuse of notation we also write $|U|$ for the Lebesgue measure of the set $U$. 
We will also need to work with coarse-grained quantities which are functions on a lattice. In such situations, we will conflate the notation for average and write 
\begin{equation*}
	\avsum_{x\in \Zd\cap \cu} f(x) = \frac{1}{|\cu \cap \Zd|} \sum_{x\in \Zd\cap \cu} f(x)\,,
\end{equation*}
with~$|\cu \cap \Zd|$ denoting cardinality. The meaning of~$|\cdot|$ (Lebesgue measure versus counting measure) will be clear from the context.
For a function in the continuum, $f: U \to \R,$ we write, as usual, 
\begin{equation*}
	\fint_U f(x)\,dx := \frac{1}{|U|} \int_Uf(x)\,dx\,. 
\end{equation*}
Given $f: \eta_*(U) \to \R,$ we write 
\begin{equation*}
	\|f\|_{\underline{L}^2 (\eta_*(U))} :=  \Biggl( \avsum_{x \in \eta_*(U)} |f(x)|^2 \Biggr)^{\sfrac12}\,. 
\end{equation*}
Given $x,y \in \eta_*$ we write $x \sim y$ if $|x-y| \leqslant 1$ (as measured in Euclidean distance). We write 
\begin{equation*}
	\|f\|_{\underline{H}^1(\eta_*(U))} :=  \Biggl( \frac1{|U|}\sum_{x,y\in \eta_*(U),\,x\sim y}  (f(x) - f(y))^2  \Biggr)^{\sfrac12}\,. 
\end{equation*}
Obviously, the $\underline{H}^1(\eta_*(U))$ seminorm comes with a natural inner product, namely for two functions $f,g: \eta_*(U) \to \R,$ we set 
\begin{equation*}
	\langle f, g \rangle _{\underline{H}^1(\eta_*(U))} :=  \frac1{|U|}\sum_{x,y\in \eta_*(U),\,x\sim y}  (f(x) - f(y))(g(x) - g(y))  \,. 
\end{equation*}

We next introduce the technical framework in which to precisely formulate our problem. We let~$\Omega$ denote the collection of locally finite, pure point Radon measures. These are the measures~$\eta$ having the form
\begin{equation*}
	\eta = \sum_{i\in\N} \delta_{x_i}\,,
\end{equation*}
where~$\{ x_i \}_{i\in\N}\subseteq\Rd$ is a disjoint sequence with no accumulation points and~$\delta_x$ denotes the Dirac measure at a point~$x\in\Rd$. 
For each Borel subset~$\cu_m \subseteq \Rd$, we define 
the~$\sigma$-algebra~$\mathcal{F}(\cu_m)$ to be the one generated by the family of random variables
\begin{equation*}
	\eta \mapsto \int_{\Rd} \eta(x) \varphi(x) \,dx\,, \quad 
	\varphi\in C^\infty_c(\cu_m)\,.
\end{equation*}
The largest of these is~$\mathcal{F}:= \mathcal{F}(\Rd)$.
We introduce the translation group~$\{T_y \}_{y\in\Rd}$ which acts on~$\Omega$ by
\begin{equation*}
	T_y \eta := \eta(\cdot+y)\,.
\end{equation*}
We say that a probability measure~$\P$ on~$(\Omega,\F)$ is~$\Zd$-stationary if
\begin{equation*}
	\P \circ T_z  =\P \,, \quad \forall z\in\Zd\,.
\end{equation*}
Throughout the paper,~$\P$ denotes the law of the Poisson process on~$\Rd$ with intensity $\lambda > \lambda_c(d)$. This is a probability measure on~$(\Omega,\F)$ which is characterized by the following properties: 
	\begin{itemize}
		\item The probability that a Borel set $B\subseteq \RR^d$ has $k \in \NN$ points, i.e., 
		\begin{equation*}
			\P\bigl [\eta(B) = k \bigr ] = \frac{(\ga |B|)^k}{k!}e^{-\ga |B|}\,;
		\end{equation*}
		\item For any finite collection~$\{B_j\}_{j=1}^N$ of pairwise disjoint Borel subsets, the random variables $\{\eta(B_j)\}_{j=1}^N$ are $\P$--independent.  
	\end{itemize}
	
Given a typical realization~$\eta$ of the Poisson point process, an (unweighted) graph is obtained by joining points~$x,y \in \eta$ that are within distance~$1$ of each other, with an edge with unit weight. The graph laplacian associated to this graph, applied to a function~$u: \eta_*(U) \to \R$ is given by 
\begin{equation} \label{e.Lapdef}
	\mathcal{L}u(x) := \sum_{y \in \eta_*(U),\,y\sim x }  (u(y) - u(x))\,.
\end{equation}

Our theorem statements below can be easily extended to significantly more general edge weights, such as those given by a compactly supported kernel, or Gaussian kernels. Our choice of the~$0$ or~$1$ edge-weights is merely for simplicity.

In the sequel we will refer to functions $u$ as being \emph{graph harmonic} in a domain $U \subseteq \Rd$ if they satisfy $\mathcal{L} u (x) = 0$ for each $x \in \eta_*(U).$ Our goal in this paper is to understand the quantitative \emph{large-scale} behavior of graph harmonic functions.

\smallskip 

We next give the precise statements of the main results. We prove three main theorems concerning the large-scale behavior of solutions to the equation $\mathcal{L}u = f$. 

The first result concerns the regularity of solutions on large scales. It can be compared to the classical pointwise (or~$C^{0,1}$) estimate for harmonic functions. The latter states that there exists~$C(d)<\infty$ such that, for any harmonic function~$u$ on~$B_R$, we have
\begin{equation*}
| \nabla u(0) | \leq \frac{C}{R} \| u - (u)_{B_R} \|_{\underline{L}^1(B_R)}\,.
\end{equation*}
This is essentially equivalent \rv{(by the Lebesgue differentiation theorem and the translation invariance of the Laplacian)} to the statement that 
\begin{equation}
\label{e.harmonicC01}
\sup_{t \in (0,R/2)} 
\| \nabla u \|_{\underline{L}^2(B_t)}
\leq
\frac{C}{R} \| u - (u)_{B_R} \|_{\underline{L}^1(B_R)}\,.
\end{equation}
We will prove an estimate like~\eqref{e.harmonicC01} for solutions on our random graph, when~$R$ is large. Since the small-scale geometry of our graph is wild, we cannot expect to have an estimate~\eqref{e.harmonicC01} unless we make a restriction to remove the small scales; this means that the supremum on the left side should have a restriction that~$t$ be larger than some \emph{random scale}, which is the scale at which the random graph is ``well-behaved'' in the sense of being close to homogenized. 
The precise statement is the following.

\begin{theorem}[Large-scale $C^{0,1}$ regularity] \label{t.large-scale.C01.intro} 
There exist constants~\rv{$C(d,\lambda), s(d) \in (0,\infty)$} and a nonnegative random variable~$\X$ satisfying 
\begin{equation}
\label{e.X.integrability}
\X = \O_{s}(C) 
\end{equation}
such that, for every $R \geqslant \X,$  every $f: \eta_*(B_R) \to \R$, and solution~$u$ of~$\mathcal{L}u=f$ in~$\eta_*(B_R)$,  we have the estimate
	\begin{equation} \label{e.gradbound.lsr.intro}
		\sup_{t \in [r, \frac12 R]} \rv{\biggl( \avsum_{x,y\in \eta_*(B_t),\,x\sim y} |u(x)-u(y)|^2 \biggr)^{\sfrac12} }\leqslant \frac{C}{R}\|u - (u)_{B_R}\|_{\underline{L}^2(\eta_*(B_R))} + C \int_{\rv{\X}}^R \|f \|_{\underline{H}^{-1}(\eta_*(B_t))} \frac{\,dt}{t}\,.
	\end{equation}
	
\end{theorem}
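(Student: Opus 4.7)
The plan is to carry out a Campanato-type excess-decay iteration in the spirit of the quantitative homogenization strategy of~\cite{AS,AKM19}, adapted to the degenerate graph setting as in~\cite{AD16}. The core step is to establish that on scales above a random threshold $\X$, every graph-harmonic function on $\eta_*(B_R)$ admits a sharp approximation by an affine function, with an accuracy that improves geometrically as one zooms in. Summing the resulting geometric series will produce~\eqref{e.gradbound.lsr.intro} in the homogeneous case $f=0$, and the inhomogeneous term will be added by a linear-superposition argument.

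First I will set up coarse-grained subadditive quantities $\mu(\cu_m,p)$ and $\mu^*(\cu_m,q)$ on each triadic cube $\cu_m$ that is well-connected in the sense of Definition~\ref{d.wellconnected}, by minimizing the graph Dirichlet energy on $\eta_*(\cu_m)$ subject to affine slope (resp., affine flux) constraints, directly analogous to the $\nu$ and $\nu^*$ of~\cite{AKM19}. Proposition~\ref{p.Penrose} combined with the Poincar\'e--Sobolev inequalities on well-connected cubes will ensure that these minimizers exist and obey upper bounds depending only on $(d,\lambda)$. Exploiting $\Zd$-stationarity, the subadditivity of $\mu$, the superadditivity of $\mu^*$, and a single-scale variational comparison, one shows that the convex-duality gap $J_m(p) := \mu(\cu_m,p) + \mu^*(\cu_m,\ahom p) - 2 p\cdot\ahom p$ satisfies $\E[J_m(p)] \leqslant C|p|^2 \, 3^{-\alpha m}$ for some $\alpha(d,\lambda)>0$. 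An Efron--Stein-type concentration estimate, exploiting the product structure of $\P$ under unit-cube discretization, will upgrade this expectation bound to a high-probability bound with stretched-exponential tails.

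Once $J_m$ is small on a cube containing $B_R$, a harmonic-approximation lemma (standard in this framework) will produce $\bar u \in C^\infty(B_R)$ with $\Delta \bar u = 0$ and
\[
\|u - \bar u\|_{\underline{L}^2(\eta_*(B_R))}^2 \leqslant C R^{-\alpha}\,\|u - (u)_{B_R}\|_{\underline{L}^2(\eta_*(B_R))}^2.
\]
Combined with the classical pointwise $C^{1,1}$ bound for $\bar u$, this will yield the excess-decay inequality
\[
\inf_{\ell\text{ affine}}\|u - \ell\|_{\underline{L}^2(\eta_*(B_{t/3}))}^2 \leqslant \tfrac{1}{4}\inf_{\ell\text{ affine}}\|u - \ell\|_{\underline{L}^2(\eta_*(B_t))}^2 + C t^{-\alpha}\|u - (u)_{B_t}\|_{\underline{L}^2(\eta_*(B_t))}^2
\]
for all $\X \leqslant t \leqslant R/2$. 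Iterating this geometrically from $t = R/2$ down to $t = \X$, then converting the affine-approximation error to a gradient estimate via a discrete Caccioppoli inequality (valid on every well-connected cube, hence on every scale above $\X$), produces the claimed bound in the homogeneous case.

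The inhomogeneous case will be handled by decomposing $u = u_0 + \sum_k w_k$, where $u_0$ is graph-harmonic with the boundary data of $u$ and each $w_k$ solves $\mathcal{L}w_k = f\mathbf{1}_{A_k}$ on a dyadic annulus $A_k \subseteq B_R$ with zero boundary data; applying the homogeneous estimate to $u_0$ and the elementary bound $\|\nabla w_k\|_{\underline{L}^2} \leqslant C\|f\mathbf{1}_{A_k}\|_{\underline{H}^{-1}}$ to each $w_k$, then telescoping, produces the $\underline{H}^{-1}$-integral term. Finally, I will define $\X$ as the smallest triadic radius such that every enclosing triadic cube is well-connected \emph{and} the gap $J_n$ lies below a fixed tolerance $\delta(d,\lambda)$ on every cube of size at least $\X$. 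Proposition~\ref{p.Penrose} gives stretched-exponential tails for the first condition and the concentration bound from step one does the same for the second, so a union bound over triadic scales yields $\X = \O_{s}(C)$ for some $s(d)>0$. The hard part will be step one: obtaining the algebraic homogenization rate in the absence of uniform ellipticity, which forces one to work with coarse-grained Poincar\'e inequalities on $\eta_*(\cu)$ in place of pointwise bounds and to treat with care the boundary layer $\cu \setminus \eta_*(\cu)$, where minimizers are defined only by extension.
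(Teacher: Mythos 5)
Your proposal follows essentially the same route as the paper: coarse-grained subadditive quantities on well-connected cubes, an iteration in the scales to extract an algebraic homogenization rate with stretched-exponential tails, a harmonic approximation lemma, and an excess-decay argument closed via Caccioppoli. The only substantive differences are expository: the paper reaches harmonic approximation (Lemma~\ref{l.verifhyp}) through a full two-scale expansion for the Dirichlet problem (Proposition~\ref{t.DirichletProblem}), and then delegates the Campanato-type iteration to the packaged abstract result~\cite[Lemma 6.3]{AK22} rather than running it by hand. Two small points to watch: your duality gap should read $-p\cdot\ahom p$, not $-2p\cdot\ahom p$ (otherwise $J_m$ is negative when the coarse-grained coefficients agree exactly); and a clean geometric decay of the affine excess as you wrote it is a $C^{1,\alpha}$-type statement, so closing the iteration for the \emph{Lipschitz} bound requires also tracking the drift of the optimal affine slope between scales---this is precisely the bookkeeping that the abstract lemma handles, and you will need to reproduce it explicitly.
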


The statement of Theorem~\ref{t.large-scale.C01.intro} can be generalized to higher regularity, beyond~$C^{0,1}$, to all orders of regularity. In particular, we can obtain Liouville theorems which classify all global, polynomially growing solutions. The precise statements of these results are essentially the same as the ones found in~\cite[Theorem 3.8]{AKM19} or~\cite[Theorem 2]{AD16}, and the proof also follows along the same lines.

An important consequence of the above large-scale~$C^{0,1}$ estimate are estimates on the growth of the first-order correctors. As we will see, there exists a family of random fields~$\{ \phi_e: e \in \Rd\},$ defined on the unique infinite percolation cluster~$\eta_*(\Rd)$, which are stationary modulo additive constants and satisfy
\begin{equation*}
\mathcal{L} ( \ell_e + \phi_e ) = 0 \quad \mbox{on} \ \eta_*(\Rd)\,.
\end{equation*}
In the next theorem, we give optimal quantitative estimates on the correctors, presented as estimates in negative Sobolev norms of their gradients. 
Here the quantity~$[w]$ denotes the ``coarsened version'' of a function~$w$ defined on the graph. The former is defined on~$\Zd$ by essentially taking its values from those of~$w$ at nearby points on the graph (see~\eqref{e.Qcoarse}). Note that the difference between~$w$ and~$[w]$ is well-controlled by Poincar\'e inequalities on the cluster~$\eta_*$, and so bounds on~$[w]$ imply corresponding bounds for~$w$, but the former are easier to work with (see~\eqref{e.correctorsize.opt.nograd} below).

\begin{theorem}[Optimal corrector estimates]
	\label{t.corrector.bounds}
	There exists constants~\rv{$c(d,\lambda ),s(d) \in (0,\infty)$} such that,
	for every~$r\in [1,\infty)$,
	\begin{equation*}
	\biggl |
		\int_{\Rd}
		\nabla[\phi_e] (x) 
		\Psi_r (x)
		\biggr |
		=
		\O_{s} ( Cr ^{-\frac d2} )\,,
	\end{equation*}
\rv{where~$\Psi_r := r^{-d} \Psi(\cdot/r),$ and~$\Psi$ is any nonnegative, compactly supported smooth function with~$\int_{\Rd} \Psi = 1.$  }
	Consequently, for any $\mathfrak{s}>0,$ and any $\varepsilon \in (0,\frac12],$ we have 
	\begin{equation} \label{e.correctorsize.opt}
		\Bigl\|\nabla[\phi_e]\Bigl(\frac{\cdot}{\varepsilon}\Bigr)\Bigr\|_{H^{-\mathfrak{s}}(B_1)} \leqslant 
		\begin{cases}
			\O_{s} (C\varepsilon^\mathfrak{s})  & \mbox{if} \ \mathfrak{s} <\frac{d}2\,,
			\\
			\O_s\bigl( C\ep^{\frac{d}2} |\log \ep|^{\frac12}\bigr) &  \mbox{if} \ \mathfrak{s} = \frac{d}2\,,\\
			\O_s\bigl( C \ep^{\frac{d}2} \bigr) &\mbox{if} \  \mathfrak{s} > \frac{d}2\,,
		\end{cases}
	\end{equation}
	for a constant~$C(d,\lambda)<\infty$. 
\end{theorem}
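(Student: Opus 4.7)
The plan is to follow the vertical-derivative/concentration strategy for quantitative stochastic homogenization, in the form used in \cite{AKM19,AK22,AD16}, adapted to the Poisson point cloud. The key inputs will be Theorem \ref{t.large-scale.C01.intro} (applied to $\ell_e + \phi_e$ and to differences of correctors built from modified configurations), Proposition \ref{p.Penrose}, and a concentration inequality for functionals of the Poisson process. Because the cells $z+[0,1)^d$, $z\in\Zd$, carry mutually independent Poisson data, a standard Glauber/Efron--Stein argument combined with a martingale decomposition yields
\[
F(\eta) - \E[F(\eta)] = \O_s\!\biggl(C\biggl(\sum_{z\in\Zd}\E\bigl[|D_z F|^2\bigr]\biggr)^{\!\sfrac12}\biggr)
\]
for any cell-Lipschitz random variable $F$, where $D_z F$ denotes the vertical derivative measuring the change in $F$ under resampling of the points in the cell at $z$, and where the stochastic integrability exponent $s\in(0,1]$ is ultimately inherited from the tails of the random scale $\X$ in \eqref{e.X.integrability}.

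Applying this with $F(\eta):= \int_{\Rd} \nabla[\phi_e](x)\,\Psi_r(x)\,dx$ reduces the first estimate to a pointwise bound on $D_z F$. Let $\tilde\eta$ agree with $\eta$ outside $z+[0,1)^d$ and be independently resampled inside, and write $\tilde\phi_e$ for the corresponding corrector. On the good event that the resampling adds or removes only finitely many edges in a bounded neighborhood of $z$, the difference $w:= \tilde\phi_e - \phi_e$ satisfies $\mathcal{L}w = g$ on the common cluster, with source $g$ supported near $z$. Applying Theorem \ref{t.large-scale.C01.intro} to $w$ together with standard homogenized-Green's-function decay gives
\[
|D_z F| \lesssim (r+|z|)^{-d}\quad\mbox{on the good event,}
\]
so that $\sum_{z\in\Zd}\E[|D_z F|^2] \lesssim r^{-d}$. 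Combined with the concentration inequality above, this produces the CLT-rate $F - \E[F] = \O_s(Cr^{-\sfrac d2})$; stationarity and the mean-zero normalization of $\nabla\phi_e$ force $\E[F]=0$, yielding the first estimate of the theorem.

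The negative Sobolev estimate \eqref{e.correctorsize.opt} then follows from the spatial-average bound by a dyadic decomposition of the test function in $H^{\mathfrak{s}}(B_1)$, at scales $2^{-j}$ for $j=0,1,\dots,\lceil\log_2(1/\ep)\rceil$. Each dyadic level contributes an amount of order $\ep^{\mathfrak{s}}\,2^{j(\mathfrak{s}-\sfrac d2)}$ in the $\O_s$ sense, and summing in $j$ yields the three cases of \eqref{e.correctorsize.opt} according to whether the geometric series in $2^{j(\mathfrak{s}-\sfrac d2)}$ converges at high frequency ($\mathfrak{s}<\sfrac d2$), is marginal and produces the $|\log\ep|^{\sfrac12}$ factor ($\mathfrak{s}=\sfrac d2$), or is dominated by the top scale $2^j\sim\ep^{-1}$ ($\mathfrak{s}>\sfrac d2$). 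The $O(\log(1/\ep))$ number of scales is absorbed using Cauchy--Schwarz under the $\O_s$ sum rule, as in \cite[Chapter~4]{AKM19}.

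The main obstacle is the sensitivity bound on $D_z F$. A priori, resampling the points of a single cell can reconnect or sever pieces of the infinite cluster and thereby distort the corrector at arbitrarily large scales. The key is that such macroscopic rearrangements occur only on a stretched-exponentially rare event: by Proposition \ref{p.Penrose}, local well-connectedness fails with probability exponentially small in the cell size, and on its complement the perturbation changes only finitely many edges in a bounded neighborhood of $z$, giving the localized source $g$ used above. Under this good event, Theorem \ref{t.large-scale.C01.intro} supplies the expected pointwise decay of $\nabla[w]$ at the homogenized rate; the bad-event contribution is absorbed at the cost of a slightly smaller stochastic integrability exponent $s$, by balancing the tail in \eqref{e.X.integrability} against the polynomial spatial sensitivity, exactly as in the bond-percolation analysis of \cite{AD16}.
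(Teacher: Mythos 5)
Your proposal follows essentially the same route as the paper: the spatial average $\int_{\Rd}\nabla[\phi_e]\,\Psi_r$ is controlled by a nonlinear concentration inequality (the paper cites \cite[Proposition 2.2]{AL17}) in terms of vertical derivatives with respect to resampling unit cells, and each vertical derivative is bounded by applying the large-scale regularity theory to the difference of correctors built from the resampled point cloud---in the paper this is routed through the Green's function decay of Proposition~\ref{p.greensfunction}, after a coarsening patch is used to make $\phi_e$ and the perturbed $\phi_e'$ comparable on a common cluster. The only cosmetic difference is in deducing the $H^{-\mathfrak{s}}$ bound from the spatial-average estimate: you employ a dyadic decomposition, while the paper invokes the heat-kernel characterization of negative Sobolev spaces following \cite[Proof of Theorem 4.24]{AKM19}; these are standard equivalents.
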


\begin{remark}
\rv{We note that in~\cite{Dario_AAP}, the authors derive similar optimal corrector estimates on the supercritical bond percolation cluster using a similar strategy as we use in the proof of the preceding theorem.  }
\end{remark}

With the corrector bounds in hand, we can draw a number of corollaries with sharp convergence rates, following the corresponding results in the continuum homogenization theory. Here we present an optimal homogenization error estimate for the Dirichlet problem. For its statement, we need some notation. We let $U_0 \subseteq \Rd$ denote a bounded $C^{1,1}$ or convex domain, and we let 
\begin{equation*}
	U_m := 3^m U_0\,. 
\end{equation*}
By rescaling suitably, we may assume that~$U_0 \subseteq \cu_0$ so that~$U_m \subseteq \cu_m$. \rv{Recall that for any function~$u : \eta_*(U_m) \to \RR$ we define 
\begin{equation*}
	\mathcal{L} u (x) := \sum_{y \in \eta_*(U_m),\,y\sim x}  (u(y) - u(x)) \,. 
\end{equation*}
}

\begin{theorem}[Homogenization error estimate]
	\label{t.homogenization.intro} 
	Suppose~$\lambda > \lambda_c$.
	There \rv{exist} a scalar matrix~$\ahom$ depending only on $\lambda$, constants~$C(d,\lambda),s(d) > 0$ 
	and a nonnegative random variable~$\X$ satisfying 
	\begin{equation*}
	\X = \O_s(C) \,,
	\end{equation*}
	such that, for every~$m \in \N$ with~$3^m > \X$ and every function~\rv{$\uhom \in C^2(U_m) \cap H^1_0(U_m)$}, the Dirichlet problem 
	\begin{equation*}
		\left\{
		\begin{aligned}
			& \mathcal{L} u = \nabla \cdot \ahom \nabla \uhom & \mbox{on} & \ \rv{\eta_*( U_m) \setminus \{ x \in U_m : \dist(x,\rv{\partial U_m}) <2 \}} \,,
			\\ &
			u = 0 & \mbox{on} & \ \rv{\eta_* (U_m) \cap \{ x \in U_m : \dist(x,\rv{\partial U_m}) <2 \}} \,,
		\end{aligned}
		\right.
	\end{equation*}
	for the graph Laplacian has a unique solution~\rv{$u:\eta_*(U_m) \to \R$} which satisfies, for~$C(d,\lambda,U_0)<\infty$ which depends also on~$U_0$,
	\begin{equation}
\| u - \uhom \|_{{L}^2(\eta_*(U_m))} 
		\leq C\| \uhom \|_{{L}^2(U_m)} \cdot 
		\begin{cases}
3^{-m} 
 & \mbox{ if } d \geqslant 3\,, \\
m^{\sfrac12} 3^{-m}  & \mbox{ if } d = 2 \,.
		\end{cases}	\end{equation}
\end{theorem}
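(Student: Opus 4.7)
The plan is to carry out a two-scale expansion argument adapted to the graph setting, in the spirit of~\cite{AKM19, AD16}. Writing $\ep := 3^{-m}$, I would introduce the ansatz
\begin{equation*}
w_\ep(x) := \uhom(x) + \ep \, \chi_\ep(x) \sum_{k=1}^d \phi_{e_k}(x/\ep) \, (\partial_k \uhom)(x),
\end{equation*}
where $\chi_\ep \in C_c^\infty(U_m)$ equals~$1$ outside a boundary strip of width comparable to $\ep^{1/2}\cdot\diam(U_m)$, and then restrict $w_\ep$ to the vertices of $\eta_*(U_m)$ (using the coarsening map $[\,\cdot\,]$ as needed to evaluate the graph correctors at cluster points). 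The target is to control $\mathcal{L}(u - w_\ep)$ in a suitable dual norm, and then obtain $L^2$ convergence by a duality step.

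Computing $\mathcal{L}w_\ep$ and using the corrector equation $\mathcal{L}(\ell_{e_k} + \phi_{e_k}) = 0$, the residual takes the form
\begin{equation*}
\mathcal{L} w_\ep - \nabla \cdot \ahom \nabla \uhom = (\text{flux difference}) + (\text{boundary-layer and commutator terms}).
\end{equation*}
The flux difference, namely $\a(e_k + \nabla \phi_{e_k}) - \ahom e_k$, has zero mean and is to be written as a discrete divergence $\nabla \cdot \sigma_{e_k}$ of an antisymmetric flux corrector~$\sigma_{e_k}$. The required estimates on~$\sigma_{e_k}$ follow the same template as Theorem~\ref{t.corrector.bounds} for~$\phi_{e_k}$, using the large-scale~$C^{0,1}$ bound of Theorem~\ref{t.large-scale.C01.intro} and concentration inequalities; crucially, in $d=2$ the flux corrector is only sublinear but not stationary, with $\underline{L}^2$-growth of order $\sqrt{\log \ep^{-1}}$. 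Pairing the residual with $u - w_\ep$ and invoking the graph Poincar\'e inequality (a consequence of Proposition~\ref{p.Penrose}) then yields a bound of the form
\begin{equation*}
\| \nabla (u - w_\ep) \|_{\underline{L}^2(\eta_*(U_m))} \leq C \ep^{1/2} \|\uhom\|_{H^2(U_m)} \cdot \O_s(1),
\end{equation*}
the $\sqrt{\ep}$ loss being inherited from the boundary strip on which $\nabla \chi_\ep$ is supported.

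To upgrade this $H^1$ rate $\sqrt{\ep}$ to the optimal $L^2$ rate~$\ep$, I would run an Aubin--Nitsche type duality argument: test $u - w_\ep$ against the solution of an auxiliary dual homogenized Dirichlet problem with $L^2$ right-hand side, and exploit the $H^2$ elliptic regularity of this dual problem (guaranteed by the $C^{1,1}$ or convex hypothesis on~$U_0$) to gain an additional factor~$\ep^{1/2}$. The logarithmic correction $m^{1/2}$ in~$d=2$ enters precisely through the non-stationary flux corrector in that dimension.

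The main obstacle will be the boundary-layer analysis, together with the care needed to transfer between the continuum object $\uhom$ on~$\Rd$ and its graph counterpart on $\eta_*(U_m)$. The $C^{1,1}$/convex assumption on~$U_0$ yields the strip estimate $\int_{\{\dist(\cdot,\partial U_m) < \ep^{1/2}\}} |\nabla \uhom|^2 \lesssim \ep^{1/2} \|\uhom\|_{H^2(U_m)}^2$, which is exactly what makes the duality argument close. The random minimal scale~$\X = \O_s(C)$ is inherited from Proposition~\ref{p.Penrose} (to ensure well-connectedness and validity of the Poincar\'e inequality on $\eta_*(U_m)$) and the stochastic integrability of the corrector estimates in Theorem~\ref{t.corrector.bounds}.
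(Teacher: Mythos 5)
Your proof takes a genuinely different route from the paper's. The paper never introduces an explicit flux corrector $\sigma_{e_k}$: in place of writing the flux mismatch as a discrete divergence, it works with the coarse-grained fluxes $[\g_{e_i,j}(\cu_m)]_{l}$ of \eqref{e.fluxdef} and bounds them directly in a coarse-grained $\underline{H}^{-1}$ norm (Lemma~\ref{l.fluxrates}), which plays the same role without ever needing the antisymmetric potential. And the paper's proof, as written, does not invoke an Aubin--Nitsche step: it substitutes the optimal corrector bounds \eqref{e.correctorsize.opt.nograd} into the quantitative two-scale expansion of Proposition~\ref{t.DirichletProblem}, whose ansatz already carries a boundary cutoff $\xi$ at a free mesoscale $3^b$, and then obtains the $L^2$ rate by optimizing $b$ using the strip estimate $\|\nabla\uhom\|_{L^2(\{\dist(\cdot,\partial U_m)<\delta\})}\leq C\delta^{1/2}\|\uhom\|_{H^2(U_m)}$ that holds for $C^{1,1}$/convex $U_0$---this is the ``delicate boundary layer estimate for Poisson's equation'' the paper defers to~\cite{AKM19}. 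Your route---flux correctors and a Nitsche duality upgrade---is the familiar continuum template and is plausible, but it adds machinery the paper has been careful not to need.

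Two places in your plan are where the real work would have to go, and both are currently compressed into a remark. First, the flux corrector $\sigma_{e_k}$ is not free on the random graph: you must produce a skew-symmetric edge field whose discrete divergence equals the zero-mean flux, which on the cluster goes through the coarsened flux on $\Zd$ or an analogue, and its stochastic moments must then be pushed through the same concentration argument as for~$\phi_e$; the paper avoids this construction entirely. Second, the Aubin--Nitsche pairing $\langle g,\,u-w_{\ep}\rangle$ mixes a graph function with a continuum dual solution $\psi$; restricting $\psi$ to vertices and integrating by parts on the graph does not reproduce the continuum bilinear form $\int\ahom\nabla\psi\cdot\nabla(\cdot)$, so you effectively need a second two-scale expansion for $\psi$ together with control of the resulting discrete--continuum commutator terms. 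You acknowledge the ``care needed to transfer between the continuum object $\uhom$ and its graph counterpart,'' but that transfer is exactly where the extra $\ep^{1/2}$ is supposed to be recovered---it is the crux, not a loose end, and it is precisely the bookkeeping the paper's coarse-grained $\underline{H}^{-1}$ framework packages away. Neither obstacle is fatal, but as stated the proposal leaves the hardest part of the alternative argument unaddressed.
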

\rv{
	\begin{remark}
	As the function~$\uhom \in C^2(U_m)$, the function~$\nabla \cdot \ahom \nabla \uhom$ is continuous, and therefore its restriction to~$\eta_*(U_m)$ is well-defined.
\end{remark}
\begin{remark}
	We note from \cite{AKM19} that these rates of converge match with optimal rates in quantitative continuum stochastic homogenization. 
\end{remark}
}
The dependence in the length scale~$3^m$ of our estimates are optimal, as they agree with the bounds obtained in continuum homogenization theory. 
The parameter~$s(d)>0$ is a small constant which we do not estimate. Identifying the optimal exponent~$s$, which characterizes the stochastic integrability of our estimates, would require finer versions of the percolation estimates of~\cite{Penrose,PP}; on the homogenization side, we would also need to revisit the proof of Theorem~\ref{t.homogenization.intro}, and replace the argument based on nonlinear concentration inequalities, which lose stochastic integrability, with the renormalization approach of~\cite{AK22}. The latter method uses only linear concentration inequalities and therefore does not lose such quantitative information. This level of precision is beyond the scope of the present paper. 

\smallskip

A version of Theorem~\ref{t.homogenization.intro} with macroscopic dependence in the coefficients, or for point clouds on a manifold, can be obtained by slightly varying the proof. This kind of generalization does not require sophisticated arguments. In fact, we can use essentially the same arguments as are used in periodic homogenization. We just need the corrector bounds with the macroscopic parameter frozen (which is already given by Theorem~\ref{t.corrector.bounds}), and suitable assumption of continuity in the macroscopic variable. 
Other kinds of homogenization estimates, such as those for the homogenization of the parabolic (or elliptic) Green functions, can also be obtained by combining the corrector estimates of Theorem~\ref{t.corrector.bounds} with standard arguments from classical homogenization theory, see for instance~\cite[Chapter 8]{AKM19}.

\section*{Acknowledgements}
S.A. acknowledges support from the NSF grants DMS-1954357 and DMS-2000200. R.V. acknowledges support from the Simons foundation through award number~733694, and an AMS-Simons award.

\section{Large-scale geometry of random graph} 
\label{s.large-scale}

In this section, we perform the first renormalization step of taming the geometric structure of our random graph. 
This essentially amounts to quoting some estimates in the percolation literature and putting them in a form suitable for our use in this paper. A similar step was performed in the case of bond percolation in~\cite{AD16}, and what is presented here is close to Section~3 of that paper.

\subsection{Renormalization estimates in supercritical percolation}
The result we need states roughly that, in a sufficiently large cube, the random graph has a unique large cluster. The minimal size of the cube is random, but its stochastic moments are strongly controlled. In the case of supercritical bond percolation, these ``cluster estimates'' can be found in the work of Pisztora~\cite{Pisztora} and Antal and Pisztora~\cite{AP}, and so the paper~\cite{AD16} relied on these works. 
There are completely analogous results for continuum percolation, which were proved around the same time by Penrose~\cite{Penrose} and Penrose and Pisztora~\cite{PP} using very similar arguments.

\smallskip

In dimensions three and higher, the arguments in all of these papers (for both bond and continuum percolation) are based on the earlier work of Grimmett and Marstrand~\cite{GM} on percolation estimates in slabs. Their results were formalized in the setting of bond percolation, and later extended to continuum percolation in Tanemura~\cite{Tanemura}. 
In two dimensions the slab estimates are not valid, but in this case the supercritical ``large cluster'' estimates can be proved in a different and much simpler way by a reduction to subcritical estimates for the dual graph. 
The proof of the slab estimate has a qualitative step in the argument which is not easy to quantify (and thus dependence on the percolation parameters is lost). However, a quantitative version (which is still far from sharp) was recently proved in~\cite{DKT}. We also mention the very recent work~\cite{CMT} in which the cluster estimates are proved using a new argument which avoids the slab estimates, does not distinguish between~$d=2$ and~$d>2$, and leads to quantitative estimates of the same order as in~\cite{DKT}. 

\smallskip

The precise result we will quote here was proved in Penrose~\cite[Proposition 3, page 105]{Penrose}.
Before giving the statement, we make a definition of ``well-connected cube,'' which is roughly a cube in which there exists a single large connected component of the graph and the number of points in the graph is close to the volume times the density of the infinite cluster.

\begin{definition}[Well-connected cube]
\label{d.wellconnected}
Let~$r \in \N$ and~$\cu = z+r\cu_0$ be a cube with side length~$r$. We say that~$\cu$ is a \emph{well-connected cube} and write~$\cu \in \mathrm{WC}$ if:
\begin{enumerate}

\item There is a unique connected component of~$\eta \cap \cu$ with diameter larger than~$\frac{1}{100} r$. \emph{\rv{We denote this by~$\eta_*(\cu)$.}}

\item The number of points in the cluster~$\eta_*(\cu)$ satisfies
\begin{equation*}
\frac{99}{100} \theta (\lambda)
\leq \frac{| \eta_*(\cu) |}{|\cu|}
\leq \frac{101}{100} \theta (\lambda) \,,
\end{equation*}
with~$\theta(\la)$ denotes the density of the infinite cluster.

\item $\sup_{x\in \cu} \dist(x, \eta_*(\cu)) \leq \frac{1}{100} r$.

\end{enumerate}
\end{definition}

The following proposition bounds the probability of the event that a large triadic cube is not well-connected.

\begin{proposition}[{\cite[Proposition 3]{Penrose}}]
\label{p.Penrose}
There exists~$c(d,\lambda) > 0$ such that
\begin{equation}
\label{e.Penrose}
\P\bigl [ \cu_n \not\in \mathrm{WC} \bigr ]
\leq 
\exp \bigl ( -c 3^n \bigr )\,, \quad \forall n\in \N\,.
\end{equation}
\end{proposition}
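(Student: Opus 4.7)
The plan is to bound, with probability at least~$1-\exp(-c3^n)$, the failure of each of the three conditions in Definition~\ref{d.wellconnected} separately, and then apply a union bound. The key input is a mesoscopic renormalization in the spirit of Pisztora~\cite{Pisztora} and Antal--Pisztora~\cite{AP}, adapted to the continuum by~\cite{Penrose,PP}. I would fix a mesoscopic scale~$K = K(d,\lambda)$ (independent of~$n$) and partition~$\cu_n$ into~$N^d \simeq (3^n/K)^d$ disjoint blocks~$\{Q_j\}$ of side~$K$. Call a block~$Q_j$ \emph{good} if it is itself well-connected in the sense of Definition~\ref{d.wellconnected} at scale~$K$, with its unique large component touching all faces of~$Q_j$. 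Thus goodness is a strengthened form of the conclusion we seek, imposed at the mesoscopic scale.

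By the Grimmett--Marstrand slab percolation theorem, adapted to continuum percolation by Tanemura~\cite{Tanemura} when~$d \geq 3$, and by planar duality together with subcritical sharpness in~$d = 2$, the probability that a single block~$Q_j$ is good tends to~$1$ as~$K \to \infty$. Moreover, the event $\{Q_j \text{ good}\}$ depends only on~$\eta$ in a slightly enlarged block, so the indicators~$\{\mathbbm{1}[Q_j \text{ good}]\}$ form a finite-range-dependent field; by a standard stochastic-domination lemma, this field can be coupled to dominate an i.i.d. Bernoulli site process on~$\Zd$ with parameter arbitrarily close to~$1$, provided~$K$ is sufficiently large.

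For condition~(1), the essential geometric observation is that if two neighboring blocks~$Q_j, Q_{j'}$ are both good then their crossing components meet at the shared face and hence belong to a single connected component of~$\eta$. Therefore any connected cluster of good blocks corresponds to a connected component of~$\eta \cap \cu_n$ of diameter at least~$K$ times the renormalized cluster's diameter. A standard Peierls argument in the highly supercritical regime of Bernoulli site percolation on~$\Zd$ then gives that, except on an event of probability at most~$\exp(-cN) = \exp(-c' 3^n)$, the bad blocks form only small renormalized components of diameter~$O(1)$ and there is a unique good cluster crossing~$\cu_n$. This yields uniqueness of the macroscopic component and simultaneously handles condition~(3): every~$x \in \cu_n$ lies in a block adjacent to a good block, whose internal covering property forces a cluster point within distance~$O(K)$, which is negligible compared to~$\tfrac{1}{100}\cdot 3^n$.

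Condition~(2) then reduces to a concentration argument for~$|\eta_*(\cu_n)| = \sum_j |\eta_*(\cu_n) \cap Q_j|$. On the event that condition~(1) holds, each summand over a good~$Q_j$ has mean close to~$\theta(\lambda)|Q_j|$ by ergodicity, and the summands are bounded and finite-range-dependent. A block Bernstein inequality yields the required density bound with probability at least~$1 - \exp(-c 3^{nd})$, which is far stronger than needed. The main obstacle is the simultaneous calibration of~$K$: one must make the per-block good probability exceed the Bernoulli site-percolation threshold by enough margin to absorb the range-of-dependence correction in the domination step, while also making the ergodic fluctuation per block smaller than the density tolerance in~(ii). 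The qualitative Grimmett--Marstrand theorem suffices for existence of such a~$K$ but renders the constant~$c(d,\lambda)$ ineffective, which is acceptable here; a quantitative~$c$ would require the recent bounds of~\cite{DKT,CMT}.
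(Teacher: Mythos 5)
The paper does not prove Proposition~\ref{p.Penrose}: it simply cites \cite[Proposition 3]{Penrose} and notes that the differences (a rescaling, and the fact that Penrose conditions on the number of points in the cube) are superficial and easily bridged. Your proposal instead reconstructs the block-renormalization proof that underlies Penrose's result, in the style of \cite{Pisztora,AP}, using the Grimmett--Marstrand/Tanemura slab input and a Liggett--Schonmann--Stacey-type stochastic-domination step, followed by a Peierls argument at the renormalized scale and a Bernstein-type concentration for the cluster density. That route is valid and genuinely different: it makes the renormalization machinery visible rather than importing it through a citation, at the cost of carrying out the usual multi-way calibration of the mesoscopic scale~$K$, which you correctly flag as the sole source of ineffectivity in $c(d,\lambda)$. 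One step needs more care than your sketch provides: defining block goodness as ``unique large component touching all faces'' is not by itself sufficient to conclude that adjacent good blocks share a connected cluster, because the two designated crossing clusters could touch disjoint patches of the common face. You need to additionally impose that every sufficiently long crossing of the block is connected to every other such crossing within a slightly enlarged window — the crossing-connectivity clause in Pisztora's block events, and correspondingly in Penrose's definition of an occupied renormalized site. With that strengthening in place, the remainder of your plan — Peierls to rule out long chains of bad blocks (handling conditions~(1) and~(3) together, at precisely the $\exp(-c3^n)$ scale because a macroscopic second component forces a linear-length bad chain) and block Bernstein for condition~(2) — is the standard and sound argument.
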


The statement of~\cite[Proposition 3]{Penrose} is slightly different from the one given above. First, a superficial difference is that we have scaled the estimate differently. \rv{ Second, the estimate in~\cite[Proposition 3]{Penrose} is for a slightly different point process (obtained by essentially specifying the number of points in a region larger than the cube~$\cu_n$). It is however easy to recover the statement of Proposition~\ref{p.Penrose} from~\cite[Proposition 3]{Penrose}, because the two point processes have the same distribution if we condition on the number of points in the cube (which follows a Poisson distribution in our case and a binomial distribution in the setting of~\cite{Penrose}). }

\smallskip

The estimate~\eqref{e.Penrose} is sharp in the sense that the probability can also be lower bounded by~$\exp \bigl ( -C 3^n \bigr )$ for some other constant~$C(d,\lambda)<\infty$. This is the probability of seeing a long ``finger'' proportional to the size of the cube, that is, a connected blob of length longer than~$r/50$ which has diameter of order one that is surrounded by an empty region (and thus not connected to the main cluster). 
If one slightly modifies the first point in the definition of well-connected cube to require uniqueness only of connected components with at least~$\frac{1}{100}r^d$ many points---rather than with diameters larger than~$\frac{1}{100}r$---then the right side of the estimate in~\eqref{e.Penrose} can be improved to~$\exp \bigl ( -c 3^{n(d-1)} \bigr )$. This \emph{surface-order} type large deviations estimate is the main result of~\cite{PP}, and it is also sharp.

\smallskip

It would presumably be possible to use the latter estimate, rather than the one stated in Proposition~\ref{p.Penrose}. This would, we expect, yield versions of our main results with sharp stochastic integrability. However, it is more convenient to use the notion in Definition~\ref{d.wellconnected} because dealing with the presence of long fingers is technical, requiring a much less simple presentation. 

\smallskip

\subsection{Minimal scales for the Poincar\'e inequality}

Proposition~\ref{p.Penrose} provides a random length scale above which our random graph behaves more or less like Euclidean space~$\Rd$. Below this scale, the geometry of the graph can be very complicated, so we will avoid looking too closely at these small scales.
Our homogenization procedure will begin on scales strictly larger.

\smallskip

In this subsection, we will zoom out a bit more and find another random scale above  which~(i)~the Poincar\'e inequality is valid, and~\rv{(ii)}~affine functions have bounded energy.
The idea in the case of the Poincar\'e inequality is to follow the usual proof in the Euclidean setting, but ignoring all scales below the scale at which the cluster is well-connected. This is the same idea as the one in the proof of~\cite[Lemma 3.2]{AD16}, which considered the bond percolation case. It is formalized by introducing the notion of a \emph{coarsened function}, defined below, and uses a partitioning scheme developed in~\cite[Proposition 2.1]{AD16}.

\smallskip

\begin{lemma}
\label{l.Poincare}
There exists \rv{$C(\lambda,d) > 0$} and a random scale $\mathcal{M} > 0$ satisfying the bound 
\begin{equation} \label{e.minscalesize}
	\mathcal{M} = \O_{\frac{d}{2(d+1)}} (C),
\end{equation}
such that the following holds for every~$3^m > \mathcal{M}$:
\begin{itemize}

\item The cube~$\cu_m$ belongs to~$\mathrm{WC}$ as defined in Definition~\ref{d.wellconnected}.

\item There exists a constant~$C(d,\lambda) > 0,$  such that for any function~$u : \eta_*(\cu_{m+1}) \to \R,$ we have 
\begin{equation} \label{e.main_Poincare}
	\biggl( \avsum_{x \in \cu_m} |u(x) - (u)_{\cu_m}|^2 \Biggr)^{\!\!\sfrac12} \leqslant C3^m \biggl( \sum_{\rv{y,y^\prime \in \eta_*(\cu_m), y \sim y^\prime}} |u(y) - u(y^\prime)|^2 \Biggr)^{\!\!\sfrac12}\,. 
\end{equation}

\item There exists a constant $K(d,\lambda)> 0$ such that
\begin{equation} \label{e.affineenergy}
\frac{1}{|\cu_m|} \sum_{\rv{x,y \in \eta_*(\cu_m), x \sim y}} |x-y|^2 \leqslant K\,.
\end{equation}
\end{itemize}
\end{lemma}

\begin{proof}
First, by applying \cite[Proposition 2.1]{AD16} \rv{(take~$\mathcal{G} = \mathrm{WC}$)}, for any~$m\in\N$ there exists a partition~$\mathcal{P}$ of~$\Zd$ into triadic cubes \rv{of the form~$z+ \cu_n$ with~$z\in 3^n\Zd$} such that every element~$\cu\in\mathcal{P}$ satisfies~$\cu\in \mathrm{WC}$ (see Definition \ref{d.wellconnected}), and 

\begin{equation*}
	\cu,\cu'  \in \mathcal{P} \ \mbox{and} \ \dist(\cu,\cu') \leq 1 
	\implies 
	\frac{\size(\cu)}{\size(\cu')} \in \{ \sfrac13, 1,3\} 
	\,.
\end{equation*}
Moreover, by~\cite[Proposition 2.1, (iii)]{AD16} for any~$x \in \rv{\cu_m},$ if~$\cu_x \in \mathcal{P}$ denotes the unique cube in the partition that contains~$x,$ then there is a deterministic constant~$C > 0$ such that 
\begin{equation} \label{e.sizeofcubes}
\size(\cu_x) = \O_1(C)
\end{equation}
\rv{and, if~$3^m$ is larger than the element of~$\mathcal{P}$ containing the origin, then~$\cu_m$ belongs to~$\mathrm{WC}$. We may thus restrict~$\mathcal{P}$ to be a partition of~$\cu_{m+1}$. }

\smallskip 

\emph{Step 1.}
Fix~$\cu \in \mathcal{P}$. Let~$\tilde{\cu}$ be the union of~$\cu$ and its neighboring cubes in~$\mathcal{P}$. 
For each~$x,x'\in \eta_*(\cu)$, we can find a path joining~$x$ to~$x'$ such that the graph length of the path is at most the number of points of~$\eta_*(\tilde{\cu})$, which is at most 
\begin{equation*}
\frac{101 \theta(\lambda) |\tilde{\cu}| }{100}
\leq 
2\cdot 6^d \theta(\lambda) |\cu| 
\,.
\end{equation*}
Therefore, 
\begin{equation*}
\sup_{x,x' \in \eta_*(\cu) }|u(x) - u(x') |
\leq 
\sum_{\rv{y,y^\prime \in \eta_*(\cu), y \sim y^\prime}}
|u(y) - u(y') |
\leq 
2\cdot 6^d \theta(\lambda) |\cu| 
\sum_{y,y' \eta_*(\tilde\cu), y \sim y'}
|u(y) - u(y') |
\,,
\end{equation*}
so that 
\begin{equation*}
\| u - [u]_{\mathcal{P}} \|_{L^\infty \rv{(\eta_*(\cu))}}
\leq 
2\cdot 6^d \theta(\lambda) |\cu| 
\sum_{\rv{y,y^\prime \in \eta_*(\tilde\cu), y \sim y^\prime}}
|u(y) - u(y') |\,.
\end{equation*}
Here, and in what follows, 
\begin{equation} \label{e.coarsegrain}
	[u]_{\mathcal{P}} (x) := \avsum_{y \in \rv{\eta_*(\cu_x)} } u(y) \quad x \in \cu_{m+1}\,,
\end{equation}
where, as before, $\cu_x \in \mathcal{P}$ denotes the unique cube in the partition $\mathcal{P}$ containing $x. $ 
Squaring this and averaging over the cubes yields 
\begin{align} \label{e.Poincare}
\avsum_{\cu \in \mathcal{P}}
\| u - [u]_{\mathcal{P}} \|_{L^\infty \rv{(\eta_*(\cu))}}^2
\leq 
C
\avsum_{\cu \in \mathcal{P}}
|\cu|^2
\biggl (
\sum_{\rv{y,y^\prime \in \eta_*(\cu), y \sim y^\prime}}
|u(y) - u(y') |
\biggr )^{\!\!2}
\,.
\end{align}

\smallskip

\emph{Step 2. } The gradient~$\nabla [ u ]_{\mathcal{P}}$ is supported on the edges~$(x,y)$ such that~$x$ and~$y$ belong to different elements of~$\mathcal{P}$. The number of such elements is no more than
\begin{equation*}
\frac12 
\sum_{\cu\in \mathcal{P}}
|\cu|^{\frac{d-1}d}
\,.
\end{equation*}
The gradient~$\nabla [ u ]_{\mathcal{P}}(x,y)$ on such an edge~$(x,y)$  is equal to the difference between the mean of~$u$ in~$\eta_*(\cu_x)$ and the mean of~$u$ in~$\eta_*(\cu_y)$, where~$\cu_x$ is the element of~$\mathcal{P}$ containing~$x$, and similarly for~$y$. 
\rv{By property~1 of Definition~\ref{d.wellconnected}, we have that~$\eta_*(\cu_x \cup \cu_y)$ contains~$\eta_*(\cu_x) \cup \eta_*(\cu_y)$, and therefore}
this is bounded by 
\begin{equation*}
\sup_{x',y' \in \eta_*(\cu_x \cup \cu_y) }|u(x') - u(y') |\,,
\end{equation*}
which is estimated, using the same argument as above, by 
\begin{equation*}
\sup_{x',y' \in \eta_*(\cu_x \cup \cu_y) }|u(x') - u(y') |
\leq 
C|\cu_x|
\sum_{\rv{y,y^\prime \in \eta_*(28\cu_x), y \sim y^\prime}}
|u(y) - u(y') |
\,.
\end{equation*}
Notice that~$\nabla [ u ]_{\mathcal{P}}(x,y) = 0$ unless~$x$ and~$y$ are in different, adjacent elements of~$\mathcal{P}$. 
Therefore, for every~$\cu\in\mathcal{P}$,
\begin{equation*}
\sup_{x\in \cu,\, y \sim x}
\bigl| \nabla [ u ]_{\mathcal{P}}(x,y) \bigr |
\leq 
C
|\cu|
\sum_{\rv{y,y^\prime \in \eta_*(28\cu_x), y \sim y^\prime}}
|u(y) - u(y') | 
\,. 
\end{equation*}
Then, setting~$2_* := \frac{2d}{d+2}$, 
and summing over the cubes in the partition yields  
\rv{
	\begin{align}  
\label{e.gradcoarsened}
	\sum_{x\in \cu_m,\, y \sim x}
	\bigl| \nabla [ u ]_{\mathcal{P}}(x,y) \bigr |^{2_*}
	& 
	\leq
	\sum_{\cu\in\mathcal{P}}
	\sum_{x\in \cu,\, y \sim x}
	\bigl| \nabla [ u ]_{\mathcal{P}}(x,y) \bigr |^{2_*}
\notag \\ &
	\leq 
	C\sum_{\cu \in \mathcal{P}}
	|\cu|^{2_*+1 }
	\biggl (
	\sum_{y' \sim y \in \eta_*( 28\cu)}
	|u(y) - u(y') |
	\biggr )^{\!\!2_*}
\notag \\ &
	\leq 
	C
	\biggl (
	\sum_{\cu \in \mathcal{P}}
	|\cu|^{{2d+1}}
	\biggr )^{\frac{2}{d+2}}
	\biggl (
	\sum_{\cu \in \mathcal{P}}
	\sum_{y' \sim y \in \eta_*( 28\cu)}
	|u(y) - u(y') |^2
	\biggr )^{\frac{d}{d+2}}
\notag \\ &
	=
	C|\cu_m|
	\lambda(\cu_m,\mathcal{P})^{\frac{2}{d+2}}
	\biggl ( \frac{1}{|\cu_m|}
	\sum_{y' \sim y \in \eta_*( \cu_m)} \!\!
	|u(y) - u(y') |^2
	\biggr )^{\frac{2_*}{2}}
	\!\!,
\end{align}
}
where the quantity $\la(\cu_m, \mathcal{{P}})$ is defined via 
\begin{equation*}
	\la(\cu_m, \mathcal{P}) := \frac{1}{|\cu_m|} \sum_{\cu \in \mathcal{P}} |\cu|^{2d+1} \,.
\end{equation*}
Simplifying further, we obtain
\begin{equation*}
\biggl (\frac{1}{|\cu_m|}
\sum_{x\in \cu_m,\, y \sim x}
\bigl| \nabla [ u ]_{\mathcal{P}}(x,y) \bigr |^{2_*} \biggr )^{\frac{1}{2_*}}
\leq
C
\la(\cu_m, \mathcal{P})^{\frac1d}
\biggl ( \frac{1}{|\cu_m|}\sum_{\rv{y,y^\prime \in \eta_*(\cu_m), y \sim y^\prime}}
|u(y) - u(y') |^2
\biggr )^{\frac{1}{2}}\,.
\end{equation*}

\smallskip
\emph{Step 3.} 
By \cite[Proposition 2.4]{AD16}, 
there exists a random scale~$\mathcal{M}$ satisfying
\begin{equation*}
\mathcal{M}
= \O_{\frac d{2(d+1)}} (C)
\end{equation*}
such that
\begin{equation*}
3^m \geq \mathcal{M} 
\quad \implies \quad
\la(\cu_m, \mathcal{P}) 
\leq C.
\end{equation*}
Using the Sobolev-Poincar\'e inequality on~$\Zd$,
we therefore obtain, for every~$3^m\geq \mathcal{M}$,
\begin{align} 
\label{e.sobPoincare}
\biggl (
\avsum_{x\in \cu_m}
\bigl| [ u ]_{\mathcal{P}}(x)  - ([ u ]_{\mathcal{P}})_{\cu_m} \bigr |^{2}
\biggr )^{\sfrac1{2}}
&
\leq 
C3^m
\biggl (
\avsum_{x\in \cu_m,\, y \sim x}
\bigl| \nabla [ u ]_{\mathcal{P}}(x,y) \bigr |^{2_*}
\biggr )^{\frac1{2_*}}
\notag \\ &
\leq 
C3^m
\biggl ( \frac{1}{|\cu_m|}\sum_{\rv{y,y^\prime \in \eta_*(\cu_m), y \sim y^\prime}}
|u(y) - u(y') |^2
\biggr )^{\frac{1}{2}}
\,.
\end{align}
Thus, the triangle inequality, combined with \eqref{e.Poincare}, \eqref{e.gradcoarsened} and \eqref{e.sobPoincare} yields, for~$3^m\geq \mathcal{M}$,
\begin{align*}
	\biggl( \avsum_{x \in \cu_m} |u(x) - (u)_{\cu_m}|^2\Biggr)^{\!\!\sfrac12} &= \min_{k \in \R} 	\biggl( \avsum_{x \in \cu_m} |u(x) - k|^2\Biggr)^{\!\!\sfrac12} \\
	&\leqslant   \biggl(  \avsum_{x \in \cu_m}  |u(x) - [u]_{\mathcal{P}}(x)|^2\Biggr)^{\!\!\sfrac12} + \biggl (
	\avsum_{x\in \cu_m}
	\bigl| [ u ]_{\mathcal{P}}(x)  - ([ u ]_{\mathcal{P}})_{\cu_m} \bigr |^{2}
	\biggr )^{\sfrac1{2}}\\
	&\leqslant C 3^m\biggl( \sum_{y' ,y \in \eta_*( \cu_m), y\sim y'}
	|u(y) - u(y') |^2\Biggr)^{\!\!\sfrac12}
	 \,.
\end{align*}
This completes the proof of \eqref{e.main_Poincare}.
\smallskip

\emph{Step 4.} It remains to show that affine functions have uniformly bounded energy on sufficiently large scales. Towards this goal, we compute 
\begin{align*}
	\frac{1}{|\cu_m|} \sum_{x,y \in \eta_*(\cu_m): y \sim x} |x-y|^2 
	&
	= \frac{1}{|\cu_m|} \sum_{\cu \in \mathcal{P}} \sum_{x,y \in \eta_*(28\cu),x\sim y } |x-y|^2\\
	& \leqslant \frac{C}{|\cu_m|} \sum_{\cu \in \mathcal{P}} |\cu|^{2+\frac2d} \leq C \la(\cu_m,\mathcal{P})\,. 
\end{align*}
In view of Step~3, the proof of \eqref{e.affineenergy} is complete. 
\end{proof}

\begin{definition}[Good cube] \label{d.goodness}
	We say that a triadic cube~$\cu\in\mathcal{T}$ is~\emph{good} if it satisfies the conclusion of Lemma~\ref{l.Poincare}. 
\end{definition}

In particular, Lemma~\ref{l.Poincare} says that
\begin{equation}
\label{e.good}
3^m \geq \mathcal{M}
\quad \implies \quad
\cu_m \ \mbox{is good.}
\end{equation}

Applying~\cite[Proposition 2.1, (iii)]{AD16} once again, we obtain a partition~$\mathcal{Q}$ of~$\Zd$ into triadic cubes such that every element~$\cu \in \mathcal{Q}$ satisfies $\cu$ is good, in the sense of Definition~\ref{d.goodness}. In particular, for functions defined on cubes $\eta_*(\cu), \cu \in \mathcal{Q},$ the Poincar\'e inequality from Lemma~\ref{l.Poincare} holds. 

We will define our coarse-grained coefficients with respect to a given ``smallest scale'' which will in practice be a mesoscopic scale, which is actually almost as large as the largest scales we will encounter in an iteration. 
We denote this by~$l \in\N$. 
We introduce the ``thickened boundary'' of a triadic cube~$\cu\in\mathcal{T}$ with~$\size(\cu) > 3^{l}$ by
\begin{equation*}
	\partial_*^{(l)} \cu := 
	\bigcup  \bigl\{ 
	z + \cu_l \,:\, 
	z\in 3^l\Zd\cap \cu_m, \ 
	\dist(z+\cu_l,\partial \cu_m) = 0\bigr\}
	\,.
\end{equation*}
This is basically a neighborhood of the usual Euclidean boundary~$\partial \cu$ of~$\cu$ with thickness~$3^{l} $.
The ``interior'' of a cube~$\cu\in\mathcal{T}$ with~$\size(\cu) > 3^{l}$ is denoted by
\begin{equation*}
	i_l (\cu) := \cu \, \setminus \, \partial_*^{(l)}\cu
	\,.
\end{equation*}
We define~$\mathcal{G}_l$ to be the collection of triadic cubes with size larger than~$3^{l}$ with the property that every triadic subcube of size~$3^{l}$ is good:
\begin{equation*}
\mathcal{G}_l := 
\bigr\{ \cu \in\mathcal{T}\,:\, \ 
\size(\cu)>3^{l} \ \mbox{ and } \
z+\cu_{l} \in \mathrm{WC}\,,  \ 
\forall z\in 3^{l-1} \Zd  \ \mbox{such that} 
\ z+\cu_l \subseteq \cu
\bigr\}  \,.
\end{equation*}
Observe that, by~\eqref{e.Penrose} and a union bound, for every~$l < n$, 
\begin{equation}
\P \bigl[ \cu_n \not\in \mathcal{G}_l \bigr] 
\leq
\sum_{z\in 3^{l-1} \Zd \cap \cu_n} 
\P \bigl[ z+\cu_l  \not \in  \mathrm{WC} \bigr]
\leq
C 3^{d(n-l)} 
\exp( -c 3^l) 
\,.
\label{e.Gl.nope}
\end{equation}
\rv{We will frequently use the simple observation, which is immediate from the definition of~$\mathcal{G}_l$, that any~$\cu \in \mathcal{G}_l$ has the property that
\begin{equation*}
\eta_*(z+\cu_l ) \subseteq \eta_*(\cu) \,, \quad \forall z\in 3^l\Zd\cap \cu \,.
\end{equation*}
Indeed, this follows from the fact that the clusters of neighboring subcubes congruent to~$\cu_l$ must be connected due to the first and third properties of Definition~\ref{d.wellconnected} and the fact that the family~$\{ z + \cu_l \,:\, z\in 3^{l-1} \Zd\,,  z+\cu_l \subseteq \cu\}$ are overlapping. It follows that, for every~$l \leq n \leq \size(\cu)$ and~$z\in \Zd\cap \cu$ with~$z+\cu_n \subseteq\cu$, 
\begin{equation}
\eta_*(z+\cu_n ) \subseteq \eta_*(\cu) \,, \quad \forall z\in 3^l\Zd\cap \cu \,.
\label{e.connected.subcubes}
\end{equation}
By the same argument, we also observe that, for such~$z$ and~$n$, 
\begin{equation}
( \eta_*(\cu) \cap (z+\cu_n) ) \setminus \eta_*(z+\cu_n)  \subseteq \partial_*^{(l)}(z+\cu_n)\,.
\label{e.extrapoints.boundarylayer}
\end{equation}
}

\section{Subadditive quantities} \label{s.subadd}

Given a cube~$\cu\in \mathcal{G}_l$ and a function $u : \eta_* (\cu ) \to \R$, the \emph{coarsening of~$u$} is a function~$[u]_l : \Zd \cap \cu \to \R$ defined by
\begin{equation*}
[u]_l (x) :=
\avsum_{y \in \eta_*(\cu) \cap (z+\cu_l)}
u(y)\,,
\quad
z\in 3^l \Zd \cap \cu, \,
x \in \Zd \cap (z+\cu_l)\,.
\end{equation*}
In other words, the coarsening~$[u]_l$ of~$u$ is a function defined on the subset~$\Zd \cap \cu$ of the lattice~$\Zd$ which is constant on subcubes of the form~$z+\cu_l$, and the constant is equal to the average of~$u$ on that subcube. The point here is that we need to replace~$u$ with a function which approximates~$u$ but lives on Euclidean space (we use~$\Zd$ rather than~$\Rd$ for convenience, but this design choice does not really matter). This allows us to ``forget'' the complicated geometric structure of the random graph and focus on more macroscopic details. A similar but slightly different version of this coarsened function was also used in~\cite{AD16}.

\subsubsection{Coarse-grained coefficients}
For each~$p,q \in\Rd$ and~$\cu\in \mathcal{T}$ with~$\size(\cu) >3^{l}$, we define the following random variables:
\begin{equation*}
\mu^{(l)}(\cu,p) 
:= 
\inf \biggl\{ 
\frac{1}{|\cu|} 
\sum_{x,y\in \eta_*(\cu), x\sim y} 
\frac12
(u(x) - u(y))^2
\,:\, 
u : \eta_*(\cu) \to \R\,, u = \ell_p \ \mbox{in} 
\ \partial_*^{(l)} \cu
\biggr\} 
\indc_{\{  \cu \in \mathcal{G}_l  \} }
\end{equation*}
and 
\begin{align*}
\mu_*^{(l)}(\cu,q)
&:= 
\frac{1}{|\cu|} 
\sup \biggl\{ 
- \!\!\!
\sum_{x,y \in \eta_*(\cu) , x\sim y} 
\frac12
(u(x) - u(y))^2
\\ & +
\Bigl( 1 - \frac{1}{\size(\cu)} \Bigr)^{\!\!-1}\!\!\!\!\!\sum_{x,y\in \Zd\cap \cu, x\sim y}
(q\,{\cdot}\, (x{-}y) ) ( [ u ]_l(x) {-}  [ u ]_l(y))
\,\Big\vert\, 
u : \eta_*(\cu) \to \R
\biggr\} 
\indc_{\{  \cu \in \mathcal{G}_l  \} }
\,.
\end{align*}
We emphasize that these quantities are only being defined for good cubes in the sense of membership in~$\mathcal{G}_l.$ We will first check that these quantities are bounded above. 

\begin{lemma}[Upper bounds for the quantities] 
\label{l.uppbound}
\rv{There exists a deterministic constant~$K > 0$ such that} for any $p \in \Rd,$ and $\cu \in \mathcal{G}_l,$ we have 
\begin{equation} \label{e.upbd.mu}
	\mu^{(l)} (\cu,p) \leqslant K|p|^2\,, 
\end{equation}
and 
\begin{equation} \label{e.upbd.mu*}
	\mu^{(l)}_*(\cu,q) \leqslant CK|q|^2
\end{equation}

\end{lemma}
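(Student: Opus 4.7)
Both inequalities follow from the variational characterizations. For \eqref{e.upbd.mu}, which is an infimum, I would simply plug in the affine function $u(x) = p\cdot x$; it satisfies the boundary condition on $\partial_*^{(l)}\cu$ trivially. The resulting energy reduces by Cauchy--Schwarz to $\frac{|p|^2}{2|\cu|}\sum_{x\in\eta_*(\cu)}\sum_{y\sim x}|x-y|^2$, and this last sum is bounded by $K|\cu|$ by partitioning $\cu$ into its size-$3^l$ subcubes (each good by the hypothesis $\cu \in \mathcal{G}_l$) and applying the affine-energy bound \eqref{e.affineenergy} inside each subcube, up to a harmless accounting of the edges that cross subcube boundaries.

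\textbf{The supremum bound.} Write $E(u) := \sum_{x\in\eta_*(\cu)}\sum_{y\sim x}(u(x)-u(y))^2$ and let $L(u,q)$ denote the linear term in the definition of $\mu_*^{(l)}(\cu,q)$. The key observation is that $[u]_l$ is constant on each size-$3^l$ subcube, so only $\Zd$-edges $(x,y)$ straddling a subcube boundary contribute to $L(u,q)$; on such an edge, $[u]_l(x) - [u]_l(y) = \overline{u}_{\cu'} - \overline{u}_{\cu''}$ for the two adjacent subcubes $\cu',\cu''$ containing $x$ and $y$. The Poincar\'e inequality \eqref{e.main_poincare} applied on $\cu'\cup\cu''$ yields
\[
\bigl|\overline{u}_{\cu'} - \overline{u}_{\cu''}\bigr|^2 \;\leq\; C \cdot 3^{l(2-d)} \cdot E_{\cu'\cup\cu''}(u)\,,
\]
and summing over the $3^{l(d-1)}$ cross-edges per adjacent pair and over all $O(|\cu|/3^{ld})$ pairs gives
\[
\sum_{x\in \Zd\cap\cu}\sum_{y\sim x}\bigl([u]_l(x)-[u]_l(y)\bigr)^2 \;\leq\; C\cdot 3^{l}\cdot E(u)\,.
\]
Since also $\sum_{\text{cross edges}}(q\cdot(x-y))^2 \leq C|q|^2|\cu|/3^l$, Cauchy--Schwarz restricted to the cross edges cancels the factors of $3^l$ and produces $|L(u,q)| \leq C|q|\sqrt{|\cu|\,E(u)}$. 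Young's inequality then absorbs $\tfrac12 E(u)$ against the quadratic term in the supremum, leaving a residual bounded by $C|q|^2|\cu|$; dividing by $|\cu|$ delivers \eqref{e.upbd.mu*}. Note that the prefactor $(1-1/\size(\cu))^{-1}\leq 3/2$ comfortably fits inside the absorption since the coefficient in front of $E(u)/(2|\cu|)$ remains strictly positive.

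\textbf{Main obstacle.} The most delicate step is the two-subcube Poincar\'e estimate $|\overline{u}_{\cu'} - \overline{u}_{\cu''}|^2 \leq C\,3^{l(2-d)}E_{\cu'\cup\cu''}(u)$: the union of two adjacent triadic cubes is not itself triadic, so \eqref{e.main_poincare} does not apply verbatim, and a naive path-based bound of the form $(\overline{u}_{\cu'}-\overline{u}_{\cu''})^2 \leq C\,3^l\,E_{\cu'\cup\cu''}(u)$ is too weak for $d\geq 3$ by a factor of $3^{2l}$. I would resolve this either by (a) enclosing $\cu'\cup\cu''$ in a nearby good cube drawn from the partition $\mathcal{Q}$, whose elements have diameter comparable to $3^l$ by \cite[Proposition 2.1]{AD16}, and then invoking \eqref{e.main_poincare} on that enclosing cube; or (b) combining \eqref{e.main_poincare} applied on each subcube separately with the law of total variance, using the fact that $\eta_*(\cu'\cup\cu'')$ forms a single connected cluster of graph diameter $O(3^l)$ by Definition~\ref{d.wellconnected}. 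Either route is standard once the geometric setup is in place; all remaining steps are clean applications of Cauchy--Schwarz and Young.
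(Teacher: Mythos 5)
Your proof of \eqref{e.upbd.mu} coincides with the paper's: test with $\ell_p$, Cauchy--Schwarz, and the affine-energy bound \eqref{e.affineenergy} applied per-subcube of scale $3^l$ (since $\cu\in\mathcal{G}_l$ makes each of these good).

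For \eqref{e.upbd.mu*}, however, you take a genuinely different route, and the comparison is instructive. The paper never decomposes the linear form edge-by-edge. Instead, it observes that the linear form is a pairing of the constant vector field $(x,y)\mapsto q\cdot(x-y)$ against $\nabla[u]_l$ on $\Zd\cap\cu$, so the whole thing is at most $C\,|q|\,\size(\cu)^{-1}\|\nabla([u]_l-\text{mean})\|_{\underline H^{-1}(\cu)}$; the $\underline H^{-1}\leq\underline L^2$ bound and then the Poincar\'e inequality at the \emph{macroscopic} scale $\size(\cu)$ deliver $|L(u,q)|\leq CK^{\sfrac12}|q|\,E(u)^{\sfrac12}|\cu|^{\sfrac12}$ directly, after which Young's inequality closes as you describe. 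This route has the advantage of never descending below the scale of $\cu$: the only Poincar\'e needed is on $\cu$ itself (or, in the form of Lemma~\ref{l.poincare.2}, on single good subcubes), so there is no need to prove anything on unions of adjacent subcubes.

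That is exactly the delicate point where your argument needs more work, and you have correctly identified it. Your interface bound $|\overline u_{\cu'}-\overline u_{\cu''}|^2\leq C\,3^{l(2-d)}E_{\cu'\cup\cu''}(u)$ is a Poincar\'e inequality on a domain on which Lemma~\ref{l.poincare} does not directly apply. Of your two proposed fixes, (a) does not work as stated: the good-cube partition $\mathcal{Q}$ has elements of \emph{random} size, typically of order one (this is the content of the $\O_1(C)$ bound in \cite[Proposition 2.1(iii)]{AD16}), not of order $3^l$; there is no reason for a $\mathcal{Q}$-cube of diameter comparable to $3^l$ to sit over $\cu'\cup\cu''$, and if $\cu'$ and $\cu''$ do not share a triadic parent, the union has no canonical triadic enclosure of size $3^{l+1}$ either. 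Fix (b) is the right idea, but note that the crude telescoping-along-a-single-path bound loses a dimensional factor as you say; to recover $3^{l(2-d)}$ you would need a genuine flow/routing argument across the interface, using the quantitative connectivity guaranteed by Definition~\ref{d.wellconnected}, which is more work than either Cauchy--Schwarz or Young. The paper's choice to work in $\underline H^{-1}$ at the coarse scale is precisely what makes this extra argument unnecessary.
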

\begin{proof}
Let $\cu \in \mathcal{G}_l$, and $p \in \Rd.$ We test the definition of $\mu^{(l)}(\cu,p)$ with the function $u = \ell_p.$ By Cauchy-Schwarz this implies that 
	\begin{equation*}
		\mu^{(\ell)}(\cu,p)  \leqslant \frac{1}{2|\cu|} \sum_{x,y \in \eta_*(\cu),y\sim x}  (p \cdot (x - y))^2 \leqslant |p|^2 \frac{1	}{2|\cu|}	\sum_{x,y \in \eta_*(\cu),y\sim x}|x-y|^2 \leqslant \frac{K}2|p|^2\,. 
	\end{equation*}
This yields \eqref{e.upbd.mu}. 

\smallskip 
We turn to \eqref{e.upbd.mu*}.  \rv{As~$\size(\cu) = 3^l \geqslant 3,$ the factor~$(1 - \size(\cu)^{-1})^{-1} \in [1,\frac32].$ Therefore, to} bound the dual quantity, we estimate, for every~$u$, and \rv{by Cauchy-Schwarz,}
\begin{align*}
\lefteqn{
\biggl|\Bigl( 1 - \frac{1}{\size(\cu)} \Bigr)^{-1} \rv{\frac1{|\cu|}}\sum_{x ,y\in \Zd \cap \cu, y \sim x}  (q \cdot (x-y))([u]_l(x) - [u]_l(y))\biggr| 
} \qquad
\notag \\ &
\leq
C|q|  \Bigl\|\nabla \bigl([u]_l - \avsum_{z \in \cu} [u]_l(z)\bigr) \Bigr\|_{\underline{H}^{-1}(\cu)}
\notag \\ &
\leq 
C|q|  \Bigl\| [u]_l - \avsum_{z \in \cu} [u]_l(z)\Bigr\|_{\underline{L}^2(\cu)}
\leq 
CK^{\sfrac12}|q|  
\biggl( 
\frac{1}{|\cu|} \!
\sum_{x,y\in \eta_*(\cu), y \sim x} 
\bigl( u(x) - u(y) \bigr)^2
\biggr)^{\!\!\sfrac12}\,.
\end{align*}
We deduce therefore, using the previous display and Young's inequality, that, for every~$u$, 
\begin{align*}
&
-
\frac{1}{|\cu|} 
\sum_{x,y \in \eta_*(\cu),\,y\sim x} 
\frac12 (u(x) - u(y))^2
\notag \\ & \quad
+
\biggl|\bigl( 1 - \size(\cu)^{-1} \bigr)^{\!\!-1} \rv{\frac{1}{|\cu|}}\!\!\sum_{x,y \in \Zd \cap \cu,\,y\sim x}  (q \cdot (x-y))([u]_l(x) - [u]_l(y))\biggr| 
\\ & \qquad
\leq 
-
\frac{1}{|\cu|} 
\sum_{x,y \in \eta_*(\cu),y\sim x} 
\frac12 (u(x) - u(y))^2
+
CK^{\sfrac12} |q| 
\biggl( 
\frac{1}{|\cu|} \! 
\sum_{ x ,y\in \eta_*(\cu),x\sim y}
\bigl( u(x) - u(y) \bigr)^2
\biggr)^{\!\!\sfrac12}
\\ & \qquad
\leq 
C K |q|^2 \,.
\end{align*}
This completes the proof.
\end{proof}

\begin{lemma}[Fenchel inequality]
\label{l.fenchel}
For every~$p,q \in\Rd$ and~$\cu \in \mathcal{G}_l$, 
\begin{equation*}
\mu^{(l)} (\cu,p)
+
\mu^{(l)}_* (\cu,q)
\geq 
p\cdot q 
\,.
\end{equation*}
\end{lemma}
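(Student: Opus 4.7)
The proof is an instance of the Fenchel--Young inequality combined with a discrete integration by parts. Let $u : \eta_*(\cu) \to \R$ denote a minimizer of the variational problem defining $\mu^{(l)}(\cu,p)$, so that $u \equiv \ell_p$ on $\partial_*^{(l)}\cu$ and the Dirichlet energy of $u$ realizes $\mu^{(l)}(\cu,p)$. Choosing $u$ as a test function in the supremum defining $\mu^{(l)}_*(\cu,q)$ yields
\begin{equation*}
\mu^{(l)}(\cu,p) + \mu^{(l)}_*(\cu,q) \geq \frac{1}{|\cu|}\Bigl(1 - \frac{1}{\size(\cu)}\Bigr)^{\!\!-1}\sum_{x \in \Zd \cap \cu}\sum_{y \sim x}\bigl(q \cdot (x-y)\bigr)\bigl([u]_l(x) - [u]_l(y)\bigr),
\end{equation*}
so it suffices to show the right-hand side is bounded below by $p\cdot q$.

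To analyze the bilinear sum, decompose $u = \ell_p + w$, where $w \equiv 0$ on $\partial_*^{(l)}\cu$. By linearity of the coarsening, $[u]_l = [\ell_p]_l + [w]_l$, and $[w]_l$ vanishes on every $3^l$-sub-cube contained in the boundary layer $\partial_*^{(l)}\cu$ (since $[w]_l$ on $z+\cu_l$ is an average of $w$ over $\eta_*(\cu)\cap(z+\cu_l)$, and $w$ vanishes there). In particular $[w]_l \equiv 0$ on the lattice boundary slabs $\{x \in \Zd \cap \cu : x \cdot e_i \in \{0, \size(\cu) - 1\}\}$ for each $i \in \{1,\ldots,d\}$. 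Since the map $x \mapsto q \cdot x$ is discrete-harmonic on $\Zd$, discrete summation by parts expresses the ordered bilinear sum as
\begin{equation*}
\sum_{x \in \Zd \cap \cu}\sum_{y \sim x}(q \cdot (x-y))(g(x) - g(y)) = 2\sum_{i=1}^d q_i \Bigl[\sum_{x \in \Zd \cap \cu,\, x_i = \size(\cu)-1} g(x) - \sum_{x \in \Zd \cap \cu,\, x_i = 0} g(x)\Bigr],
\end{equation*}
a pure telescoping boundary term. Applied to $g = [w]_l$, both boundary sums vanish, so $[w]_l$ contributes nothing, and the bilinear expression reduces to the same sum taken with $g = [\ell_p]_l$.

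Since $[\ell_p]_l$ is piecewise constant, taking on each sub-cube $z+\cu_l$ the value $p$ dotted with the centroid of $\eta_*(\cu)\cap(z+\cu_l)$, one now carries out an explicit telescoping calculation. The well-connectedness of the boundary sub-cubes (Definition~\ref{d.wellconnected}) places each such centroid within $O(3^l)$ of the sub-cube's geometric center, and by symmetry the zeroth-order contributions from opposite boundary slabs combine to produce a bulk factor $(\size(\cu)-1)\,p \cdot q$ per direction, which, after dividing by $|\cu|$ and multiplying by $(1-1/\size(\cu))^{-1}$, yields at least $p\cdot q$. The case $p\cdot q < 0$ is immediate from the nonnegativity $\mu^{(l)}(\cu,p),\mu^{(l)}_*(\cu,q) \geq 0$ (the latter obtained by testing the supremum with $u\equiv 0$).

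The main technical hurdle is bookkeeping for the discrete summation by parts and verifying that the normalization factor $(1-1/\size(\cu))^{-1}$ aligns with the $(\size(\cu)-1)/\size(\cu)$ factor produced by the telescoping boundary sum, while absorbing the small discrepancies between the random centroids $p\cdot \overline{y}_z$ and their idealized values at the centers of the boundary sub-cubes. The definition of $\partial_*^{(l)}\cu$ as a full layer of $3^l$-sub-cubes adjacent to $\partial\cu$ is essential: it ensures that the boundary condition $u = \ell_p$ determines $[u]_l$ on the entire lattice boundary of $\cu$, which is precisely where the summation by parts places all of the mass.
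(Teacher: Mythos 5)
You take the same route as the paper: test the $\mu^{(l)}$-minimizer $u$ in the supremum defining $\mu^{(l)}_*(\cu,q)$, reduce to evaluating the bilinear cross-term, and compute it by a discrete Stokes (summation-by-parts) formula that localizes everything at the lattice boundary of $\cu$. Your decomposition $u = \ell_p + w$, with the observation that $[w]_l$ vanishes identically on the boundary sub-cubes and therefore contributes zero boundary term, is a clean way to see the vanishing of the interior contribution; this parallels the paper's step of subtracting the vector field $(x,y) \mapsto p\cdot(x-y)$.

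The step you flag as a ``main technical hurdle,'' however, is a genuine gap, not mere bookkeeping. After your reduction one is left with the boundary telescoping sum applied to $g = [\ell_p]_l$, whose value on a boundary sub-cube $z+\cu_l$ is $p\cdot\bar y_z$ for $\bar y_z$ the centroid of the Poisson points in $\eta_*(\cu)\cap(z+\cu_l)$ --- not $p\cdot x$, and not $p\cdot z$. If one replaces $\bar y_z$ by the geometric center $z$, opposite boundary slabs are separated by $\size(\cu)-3^l$, not $\size(\cu)-1$, so the telescoping produces $(1-3^l/\size(\cu))\,p\cdot q$ rather than the $(1-1/\size(\cu))\,p\cdot q$ that would be exactly cancelled by the normalization factor in the definition of $\mu^{(l)}_*$. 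On top of this, the random offsets $\bar y_z - z$ are $O(3^l)$ with no symmetry that makes them cancel across opposite faces, and they sum to a normalized boundary contribution of order $3^l/\size(\cu)$ with no fixed sign. Since $\mathcal{G}_l$ allows cubes as small as $\size(\cu)=3^{l+1}$, these effects are $O(1)$, and the assertion that the boundary term ``yields at least $p\cdot q$'' is not established. The paper's own proof makes the same tacit leap (it declares that $[v_l](x,\cu,p)=p\cdot x$ at boundary cubes, which is not literally true for the piecewise-constant coarsening), so you are not behind the reference; but if you want a complete argument you should either show the centroid corrections combine with a definite favorable sign, or restate the lemma with an explicit error term of size $O(3^{l}/\size(\cu))$ and track that through the subsequent uses (Lemmas~\ref{l.lowbound} and~\ref{l.order}), where it is in fact harmless.
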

\begin{proof}
Letting~$\cu \in \mathcal{G}_l,$ we let~$v_l(\cdot,\cu,p)$ denote the minimizer for the variational problem defining~$\mu^{(l)}(\cu,p).$ Testing~$v_l(\cdot,\cu,p)$ as a competitor for the variational problem defining~$\mu^{(l)}_*(\cu,q),$ it follows that 
\begin{align} \label{e.fenchelpf}
	\mu^{(l)}_*(\cu,q)  &\geqslant  - \frac{1}{|\cu|} \sum_{x,y \in \eta_*(\cu)\,y\sim x}  \frac12 (v_l(x,\cu,p) - v_l(y,\cu,p))^2 \\ \notag 
	&\quad + \rv{\Bigl( 1 - \frac{1}{\size(\cu)} \Bigr)^{\!\!-1}} \rv{\frac{1}{|\cu|}}\sum_{x,y \in \Zd \cap \cu,\, y \sim x} (q\cdot(x-y)) ([v_l](x,\cu,p) - [v_l](y,\cu,p)) \\ \notag 
	&= -\mu^{(l)}(\cu,p) +\rv{\Bigl( 1 - \frac{1}{\size(\cu)} \Bigr)^{\!\!-1}} \rv{\frac{1}{|\cu|}}\sum_{x,y \in \Zd \cap \cu,\,y \sim x} (q\cdot(x-y)) ([v_l](x,\cu,p) - [v_l](y,\cu,p)) \,.
\end{align}
It remains to compute the last sum using the discrete Stokes' formula. Toward this goal we introduce the outer normal vector $\mathbf{n}(x) \in \RR^d$ to $\partial \cu$ at $x$ by
\begin{equation*}
	\mathbf{n}(x) := \sum_{i=1}^d \bigl( e_i \indc_{\{x - e_i \in \cu\}} - e_i \indc_{\{x + e_i \in \cu\}}\bigr)\,. 
\end{equation*}
Then by the discrete Stokes' formula,
\begin{align*}
\lefteqn{
\frac{1}{|\cu|}  \sum_{x,y \in \Zd \cap \cu, \, y \sim x} (q\cdot(x-y)) ([v_l](x,\cu,p) - [v_l](y,\cu,p)) 
}
\qquad & 
\\& = \frac{1}{|\cu|}  \sum_{x,y \in \Zd \cap \cu,\, y \sim x} (q\cdot(x-y)) ([v_l](x,\cu,p) - [v_l](y,\cu,p) - p\cdot(x-y)) \\
&\quad \quad \quad + \frac{1}{|\cu|}  \sum_{x,y \in \Zd \cap \cu,\,y \sim x} (q\cdot(x-y)) (p\cdot(x-y))
\\ & 
=
\frac{1}{|\cu|}  \sum_{x ,y\in \Zd \cap \cu,\,y \sim x} (q\cdot(x-y)) (p\cdot(x-y))
=
\Bigl( 1 - \frac{1}{\size(\cu)} \Bigr)p\cdot q 
\,.
\end{align*}
In the second line, we added and subtracted the vector field $(x,y) \mapsto p\cdot(x-y),$ and used that $[v_l](x,\cu,p) = p \cdot x$ at the boundary cubes so that the first term exactly vanishes; the second term has constant summand, and we are left with counting the average number of edges within the cube $\cu,$ excluding boundary edges: an elementary counting argument then leads to the last display.  

\rv{Finally, dividing the last line by~$1 - \size(\cu)^{-1}$ and plugging into~\eqref{e.fenchelpf} yields the desired inequality. }
\end{proof}

\begin{lemma}[Lower bounds for the quantities]
\label{l.lowbound}
There exists a constant $C(d) > 0$ such that for every $p,q \in \RR^d$ and $\cu \in \mathcal{G}_l,$ we have 
\begin{equation*}
	\mu^{(l)}(\cu,p) \geqslant \frac{1	}{C}|p|^2\,,
\end{equation*}
and 
\begin{equation*}
\mu_*^{(l)}(\cu,q) \geqslant \frac{1}{C}|q|^2\,. 	
	\end{equation*}
\end{lemma}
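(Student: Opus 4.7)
The plan is to derive both lower bounds directly from the approximate Fenchel inequality of Lemma~\ref{l.fenchel} combined with the upper bounds of Lemma~\ref{l.uppbound}, via the standard convex-duality trick of optimizing against a colinear test vector. No further hard work should be required; indeed, this is exactly the point of having set up $\mu^{(l)}$ and $\mu_*^{(l)}$ as a dual pair.

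\textbf{Lower bound for} $\mu^{(l)}(\cu,p)$. I would fix $p \in \Rd$ and plug $q = tp$ into Lemma~\ref{l.fenchel}, obtaining
\begin{equation*}
\mu^{(l)}(\cu,p) \;\geq\; t|p|^2 - \mu_*^{(l)}(\cu,tp) \qquad \text{for every } t \in \R.
\end{equation*}
By the upper bound \eqref{e.upbd.mu*} of Lemma~\ref{l.uppbound}, $\mu_*^{(l)}(\cu,tp) \leq CKt^2 |p|^2$, so
\begin{equation*}
\mu^{(l)}(\cu,p) \;\geq\; t|p|^2 - CK t^2 |p|^2.
\end{equation*}
Optimizing over $t \in \R$ (the maximum is attained at $t = \tfrac{1}{2CK}$) yields $\mu^{(l)}(\cu,p) \geq \frac{1}{4CK}|p|^2$, which is the desired bound.

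\textbf{Lower bound for} $\mu_*^{(l)}(\cu,q)$. The argument is symmetric: given $q \in \Rd$, set $p = tq$ in Lemma~\ref{l.fenchel} to get
\begin{equation*}
\mu_*^{(l)}(\cu,q) \;\geq\; t|q|^2 - \mu^{(l)}(\cu,tq) \;\geq\; t|q|^2 - \tfrac{K}{2}t^2|q|^2,
\end{equation*}
where the second inequality uses \eqref{e.upbd.mu} from Lemma~\ref{l.uppbound}. Optimizing over $t$ (maximum at $t = 1/K$) gives $\mu_*^{(l)}(\cu,q) \geq \frac{1}{2K}|q|^2$.

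There is no real obstacle here: both coercivity estimates are extracted for free from the Legendre--Fenchel structure already established. The only place that used percolation input was Lemma~\ref{l.uppbound}, whose proof relied on the affine-energy estimate \eqref{e.affineenergy} (and hence on the ``good cube'' property built into $\mathcal{G}_l$); this is the reason the resulting constant $C$ depends on $d$ and $\lambda$ through $K$.
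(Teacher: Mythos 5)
Your proof is correct and follows essentially the same route as the paper: transpose one quantity across the Fenchel inequality, invoke the quadratic upper bound, and optimize. The only cosmetic difference is that you restrict the dual variable to the line through $p$ (resp.\ $q$) and optimize over the scalar $t$, whereas the paper optimizes over all of $\RR^d$; since the optimum is colinear anyway, these are the same argument.
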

\begin{proof}
This is an immediate consequence of the two preceding lemmas. For instance, to get the lower bound for $\mu^{(l)}(\cu,p),$ we transpose the $\mu_*^{(l)}(\cu,q)$ over to the right-hand side of the Fenchel inequality in Lemma~\ref{l.fenchel}, use the quadratic upper bound from Lemma~\ref{l.uppbound}, and optimize over $q\in\RR^d.$ The other inequality is similar. 
\end{proof}

Now that we have upper and lower bounds, we define the matrix~$\a_l (\cu)$ to be the matrix for the quadratic form~$p \mapsto \mu^{(l)}(\cu,p)$ and the matrix~$\a_{*,l}(\cu)$
to be the matrix whose inverse~$\a_{*,l}^{-1}(\cu)$ is the matrix for~$q \mapsto \mu_*^{(l)}(\cu,q)$. 
This is all for good cubes. 
For the case~$\cu \not\in \mathcal{G}_l$, we just set~$\a_l (\cu)=+\infty$ and~$\a_{*,l}(\cu) := 0$.

Next we rewrite the Fenchel inequality as an ordering of these matrices. 

\begin{lemma}
	\label{l.order}
	For every~$\cu \in \mathcal{G}_l$ we have 
	\begin{equation*}
	\a_l (\cu) \geq \a_{*,l}(\cu)
		\,.
	\end{equation*}
\end{lemma}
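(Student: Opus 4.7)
The plan is to apply the approximate Fenchel inequality of Lemma~\ref{l.fenchel} and choose a clever test vector. Since $\a_l(\cu)$ represents the quadratic form $p \mapsto \mu^{(l)}(\cu,p)$ and $\a_{*,l}(\cu)^{-1}$ represents $q \mapsto \mu_*^{(l)}(\cu,q)$, the inequality
\begin{equation*}
\mu^{(l)}(\cu,p) + \mu_*^{(l)}(\cu,q) \geq p \cdot q
\end{equation*}
rewrites (up to the convention of an overall $\tfrac12$) as
\begin{equation*}
\tfrac12 p \cdot \a_l(\cu) p + \tfrac12 q \cdot \a_{*,l}(\cu)^{-1} q \geq p \cdot q \quad \text{for all } p,q \in \Rd.
\end{equation*}

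The key step is to specialize this to $q = \a_{*,l}(\cu) p$, which is permissible since Lemmas~\ref{l.uppbound} and~\ref{l.lowbound} together guarantee that $\a_{*,l}(\cu)$ is a well-defined positive symmetric matrix (in particular, invertible) whenever $\cu \in \mathcal{G}_l$. With this choice, the right-hand side becomes $p \cdot \a_{*,l}(\cu) p$, while the $q$-term on the left collapses to $\tfrac12 p \cdot \a_{*,l}(\cu) p$. Rearranging then yields $p \cdot \a_l(\cu) p \geq p \cdot \a_{*,l}(\cu) p$ for every $p \in \Rd$, which is exactly the matrix inequality $\a_l(\cu) \geq \a_{*,l}(\cu)$.

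There is no serious obstacle here: the entire content is a one-line Legendre-duality trick, the only point requiring (minor) care being to check the normalizations of the two quadratic forms match what is used when inverting $\a_{*,l}$ and substituting. Once that bookkeeping is confirmed, the proof is essentially immediate from Lemma~\ref{l.fenchel}.
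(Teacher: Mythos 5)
Your argument is correct and is essentially the paper's own proof: the paper also invokes Lemma~\ref{l.fenchel}, writes both quantities in quadratic-form notation, and optimizes over $q$, whereas you simply substitute the optimal value $q = \a_{*,l}(\cu)p$ directly — the same Legendre-duality step in slightly different dress.
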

\begin{proof}
	Plugging in $\mu^{(l)}(\cu,p) = \tfrac12 p \cdot \rv{\a_l(\cu)} p$ and $\mu^{(l)}_*(\cu,q) = \tfrac12 q \cdot \rv{\a_{*,l}^{-1}(\cu)}q$ \rv{in the left-hand side of the Fenchel inequality from Lemma~\ref{l.fenchel},} transposing the latter quadratic form to the right-hand side of the same inequality and optimizing over $q \in \RR^d$ yields
\rv{	\begin{equation*}
\frac12 p \cdot \a_l(\cu)p \geqslant \sup_{q \in \Rd} \Bigl( p \cdot q - \frac12 q \cdot \a_{*,l}^{-1}(\cu) q\Bigr) = \frac12 p \cdot \a_{*,l}(\cu) p\,. 
	\end{equation*}
This completes the proof of the lemma as~$p\in\Rd$ is arbitrary. 
}
\end{proof}

Next we study subadditivity. 

\begin{lemma}[Approximate subadditivity]
\label{l.subbadditivity}
There exists a constant~$C(d,\lambda) > 0$ such that if~$\cu_m \in \mathcal{G}_l$, then  
\begin{equation} \label{e.subadd1}
\a_l(\cu_m)  
-
\avsum_{z\in 3^n\Zd \cap \cu_m} \a_l(z+\cu_n)
\leq 
C3^{-(n-l)}
\,. 
\end{equation}
and
\begin{equation} \label{e.subadd2}
\a_{*,l}^{-1} (\cu_m)  
-
\avsum_{z\in 3^n\Zd \cap \cu_m} \a_{*,l}^{-1}(z+\cu_n)  \leq  C3^{-(n-l)/2}. 
\end{equation}

\end{lemma}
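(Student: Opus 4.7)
Both inequalities follow the Armstrong--Smart subadditivity strategy adapted to the coarse-grained framework of Section~\ref{s.subadd}: for~\eqref{e.subadd1}, I construct a global competitor for $\mu^{(l)}(\cu_m,p)$ by gluing subcube minimizers; for~\eqref{e.subadd2}, I restrict a global near-maximizer of $\mu_*^{(l)}(\cu_m,q)$ and test it against each subcube variational problem. In both cases the errors localize to cells adjacent to subcube boundaries, whose volume is small relative to $|\cu_m|$.

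\textbf{Primal subadditivity.} Fix $p\in\Rd$. For each $z\in 3^n\Zd\cap\cu_m$ let $v_z$ denote the minimizer for $\mu^{(l)}(z+\cu_n,p)$. Define $w:\eta_*(\cu_m)\to\R$ by $w=v_z$ on $\eta_*(z+\cu_n)\cap\eta_*(\cu_m)$, extended by $\ell_p$ at the remaining few points. Since each $v_z=\ell_p$ on $\partial_*^{(l)}(z+\cu_n)$ and $\partial_*^{(l)}\cu_m$ is contained in the union of thickened boundaries of outer subcubes, $w$ is admissible for $\mu^{(l)}(\cu_m,p)$. Its Dirichlet energy decomposes into within-subcube contributions---which sum to $\sum_z|z+\cu_n|\,\mu^{(l)}(z+\cu_n,p)$---and cross-subcube contributions. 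Because $3^l\geq 1$, the two endpoints of any cross-subcube edge lie in the thickened boundaries of their respective subcubes, where $w=\ell_p$, so each such edge contributes at most $|p|^2$. The total number of such edges is bounded by $C|\cu_m|\,3^{-n}$ using the density bound in Definition~\ref{d.wellconnected}(2). Dividing by $|\cu_m|$ and using $n>l$,
\begin{equation*}
\mu^{(l)}(\cu_m,p)\;\leq\;\avsum_z\mu^{(l)}(z+\cu_n,p)+C|p|^2\,3^{-(n-l)},
\end{equation*}
which is~\eqref{e.subadd1}.

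\textbf{Dual subadditivity.} Let $u^*$ be a near-maximizer of $\mu_*^{(l)}(\cu_m,q)$. Absorbing the flux term via Cauchy--Schwarz and Lemma~\ref{l.poincare.2} against the upper bound from Lemma~\ref{l.uppbound} yields the a priori estimate
\begin{equation*}
\frac{1}{|\cu_m|}\sum_{x\in\eta_*(\cu_m)}\sum_{y\sim x}(u^*(x)-u^*(y))^2\leq C|q|^2.
\end{equation*}
Now I use $u^*|_{\eta_*(z+\cu_n)}$ as a competitor in the supremum defining $\mu_*^{(l)}(z+\cu_n,q)$, average over $z$, and compare with $\mu_*^{(l)}(\cu_m,q)$. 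Three error sources arise: (a) the energy difference $E_{\cu_m}(u^*)-\sum_z E_{z+\cu_n}(u^*)$ equals the cross-subcube energy, which is nonnegative and has the favorable sign; (b) the prefactors $(1-3^{-n})^{-1}$ and $(1-3^{-m})^{-1}$ differ by $O(3^{-n})$, producing an error $O(|q|^2\,3^{-n})$; (c) the principal error is the mismatch between the coarse-grained gradient computed relative to $\cu_m$ and its patched analogue relative to the subcubes. The two coarse-grainings disagree only on $3^l$-cubes that straddle a subcube face---a region of volume $\lesssim|\cu_m|\,3^{l-n}$---and bounding the resulting flux discrepancy by Cauchy--Schwarz against the a priori energy gives an error of order $|q|\cdot|q|\cdot(3^{l-n})^{1/2}=|q|^2\,3^{-(n-l)/2}$. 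This is~\eqref{e.subadd2}.

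\textbf{Main obstacle.} The delicate step is the Cauchy--Schwarz estimate underlying (c): the $\tfrac12$-power (as opposed to first power) is the best available because the flux term is linear in the coarse-grained gradient of $u^*$, while the a priori control is only on its $L^2$ norm---this is why the primal and dual exponents in~\eqref{e.subadd1} and~\eqref{e.subadd2} differ by a factor of two. A secondary technical point is verifying that $\eta_*(z+\cu_n)$ is contained in $\eta_*(\cu_m)$ outside an exceptional set of size $O(|\cu_m|\,3^{-n})$, which follows from the uniqueness of the large cluster and the covering property in Definition~\ref{d.wellconnected}(1), (3) applied at the two scales $3^m$ and $3^n$.
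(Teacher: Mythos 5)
Your proposal is correct and follows essentially the same route as the paper: the primal inequality is proved by gluing the subcube minimizers (extended affinely where needed) and attributing the excess to cross-subcube edges whose number is bounded by the normalized boundary-layer volume, and the dual inequality is proved by restricting the global maximizer to each subcube and bounding the resulting flux discrepancy along the interfaces via Cauchy--Schwarz and the scale-$3^l$ Poincar\'e inequality, giving the half-power loss. Your accounting is in fact slightly more explicit than the paper's in two minor respects---you spell out how to define the glued competitor off the subcube clusters, and you isolate the prefactor discrepancy $(1-3^{-n})^{-1}$ vs.\ $(1-3^{-m})^{-1}$ as a separate (subordinate) error---but neither changes the argument's structure or the resulting exponents.
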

\begin{proof}
First we prove \eqref{e.subadd1}. Let $\cu_m, \{z + \cu_n\}_{z \in 3^n \Zd \cap \cu_m} \subseteq \mathcal{G}_l ,$ and $p \in \RR^d.$ It suffices to prove 
\begin{equation*}
	\mu^{(l)}(\cu_m,p) - \avsum_{z \in 3^n \Zd \cap \cu_m} \mu^{(l)}(z + \cu_n,p) \leqslant C3^{-(n-l)}|p|^2\,. 
\end{equation*}
Toward this goal, for each~$ n \in \NN,$ and~$z \in 3^n \ZZ^d \cap \cu_m,$ we denote by $v(\cdot, z  + \cu_n,p)$ the minimizer of the variational problem defining ~$\mu^{(l)}(z \!+ \cu_n,p).$ 

\smallskip 
\noindent \rv{We define the competitor 
\begin{equation*}
w(x,\cu_m,p) := 
\left\{
\begin{aligned}
& v(x,z + \cu_n,p) &&  \quad \mbox{if} \ x \in \eta_*(z + \cu_n) \ \mbox{with} \ z \in 3^n \ZZ^d \cap \cu_m \,, \\
& 
v(x,z + \cu_n,p) = \ell_p 
&&  \quad \mbox{otherwise} \,.
\end{aligned}
\right.
\end{equation*}
}
We note that, by~\eqref{e.extrapoints.boundarylayer},  if~$\cu_m\in \mathcal{G}_l$, then the only points belonging to~$\eta_*(\cu_m)$ but not belonging to any~$\eta_*(z+\cu_n)$ with~$z\in3^n\Zd \cap \cu_m$ are points in the boundary layers between two subcubes, that is,~$\partial^{(l)}_*(z+\cu_n)$. Here the function is already set equal to the affine~$\ell_p$, so we are just adding these ``missing'' points to the domain of~$w$. 
Since $w(x,\cu_m,p) = \ell_p(x)$ along the thickened boundary,~i.e.,~for all $x \in \partial^{(l)}_*(\cu_m),$ it follows that it is admissible for the variational problem defining $\mu^{(l)}(\cu_m,p).$ In the computation below we simply write $w(x)$ suppressing the dependence on $\cu_m$ and $p,$ and along the way, for convenience we introduce $\mathcal{Z}_m := \{ z \in 3^n \Zd : \rv{(z + \cu_n)} \cap \cu_m \neq \emptyset \}.$ Comparing energies we find
\begin{align*}
\mu^{(l)}(\cu_m,p) &\leqslant \frac{1}{|\cu_m|}	\sum_{x,y \in \eta_*(\cu_m),\, y \sim x} \frac12 (w(x) - w(y))^2 \\
&\leqslant \frac{1}{|\cu_m|}\sum_{z \in \mathcal{Z}_m} \biggl[ \sum_{x \in \eta_* (\cu_m) \cap (z + \cu_n), \, \rv{y \in \eta_*(\cu_m)}, \, y \sim x} \frac12 (w(x) - w(y))^2 \biggr]\\
&\leqslant \frac{1}{|\cu_m|}\! \sum_{z \in \mathcal{Z}_m} \!\biggl[ \sum_{x \in \eta_*(\cu_m) \cap \rv{(z {+} \cu_n)}} \sum_{y \sim x: y \in \rv{\eta_*(\cu_m) \cap \eta_*(z {+} \cu_n)}} \frac12 (v(x, z{+} \cu_n,p) - v(y,z {+} \cu_n,p))^2\\
& \phantom{\leqslant \frac{1}{|\cu_m|} \sum_{z \in \mathcal{Z}_m} } +  \sum_{x \in \eta_*(\cu_m) \cap \rv{(z + \cu_n)}} \sum_{y \sim x: y\in \rv{\eta_*(\cu_m) \setminus \eta_*(z + \cu_n)}} \frac12 (p\cdot x - p\cdot y)^2   \biggr]\\
&\leqslant \frac{1}{|\cu_m|} \sum_{z \in \mathcal{Z}_m} \biggl[ |\cu_n| \mu^{(l)}(z+\cu_n,p) + |p|^2 \!\!\!\!\!\!\!\!\!\! \sum_{x \in \eta_*(\cu_m) \cap \eta_*(z + \cu_n)} \sum_{y \sim x: y\in \rv{\eta_*(\cu_m) \setminus \eta_*(z + \cu_n)}} |x-y|^2\biggr]\,.
\end{align*}
The last term is controlled by the normalized total volume of the boundary layers, so that 
\begin{equation*}
	\mu^{(l)}(\cu_m,p) \leqslant \frac{1}{|\cu_m|} \sum_{z \in \mathcal{Z}_m} |\cu_n| \mu^{(l)}(z + \cu_n,p) + C|p|^2 3^{-(n-l)}\,.
\end{equation*}
This concludes the proof of \eqref{e.subadd1} upon re-writing the quantities in terms of their associated matrices.

\smallskip
Proof of \eqref{e.subadd2}. 
As before, fixing $q \in \Rd,$ we must show that 
\begin{equation*}
	\mu_*^{(l)}(\cu_m,q) - \avsum_{z \in 3^n \Zd \cap \cu_m} \mu_*^{(l)}(z + \cu_n,q) \leqslant CK^{\sfrac12} 3^{-(n-l)/2}|q|^2\,. 
\end{equation*}
To show this, we test each of the variational problems $\mu_*^{(l)}(z+\cu_n,q)$ with the restriction to the cube $z + \cu_n$, of the minimizer $u(\cdot,\cu_m,q)$ for $\mu_*^{(l)}(\cu_m,q)$. Energy comparison yields
\begin{align*}
	\mu_*^{(l)}(z+\cu_n,q) &\geqslant - \frac{1}{|\cu_n|} \!\! \sum_{x \in \eta_*(z + \cu_n)} \! \sum_{y \sim x} \frac12(u(x,\cu_m,q) - u(y,\cu_m,q))^2 \\
	& + (1 - 3^{-n})^{-1}\!\!\!\!\!\sum_{x\in \Zd\cap (z+ \cu_n)} \sum_{y\sim x} (q\cdot (x-y)) \bigl([u(\cdot,\cu_m,q) ]_l(x) - [u(\cdot, \cu_m,q)]_l(y)\bigr)\,.
\end{align*}
For brevity we let $\mathcal{Z} := \{ z \in 3^n \Zd : (z + \cu_n) \cap \cu_m \neq \emptyset\}$. Summing this over $z \in\mathcal{Z},$ and dropping nonnegative quadratic remainder terms that occur along boundary layers, we find 
\begin{align*}
&	\mu_*^{(l)}(\cu_m,q) - \avsum_{z\in 3^n \Zd \cap \cu_m} \mu_*^{(l)}(z+\cu_n,q) \\
	&\quad \leqslant -\frac{(1-3^{-n})^{-1}}{|\cu_m|}  \avsum_{z \in \mathcal{Z}}  \sum_{x \in z + \cu_n} \sum_{y \sim x, y \in z^\prime + \cu_n, z^\prime \neq z} q \cdot (x-y) \bigl( [u(\cdot,\cu_m,q)]_l(x) - [u(\cdot,\cu_m,q)]_l(y)\bigr)\,.
\end{align*}
By Cauchy-Schwarz, it follows that 
\begin{align*}
	&\biggl| \frac{(1-3^{-n})^{-1}}{|\cu_m|}  \avsum_{z \in \mathcal{Z}}  \sum_{x \in z + \cu_n} \sum_{y \sim x, y \in z^\prime + \cu_n, z^\prime \neq z} q \cdot (x-y) \bigl( [u(\cdot,\cu_m,q)]_l(x) - [u(\cdot,\cu_m,q)]_l(y)\bigr)\biggr|\\
	&\leqslant \frac{|q|}{|\cu_m|}\avsum_{z \in \mathcal{Z}} \Bigl(  \sum_{x \in z + \cu_n} \sum_{y \sim x, y \in z^\prime + \cu_n, z^\prime \neq z} |x-y|^2\Bigr)^{\sfrac12}\\
	&\qquad \qquad  \qquad \quad \times\Bigl(  \sum_{x \in z + \cu_n} \sum_{y \sim x, y \in z^\prime + \cu_n, z^\prime \neq z} \bigl( [u(\cdot,\cu_m,q)]_l(x) - [u(\cdot,\cu_m,q)]_l(y)\bigr)^2\Bigr)^{\sfrac12}\\
	&\leqslant  CK^{\sfrac12}3^{-(n-l)/2}|q| \frac{3^{l}}{|\cu_l|} \Biggl(\sum_{x\in\eta_*(z+\cu_l)} \sum_{y \sim x} (u(x,\cu_m,q) - u(y,\cu_m,q))^2 \Biggr)^{\!\!\sfrac12}\\
	&\leqslant CK^{\sfrac12} 3^{-(n-l)/2}|q|^2
	\,.
\end{align*}
In the penultimate step we used the triangle inequality and the Poincar\'e inequality at scale~$3^l,$ and a volume bound for the boundary layers, and concluded with the uniform bound on the energies. 
\end{proof}

An immediate consequence of the preceding subadditivity inequalities is the next corollary. To state it, here and in what follows we introduce the following \emph{averaged versions of the coarse-grained coefficients}. 
We define 
\begin{align*}
	\rv{\ahom_l} (\cu) := \E [\a_{\rv{l}}(\cu) \indc_{\{\cu \in \mathcal{G}_l\}}], \quad \mbox{and} \quad
	\rv{\ahom_{*,l}(\cu)} := \E[\a_{*,l}^{-1}(\cu)\indc_{\{\cu \in \mathcal{G}_l\}}]^{-1}\,.
\end{align*}

\begin{corollary}
	\label{c.order}
There exists a constant $C(d,\la) > 0$ such that for every $m > l,$  we have 
	\begin{align*}
		\ahom_l(\cu_{m+1}) \leqslant \ahom_l(\cu_m) {+} C3^{-(m-l)},  \quad \mbox{ and }  \ahom_{*,l}(\cu_m) \leqslant \ahom_{*,l}(\cu_{m+1}) {+} C 3^{-(m-l)/2} \,.
	\end{align*}
	
\end{corollary}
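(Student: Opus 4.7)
Both inequalities follow by taking $\P$-expectations in the deterministic approximate subadditivity of Lemma~\ref{l.subbadditivity}, with careful attention to the indicator $\indc_{\{\cu \in \mathcal{G}_l\}}$ that appears in the definitions of $\ahom_l$ and $\ahom_{*,l}$.

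\emph{First inequality.} I would apply Lemma~\ref{l.subbadditivity} with $n = m$, partitioning $\cu_{m+1}$ into the $3^d$ triadic subcubes $\{z + \cu_m : z \in 3^m \Zd \cap \cu_{m+1}\}$. By definition of $\mathcal{G}_l$ (every triadic sub-cube of size $3^l$ must be good), the event $\{\cu_{m+1} \in \mathcal{G}_l\}$ is contained in $\{z + \cu_m \in \mathcal{G}_l\}$ for each such $z$. On $\{\cu_{m+1} \in \mathcal{G}_l\}$ the hypothesis of Lemma~\ref{l.subbadditivity} is thus satisfied; multiplying \eqref{e.subadd1} by $\indc_{\{\cu_{m+1}\in\mathcal{G}_l\}}$ and taking expectations gives
\begin{equation*}
\ahom_l(\cu_{m+1}) \leq \avsum_{z} \E\bigl[\a_l(z+\cu_m)\,\indc_{\{\cu_{m+1}\in\mathcal{G}_l\}}\bigr] + C\, 3^{-(m-l)}.
\end{equation*}
Since $\a_l \indc_{\{\cdot\in\mathcal{G}_l\}}$ is a nonnegative matrix and the indicator shrinks via the inclusion above, each term of the average is bounded by $\E[\a_l(z+\cu_m)\,\indc_{\{z+\cu_m\in\mathcal{G}_l\}}]$, which by $\Zd$-stationarity of $\P$ equals $\ahom_l(\cu_m)$, yielding the first claim.

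\emph{Second inequality.} The identical template applied to \eqref{e.subadd2} produces
\begin{equation*}
\ahom_{*,l}(\cu_{m+1})^{-1} \leq \ahom_{*,l}(\cu_m)^{-1} + C\, 3^{-(m-l)/2}.
\end{equation*}
To convert this into a bound on $\ahom_{*,l}$ itself, I would conjugate by $\ahom_{*,l}(\cu_m)^{1/2}$ to obtain $\ahom_{*,l}(\cu_m)^{1/2}\ahom_{*,l}(\cu_{m+1})^{-1}\ahom_{*,l}(\cu_m)^{1/2} \leq I + C\, 3^{-(m-l)/2}\,\ahom_{*,l}(\cu_m)$ and note that this matrix shares its (real, symmetric) spectrum with $\ahom_{*,l}(\cu_{m+1})^{-1/2}\ahom_{*,l}(\cu_m)\ahom_{*,l}(\cu_{m+1})^{-1/2}$. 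Using that $\ahom_{*,l}(\cu_m)$ is bounded from above by a constant (a consequence of Lemma~\ref{l.uppbound} together with Proposition~\ref{p.Penrose} ensuring $\P[\cu_m \in \mathcal{G}_l] \to 1$), this largest-eigenvalue comparison yields $\ahom_{*,l}(\cu_m) \leq (1 + C\, 3^{-(m-l)/2})\,\ahom_{*,l}(\cu_{m+1})$, which implies the second inequality after adjusting the constant.

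\emph{Main obstacle.} The only delicate point is the bookkeeping of the indicators: the hypothesis of Lemma~\ref{l.subbadditivity} requires the big cube \emph{and} all subcubes to be in $\mathcal{G}_l$, so one cannot directly integrate the inequality. The containment of good events together with matrix positivity allows one to enlarge the indicator on the right-hand side without loss, after which $\Zd$-stationarity converts the expectation into the coarse-grained coefficient at the smaller scale. The matrix-inversion step in the second part is standard linear algebra, but it relies crucially on the uniform two-sided bounds for the coarse-grained coefficients furnished by Lemmas~\ref{l.uppbound} and~\ref{l.lowbound}.
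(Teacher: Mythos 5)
Your proposal is correct and follows the same broad strategy as the paper (apply Lemma~\ref{l.subbadditivity}, take expectations, invoke stationarity), but it tightens two points that the paper handles more loosely. For the first inequality, your observation that $\{\cu_{m+1}\in\mathcal{G}_l\}\subseteq\{z+\cu_m\in\mathcal{G}_l\}$ for every $z\in 3^m\Zd\cap\cu_{m+1}$ is valid—membership in $\mathcal{G}_l$ is a statement about all $3^l$-scale subcubes, and every such subcube of $z+\cu_m$ is a subcube of $\cu_{m+1}$—so the hypothesis of Lemma~\ref{l.subbadditivity} is automatically satisfied on $\{\cu_{m+1}\in\mathcal{G}_l\}$. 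The paper instead introduces the event $A_m$ (which, by your containment, actually coincides with $\{\cu_{m+1}\in\mathcal{G}_l\}$) and estimates a term $C\,\P[A_m^c]$ using Proposition~\ref{p.Penrose}; your containment makes that term identically zero and removes the Penrose step, which is a genuine simplification. For the second inequality, the paper dismisses it with ``similar,'' but as you correctly note this conceals a matrix-inversion step: what comes directly out of \eqref{e.subadd2} and the indicator/stationarity template is $\ahom_{*,l}^{-1}(\cu_{m+1})\leq\ahom_{*,l}^{-1}(\cu_m)+C3^{-(m-l)/2}$, and converting this into the stated bound on $\ahom_{*,l}$ requires the conjugation and largest-eigenvalue argument you spell out, together with uniform two-sided bounds on the coarse-grained coefficients. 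One small correction: to upper-bound $\ahom_{*,l}(\cu_m)=\E[\a_{*,l}^{-1}(\cu_m)\indc_{\{\cu_m\in\mathcal{G}_l\}}]^{-1}$ you need a \emph{lower} bound on $\a_{*,l}^{-1}$ on the good event, which is Lemma~\ref{l.lowbound} rather than Lemma~\ref{l.uppbound}, combined with a lower bound on $\P[\cu_m\in\mathcal{G}_l]$. Be aware that the latter is not automatically uniform in $m$ for fixed $l$ (a union bound over the $3^{d(m-l)}$ subcubes of scale $3^l$ worsens as $m$ grows); this is a subtlety that the paper's proof shares rather than resolves, and in applications it is handled by taking $l$ proportional to $m$.
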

\begin{proof}
	Both inequalities are proven by taking expectations on the both sides of \eqref{e.subadd1} and \eqref{e.subadd2} respectively. For instance, for the first inequality, we consider \eqref{e.subadd1} with $m+1 $ in place of $m$, and $m$ in place of $n,$ and let $A_{m} $ denote the event 
	\begin{equation*}
		A_{m} := \{ \cu_{m+1} \in \mathcal{G}_l \mbox{ and } z + \cu_m \in \mathcal{G}_l \mbox{ for every } z \in 3^m \Zd \cap \cu_{m+1}\}\,.
	\end{equation*} 
We observe that, by~\eqref{e.subadd1}, 
\begin{align*}
\ahom_l(\cu_{m+1})
&
=
\E[\a_l (\cu_{m+1})\indc_{\{ \cu_{m+1}\in \mathcal{G}_l \} }] 
\notag \\ & 
= 
\E[\a_l (\cu_{m+1})\indc_{A_m}] 
+ 
\E[\a_l (\cu_{m+1})\indc_{\{ \cu_{m+1}\in \mathcal{G}_l \} \setminus A_m }] 
\notag \\ & 
\leqslant 
\E [\a_l (\cu_{m+1}) \indc_{\{ \cu_{m+1}\in \mathcal{G}_l \}} ] 
+C3^{-(m-l)} + C \P [A_m^c]
\notag 
\,.
\end{align*}	
By Proposition~\ref{p.Penrose}, we find $ \P [A_m^c] \leqslant C\exp (-c 3^{\frac{d}{2(d+1)} m} ) \leq C 3^{-100m}$, so that we compute
	\begin{equation*}
\E \bigl[ \a_l(\cu_{m+1}) \indc_{A_m}\bigr] \leqslant \avsum_{z \in 3^m \Zd \cap \cu_{m+1}} \E \bigl[ \a_l(z + \cu_m)\indc_{\{ z+\cu_{m}\in \mathcal{G}_l \}}\bigr] + C 3^{-(m-l)}\,.
	\end{equation*}
By stationarity of $\P$ (see \cite[Proposition 8.3]{LP}), it follows that for every $z \in 3^m \Zd \cap \cu_{m+1}$,  
\begin{equation*}
	\avsum_{z \in 3^m \Zd \cap \cu_{m+1}} \E \bigl[ \a_l(z + \cu_m) \indc_{\{ z+\cu_{m}\in \mathcal{G}_l \}}\bigr] = \E \bigl[ \a_l(\cu_m) \indc_{\{ \cu_{m}\in \mathcal{G}_l \}}\bigr] 
=	\ahom_l(\cu_{m})
	\,,
\end{equation*} 
so that the first inequality follows. The proof of the second inequality is similar.
\end{proof}

\smallskip 

Before proceeding, it is convenient to introduce the following master quantity that tracks both the Dirichlet and Neumann quantities~\rv{$(\cu,p) \mapsto \mu^{(l)} (\square,p)$ and~$(\cu,q)\mapsto \mu^{(l)}_*(\square,q)$, respectively, which are approximately subadditive as proven in Lemma~\ref{l.subbadditivity}}. We adapt the program in \cite{AK22}. Thus, for any $p,q \in \Rd$ we set 
\begin{equation}
	\label{e.Jdef}
	J_l(\cu,p,q) := \Bigl(\mu^{(l)}(\cu,p) + \mu_*^{(l)}(\cu,q) - p \cdot q \Bigr)\indc_{\{  \cu \in \mathcal{G}_l  \} }\,.
\end{equation}
By Lemma~\ref{l.fenchel}, $J_l(\cu,p,q) \geqslant 0$ for all $\cu \in \mathcal{G}_l, p,q \in \Rd.$ The following lemma is a representation formula for the master quantity $J_l.$ At this point it is useful to introduce the set of graph-harmonic functions. For any good cube $\cu \in  \mathcal{G}_l$ we introduce the set of solutions of the graph Laplacian \rv{on $\eta_*(\cu)$}:
\begin{align*}
	\mathcal{A}(\eta_*(\cu)) := \biggl \{ w : \eta_*(\cu) \to \RR \mbox{ such that } \sum_{y \in \eta_*(\cu), \, y \sim x} (w(x) - w(y)) = 0, \ \forall x \in \eta_*(\cu) \biggr \}\,. 
\end{align*}
Informally we refer to elements of $\mathcal{A}$ as graph-harmonic functions.

\begin{lemma}
	\label{l.J.rep.formula}
	For any $\cu \in \mathcal{G}_l,$ and $p,q \in \Rd$, there holds 
	\begin{align*}
		J_l(\cu,p,q) &= \max_{\xi \in \mathcal{A}(\eta_*(\cu))} \frac{1}{|\cu|} \sum_{x,y \in \eta_*(\cu),\,y \sim x} - \frac12(\xi(x) - \xi(y))^2 - (p \cdot (x-y)) (\xi(x) - \xi(y)) \\
		 &\quad \quad + \sum_{x,y \in \Zd \rv{\cap \cu},\, y \sim x} (q\cdot(x-y)) ([\xi]_l (x) - [\xi]_l(y))\,.
			\end{align*}
\end{lemma}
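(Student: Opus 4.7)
The plan is to establish this representation formula by combining convex duality with a discrete summation-by-parts argument. Since $\mu^{(l)}(\cu,p)$ and $\mu_*^{(l)}(\cu,q)$ form a dual pair of quadratic variational problems, the gap in their Fenchel inequality (Lemma~\ref{l.fenchel}) should admit a clean representation as the supremum of a single concave functional, which is precisely the right-hand side. I will denote this right-hand side functional by $L(\xi;p,q)$ and verify both directions of the identity.

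\emph{Upper bound $L(\xi;p,q) \leq J_l(\cu,p,q)$.} For any $\xi \in \A(\eta_*(\cu))$, the function $u := \ell_p + \xi$ is admissible in the sup defining $\mu_*^{(l)}(\cu,q)$. Expanding
\begin{equation*}
\tfrac12(u(x)-u(y))^2 = \tfrac12(\xi(x)-\xi(y))^2 + (p\cdot(x-y))(\xi(x)-\xi(y)) + \tfrac12(p\cdot(x-y))^2
\end{equation*}
and splitting $[u]_l = [\xi]_l + [\ell_p]_l$, the variational inequality for $\mu_*^{(l)}(\cu,q)$ rearranges into $L(\xi;p,q)$, plus a $p$-only piece bounded above by $\mu^{(l)}(\cu,p)$ (obtained by testing $\mu^{(l)}$ with $u = \ell_p$), plus a cross-term. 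The cross-term involves $(1-\sfrac{1}{\size(\cu)})^{-1} \frac{1}{|\cu|}\sum (q\cdot(x-y))([\ell_p]_l(x) - [\ell_p]_l(y))$, which via the discrete Stokes computation carried out at the end of the proof of Lemma~\ref{l.fenchel} evaluates exactly to $p \cdot q$. Rearranging yields $L(\xi;p,q) \leq \mu^{(l)}(\cu,p) + \mu_*^{(l)}(\cu,q) - p \cdot q = J_l(\cu,p,q)$.

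\emph{Lower bound via an explicit optimizer.} Let $v_p$ be the minimizer of $\mu^{(l)}(\cu,p)$ (graph-harmonic in the interior with $v_p = \ell_p$ on $\partial_*^{(l)}\cu$) and let $w_q$ be the maximizer of $\mu_*^{(l)}(\cu,q)$. I would then set $\xi^\star := (v_p - \ell_p) + w_q$, up to an additive constant chosen so that the Euler--Lagrange equations satisfied by $v_p$ (a Poisson-type equation with source driven by $\ell_p$) and by $w_q$ (a Poisson-type equation with source driven by the coarsened $q$-flux) combine to yield $\mathcal{L}\xi^\star = 0$ on all of $\eta_*(\cu)$---so that $\xi^\star \in \A(\eta_*(\cu))$. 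Substituting $\xi^\star$ into $L$, using the boundary data $v_p = \ell_p$ on $\partial_*^{(l)}\cu$, both Euler--Lagrange equations, and invoking the same discrete Stokes identity once more, one recovers $L(\xi^\star;p,q) = \mu^{(l)}(\cu,p) + \mu_*^{(l)}(\cu,q) - p \cdot q$. This establishes the reverse inequality and simultaneously shows that the supremum is attained, justifying the $\max$ in the statement.

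The main technical obstacle is the careful bookkeeping of boundary terms and the interaction between the $(1-\sfrac{1}{\size(\cu)})^{-1}$ normalization in $\mu_*^{(l)}$, the piecewise-constant coarsening $[\ell_p]_l$, and the $-p\cdot q$ built into $J_l$: these pieces must cancel \emph{exactly}, not merely asymptotically as $\size(\cu) \to \infty$, and the reconciliation hinges on the same combinatorial identity used in Lemma~\ref{l.fenchel}. A secondary subtlety is that strict concavity of $L(\cdot;p,q)$ in $\xi$, together with the explicit optimizer $\xi^\star \in \A$ constructed above, sidesteps the need to directly analyze the constrained Euler--Lagrange problem on the class $\A$.
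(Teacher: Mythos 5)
Both directions of your argument have gaps; the underlying strategy (explicit optimizer plus separate upper bound) is a genuinely different route from the paper's one-shot completion-of-square, but as written it does not work.

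\textbf{Lower bound.} The candidate $\xi^\star := (v_p - \ell_p) + w_q$ is not in $\A(\eta_*(\cu))$, and your justification for why it should be is based on a misreading of the Euler--Lagrange structure. The Dirichlet minimizer $v_p$ is graph-harmonic in the interior of $\cu$ --- its equation has \emph{no} interior source; the role of $\ell_p$ is purely a boundary condition. So in the interior $\mathcal{L}(v_p-\ell_p) = -\mathcal{L}\ell_p$, which is nonzero on a random graph and has nothing to do with the coarsened $q$-divergence that sources the Neumann maximizer $w_q$. Adding a constant does not change a Laplacian, so no choice of normalization produces $\mathcal{L}\xi^\star = 0$. (As a sanity check, passing to the first-variation identity~\eqref{e.firstvarmaster}: the maximizer of $J_l$ decomposes linearly as $\xi_l(\cdot,\cu,p,q) = \xi_l(\cdot,\cu,p,0) + \xi_l(\cdot,\cu,0,q)$, and the $p$-component is $v_p - \ell_p$ shifted into the solution class by the corrector construction, not the raw difference $v_p-\ell_p$; the two differ by a non-constant graph-harmonic correction.)

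\textbf{Upper bound.} The test function $\ell_p + \xi$ produces the $p$-only piece $-\frac{1}{2|\cu|}\sum(p\cdot(x-y))^2$, and you claim this is controlled because testing $\mu^{(l)}(\cu,p)$ with $\ell_p$ bounds it \emph{above} by $\mu^{(l)}(\cu,p)$. The inequality runs the other way: testing gives $\mu^{(l)}(\cu,p)\leqslant \frac{1}{2|\cu|}\sum(p\cdot(x-y))^2$, so the $p$-only piece is $\leqslant -\mu^{(l)}(\cu,p)$. Chaining through, your rearrangement yields $L(\xi;p,q) \leqslant \mu_*^{(l)}(\cu,q) + \frac{1}{2|\cu|}\sum(p\cdot(x-y))^2 - p\cdot q$, which is \emph{weaker} than the target $L(\xi;p,q)\leqslant J_l(\cu,p,q)$ by an order-one amount ($\frac{1}{2|\cu|}\sum(p\cdot)^2 - \mu^{(l)}(\cu,p)$ is strictly positive in general). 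The fix is to test $\mu_*^{(l)}(\cu,q)$ with $v_p + \xi$ rather than $\ell_p + \xi$: then the pure-$p$ term is exactly $-\mu^{(l)}(\cu,p)$; the cross term $\frac{1}{|\cu|}\sum (v_p(x)-v_p(y))(\xi(x)-\xi(y))$ collapses to $\frac{1}{|\cu|}\sum (p\cdot(x-y))(\xi(x)-\xi(y))$ by integrating by parts against $v_p-\ell_p$ (which vanishes on $\partial_*^{(l)}\cu$ while $\xi$ is harmonic in the interior); and the Stokes identity for $[v_p]_l$ (this time valid, since $v_p = \ell_p$ on the boundary layer) returns $p\cdot q$. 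This is precisely the substitution the paper makes, after which completing the square in $v_p - w$ handles both bounds at once and exhibits the quadratic defect $J_l(\cu,p,q) - L(w;p,q) = \frac{1}{2|\cu|}\sum_{x\sim y}\bigl((v_p-w)(x)-(v_p-w)(y)\bigr)^2\geqslant 0$.
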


\begin{proof}
	We fix $\cu \in \mathcal{G}_l,$ and let $v_l(\cdot,\cu,p)$ denote the minimizer to $\mu^{(l)}(\cu,p).$  Then thanks to the condition ~$v_l(\cdot,\cu,p) = \ell_p(\cdot)$ on the thickened boundary $\partial_l^*\cu,$ we obtain as \rv{in the proof of Lemma~\ref{l.fenchel}} that 
	\begin{equation*}
		 p \cdot q\bigl(1 - \size(\cu)^{-1} \bigr) = \frac{1}{|\cu|} \sum_{x,y \in \Zd \cap \cu ,\, y\sim x}  (q \cdot(x-y))\bigl([v_l(\cdot,\cu,p)]_l(x) - [v_l(\cdot,\cu,p)]_l(y)\bigr)\,.
	\end{equation*}
Fix $w \in \mathcal{A}(\eta_*(\cu))$.  For brevity, we will write $[v_l]_l(x)$ in place of $[v_l(\cdot,\cu,p)]_l(x)$, and so on. We compute:
\begin{align*}
	&\rv{\mu^{(l)}}(\cu,p) - p \cdot q \\
	&+ \frac{1}{|\cu|} \Biggl[\sum_{x,y \in \eta_*(\cu),\,y \sim x} \! -\frac12 (w(x) \!- w(y))^2 \\
	&\qquad \qquad \qquad + \!\!(1 - \size(\cu)^{-1})^{-1}\!\!\!\!\!\!\sum_{x,y \in \Zd \cap \cu,\,y \sim x} (q \cdot (x \! -y))([w]_l(x)  \!- [w]_l(y))\Biggr] 
 \\
	& = \frac{1}{|\cu|} \sum_{x,y \in \eta_*(\cu),\, y \sim x} \frac12 \bigl( (v(x) - v(y))^2 - (w(x) - w(y))^2 \bigr ) \\
	&\quad \quad \quad + \frac{(1 - \size(\cu)^{-1})^{-1}}{|\cu|}\sum_{x,y \in \Zd \cap \cu,\,y \sim x} (q\cdot(x-y)) \Bigl( [w]_l(x) - [v_l]_l(x) - ([w](y) - [v_l]_l(y))\Bigr)\\
	& = \frac{1}{|\cu|} \sum_{x,y\in \eta_*(\cu),\, y \sim x} -\frac12 \bigl( v(x) - w(x) - \bigl( v(y) - w(y)\bigr) \bigr)^2 \\
	&\quad \quad \quad -  \bigl( v(x) - v(y)\bigr)\bigl( w(x) - w(y) - (v(x) - v(y)) \bigr)\\
		&\quad \quad \quad + \frac{(1 - \size(\cu)^{-1})^{-1}}{|\cu|}\sum_{x,y \in \Zd \cap \cu, \, y \sim x} (q\cdot(x-y)) \Bigl( [w]_l(x) - [v_l]_l(x) - ([w](y) - [v_l]_l(y))\Bigr)\, .
\end{align*}
As $w-v$ is graph harmonic, and $v(x) = p \cdot x$ along $\partial_*^{(l)}\cu,$ it follows that the middle term of the last line evaluates to 
\begin{equation*}
	- (p \cdot(x-y)) \bigl( w(x) - w(y) - (v(x) - v(y))\bigr)\,. 
\end{equation*}
Using this, continuing from the preceding display we obtain
\begin{align*}
	&\mu^{(l)}(\cu,p) - p \cdot q \\
	& + \frac{1}{|\cu|} \Biggl[\sum_{x,y \in \eta_*(\cu),\, y \sim x} \! {-} \frac12 (w(x) {-} w(y))^2 \\
	&\qquad \qquad \qquad \!{+} (1 {-} \size(\cu)^{-1})^{-1}\sum_{x,y \in \Zd \cap \cu,\, y \sim x} (q \cdot (x \! -y))([w]_l(x)  \!- [w]_l(y))\Biggr] \\
	& = \frac{1}{|\cu|} \sum_{x,y\in \eta_*(\cu),\, y \sim x} - \frac12 \biggl( v(x) - w(x) - \bigl( v(y) - w(y)\bigr) \biggr)^2 \\ &\quad \quad  - (p\cdot(x-y))\bigl( w(x) - w(y) - (v(x) - v(y) \bigr)\\
	&\quad \quad  + \frac{(1 - \size(\cu)^{-1})^{-1}}{|\cu|}\sum_{x,y \in \Zd \cap \cu,\, y \sim x} (q\cdot(x-y)) \Bigl( [w]_l(x) - [v_l]_l(x) - ([w](y) - [v_l]_l(y))\Bigr)\, . 
\end{align*}
Taking the supremum of the left and right hand sides over $w \in \mathcal{A}(\eta_*(\cu))$ concludes the proof of the lemma. 
\end{proof}

Our next lemma concerns the first and second variations and their consequences.

\begin{lemma}
\label{l.variations}
For any $\cu \in \mathcal{G}_l,$ and any $w \in \rv{\mathcal{A}(\eta_*(\cu))},$ we have the following identities: 
\begin{enumerate}
	\item First variation formula for the Neumann quantity:
	\begin{align} \notag
		& (1 - \size(\cu)^{-1})^{-1}\sum_{x,y \in \Zd \cap \cu,\,y \sim x} (q \cdot(x-y)) \bigl( [w]_l (x) \!- [w]_l(y)\bigr) \\ &\quad\quad \label{e.firstvarmu*}  =  \sum_{x,y \in \eta_*(\cu),\, y \sim x} (u_l(x,\cu,q) 
		\! - u_l(y,\cu,q)) (w(x) \! - w(y))\,.
	\end{align}
\item Second variation formula for the Neumann quantity: 
\begin{align}
	\label{e.secondvarmu*}
	\frac{1}{|\cu|} \sum_{x,y \in \eta_*(\cu),\, y \sim x} (w(x) \! - w(y))^2 - \bigl( u_l(x,\cu,q)\! + \! w(x) \!- \! u_l(y,\cu,q)\! - w(y)\bigr)^2\! =\! 2\mu_*^{(l)}(\cu,q)\,.
\end{align}

\item First variation formula \rv{for~$J_l$}: The quantity $J_l(\cu,p,q)$ admits a unique maximizer up to an additive constant. Denoting by $\xi_l(\cdot,\cu,p,q)$ a \rv{maximizer} (say with mean-zero over $\cu$ normalization), 
\begin{align}
	\label{e.firstvarmaster}
&(1 - \size(\cu)^{-1})^{-1}\sum_{x,y\in \Zd\cap \cu,\, y \sim x} (q \cdot (x-y))([w]_l(x) - [w]_l(y))   \\ \notag &\quad \quad  - \sum_{x,y \in \eta_*(\cu),\,y \sim x} (p\cdot(x-y)) \bigl( w(x) - w(y)\bigr)  \\ \notag &\quad \quad = \sum_{x,y \in \eta_*(\cu),\,y \sim x} (\xi_l(x,\cu,p,q) - \xi_l(y,\cu,p,q))(w(x) - w(y))\,.
\end{align}
\item Second variation of $J_l(\cu,p,q)$ at $\xi_l(\cdot,\cu,p,q)$ and quadratic response\rv{:}
\begin{equation} \label{e.secvarJ}
	\begin{aligned}
	&J_l(\cu,p,q) - \frac{1}{|\cu|} \Biggl[  \sum_{x,y \in \eta_*(\cu),\,y \sim x} - \frac12 (w(x) - w(y))^2 - (p \cdot (x-y))(w(x) - w(y)) \\ &\quad \quad \quad +(1 - \size(\cu)^{-1})^{-1}\sum_{x,y \in \Zd\cap \cu,\, y \sim x} (q \cdot (x-y))([w]_l(x) - [w]_l(y)) \Biggr] \\
	&= \frac{1}{|\cu|}\sum_{x,y \in \eta_*(\cu),\,y \sim x} \frac12 \bigl(w(x)- \xi_l(x,\cu,p,q) - (w(y) - \xi_l(y,\cu,p,q))\bigr)^2\,.
	\end{aligned}
\end{equation}
\item Characterization of $J_l(\cu,p,q)$ in terms of the energy of $\xi$\rv{:} 
\begin{align}
	\label{e.secondvarmaster}
J_l(\cu,p,q) = \frac{1}{|\cu|} \sum_{x,y \in \eta_*(\cu),\,y \sim x} 
\frac12(\xi_l(x,\cu,p,q) - \xi_l(y,\cu,p,q))^2\,. 
\end{align}

\end{enumerate}

\end{lemma}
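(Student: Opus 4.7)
All five identities are standard consequences of the first- and second-order variational calculus applied to the quadratic-concave functionals whose maxima define~$\mu_*^{(l)}(\cu,q)$ and~$J_l(\cu,p,q)$. The overall strategy is:~(a) write the Euler--Lagrange equation of each variational problem, which gives the first-variation identities~\eqref{e.firstvarmu*} and~\eqref{e.firstvarmaster};~(b)~expand the functional at the maximizer plus a perturbation, observe that the first-order term vanishes by (a), and read off the quadratic response, which yields~\eqref{e.secondvarmu*} and~\eqref{e.secvarJ};~(c)~combine (a) and (b) at the maximizer itself to obtain the energy representation~\eqref{e.secondvarmaster}.

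For items (1) and (2), recall that~$\mu_*^{(l)}(\cu,q)$ is defined as an unconstrained supremum over all $u:\eta_*(\cu)\to\R$; thus the maximizer~$u_l(\cdot,\cu,q)$ satisfies the Euler--Lagrange equation obtained by perturbing $u \mapsto u + tw$ for \emph{any} test function~$w$ and differentiating at~$t=0$. This computation, which uses linearity of the coarsening map $w\mapsto [w]_l$, produces exactly~\eqref{e.firstvarmu*}. For~\eqref{e.secondvarmu*}, expand the quadratic $-\tfrac12(\Delta u_l+\Delta w)^2 + \tfrac12(\Delta w)^2$ edge-by-edge as a sum of $\tfrac12(\Delta u_l)^2 + (\Delta u_l)(\Delta w)$, apply the first-variation identity to replace $\sum(\Delta u_l)(\Delta w)$ by the Neumann flux term, and identify the resulting expression with~$2\mu_*^{(l)}(\cu,q)$ using the fact that at the maximizer $u_l$ we have $\mu_*^{(l)}(\cu,q) = \tfrac{1}{2|\cu|}\sum (\Delta u_l)^2$ (which itself follows by testing~\eqref{e.firstvarmu*} with $w=u_l$).

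For items (3)--(5), I would work from the representation formula in Lemma~\ref{l.J.rep.formula}, which realizes $J_l(\cu,p,q)$ as the supremum of a concave quadratic functional $F_{p,q}$ over $\xi\in\mathcal{A}(\eta_*(\cu))$. Since $\mathcal{A}(\eta_*(\cu))$ is a linear space, admissible perturbations $w\in\mathcal{A}$ yield the Euler--Lagrange equation~\eqref{e.firstvarmaster} at the maximizer $\xi_l(\cdot,\cu,p,q)$. Strict concavity of $\xi\mapsto -\tfrac12\sum(\Delta \xi)^2$ modulo additive constants gives uniqueness of $\xi_l$ up to a constant. Writing $F_{p,q}(w) = F_{p,q}(\xi_l + (w-\xi_l))$ and expanding, the first-order term vanishes by~\eqref{e.firstvarmaster} applied to the perturbation $w-\xi_l\in\mathcal{A}$, and the second-order term is precisely $-\tfrac{1}{2|\cu|}\sum(\Delta(w-\xi_l))^2$; rearranging yields~\eqref{e.secvarJ}. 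Finally,~\eqref{e.secondvarmaster} follows by evaluating $J_l=F_{p,q}(\xi_l)$ and using~\eqref{e.firstvarmaster} with $w=\xi_l$ to eliminate the flux and linear-$p$ terms, collapsing the expression to~$\tfrac{1}{2|\cu|}\sum(\Delta\xi_l)^2$.

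The bookkeeping—in particular tracking the $(1-\size(\cu)^{-1})^{-1}$ prefactor on the Neumann term and verifying that perturbations indeed remain in~$\mathcal{A}$—is the main source of potential error, but there is no conceptual obstruction: everything is a manipulation of explicit quadratic sums on a finite graph. The only structural point requiring care is the uniqueness statement for the maximizer of $J_l$; for this I would verify that strict concavity holds on the quotient by constants by testing the Hessian on any nonconstant $w\in\mathcal{A}$ and invoking the coercivity delivered by Lemma~\ref{l.lowbound} together with the Poincar\'e-type bound of Lemma~\ref{l.poincare.2}, which guarantees that $\sum(\Delta w)^2=0$ forces $w$ to be constant on each well-connected subcube and hence globally.
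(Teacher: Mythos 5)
Your proposal follows essentially the same route as the paper: write the Euler--Lagrange equations of the unconstrained Neumann supremum and of the $J_l$ representation in Lemma~\ref{l.J.rep.formula} to get the first-variation identities, expand quadratically about the maximizers to read off the quadratic responses, and combine these for~\eqref{e.secondvarmaster}. The paper phrases the first variation as a pair of one-sided inequalities from $h\to 0^{\pm}$, which is equivalent to your differentiation at $t=0$, and likewise leaves items~(3)--(5) as ``proceeding as before via Lemma~\ref{l.J.rep.formula}.''

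Two remarks on item~(2). First, you say you would derive $\mu_*^{(l)}(\cu,q)=\frac{1}{2|\cu|}\sum(u_l(x)-u_l(y))^2$ by ``testing~\eqref{e.firstvarmu*} with $w=u_l$''; as literally stated that identity is only asserted for $w\in\mathcal{A}(\cu)$, and the maximizer $u_l(\cdot,\cu,q)$ is not graph-harmonic. The step is nonetheless valid, because the supremum defining $\mu_*^{(l)}$ runs over \emph{all} $u:\eta_*(\cu)\to\R$, so the underlying Euler--Lagrange equation holds for every perturbation, not only those in $\mathcal{A}(\cu)$; you should invoke that rather than~\eqref{e.firstvarmu*} as stated. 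Second, if you push your expansion for item~(2) to the end, the cross term $\sum(u_l(x)-u_l(y))(w(x)-w(y))$ does not disappear: by~\eqref{e.firstvarmu*} it becomes $(1-\size(\cu)^{-1})^{-1}\sum(q\cdot(x-y))([w]_l(x)-[w]_l(y))$, a $w$-dependent term that the displayed identity~\eqref{e.secondvarmu*} does not show, and the signs on the left side also appear reversed relative to the genuine quadratic response $\mu_*^{(l)}(\cu,q)-F(u_l+w)=\frac{1}{2|\cu|}\sum(w(x)-w(y))^2$. This reflects an inaccuracy in the paper's stated identity and its one-line ``rearranging this'' proof rather than a defect of your method (and~\eqref{e.secondvarmu*} is not used downstream), but be aware that your computation, done carefully, will not balance against the printed~\eqref{e.secondvarmu*}.
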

\begin{proof}
	As in the previous proof, we let $u_l(\cdot, \cu,q)$ denote the minimizer to the variational problem for the Neumann quantity $\mu_*^{(l)}(\cu,q).$ Fixing $w \in \mathcal{A}(\eta_*(\cu)),$ for any $h \in (0,1],$ comparing the $\mu_*^{(l)}$ energy of $u_l(\cdot, \cu,q) + h w$ with that of $u_l(\cdot,\cu,q)$, and using that coarsening is a linear operation, we find 
	\begin{align*}
	&	- h \sum_{x,y \in \eta_*(\cu),\,y \sim x} (u_l(x,\cu,q) - u_l(y,\cu,q)) (w(x) - w(y))
                - \frac{h^2}{2} \sum_{x,y \in \eta_*(\cu),\,y \sim x} (w(x) - w(y))^2 \\
        &\quad \quad + h(1 - \size(\cu)^{-1})^{-1} \sum_{x,y \in \Zd \cap \cu,\,y \sim x} (q \cdot(x-y)) \bigl( [w]_l (x) - [w]_l(y)\bigr) \leqslant 0\,.         
		\end{align*}
	Dividing through by $h$ and sending $h \to 0^+$ implies 
	\begin{align*}
	& (1 - \size(\cu)^{-1})^{-1}\sum_{x,y \in \Zd \cap \cu,\,y \sim x} (q \cdot(x-y)) \bigl( [w]_l (x) - [w]_l(y)\bigr) \\ &\quad \quad 	  \leqslant  \sum_{x,y \in \eta_*(\cu),\,y \sim x} (u_l(x,\cu,q) - u_l(y,\cu,q)) (w(x) - w(y))\,.
	\end{align*}
Taking instead $h \in [-1,0),$ dividing through by $h$ in the preceding display reverses the sign; sending $h \to 0^-$ yields the opposite inequality:
\begin{align*}
	& (1 - \size(\cu)^{-1})^{-1}\sum_{x,y \in \Zd \cap \cu,\,y \sim x} (q \cdot(x-y)) \bigl( [w]_l (x) - [w]_l(y)\bigr) \\ &\quad \quad 	  \geqslant  \sum_{x,y \in \eta_*(\cu),\,y \sim x} (u_l(x,\cu,q) - u_l(y,\cu,q)) (w(x) - w(y))\,.
\end{align*}
These two displays yield \eqref{e.firstvarmu*}. 

\smallskip
Proof of \eqref{e.secondvarmu*}. 

Expanding squares in the energy of $ u_l (\cdot, \cu,q) + w$ and using the first variation yields that for any $w \in \mathcal{A}(\cu),$ 
\begin{align*}
	&- \sum_{x,y \in \eta_*(\cu),\,y \sim x} (u_l(x,\cu,q) - u_l(y,\cu,q) + w(x) - w(y))^2 \\ 
	&\quad 
	=2|\cu| \mu_*^{(l)}(\cu,q) - \sum_{x,y \in \eta_*(\cu),\,y \sim x} (w (x) - w(y))^2\,.
\end{align*}
Rearranging this results in \eqref{e.secondvarmu*}. 

\smallskip

Proof of \eqref{e.firstvarmaster} and \eqref{e.secondvarmaster}. 

We use the characterization in Lemma~\ref{l.J.rep.formula} for the variational quantity $J_l,$ and proceed as above. The details are omitted. The identity \eqref{e.secondvarmaster} is proved by inserting the first variation condition \eqref{e.firstvarmaster} into Lemma~\ref{l.J.rep.formula}. 
\end{proof}

\section{Iteration} \label{s.iteration}
Our goal in this section is to iterate up the scales and eventually prove an algebraic rate of homogenization. Our first technical ingredient is a Caccioppoli inequality; it will be used in the subsequent convex duality estimate. 

\begin{lemma}[Caccioppoli's estimate]
	\label{l.caccioppoli}
	Let $\cu \in \mathcal{G}_l$, and $u \in \mathcal{A}(\eta_*(\cu)).$ Then there exists a constant $C(d,\lambda) > 0$ such that for any $ k \in \RR,$ 
	\begin{align}
		\label{e.cacciop}
	\frac1{|\cu|}	\sum_{x,y \in \eta_*(\cu),\,y \sim x} (u(x) - u(y))^2 &\leqslant C\size(\cu)^{-2}   \avsum_{z \in 3^l \Zd \cap \cu} ([u]_l(z) -k)^2\,.
	\end{align}
\end{lemma}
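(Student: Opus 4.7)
The plan is to run the classical Caccioppoli testing argument, adapted to the graph setting by building cutoff functions that respect the $l$--scale coarsening. Concretely, I would first construct a cutoff $\eta : \eta_*(\cu) \to [0,1]$ that is \emph{constant on each $l$--subcube} $z + \cu_l$ (so that $\eta(x) = \eta_l(z)$ for all $x \in \eta_*(\cu) \cap (z+\cu_l)$), equals $1$ on the subcubes contained in $\cu'$, vanishes on the subcubes meeting $\partial_*^{(l)}\cu$, and interpolates linearly in between. The Lipschitz estimate $|\eta(x) - \eta(y)| \leq C \size(\cu)^{-1}$ for $x \sim y$ is then inherited from the lattice cutoff, and the jumps of $\eta$ are supported only on edges whose endpoints lie in distinct $l$--subcubes.

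Since $u \in \mathcal{A}(\eta_*(\cu))$ satisfies $\sum_{y \sim x}(u(x)-u(y)) = 0$ at every vertex, discrete summation-by-parts yields the identity
\begin{equation*}
	\sum_{x \in \eta_*(\cu)} \sum_{y \sim x} (u(x) - u(y)) \bigl(\eta(x)^2 (u(x) - k) - \eta(y)^2 (u(y) - k)\bigr) = 0
\end{equation*}
for any constant $k \in \R$. Using the algebraic identity
\begin{equation*}
	\eta(x)^2 a - \eta(y)^2 b = \tfrac12 (\eta(x)^2 + \eta(y)^2)(a-b) + \tfrac12 (\eta(x)-\eta(y))(\eta(x)+\eta(y))(a+b)
\end{equation*}
with $a = u(x)-k$, $b = u(y)-k$, rearranging, and applying Cauchy--Schwarz and Young's inequality, I would absorb the dominant gradient term into the left-hand side to obtain the intermediate estimate
\begin{equation*}
	\sum_{x \in \eta_*(\cu)} \sum_{y \sim x} \eta(x)^2 (u(x) - u(y))^2 \leq C \sum_{x \in \eta_*(\cu)} \sum_{y\sim x} (\eta(x)-\eta(y))^2 (u(x) - k)^2 \leq \frac{C}{\size(\cu)^2}\sum_{x \in S} (u(x) - k)^2,
\end{equation*}
where $S$ denotes the union of $l$--subcubes meeting the support of the discrete gradient of $\eta$.

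The final step is to pass from $(u(x)-k)^2$ to $([u]_l(z)-k)^2$. For each $x$ lying in the subcube $z+\cu_l$, I would write $u(x) - k = (u(x) - [u]_l(z)) + ([u]_l(z) - k)$ and expand the square; the Poincar\'e estimate of Lemma~\ref{l.poincare.2} applied on each good $l$--subcube controls $\sum_x (u(x) - [u]_l(z))^2$ by $C 3^{2l} |\cu_l|^{-1}$ times the graph Dirichlet energy on that subcube. Multiplying by the prefactor $\size(\cu)^{-2}$ yields a term of order $3^{2l} \size(\cu)^{-2}$ times the total Dirichlet energy on $\cu$, which, provided $\cu'$ sits strictly inside $\cu$ so that $3^l \ll \size(\cu)$, can be absorbed into the left-hand side (or, if necessary, removed by a standard hole-filling/iteration over a nested family of cubes between $\cu'$ and $\cu$). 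What remains is exactly a constant multiple of $\size(\cu)^{-2} \avsum_{z \in 3^l\Zd \cap \cu}([u]_l(z)-k)^2$, yielding \eqref{e.cacciop} after dividing by $|\cu'|$.

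The main obstacle is the absorption bookkeeping in the third step: the Caccioppoli inequality, in its natural form, controls $u$ by itself, and bridging to the coarse-grained object $[u]_l$ forces one to produce and then reabsorb an additional graph Dirichlet energy on the $l$--subcubes in the annulus where $\nabla \eta \neq 0$. This requires a clean separation of the scales $3^l$ and $\size(\cu)$ and careful use of the well-connectedness condition inherent in $\cu \in \mathcal{G}_l$ so that every subcube satisfies the Poincar\'e inequality with the same deterministic constant.
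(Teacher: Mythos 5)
Your overall strategy---test the equation against a cutoff times $(u-k)$, run the Caccioppoli absorption, and then convert from $u$ to $[u]_l$ via the subcube Poincar\'e inequality, absorbing the residual Dirichlet-energy term---is the right plan and matches the paper's (which tests with $\rho^2(u-k)$, applies the standard Caccioppoli bound $\avsum_{\cu'}|\nabla u|^2 \leq C\size(\cu)^{-2}\avsum_{\cu}(u-k)^2$, converts $\avsum (u-k)^2$ to $\avsum([u]_l - k)^2$ by the Poincar\'e inequality of Lemma~\ref{l.poincare}, and ``buckles'').

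There is, however, a concrete error in your cutoff construction. A function $\eta$ that is \emph{constant on each $l$--subcube} and interpolates from $1$ on the inner subcubes to $0$ on the outer ones over an annulus of width $\sim \size(\cu)$ necessarily jumps by $\approx 3^l\size(\cu)^{-1}$ across adjacent subcubes (it traverses $\sim \size(\cu)/3^l$ subcubes with increments summing to $1$). Hence for edges $x \sim y$ with $x$ and $y$ in adjacent subcubes one has $|\eta(x) - \eta(y)| \sim 3^l\size(\cu)^{-1}$, not the claimed $\size(\cu)^{-1}$. Tracking this through the argument, the intermediate factor is $3^{2l}\size(\cu)^{-2}$ rather than $\size(\cu)^{-2}$, and even after accounting for the fact that the support of $|\nabla_{\mathrm{discrete}}\eta|$ sees only $\sim 3^{l(d-1)}$ points per subcube (versus $\sim 3^{ld}$ in the full subcube), the final bound still carries an extra factor of $3^{l}$ beyond what \eqref{e.cacciop} asserts. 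This spurious factor is not acceptable: in the applications (e.g.\ Lemma~\ref{l.convex_duality_lemma}) the mesoscale $l$ is not small compared to $\size(\cu)$, so the loss is genuine. The fix is simple and is what the paper does: use a smooth cutoff $\rho \in C^1_c(\cu)$ (not tied to the subcube partition) with $|\nabla\rho| \leq C\size(\cu)^{-1}$; since $x \sim y$ forces $|x-y|\leq 1$, the discrete jump $|\rho(x)-\rho(y)|\leq C\size(\cu)^{-1}$ over \emph{every} edge, with no $3^l$ penalty. The rest of your plan (subcube Poincar\'e plus absorption/hole-filling) then closes the argument exactly as you intend.
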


\begin{proof}
	Let~$x \in \eta_*(\cu)$ so that we have that 
	\begin{equation*}
		\sum_{y \sim x; y \in  \eta_*(\cu)} (u(y) - u(x)) = 0\,.
	\end{equation*}
First we prove~\eqref{e.cacciop} with~$k = 0.$ If~$\rho \in C^1_c(\cu) $ is a nonnegative cutoff function, then as in the usual continuum proof of Caccioppoli's inequality we will test the above equation with~$v := \rho^2 u.$ With this choice, let us note that for any~$x,y \in \eta_*(\cu)$ with $x \sim y,$ one has
\begin{align*}
	v(x) - v(y) &= \rho^2(x) u(x) - \rho^2(y) u(y) \\
	&= \frac{\rho^2(x) + \rho^2(y)}{2}(u(x) - u(y)) + \frac{\rho^2(x) - \rho^2(y)}{2} (u(x) + u(y))\,.
\end{align*}
Multiplying the equation for $u$ by $v(x)  = \rho^2 (x) u(x),$ and summing over $x \in \eta_*(\cu)$ yields
\begin{align*}
	0 &=\!\!\! \sum_{x \in \eta_*(\cu)} \sum_{y \in \eta_*(\cu): y \sim x} (u(x) - u(y))(v(x) - v(y))\\
	&=\!\! \!\sum_{x \in \eta_*(\cu)} \sum_{y \in \eta_*(\cu): y \sim x} \frac{\rho^2(x) {+} \rho^2(y)}{2} (u(x) {-} u(y))^2 + \frac{\rho^2(x) {-} \rho^2(y)}{2}(u(x) {-} u(y))(u(x) {+} u(y))\\
	&\geqslant \!\!\! \sum_{x \in \eta_*(\cu)} \sum_{y \in \eta_*(\cu): y \sim x} \!\!\! \frac{\rho^2(x) {+} \rho^2(y)}{4}(u(x) {-} u(y))^2 - \!\!\!\!\sum_{x \in \eta_*(\cu)} \sum_{y \in \eta_*(\cu): y \sim x} \!\!(\rho(x) {-} \rho(y))^2  (u(x) {+} u(y))^2 ,
\end{align*}
where we used Cauchy-Schwarz, and that $\rho^2 (x) + \rho^2(y) \leqslant (\rho(x) + \rho(y))^2$ since $\rho \geqslant 0.$

\smallskip

Let~$\cu^\prime \subsetneq \cu^{\prime\prime}\subsetneq \cu$ be cubes with the same center as~$\cu$, but such that~$\size(\cu^\prime) = \frac{\size(\cu)}{9},$ and~$\size(\cu'') = \frac{\size(\cu)}{3}$, so that~$\mathrm{dist}(\cu', \partial \cu'' ) = \size(\cu'').$ Suppose now that~$\rho \geqslant 0$ is chosen to satisfy~$\indc_{\cu'} \leqslant \rho \leqslant \indc_{\cu''},$ and~$|\nabla \rho| \leqslant C \size(\cu)^{-1}\indc_{\cu'' \setminus \cu'}.$ 
Inserting this choice in the prior display and rearranging, we obtain 
\begin{align*}
	\sum_{x \in \eta_*(\cu^\prime)} \sum_{y \sim x, y \in \eta_*(\cu')} (u(x) - u(y))^2 & \leqslant \sum_{x \in \eta_*(\cu)} \sum_{y \in \eta_*(\cu): y \sim x} (\rho(x) {-} \rho(y))^2  (u(x) {+} u(y))^2\\
	&\leqslant \frac{C}{\size(\cu)^2} \sum_{x \in \eta_*(\cu)} \sum_{y \in \eta_*(\cu''\setminus \cu'): y \sim x} (u(x) + u(y))^2\\
	&\leqslant \frac{C}{\size(\cu)^2} \sum_{x \in \eta_*(\cu)} u(x)^2\,,
\end{align*}
where in the last line, we used that since $\cu \in \mathcal{G}_l,$ \rv{by the first item in Lemma 2.3,} the number of neighbors of any point is~$\leqslant C(d,\lambda).$ 
\noindent 
Finally, using the triangle inequality along with Lemma~\ref{l.Poincare},
\begin{align*}
	\avsum_{x \in \eta_*(\cu)} u(x)^2\ &\rv{\leqslant  C}\sum_{z \in 3^l\Zd \cap \cu} \frac{|\cu_l|}{|\cu|}\avsum_{x \in \eta_*(z + \cu_l)} u(x)^2 \\
	&\leqslant C\sum_{z \in 3^l \Zd \cap \cu} \frac{|\cu_l|}{|\cu|}\avsum_{x \in \eta_*(z + \cu_l)} (u(x) - [u]_l(x))^2 + ([u]_l(x))^2\\
	&\leqslant C\frac{|\cu_l|}{|\cu|} \sum_{z \in 3^l \Zd \cap \cu}3^{2l} \avsum_{x \in \eta_*(z + \cu_l)} \sum_{y \in \cu, y \sim x} (u(x) - u(y))^2 + \avsum_{z \in 3^l \Zd \cap \cu} ([u]_l(z))^2 \\
	&\leqslant C3^{2l} \avsum_{x \in \eta_*(\cu)} \sum_{y \in \eta_*(\cu), y \sim x} (u(x) - u(y))^2  +  \avsum_{z \in 3^l \Zd \cap \cu} ([u]_l(z))^2\,.
\end{align*}
\smallskip
Combining the previous two displays, and buckling completes the proof of~\eqref{e.cacciop} with the choice~$k = 0.$ To prove the inequality for~$k \neq 0$ we simply test the equation by~$v_k := \rho^2 (u-k)$ instead of~$v$ above, and repeat the subsequent estimates. 
\end{proof}

Our next main task is to prove the convex duality estimate. In its statement, we use the following quantity $\tau_m,$ which denotes the  \emph{expected additivity defect at scale $3^m$ }:
\begin{align}
	\label{e.taudef}
	\tau_m \!:=\!\Bigl( |\ahom_l(\cu_m) \indc_{\{ \cu_{m}\in \mathcal{G}_l \}} {-} \ahom_l(\cu_{m-1})\indc_{\{ \cu_{m-1}\in \mathcal{G}_l \}}| {+} |\ahom_{*,l}(\cu_m) \indc_{\{ \cu_{m}\in \mathcal{G}_l \}}{-} \ahom_{*,l}(\cu_{m-1})\indc_{\{ \cu_{m-1}\in \mathcal{G}_l \}}|\Bigr)\,,
\end{align}
where we recall the coarse-grained matrices \rv{$\ahom_l, \ahom_{*,l}$} introduced prior to Corollary \ref{c.order}. 

An elementary estimate about the quantity $\tau_m$ that we will make use of, is as follows. Rewriting the definition of $J_l(\cu,p,q),$ for any $\cu \in \mathcal{G}_l,$ and $p,q \in \Rd,$ we find 
\rv{\begin{align*}
	J_l(\cu,p,q) &= \frac12 p \cdot \a_l(\cu)p + \frac12 q \cdot \a_{*,l}^{-1} (\cu)q - p \cdot q\\
	&=
	\frac12 p\cdot (\a_l(\cu)  - \a_{*,l}(\cu))p +  \frac12 (q - \a_{*,l}(\cu)p) \cdot \a_{*,l}^{-1}(\cu)(q - \a_{*,l}(\cu)p)\,.
\end{align*}
}
Consequently, for the standard basis ~$\{e_1,\ldots, e_d\}$ of $\Rd,$ making the choices of~\rv{$p=e_i,q= \a_{*,l}(\cu),$} and for $m \in \N$ such that $\cu_m, \cu_{m+1} \in \mathcal{G}_l,$ taking expectations, and summing, we find 
\begin{align*}
	&\sum_{i=1}^d e_i \cdot\bigl(\ahom_l(\cu_m) - \ahom_{*,l} (\cu_m)  - \bigl(\ahom_l(\cu_{m+1}) - \ahom_{*,l} (\cu_{m+1})\bigr) \bigr) e_i\\
	& \  =\sum_{i=1}^d \E \bigl[ J_l(\cu_m,e_i, \ahom_{*,l}(\cu_m)e_i) \indc_{\{\cu_m \in \mathcal{G}_l\}}\bigr] - \E \bigl[ J_l(\cu_{m+1} ,e_i, \ahom_{*,l}(\cu_{m+1} e_i))\indc_{\{\cu_{m+1} \in \mathcal{G}_l\}}\bigr]  \,,
\end{align*}
Now, by Corollary \ref{c.order}, we know that when $\cu_m, \cu_{m+1} \in \mathcal{G}_l,$ then
\begin{equation*}
	\rv{\ahom_l}(\cu_m) - \rv{\ahom_l}(\cu_{m+1}) + C3^{-(m-l)}  \geqslant 0, \quad \mbox{ and } \ahom_{*,l} (\cu_{m+1}) - \ahom_{*,l}(\cu_m) + C3^{-(m-l)/2} \geqslant 0\,. 
\end{equation*}
As the trace of matrices supplies a norm that is equivalent to the usual Frobenius norm on positive symmetric matrices, it follows  from the previous display that 
	\begin{align}
	\label{e.control tau by J}
	\tau_{m+1} &\leqslant C \sum_{i=1}^d \E \bigl[ J_l(\cu_m,e_i, \ahom_{*,l}(\cu_m)e_i) \indc_{\{\cu_m \in \mathcal{G}_l\}}\bigr] - \E \bigl[ J_l(\cu_{m+1} ,e_i, \ahom_{*,l}(\cu_{m+1} e_i))\indc_{\{\cu_{m+1} \in \mathcal{G}_l\}}\bigr] \notag  \\ 
&\qquad + C 3^{-(m-l)/2}  \notag\\
&\leqslant  C \sum_{i=1}^d \E \bigl[ J_l(\cu_m,e_i, \ahom_{*,l}(\cu_m)e_i) \indc_{\{\cu_m \in \mathcal{G}_l\}}\bigr] - \E \bigl[ J_l(\cu_{m+1} ,e_i, \ahom_{*,l}(\cu_{m+1} e_i)) \indc_{\{\cu_{m+1} \in \mathcal{G}_l\}}\bigr] \,.   
	\end{align}
We next give an upper bound estimate on the expectation of~$J$ in terms of~$\tau_n$ and the variance of~$\a_{*,l}^{-1}$ at different scales.

\begin{lemma}[Convex duality estimate] \label{l.convex_duality_lemma}
	There exists a constant~$C(d,\la) < \infty$ such that, for every~$m \in \N$ with~$m > l$, with~$\cu_m \in \mathcal{G}_l$ and~$\{z + \cu_n : z \in 3^n \Zd \cap \cu_m\} \subseteq \mathcal{G}_l,$ and for every~$p,q \in B_1$, there holds 
		\begin{align}
		 \label{e.convexduality}
				J_l(\cu_{m-1},p,q) &\leqslant C\sum_{n=l}^m 3^{-(m-n)} \avsum_{z \in 3^n \Zd \cap \cu_m} (J_l(z + \cu_n,p,q) - J_l(\cu_m,p,q))\notag \\
				& \qquad+ C \sum_{n=l}^m 3^{-(m-n)} \avsum_{z \in 3^n \Zd \cap \cu_m} \bigl| \a_{*,l}^{-1} (z + \cu_n)q - p|^2 + C 3^{-2(m-l)} \,.
		\end{align}
In particular, for every~$e \in B_1,$ 
\begin{align}
	\label{e.bound J by tau and var}
	\E \bigl[ J_l(\cu_m,e, \ahom_{*,l} (\cu_m)e)\indc_{\{\cu_m \in \mathcal{G}_l\}}\bigr] &\leqslant C \sum_{n=l}^{m-1} 3^{-(m-n)} (\tau_n + \var\bigl[ \a_{*,l}^{-1}(\cu_n)\indc_{\{\cu_n \in \mathcal{G}_l\}} \bigr]) \notag \\ &\qquad \quad  +  C 3^{-(m-l)/2}  \rv{+ C (m-l) 3^{(m-l)d}\exp (-c 3^l )}  \,.
\end{align}

\end{lemma}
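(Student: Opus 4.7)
The plan is to prove \eqref{e.convexduality} by combining Caccioppoli's inequality with a telescopic multiscale decomposition of the maximizer $\xi_l(\cdot, \cu_m, p, q)$, and then to deduce \eqref{e.bound J by tau and var} by taking expectations and using stationarity together with~\eqref{e.control tau by J}.

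For \eqref{e.convexduality}, the starting point is the variational characterization in Lemma~\ref{l.J.rep.formula}, together with the quadratic response identity~\eqref{e.secvarJ} applied at scale $\cu_{m-1}$. Testing against the restriction of $\xi_l(\cdot, \cu_m, p, q)$ to $\eta_*(\cu_{m-1})$ (which lies in $\mathcal{A}(\eta_*(\cu_{m-1}))$ since it is graph-harmonic on all of $\cu_m$), and invoking the first variation formula~\eqref{e.firstvarmaster} to cancel cross terms, expresses $J_l(\cu_{m-1}, p, q)$ in terms of the Dirichlet energy of $\xi_l(\cdot, \cu_m) - \ell_p$ localized on $\cu_{m-1}$, plus a flux-mismatch contribution $|\a_{*,l}^{-1}(\cu_m) q - p|^2$. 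Next, Caccioppoli's inequality (Lemma~\ref{l.caccioppoli}) applied to $\xi_l(\cdot, \cu_m) - \ell_p$ on the nested cube $\cu_{m-1}\subset \cu_m$ bounds this Dirichlet energy by $C 3^{-2m}$ times the squared $\underline{L}^2$-norm of the coarsened function $[\xi_l(\cdot, \cu_m) - \ell_p]_l - k$ on $\cu_m$, for a suitable constant $k$.

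That squared norm is then decomposed telescopically across triadic scales $n = l, l{+}1, \ldots, m-1$ by writing $[\xi_l(\cdot, \cu_m)]_l$ as a sum of differences between averages on successive triadic ancestors. At each scale $n$, the squared fluctuation of these averages across subcubes of size $3^n$ is controlled by Poincar\'e's inequality from Lemma~\ref{l.poincare} and then, via the quadratic response identity applied at scale $z + \cu_n$ with test function $\xi_l(\cdot, \cu_m)|_{z + \cu_n}$, by the scale-$n$ defect $J_l(z + \cu_n, p, q) - J_l(\cu_m, p, q)$ together with a local flux mismatch $|\a_{*,l}^{-1}(z + \cu_n) q - p|^2$. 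The geometric weight $3^{-(m-n)}$ in \eqref{e.convexduality} emerges by combining the Caccioppoli factor $3^{-2m}$ with the Poincar\'e factor $3^{2n}$ and the volume factor $3^{nd}/3^{md}$, while the base case at scale $l$ produces the residual $C 3^{-2(m-l)}$.

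To derive \eqref{e.bound J by tau and var}, I specialize \eqref{e.convexduality} to $p = e$ and $q = \ahom_{*,l}(\cu_m) e$ and take expectations. Stationarity of $\P$ collapses $\avsum_z \E[J_l(z + \cu_n)\indc]$ to $\E[J_l(\cu_n)\indc]$, so the first sum becomes $\sum_n 3^{-(m-n)}(\E[J_l(\cu_n)] - \E[J_l(\cu_m)])$; summation by parts together with \eqref{e.control tau by J} converts these differences into $\sum_n 3^{-(m-n)} \tau_n$, plus the residual $C 3^{-(m-l)/2}$ from Lemma~\ref{l.subbadditivity}. For the second sum, I add and subtract $\ahom_{*,l}^{-1}(\cu_n) \ahom_{*,l}(\cu_m) e$ inside the square: the first piece yields $\var[\a_{*,l}^{-1}(\cu_n)\indc]$ by stationarity, while the bias $|\ahom_{*,l}^{-1}(\cu_n) \ahom_{*,l}(\cu_m) e - e|^2$ is bounded by $C\sum_{k = n}^{m-1} \tau_k$ via Corollary~\ref{c.order} and absorbed into the main $\tau_n$ sum after swapping orders of summation. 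The probability that $\cu_m$ or any of its $3^{(m-l)d}$ scale-$l$ subcubes fails to lie in $\mathcal{G}_l$ is controlled by a union bound using the tail estimate~\eqref{e.minscalesize} on $\mathcal{M}$, producing the final term $C 3^{(m-l)d}\exp(-c 3^{ld/(2(d+1))})$. The main obstacle will be the multiscale decomposition in the second paragraph: ensuring that the Caccioppoli scaling, Poincar\'e scaling, and volume counts combine to give exactly the weight $3^{-(m-n)}$, while simultaneously tracking how the flux-mismatch term $|\a_{*,l}^{-1}(z + \cu_n) q - p|^2$ carries the dependence on the Neumann data $q$ through each intermediate scale.
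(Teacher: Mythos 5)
Your overall strategy—quadratic response, Caccioppoli, multiscale Poincar\'e, first-variation decomposition of gradient averages into flux mismatch plus defect, and then expectations with stationarity and a union bound—is exactly the route the paper follows, and your treatment of~\eqref{e.bound J by tau and var} (specializing to $q=\ahom_{*,l}(\cu_m)e$, splitting off the variance and bias via Corollary~\ref{c.order}, and controlling the good-cube event) matches the paper's Step~3 closely. However, there is a genuine error in your localization step: you apply Caccioppoli's inequality (Lemma~\ref{l.caccioppoli}) to $\xi_l(\cdot,\cu_m)-\ell_p$, but this function is not graph-harmonic, since $\ell_p$ is not graph-harmonic at unit scale; Lemma~\ref{l.caccioppoli} requires membership in $\mathcal{A}(\eta_*(\cu))$, and it is $\xi_l(\cdot,\cu_m)$ itself (the maximizer from Lemma~\ref{l.J.rep.formula}) that is graph-harmonic and to which the paper applies the inequality. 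You may be conflating $\xi_l(\cdot,\cu_m,p,q)$ with the finite-volume corrector $\phi_{m,e}=\xi_l(\cdot,\cu_m,-e,0)-\ell_e$, which is the one defined as a difference involving an affine function. The coarsened $[\xi_l(\cdot,\cu_m)]_l$ already has small average gradient $\a_{*,l}^{-1}q-p$ by the first-variation identity~\eqref{e.firstvarmaster}, so the multiscale Poincar\'e step gives the right thing when Caccioppoli is applied to $\xi_l$ directly; subtracting $\ell_p$ at the Caccioppoli stage is neither needed nor admissible.

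Two smaller remarks. First, your justification that the restriction of $\xi_l(\cdot,\cu_m)$ to $\eta_*(\cu_{m-1})$ lies in $\mathcal{A}(\eta_*(\cu_{m-1}))$ ``since it is graph-harmonic on all of $\cu_m$'' is not correct as stated: the equation defining $\mathcal{A}(\eta_*(\cu_{m-1}))$ sums only over neighbors inside $\eta_*(\cu_{m-1})$, so points near $\partial\cu_{m-1}$ generally fail harmonicity after restriction—although the paper is also terse on this point, so this is more a caveat than a new gap. Second, placing an explicit flux-mismatch term $|\a_{*,l}^{-1}(\cu_m)q-p|^2$ already at the scale-$m$ comparison is not how the paper proceeds: in the paper, the flux mismatch appears only scale-by-scale through the multiscale Poincar\'e decomposition in Step~2, via $(\nabla[\xi_l(\cdot,z+\cu_n)]_l)_{z+\cu_n}=\a_{*,l}^{-1}(z+\cu_n)q-p$, which is cleaner and is also what the statement of~\eqref{e.convexduality} actually records. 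Fixing the Caccioppoli application and removing the premature flux-mismatch term would bring your argument into line with the paper.
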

\begin{proof} Recall that for any~$m \in \NN, m > l$ with $\cu_m \in \mathcal{G}_l,$ the function~$\xi_l(\cdot,\cu_m ,p,q)$ denotes the unique maximizer for the quantity~$J_l(\cu_m, p, q)$ normalized to have zero mean on~$\cu_m.$ We fix~$p, q \in B_1;$ and suppress the dependence of various quantities on these vectors for the time being in an attempt to alleviate notation. Thus we write~$\xi_l(\cdot,\cu_m)$ in place of~$\xi_l(\cdot,\cu_m,p,q)$, and so on, for much of this proof. In steps 1 and 2 below, we assume that~$\cu_m \in \mathcal{G}_l$ and~$\{z + \cu_n: z \in 3^n \Zd \cap \cu_m, n = l, \ldots, m\}\subseteq \mathcal{G}_l.$ This in particular implies that~$\xi_l(\cdot, z + \cu_n, p , q)$ is well-defined for each $z \in 3^n \Zd \cap \cu_m.$ 	
	\smallskip
	
	\emph{Step 1.} We claim that, for every~$m \in \NN, m > l,$  if~$\cu_m \in \mathcal{G}_l,$ and~$p,q \in B_1,$  we have the estimate
	\begin{equation} 
	\label{e.step1}
J_l (\cu_{m-1})  \leqslant C 3^{-2m} \inf_{k \in \RR} \|[\xi_l(\cdot,\cu_m)] - k\|_{\underline{L}^2}^2  
		+ \avsum_{z \in 3^{m-1}\Zd \cap \cu_m} \!\!\! 2 \bigl( J_l(z + \cu_{m-1}) - J_l(\cu_m)\bigr) \,.
	\end{equation} 
By the variational characterization in \eqref{e.secondvarmaster} for~$J_l,$ and an elementary algebraic inequality, we find 
\begin{align*}
	J_l(\cu_{m-1}) &= \frac{1}{|\cu_{m-1}|} \sum_{x,y  \in \eta_*(\cu_{m-1}),\, y \sim x} \frac12(\xi_l(x, \cu_{m-1}) - \xi_l(y,\cu_{m-1}))^2\\
	&\leqslant  \frac{1}{|\cu_{m-1}|} \sum_{x,y \in \eta_*(\cu_{m-1}),\, y \sim x} (\xi_l(x,\cu_m) - \xi_l(y,\cu_m))^2 \\
	& \, + \frac{1}{|\cu_{m-1}|}\!\sum_{x,y \in \eta_*(\cu_{m-1}),\,y \sim x} \!\! \bigl(\xi_l(x,\cu_m) {-} \xi_l(y,\cu_m) {-} (\xi_l(x,\cu_{m-1}) {-} \xi_l(y,\cu_{m-1}))\bigr)^2\\
	&\leqslant  \frac{1}{|\cu_{m-1}|} \sum_{x,y \in \eta_*(\cu_{m-1}),\, y \sim x} (\xi_l(x,\cu_m) - \xi_l(y,\cu_m))^2 \\
	&\quad \quad \quad  + 3^d \avsum_{z \in 3^{m-1}\Zd \cap \cu_m} 2\bigl( J_l(z+\cu_{m-1}) - J_l(\cu_m)\bigr) \,.
\end{align*}
In the last step, we used the quadratic response and second variation from \eqref{e.secvarJ}. 
By Caccioppoli inequality (Lemma~\ref{l.caccioppoli}, with~$\cu^\prime = \cu_{m-1}, \cu = \cu_m$), the first term estimates as 
\begin{equation*}
	\frac{1}{|\cu_{m-1}|}\sum_{x,y \in \eta_*(\cu_{m-1}),\, y \sim x} (\xi_l(x,\cu_m) - \xi_l(y,\cu_m))^2 
	\quad \leqslant C3^{-2m} \inf_{k \in \RR} \avsum_{x \in 3^l\Zd \cap \cu_m} \bigl([\xi_l(\cdot, \cu_m)]_l - k\bigr)^2 \,.
	\end{equation*}
Taking infimum over~$k \in \RR$ completes the goal of Step 1. 
\smallskip

\emph{Step 2.}  We use the multiscale Poincar\'e inequality~\cite[Proposition A.2]{AD16} to estimate the first term on the right-hand side of~\eqref{e.step1}. Applying this inequality to the coarsened function~$[\xi_l(\cdot,\cu_m)]_l$, we find 
\begin{align} \label{e.multiscalePoincare}
&3^{-m} \inf_{k \in \RR} \Biggl(\frac{1}{|\cu_{m-1}|}\sum_{x \in 3^l \Zd \cap \cu_m} ([\xi_l(\cdot,\cu_m)]_l - k)^2\Biggr)^{\!\!\sfrac12}  \\ \notag
&\quad \quad \leqslant   C3^{-(m-l)} \|\nabla [\xi_l(\cdot,\cu_m)]_l\|_{\underline{L}^2(\cu_m)}  + C \sum_{n=l}^{m-1} 3^{n-m} \Biggl( \avsum_{ z \in 3^n \Zd \cap \cu_m} |(\nabla [\xi(\cdot,\cu_m)]_l)_{y + \cu_n}|^2\Biggr)^{\!\!\sfrac12}\,.
\end{align}
For the second term by the triangle inequality, we note 
\begin{align*}
	&\Biggl( \avsum_{z \in 3^n \Zd \cap \cu_m} |(\nabla [\xi_l(\cdot,\cu_m)]_l)_{y+\cu_n}|^2\Biggr)^{\!\!\sfrac12} \\
	&\quad  \leqslant \Biggl( \avsum_{z \in 3^n \Zd \cap \cu_m} |(\nabla [\xi_l(\cdot,\cu_m)]_l )_{y+\cu_n}- (\nabla [\xi_l(\cdot,\cu_n)]_l)_{y+\cu_n})|^2\Biggr)^{\!\!\sfrac12} \\ &\quad \quad\quad + \Biggl( \avsum_{z \in 3^n \Zd \cap \cu_m} | \a_{*,l}^{-1} (y + \cu_n) q - p|^2\Biggr)^{\!\!\sfrac12}\,,
\end{align*}
where, in the last line, we used the following computation: 
\begin{align*}
(\nabla [\xi_l(\cdot,\cu_n)]_l)_{y+\cu_n} &= (\nabla [\xi_l(\cdot,\cu_n,p,q)]_l)_{y+\cu_n} \\&  = (\nabla [\xi_l(\cdot,\cu_n,0,q)]_l)_{y+\cu_n}-(\nabla [\xi_l(\cdot,\cu_n,p,0)]_l)_{y+\cu_n}
\\ &
= \a_{*,l}^{-1}(y+\cu_n)q - p\,,
\end{align*}
which follows from \eqref{e.firstvarmaster}. For the first term, we simply use the second variation (see \eqref{e.secvarJ}): 
\begin{align*}
	&\avsum_{z \in 3^n \Zd \cap \cu_m} \bigl| (\nabla [\xi_l(\cdot,\cu_m)]_l )_{y+\cu_n}- (\nabla [\xi_l(\cdot,\cu_n)]_l)_{y+\cu_n})\bigr|^2 \\ &\quad \quad \quad \quad 
	\leqslant 2 \avsum_{z \in 3^n \Zd \cap \cu_m} \bigl( J_l(y+\cu_n,p,q) - J_l(\cu_m,p,q) \bigr)\,. 
\end{align*}
Finally, for the first term on the right-hand side of \eqref{e.multiscalePoincare}, 
\[\|\nabla [\xi_l(\cdot,\cu_m)]_l\|_{\underline{L}^2(\cu_m)} \leqslant 2 J_l(\cu_m,p,q) \leqslant C\,.\]

\noindent Combining the preceding estimates yields 
\begin{align*}
	J_l(\cu_{m-1},p,q) &\leqslant C\sum_{n=l}^m 3^{-(m-n)} \avsum_{z \in 3^n \Zd \cap \cu_m} (J_l(z + \cu_n,p,q) - J_l(\cu_m,p,q))\\
	& \qquad + C \sum_{n=l}^m 3^{n-m} \avsum_{z \in 3^n \Zd \cap \cu_m} \bigl| \a_{*,l}^{-1} (z + \cu_n)q - p|^2 + C 3^{-2(m-l)} \,.
\end{align*}
This is \eqref{e.convexduality}. 
\smallskip

\emph{Step 3.} We complete the argument continuing to closely follow \rv{\cite[Lemma 5.19]{AK22}}. 
We define $A_m$ to be the event given by
\begin{equation*}
	A_{m} := \bigl\{ z + \cu_n  \in \mathcal{G}_l \mbox{ for each } n \in\{ l+1 , \cdots, m\} \ \mbox{and} \ z \in 3^n \Zd \cap \cu_m \bigr\}\,.
\end{equation*}
\rv{ By Proposition~\ref{p.Penrose},~\eqref{e.Gl.nope} and a union bound, we find 
\begin{align*}
\P [A_m^c] 
\leq 
\sum_{n=l+1}^m \sum_{z\in 3^n\Zd\cap \cu_m} 
\P [ z + \cu_n \not \in \mathcal{G}_l ] 
& 
\leq 
\sum_{n=l+1}^m 
3^{d(m-l)}  
\exp( -c 3^l) 
\notag \\ & 
= (m-l) 3^{d(m-l)}  
\exp( -c 3^l)
\,.
\end{align*}
}
Choosing~$p = e \in S^{d-1},q = \rv{\ahom_{*,l}(\cu_m)e}$, \rv{towards analyzing the expectations on both sides of} \eqref{e.convexduality}, \rv{we note that for the middle term on the right-hand side of this estimate,} by the triangle inequality we obtain that 
\begin{align*}
\lefteqn{
\E\Biggl[ \Biggl( \sum_{n=l}^{m-1} 3^{n-m} \avsum_{z \in 3^n \Zd \cap \cu_m} | \a_{*,l}^{-1} (z + \cu_n) \ahom_{*,l}(\cu_m)e - e|^2 \Biggr)\indc_{A_m}\Biggr]
} \qquad &
\notag \\ & 
	= 	\E\Biggl[ \Biggl(\sum_{n=l}^{m-1} 3^{n-m} \avsum_{z \in 3^n \Zd \cap \cu_m} |( \a_{*,l}^{-1} (z + \cu_n) - \ahom_{*,l}^{-1}(\cu_m)) \ahom_{*,l}(\cu_m)e |^2\Biggr) \indc_{A_m}\Biggr]\\
	&\leqslant C \sum_{n=l}^{m-1} 3^{n-m}  \var \bigl[ \a_{*,l}^{-1}(\cu_n) \indc_{\{\cu_n \in \mathcal{G}_l\}}\bigr] + C \sum_{n=l}^{m-1} 3^{n-m} \bigl| \ahom_{*,l}^{-1}(\cu_n) - \ahom_{*,l}^{-1}(\cu_m) \bigr|^2\,.
\end{align*}
\rv{ For the second term of the previous display, by the triangle inequality,
\begin{align*}
\sum_{n=l}^{m-1} 
3^{n-m} \bigl| \ahom_*^{-1}(\cu_n) - \ahom_*^{-1}(\cu_m) \bigr|^2 
\leqslant 
C \! \sum_{n=l}^{m-1} 3^{n-m}
\!\!
\sum_{k=n+1}^m \tau_k 
&
=
C \sum_{k=l+1} ^ m 
\tau_k 
\sum_{n=l}^{k-1} 
3^{n-m} 
\notag \\ & 
\leqslant C \! \sum_{n=l}^{m} 3^{k-m}\tau_k\,.
\end{align*}
}
In addition, we note that 
\begin{align*}
	\E \Biggl[ \Biggl(\sum_{n=l}^{m-1} 3^{n-m} \avsum_{z \in 3^n \Zd \cap \cu_m} (J_l(z+\cu_n,p,q) - J_l(\cu_m,p,q)) \Biggr)\indc_{A_m}\Biggr]
& \leqslant \sum_{n=l}^{m-1} 3^{n-m} \!\sum_{k=n+1}^m \tau_k 
	\notag \\ & 
	\leqslant C \sum_{n=l}^{m-1} 3^{n-m}\tau_n\,.
\end{align*}
Finally, by Corollary \ref{c.order},
\begin{align*}
\lefteqn{
	\E[J_l(\cu_m,e,\ahom_*(\cu_m)e) \indc_{\{\cu_m \in \mathcal{G}_l\}}] 
	} \qquad & 
	\notag \\ &
	\leqslant \E[J_l(\cu_{m-1},e,\ahom_*(\cu_m)e)\indc_{\{\cu_{m-1} \in \mathcal{G}_l\}}] + C3^{-(m-l)/2}\\
	&= \E[J_l(\cu_{m-1},e,\ahom_*(\cu_m)e)\indc_{\{\cu_{m-1} \in \mathcal{G}_l\}\cap A_m}] \\ &\qquad +   \E[J_l(\cu_{m-1},e,\ahom_*(\cu_m)e)\indc_{\{\cu_{m-1} \in \mathcal{G}_l\}\setminus A_m}]+ C3^{-(m-l)/2}\\
	&\leqslant \E[J_l(\cu_{m-1},e,\ahom_*(\cu_m)e)\indc_{ A_m}] + C \P [A_m^c] + C3^{-(m-l)/2}\\
	&\leqslant \E[J_l(\cu_{m-1},e,\ahom_*(\cu_m)e)\indc_{ A_m}] 
+ C3^{-(m-l)/2}
\rv{ + C (m-l) 3^{(m-l)d}\exp (-c 3^l ) }
\,.
\end{align*}
Combining these estimates yields  \eqref{e.bound J by tau and var}. 
\end{proof}
In the next estimate we control the variance term in \eqref{e.bound J by tau and var} by the quantity $\tau_m,$ the expected additivity defect. 
\begin{lemma}[Decay of variance estimate] \label{l.variance} There exists~$\theta(d) \in (0,1)$, and~$C(d,\la) < \infty$ such that
\begin{equation*}
	\var \Bigl [ \a_{*,l}^{-1}(\cu_m) \indc_{\{\cu_m \in \mathcal{G}_l\}}\Bigr ]
	\leq 
C\sum_{n=l}^m
\theta^{m-n}\tau_n
+
C\theta^{2(m-l)} 
\,.
\end{equation*}
Here, the quantities $\tau_n$ are as introduced in \eqref{e.taudef}. 
\end{lemma}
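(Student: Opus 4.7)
The approach is a one-step recursion for $V_m := \var[X_m]$ with $X_m := \a_{*,l}^{-1}(\cu_m)\indc_{\{\cu_m \in \mathcal{G}_l\}}$, reducing $V_m$ to $V_{m-1}$ plus lower-order terms, and iterating down to scale $l$. The three ingredients I need are: (i) the one-sided subadditivity inequality \eqref{e.subadd2}; (ii) the product structure of the Poisson law on disjoint cubes together with the \emph{local} nature of membership in $\mathcal{G}_l$ (so that values on disjoint subcubes are independent); and (iii) the bound $|\E X_m - \E X_{m-1}| \leq C\tau_m$ provided by Corollary \ref{c.order}.

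\textbf{Step 1 (decomposition).} Partitioning $\cu_m$ into the $3^d$ subcubes $\{z+\cu_{m-1}\}_{z \in 3^{m-1}\Zd \cap \cu_m}$, write
\begin{align*}
X_m - \E X_m &= \underbrace{\Bigl(X_m - \avsum_z X_{m-1}(z+\cu_{m-1})\Bigr)}_{T_1} + \underbrace{\avsum_z \bigl(X_{m-1}(z+\cu_{m-1}) - \E X_{m-1}\bigr)}_{T_2} + \underbrace{(\E X_{m-1} - \E X_m)}_{T_3} \,,
\end{align*}
using stationarity of $\P$. Then $V_m \leq 3(\E[T_1^2] + \E[T_2^2] + T_3^2)$.

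\textbf{Step 2 (term-by-term estimates).} The drift satisfies $|T_3| \leq C \tau_m$ by Corollary \ref{c.order}. For $T_2$, the random variables $X_{m-1}(z+\cu_{m-1})$ are independent across $z$ (since the Poisson process is independent on disjoint sets and both $\a_{*,l}^{-1}$ and $\indc_{\mathcal{G}_l}$ are local functionals of the Poisson data in their defining cube), each with variance $V_{m-1}$, so $\E[T_2^2] \leq 3^{-d} V_{m-1}$. The crux is $\E[T_1^2]$: on the event
\begin{equation*}
G := \{\cu_m \in \mathcal{G}_l\} \cap \bigcap_{z} \{z+\cu_{m-1} \in \mathcal{G}_l\}\,,
\end{equation*}
Lemma \ref{l.subbadditivity} gives $T_1 \leq \epsilon_m := C 3^{-(m-1-l)/2}$, while off $G$ we have $|T_1| \leq C$ by the uniform bound from Lemma \ref{l.uppbound}. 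To convert the one-sided bound into a second-moment estimate, set $Y := \epsilon_m - T_1 \geq 0$ on $G$; since $Y$ is bounded, $Y^2 \leq \|Y\|_{\infty} Y \leq C Y$ pointwise on $G$, hence
\begin{equation*}
\E[Y^2 \indc_G] \leq C \E[Y \indc_G] \leq C\bigl(\epsilon_m + |\E T_1| + \P[G^c]\bigr) \leq C(\epsilon_m + \tau_m + \P[G^c])\,.
\end{equation*}
Using $T_1^2 \leq 2\epsilon_m^2 + 2Y^2$ on $G$ and the trivial bound off $G$ yields $\E[T_1^2] \leq C(\epsilon_m + \tau_m + \P[G^c])$.

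\textbf{Step 3 (recursion and iteration).} Combining produces the one-step inequality
\begin{equation*}
V_m \leq 3^{1-d}\, V_{m-1} + C \tau_m + C 3^{-(m-1-l)/2} + C \P[G^c] \,.
\end{equation*}
If the effective contraction factor $\alpha := C\cdot 3^{1-d}$ exceeds $1$, I iterate instead in blocks of $K$ steps (with $n = m-K$), choosing $K$ so that $C/3^{Kd} < 1$; the argument is unchanged. Iterating $m-l$ times from the trivial base case $V_l \leq C$ gives
\begin{equation*}
V_m \leq C\alpha^{m-l} + C\sum_{k=l+1}^m \alpha^{m-k}\bigl(\tau_k + 3^{-(k-1-l)/2} + \P[G_k^c]\bigr)\,.
\end{equation*}
The subadditivity defect term forms a geometric series bounded by $C\theta^{m-l}$ for any $\theta$ with $\theta^2 \geq \max(\alpha, 3^{-1/2})$; the bad-event probabilities $\P[G_k^c]$ are super-exponentially small in $l$ (by Proposition \ref{p.Penrose} and Lemma \ref{l.poincare}) and are absorbed into $C\theta^{2(m-l)}$ after adjusting $\theta$. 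This delivers the claimed estimate.

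\textbf{Main obstacle.} The critical step is the second-moment estimate on $T_1$: the subadditivity is only one-sided, so we must exploit the positivity of $Y = \epsilon_m - T_1$ on the good event and reduce the $L^2$ estimate to the $L^1$ estimate, which in turn is controlled by the \emph{deterministic} subadditivity gap $\epsilon_m$ plus the mean drift $|T_3| \leq C\tau_m$. A secondary point is verifying the independence structure of $T_2$ cleanly, which rests on $\indc_{\{\cu \in \mathcal{G}_l\}}$ being a local function of the Poisson data --- a consequence of Definition \ref{d.goodness} being formulated purely in terms of the geometry of $\eta$ inside $\cu$.
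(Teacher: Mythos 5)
The overall route — a one-step recursion built from one-sided subadditivity plus independence over subcubes, then iterated from scale $l$ to scale $m$ — is the same as the paper's, and your reduction of $\E[T_1^2]$ to $\E[T_1]$ using the nonnegative variable $Y = \epsilon_m - T_1$ together with boundedness is a valid and more explicit account of the paper's terse $\var^{1/2}\leq C|\E|^{1/2}$ step. The genuine gap is the independence claim in Step~2: you assert full independence of $X_{m-1}(z+\cu_{m-1}) = \a_{*,l}^{-1}(z+\cu_{m-1})\indc_{\{z+\cu_{m-1}\in\mathcal{G}_l\}}$ across $z$ on the grounds that these are ``local functionals of the Poisson data in their defining cube.'' They are not: $\a_{*,l}^{-1}(\cu)$ is built from the cluster $\eta_*(\cu)$, which, per the paper's definition, is a component of $\eta_*(\cu')\cap\cu$ for the smallest triadic $\cu'\supseteq\cu$ with $3\cu'$ well-connected, and hence depends on Poisson data in an enlargement of $\cu$; and membership of a subcube $z'+\cu_l$ in the ``good'' event (Definition~\ref{d.goodness}, i.e.\ the conclusion of Lemma~\ref{l.poincare}) is diagnosed through a Poincar\'e inequality involving $\eta_*$ on a slightly larger cube. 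Adjacent cubes of the tiling of $\cu_m$ share these enlargements and are correlated; indeed the paper uses only the vastly weaker fact that \emph{at least one} pair among the $3^{d(m-n)}$ shifted copies is independent, yielding the contraction $\theta_{m,n}^2 = (3^{2d(m-n)}-1)/3^{2d(m-n)}$, just below $1$.

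This distinction is load-bearing for your recursion. Your second-moment decomposition $V_m \leq 3(\E T_1^2 + \E T_2^2 + T_3^2)$ introduces a prefactor $3$ that you can only overcome by the strong bound $\E[T_2^2]\leq 3^{-d}V_{m-1}$ coming from full independence; with the weak contraction $\theta^2\approx 1$ available in the paper, the effective coefficient $3\theta^2$ exceeds $1$ and your iteration no longer contracts. The paper avoids this by applying the triangle inequality at the level of $\var^{1/2}$, so that its recursion has coefficient $\theta$ itself — barely but strictly below $1$, which suffices. As a secondary point, your absorption of $\P[G_k^c]$ into $C\theta^{2(m-l)}$ needs more care: that probability decays super-exponentially in $3^l$ but not in $m-l$, so for fixed $l$ and $m\to\infty$ the accumulated sum $\sum_k\alpha^{m-k}\P[G_k^c]$ does not tend to zero, and the claimed bound fails unless $l$ is allowed to grow with $m$.
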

\begin{proof}

Observe that by Cauchy-Schwarz, stationarity, and noting that for at least one pair of cube centers~$z \neq z^\prime \in 3^n \Zd \cap \cu_m,$ the random variables~$\a_{*,l}^{-1}(z + \cu_n) \indc_{\{z + \cu_n \in \mathcal{G}_l\}}$ and~$\a_{*,l}^{-1}(z^\prime + \cu_n)\indc_{\{z^\prime + \cu_n \in \mathcal{G}_l\}}$ are independent so that their covariance vanishes, we have
\begin{align*}
&\var \Biggl [ \sum_{z\in 3^n\Zd \cap \cu_m} \a_{*,l}^{-1}(z+\cu_n)   \indc_{\{z + \cu_n \in \mathcal{G}_l\}}\Biggr ]
\\ &
 =
\sum_{z,z' \in 3^n\Zd \cap \cu_m} 
\cov \Biggl [ \a_{*,l}^{-1}(z+\cu_n)  \indc_{\{z + \cu_n \in \mathcal{G}_l\}}\,, \,  \a_{*,l}^{-1}(z'+\cu_n)  \indc_{\{z' + \cu_n \in \mathcal{G}_l\}}\Biggr ]
\\ &  
\leq 
\bigl (3^{2d(m-n)} -1\bigr )
\var \bigl [ \a_{*,l}^{-1}(\cu_n)  \indc_{\{ \cu_n \in \mathcal{G}_l\}} \bigr ]
\,.
\end{align*}
Thus 
\begin{align}
\label{e.variance_est}
\var \Biggl [ \avsum_{z\in 3^n\Zd \cap \cu_m} \a_{*,l}^{-1}(z+\cu_n) \indc_{\{z + \cu_n \in \mathcal{G}_l\}}  \Biggr ]
\leq 
\frac{3^{2d(m-n)} -1}{3^{2d(m-n)}}
\var \bigl [ \a_{*,l}^{-1}(\cu_n)  \indc_{\{z + \cu_n \in \mathcal{G}_l\}}  \bigr ]\,.
\end{align}
Now, by the triangle inequality, for any $n < m,$ we find

\begin{align*}
&\var^{\frac12} \Bigl [ \a_{*,l}^{-1}(\cu_m)  \indc_{\{\cu_m\in \mathcal{G}_l\}} \Bigr ]
\\ & \quad \quad
\leq
\var^{\frac12} \biggl [ 
\a_{*,l}^{-1} (\cu_m)  \indc_{\{\cu_m\in \mathcal{G}_l\}} 
-
\avsum_{z\in 3^n\Zd \cap \cu_m} \a_{*,l}^{-1}(z+\cu_n)  \indc_{\{z + \cu_n \in \mathcal{G}_l\}}  
\biggr ]
\\ &\quad \quad \quad\quad +
\var^{\frac12} \biggl [ \avsum_{z\in 3^n\Zd \cap \cu_m} \a_{*,l}^{-1}(z+\cu_n)  \indc_{\{z + \cu_n \in \mathcal{G}_l\}}  \biggr ]
\\ & \quad \quad
\leq
C
\biggl|\E \biggl [  \a_{*,l}^{-1} (\cu_m)  \indc_{\{\cu_m\in \mathcal{G}_l\}} 
-
\avsum_{z\in 3^n\Zd \cap \cu_m} \a_{*,l}^{-1}(z+\cu_n)  \indc_{\{z + \cu_n \in \mathcal{G}_l\}}
\biggr ]\biggr|^{\frac12}
\\&
\quad \quad\quad\quad  +
\var^{\frac12} \biggl [ \avsum_{z\in 3^n\Zd \cap \cu_m} \a_{*,l}^{-1}(z+\cu_n) \indc_{\{z + \cu_n \in \mathcal{G}_l\}}    \biggr ] 
\\ & \quad \quad 
\leq
 \biggl( \sum_{k=n}^m 3^{-(m-k)}\tau_k\biggr)^{\sfrac12}
+
\theta_{m,n}\var ^{\frac12}\bigl [\a_{*,l}^{-1}(\cu_n) \indc_{\{ \cu_n \in \mathcal{G}_l\}}   \bigr ] 
\,,
\end{align*}
where $\theta_{m,n} := (3^{2d(m-n)}-1)/3^{2d(m-n)}<1$ and we inserted \eqref{e.bound J by tau and var}. Plugging in ~$n=m-1$ yields
\begin{equation*}
\var^{\frac12} \Bigl [ \a_{*,l}^{-1}(\cu_m) \indc_{\{\cu_m \in \mathcal{G}_l\}}   \Bigr ]
\leq
\theta \var^{\frac12} \Bigl [ \a_{*,l}^{-1}(\cu_{m-1}) \indc_{\{\cu_{m-1} \in \mathcal{G}_l\}}  \Bigr ]
+ 
C\tau_m 
\,,
\end{equation*}
with $\theta := \theta_{m,m-1}.$ 
Iterating this down to scale $n= l,$ gives 
\begin{equation*}
\var^{\frac12} \Bigl [ \a_{*,l}^{-1}(\cu_m) \indc_{\{\cu_m \in \mathcal{G}_l\}}   \Bigr ]
\leq 
C\sum_{n=l}^m
\theta^{m-n}\tau_n
+
C\theta^{m-l} 
\,.
\end{equation*}
Squaring this gives 
\begin{equation*}
\var \Bigl [ \a_{*,l}^{-1}(\cu_m) \indc_{\{\cu_m \in \mathcal{G}_l\}} \Bigr ]
\leq 
C\sum_{n=l}^m
\theta^{m-n}\tau_n
+
C\theta^{2(m-l)} 
\,.
\end{equation*}
This completes the desired estimate. 
\end{proof}

In our next proposition we combine the preceding two lemmas, and iterate towards an algebraic rate of homogenization. 
\begin{proposition}
	\label{p.iteration}
	There exists $\alpha_0(d) \in (0,\frac12]$ and $C(d) <\infty$ such that for every $m > l,$ we have 
	\begin{equation} \label{e.iterationestimate}
		\sup_{e \in B_1} \E \bigl[ J_l (\cu_m, e, \ahom_{*,l} (\cu_m) e) \indc_{\{\cu_m \in \mathcal{G}_l\}} \bigr] \leqslant C3^{-(m-l)\alpha_0}
		+C\rv{(m-l) 3^{(m-l)d} \exp(-c3^l)} 
		\,.
	\end{equation}
\end{proposition}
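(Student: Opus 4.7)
The plan is to chain together the three structural estimates already at hand: the convex duality bound (Lemma~\ref{l.convex_duality_lemma}), the variance decay estimate (Lemma~\ref{l.variance}), and the telescoping inequality~\eqref{e.control tau by J} that controls the additivity defect $\tau_n$ by decrements of
$F_n := \sum_{i=1}^d \E[J_l(\cu_n,e_i,\ahom_{*,l}(\cu_n)e_i)\indc_{\{\cu_n\in\mathcal{G}_l\}}]$.
Write $E_m$ for the quantity on the left of~\eqref{e.iterationestimate}; the bilinearity of $J_l$ in $(p,q)$ gives the comparability $E_m \leq F_m \leq Cd\,E_m$, while the upper bounds of Lemma~\ref{l.uppbound} yield $F_l \leq C$ uniformly.

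The first step is to observe $\sum_{n\geq l}\tau_n \leq C$: summing~\eqref{e.control tau by J} over $m$ telescopes the $(F_m-F_{m+1})$ differences (bounded by $F_l \leq C$), and the algebraic corrections $3^{-(m-l)/2}$ and $3^{-m}$ are summable. The second step is to insert Lemma~\ref{l.variance} into Lemma~\ref{l.convex_duality_lemma} and reorder the resulting double sum. For each fixed $k$, the inner sum $\sum_{n=k}^{m-1} 3^{-(m-n)}\theta^{n-k}$ is geometric and comparable to $\rho^{m-k}$, where $\rho := \max(3^{-1},\theta) \in (0,1)$. Absorbing $\rho^{2(m-l)}$ into $\rho^{m-l}$, and using $3^{-(m-l)/2}\leq \rho^{m-l}$ (valid since $\theta > 3^{-1/2}$ for $d\geq 2$), this collapses to
\begin{equation*}
E_m \leq C\sum_{n=l}^{m-1}\rho^{m-n}\tau_n + C\rho^{m-l} + R_m\,, \qquad R_m := C 3^{(m-l)d}\exp\bigl(-c 3^{\frac{ld}{2(d+1)}}\bigr)\,.
\end{equation*}

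The closing step is a weighted summation. Multiplying the above by $3^{\alpha_0 m}$ and summing over $m\in\{l+1,\dots,M\}$, the coefficient of $\tau_n$ on the right becomes $\sum_{m>n}(3^{\alpha_0}\rho)^{m-n}\cdot 3^{\alpha_0 n} \leq C 3^{\alpha_0 n}$, provided $3^{\alpha_0}\rho<1$. Replacing $\tau_n$ one more time via~\eqref{e.control tau by J} and applying Abel summation together with $F_n \leq Cd\,E_n$ produces
\begin{equation*}
\sum_{m=l+1}^M 3^{\alpha_0 m} E_m \leq C(3^{\alpha_0}-1)\!\!\sum_{m=l+1}^{M} 3^{\alpha_0 m} E_m + C 3^{\alpha_0 l} + C\!\!\sum_{m=l+1}^M 3^{\alpha_0 m} R_m\,.
\end{equation*}
Picking $\alpha_0 \in (0,\tfrac12]$ small enough (depending only on $d,\lambda$) that $C(3^{\alpha_0}-1) \leq 1/2$ and $3^{\alpha_0}\rho < 1$, the first term on the right is absorbed into the left. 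Setting $M=m$ and noting that $3^{\alpha_0 m}E_m$ is a single nonnegative term in the sum, we divide through by $3^{\alpha_0 m}$. A direct geometric calculation of $\sum_{k\leq m} 3^{\alpha_0 k} R_k$ produces exactly the exponentially small remainder $C3^{(m-l)d}\exp(-c3^{ld/(2(d+1))})$ stated in~\eqref{e.iterationestimate}, while the $C 3^{\alpha_0 l}$ term yields the $C3^{-\alpha_0(m-l)}$ main contribution.

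The main obstacle is the joint choice of $\alpha_0$: because $\rho = \theta = (3^{2d}-1)/3^{2d}$ is extremely close to one, one is forced into the regime $\alpha_0 \lesssim 3^{-2d}$, and one must verify that within this regime both the summability condition $3^{\alpha_0}\rho < 1$ and the absorption condition $C(3^{\alpha_0}-1)\leq 1/2$ can hold simultaneously with constants depending only on $d,\lambda$. The exponential remainder $R_m$ is handled for free, since~\eqref{e.control tau by J} contains no exponentially small contributions and $R_m$ is purely additive through the iteration.
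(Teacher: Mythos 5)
Your proposal is correct, and it reaches the conclusion by a genuinely different mechanism than the paper's proof. The paper sets up the backward-weighted average $\tilde{F}_{m,l} := \sum_{n=l+1}^m 3^{-(m-n)} F_{n,l}$, derives a contraction inequality $\tilde{F}_{m+1,l}\leq \frac{C}{C+1}\tilde{F}_{m,l} + C\bigl(3^{-\alpha(m-l)}+R_m\bigr)$ by playing $\tilde{F}_{m,l}-\tilde{F}_{m+1,l}\gtrsim \sum_n 3^{-(m-n)}\tau_n$ off Lemmata~\ref{l.convex_duality_lemma}--\ref{l.variance}, and iterates the contraction; the rate $\alpha_0$ is then produced \emph{a posteriori} as $\min\{\alpha,\log 3/|\log\mu|\}$ with $\mu=C/(C+1)$. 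You instead weigh forward, form $\sum_m 3^{\alpha_0 m}E_m$, and use Abel summation on $\sum 3^{\alpha_0 n}\tau_n$ to produce a factor $(3^{\alpha_0}-1)$ that can be driven below $1/(2C)$, enabling absorption. What your route buys is that the delicate point $\theta=(3^{2d}-1)/3^{2d}$ being close to $1$ is handled up front and transparently, by the explicit constraint $3^{\alpha_0}\theta<1$, whereas the paper's phrase ``upon slightly decreasing $\theta$ closer to $0$, if needed'' glosses over the fact that $\theta$ is determined by $d$ and one instead must split the sum at an intermediate scale (or choose $\alpha_0$ small enough afterwards) to trade $\sum\theta^{m-n}\tau_n$ against $\sum 3^{-(m-n)}\tau_n$. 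What the paper's route buys is that it works entirely with $F_m$: the reverse comparability $F_m\leq d\,E_m$ that you invoke (valid since $\E[J_l(\cu_m,\cdot,\ahom_{*,l}(\cu_m)\cdot)\indc]$ is a nonnegative quadratic form, so operator norm and trace are comparable) is in fact unnecessary for your argument too --- you could have run the whole Abel step with $F_m$ in place of $E_m$ and only used $E_m\leq F_m$ at the end, as the paper does, avoiding one small detour. A minor remark on constants: both $\alpha_0$ and $C$ depend on $d$ \emph{and} $\lambda$ (via the renormalization and energy bounds), matching your proposal's acknowledgment; the proposition statement's claimed dependence on $d$ alone appears to be a typographical overreach in the paper.
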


\begin{proof}
We closely follow the proof of \cite[Proposition 2.11]{AKM19}. As there, it suffices to estimate 
\begin{equation*}
	F_{m,l} := \sum_{i=1}^d \E \bigl[ J_l ( \cu_m,e_i, \ahom_{*,l}(\cu_m)e_i)  \indc_{\{\cu_m \in \mathcal{G}_l\}} \bigr]\,.
\end{equation*}
Observe that \eqref{e.control tau by J} rewrites as, for each $n > l,$ 
\begin{equation*}
	\tau_{n+1} \leqslant C (F_{n,l} - F_{n+1,l})  + C3^{-(n-l)/2}\,. 
\end{equation*}
We define the weighted average 
\begin{equation*}
	\tilde{F}_{m,l} := \sum_{n=l+1}^{m} 3^{-(m-n)} F_{n,l}\,. 
\end{equation*}
For this quantity, since $F_{l+1,l} \leqslant C,$ it follows that for all $m > l,$ we have 
\begin{align*}
	\tilde{F}_{m,l} - \tilde{F}_{m+1,l} &=  \sum_{n=l+1}^m 3^{-(m-n)}F_{n,l} - \sum_{n=l }^{m-1} 3^{-(m-n)} F_{n+1,l}\\
	&\geqslant \sum_{n=l+1}^{m-1} 3^{-(m-n)}(F_{n,l} - F_{n+1,l}) - C3^{-(m-l)}\\
	&\geqslant C^{-1} \sum_{n=l}^{m-2} 3^{-(m-n)}\tau_n - C 3^{-(m-l)/2}\,.
\end{align*}
On the other hand, combining Lemmata \ref{l.convex_duality_lemma} and \ref{l.variance} with monotonicity, we find 
\begin{align*}
	\tilde{F}_{m+1,l} \leqslant \tilde{F}_{m,l}&= \sum_{n=l+1}^m 3^{-(m-n)} F_{n,l} \\
	&\leqslant  
	C \!\! \sum_{n=l}^{m-1} 3^{-(m-n)} (\tau_n + \var\bigl[ \a_{*,l}^{-1}(\cu_n) \indc_{\{\cu_n \in \mathcal{G}_l\}}\bigr]) +  C 3^{-(m-l)/2} \\
	 &\quad \quad +  C 3^{(m-l)d}\exp (-c 3^{\frac{ld}{2(d+1)}}) 
	\notag \\ &
	\leqslant
	C \!\! \sum_{n=l}^{m-1} 3^{-(m-n)} \tau_n 
	+
	C\!\!  \sum_{n=l}^{m-1} 3^{-(m-n)} 
	\biggl ( 
	C\sum_{k=l}^n
	\theta^{n-k}\tau_k
	+
	C\theta^{2(n-l)}  \!
	\biggr )
	+  C 3^{-\frac12(m-l)} \\
	&\quad \quad 
	+ C \rv{(m-l) 3^{(m-l)d} \exp(-c3^l)}  
	\notag \\ &
	\leq
	C \sum_{n=l}^{m-1} \theta^{m-n} \tau_n 
	+
	C 3^{-\alpha(m-l)} +  C \rv{(m-l) 3^{(m-l)d} \exp(-c3^l)}  
	\,,
\end{align*}
upon slightly decreasing $\theta$ closer to $0,$ if needed, and for a suitable choice of $\alpha \in (0,\frac12] $ (depending on $\theta$).

\smallskip
Combining the previous two displays, we obtain 
\begin{equation*}
	\tilde{F}_{m+1,l} \leqslant C(\tilde{F}_{m,l}  - \tilde{F}_{m+1,l} ) + C3^{-\alpha (m-l)}+  C \rv{(m-l) 3^{(m-l)d} \exp(-c3^l)} \,.
\end{equation*} 
Rearranging yields 
\begin{equation*}
	\tilde{F}_{m+1,l} \leqslant \frac{C}{C+1} \tilde{F}_{m,l} + \frac{C}{C+1} \bigl ( 3^{-\alpha(m-l)} + \rv{(m-l) 3^{(m-l)d} \exp(-c3^l)} \bigr )\,.
\end{equation*}
Introducing $\mu:= \frac{C}{C+1} < 1,$ and iterating this estimate and using that $\tilde{F}_{l,l} \leqslant C,$ yields (upon slightly increasing $\mu$ closer to $1$ if needed)
\begin{equation*}
	\tilde{F}_{m+1,l} \leqslant C\mu^{m-l} 
	+ C\bigl ( 3^{-\alpha(m-l)} + \rv{(m-l) 3^{(m-l)d} \exp(-c3^l)}  \bigr )\,.
\end{equation*}
Finally, defining $\alpha_0 := \min\{ \alpha , \frac{\log 3}{|\log \mu|}\} > 0,$ so that $\mu \leq 3^{-\alpha_0},$ together with $F_{m,l} \leqslant \tilde{F}_{m,l}$ completes the desired estimate \eqref{e.iterationestimate}. 
\end{proof}

In our next lemma we study the dependence of $\ahom_l(\cu_m)$ on $l.$ For its statement, we recall the constant $K$ in the Poincar\'e inequality from the definition of a good cube. 
\begin{lemma}
	\label{l.indep_of_l}
There exists~$C(d,\lambda) <\infty$ such that for all~$N, m \in \N,$ $N > m \geqslant l$ we have 
\begin{equation*}
|\ahom_{l+1}(\cu_N) \indc_{\{\cu_N\in \mathcal{G}_{l+1}\}}- \ahom_l(\cu_m)\indc_{\{\cu_m \in \mathcal{G}_l\}}| \leqslant C3^{-\alpha_0(N-m)} + \rv{(m-l) 3^{(m-l)d} \exp(-c3^l)}  \,.
\end{equation*}
\end{lemma}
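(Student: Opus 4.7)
The plan is to combine Proposition~\ref{p.iteration} with the approximate monotonicity from Corollary~\ref{c.order} to show that, for each fixed coarsening scale, the sequence $\{\ahom_l(\cu_m)\}_m$ is approximately Cauchy with algebraic rate, and then to compare across adjacent coarsening scales $l$ and $l+1$. The probabilistic term in the target estimate matches the one appearing in Proposition~\ref{p.iteration}, and will simply be inherited; the main work is extracting the algebraic rate.

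First I would convert Proposition~\ref{p.iteration} into a quantitative squeeze of the primal/dual gap. Writing out $J_l(\cu, e, \ahom_{*,l}(\cu)e)$ using $\mu_l(\cu, p) = \tfrac12 p\cdot \a_l(\cu) p$ and the Neumann formula (modulo the $(1 - 3^{-m})^{-1}$ factor, which only costs a $3^{-m}$ correction that is absorbed into the probabilistic term), taking expectations and using Lemma~\ref{l.order} on the event $\{\cu_m \in \mathcal{G}_l\}$, gives the matrix sandwich
\[
0 \leq \ahom_l(\cu_m) - \ahom_{*,l}(\cu_m) \leq C\,3^{-\alpha_0(m-l)} + C\,3^{(m-l)d}\exp\bigl(-c\,3^{\frac{ld}{2(d+1)}}\bigr),
\]
and similarly at coarsening scale $l+1$. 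Combining this with the telescoped monotonicity from Corollary~\ref{c.order}, namely $\ahom_l(\cu_N) \leq \ahom_l(\cu_m) + C\,3^{-(m-l)}$ and $\ahom_{*,l}(\cu_m) \leq \ahom_{*,l}(\cu_N) + C\,3^{-(m-l)/2}$, pins all four quantities in a small interval and yields $|\ahom_l(\cu_m) - \ahom_l(\cu_N)|$ bounded by the right-hand side of the lemma (and the same at level $l+1$).

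It remains to relate $\ahom_l$ and $\ahom_{l+1}$ at a common cube scale. Since $\partial_*^{(l)}\cu \subset \partial_*^{(l+1)}\cu$, enlarging the boundary layer shrinks the admissible class of the primal variational problem, giving immediately $\a_l(\cu) \leq \a_{l+1}(\cu)$ pointwise on the event $\cu \in \mathcal{G}_l \cap \mathcal{G}_{l+1}$. For the reverse direction I would take a minimizer $u$ of $\mu^{(l)}(\cu_N,p)$ and modify it in the annular region $\partial_*^{(l+1)}\cu_N \setminus \partial_*^{(l)}\cu_N$ to equal $\ell_p$ via a cutoff argument analogous to the one in the proof of Lemma~\ref{l.subbadditivity}; since the annulus has relative volume $\lesssim 3^{-(N-l)}$ and the bulk energy of $u-\ell_p$ is already uniformly bounded by Lemma~\ref{l.uppbound}, the modification cost is $O(3^{-(N-l)})$, which is dominated by $3^{-\alpha_0(N-l-1)}$. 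Chaining with the Cauchy estimate via the triangle inequality $|\ahom_{l+1}(\cu_N) - \ahom_l(\cu_m)| \leq |\ahom_{l+1}(\cu_N) - \ahom_l(\cu_N)| + |\ahom_l(\cu_N) - \ahom_l(\cu_m)|$ and taking $N > m$ closes the estimate.

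The main technical nuisance is bookkeeping the two different indicators $\indc_{\{\cu \in \mathcal{G}_l\}}$ and $\indc_{\{\cu \in \mathcal{G}_{l+1}\}}$: they are defined relative to different reference scales, so passing between them costs a union bound over the additional size-$3^l$ subcubes, producing exactly the $3^{(m-l)d}\exp(-c\,3^{ld/(2(d+1))})$ probabilistic remainder via Proposition~\ref{p.Penrose} and Lemma~\ref{l.poincare}. This mirrors the probabilistic term already present in Proposition~\ref{p.iteration}, so no new idea is required beyond carefully propagating it through the sandwich and the cross-scale comparison.
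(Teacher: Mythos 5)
Your plan follows the paper's logic in most respects: Proposition~\ref{p.iteration} together with Lemma~\ref{l.order} controls the duality gap $\ahom_l(\cu_m) - \ahom_{*,l}(\cu_m)$ with an algebraic rate, Corollary~\ref{c.order} then gives approximate monotonicity which pins the four quantities in a short interval, and the easy inequality $\a_l(\cu) \leq \a_{l+1}(\cu)$ follows from the admissible class for $\mu^{(l+1)}(\cu,p)$ being smaller. The probabilistic bookkeeping with the $\mathcal{G}_l$ and $\mathcal{G}_{l+1}$ indicators via Proposition~\ref{p.Penrose} is also what the paper does. Up to that point you are on track.

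The gap is in the reverse cross-scale comparison. You propose to take the minimizer $v_l$ of $\mu^{(l)}(\cu_N,p)$ and modify it with a cutoff supported in the annulus $\partial_*^{(l+1)}\cu_N \setminus \partial_*^{(l)}\cu_N$ to make it admissible for $\mu^{(l+1)}(\cu_N,p)$, and you claim the modification cost is $O(3^{-(N-l)})$ because the annulus has this relative volume and ``the bulk energy of $u-\ell_p$ is already uniformly bounded by Lemma~\ref{l.uppbound}.'' But Lemma~\ref{l.uppbound} only bounds the \emph{averaged} energy over all of $\cu_N$; it does not forbid the energy of $v_l - \ell_p$ from concentrating near $\partial\cu_N$. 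With a cutoff $\rho$ whose gradient is of order $3^{-l}$ over a band of thickness $\approx 3^{l+1}$ and using the Poincar\'e inequality (since $v_l = \ell_p$ on $\partial_*^{(l)}\cu_N$, at distance $\lesssim 3^l$), the extra term $\frac1{|\cu_N|}\int |v_l - \ell_p|^2 |\nabla\rho|^2$ is controlled by $\frac1{|\cu_N|}\int_{\mathrm{band}} |\nabla(v_l - \ell_p)|^2$, which is \emph{not} a priori of order $3^{-(N-l)}$ --- it could be as large as $O(1)$ if the energy clusters at the boundary. This is precisely why the proof of Lemma~\ref{l.subbadditivity} does not cut a big minimizer but glues subcube minimizers that are \emph{prescribed affine} on the layers $\partial_*^{(l)}(z+\cu_n)$, so that the cross-edge differences are exactly $p\cdot(x-y)$, a quantity that is pointwise bounded. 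The paper's own proof of Lemma~\ref{l.indep_of_l} proceeds by the same gluing: it tiles $\cu_N$ by cubes $z + \cu_m$ (with $m \geq l+1$), puts $\ell_e$ on the cubes touching $\partial\cu_N$ and the $J_l$-optimal function on the interior cubes (affine on each interface layer, so they glue), and obtains $\ahom_{l+1}(\cu_N) \leq \ahom_l(\cu_m) + C3^{-(N-m)}$, the error being the number of boundary cubes times the uniform good-cube energy constant $K$. To repair your step you would need either this tiling construction or a pigeonhole selection of a good cutoff depth over a macroscopic range of scales, neither of which is mentioned in your sketch.
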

\begin{proof}
Suppose that~$\cu_m \in \mathcal{G}_{l+1}$, so that~$\cu_m \in \mathcal{G}_l;$ suppose further that~$\cu_N\in \mathcal{G}_{l+1}$ for some~$N > m.$   Then, for any~$e \in B_1,$ since the boundary layer in the variational problem defining $\a_{l+1}(\cu_m)$ is thicker than that defining~$\a_l(\cu_m),$ one inequality is immediate by using the minimizer of the former problem as a competitor for the latter, and taking expectation: for all $e \in B_1,$ 
	\begin{equation*}
		e \cdot \ahom_{l}(\cu_m)e \leqslant  e \cdot \ahom_{l+1} (\cu_m)e\,,
	\end{equation*}
so that, by Corollary \ref{c.order} we have 
\begin{align*}
\ahom_{l}(\cu_m) \leqslant \ahom_{l+1}(\cu_m) &= \ahom_{*,l+1}(\cu_m) + \ahom_{l+1}(\cu_m) -  \ahom_{*,l+1}(\cu_m)\\
&\leqslant \ahom_{*,l+1}(\cu_N) + |\ahom_{l+1}(\cu_m) -  \ahom_{*,l+1}(\cu_m)|\\
&= \ahom_{l+1}(\cu_N) + |\ahom_{*,l+1}(\cu_N) - \ahom_{l+1}(\cu_N)| +  |\ahom_{l+1}(\cu_m) -  \ahom_{*,l+1}(\cu_m)|\\
&\leqslant \ahom_{l+1}(\cu_N) +  \sup_{e \in B_1} \E [ J_{l+1} (\cu_N, e, \ahom_{*, l+1}(\cu_N)) \indc_{\{\cu_N\in \mathcal{G}_l\}}]  \\ &\quad \quad +  \sup_{e \in B_1} \E [ J_{l+1} (\cu_m, e, \ahom_{*, l+1}(\cu_m)) \indc_{\{\cu_m\in \mathcal{G}_l\}}] \\
&\leqslant \ahom_{l+1}(\cu_N)  +  C3^{-(m-l)\alpha_0}
+C3^{(m-l)d}\exp (-c 3^{\frac{ld}{2(d+1)}}) \,.
\end{align*}
\smallskip 
\noindent 
For the other direction, we let $N \in \NN$ be such that $N > m.$   We will show that 
\begin{equation} \label{e.interlace}
	\ahom_{l+1}(\cu_N)  \leqslant \ahom_l(\cu_m) + C3^{-(N-m)}\,.
\end{equation}
If this is proven, then the claim of the lemma follows immediately upon combining it with the previous display. 

\smallskip
\noindent
To prove this claim, we tile the cube $\cu_N$ with cubes of the form $z + \cu_m, z \in 3^m \Zd,$ and fix $e \in B_1, |e| = 1.$ We define the function $w: \eta_*(\cu_N) \to \R,$ which is defined to be affine in the cubes $z + \cu_m$ that share at least one face with $\partial \cu_N,$ i.e., in such cubes, we set $w(x) := \ell_e(x).$  For all other cubes such that $z + \cu_m \cap \cu_N \neq \emptyset,$ we set $w(x) := \xi_l(x, z + \cu_m, e,0)$ if $x \in z + \cu_m.$ Observe that along the boundary edges the function $w$ is affine, gluing them together. 

Now, as $w \equiv \ell_e$ along the boundary cubes of size $3^m > 3^{l+1},$ it follows that $w$ is an admissible competitor for the variational problem defining $\frac12 e \cdot \a_{l+1}(\cu_N)e.$ This implies then that 
\begin{align*}
	&\frac12 e \cdot \a_{l+1}(\cu_N) e \leqslant \avsum_{z \in 3^m \Zd \cap \cu_N} \|\nabla w\|^2_{\underline{L}^2 (z + \cu_m)} + C3^{-(N-m)} \\
	&\quad  = \frac{|\cu_m|}{|\cu_N|} \Biggl[ \sum_{z \in \partial_*^{(m)}\cu_N} \|\nabla \ell_e\|^2_{\underline{L}^2 (z + \cu_m)} +  \sum_{z \in 3^m \Zd \cap \cu_N \setminus \partial_*^{(m)}\cu_N} \frac12 e \cdot \a_l(z + \cu_m) e \Biggr] + C3^{-(N-m)}\,. 
\end{align*}
Now, the number of boundary cubes in the boundary layer of size~$3^m$ within the cube~$\cu_N$ is~$d2^{d-1} 3^{N-m} ;$ since each of these is a good cube, by definition it follows that the boundary-layer term on the right-hand side above is controlled by~$K \frac{|\cu_m|}{|\cu_N|} d2^{d-1} 3^{N-m}$. Therefore the number of interior cubes is 
\begin{equation*}
	\frac{|\cu_N|}{|\cu_m|} - d 2^{d-1}3^{N-m}\,.
\end{equation*}
Inserting this count in the preceding display, taking expectations on both sides and using stationarity, we find
\begin{align*}
	e \cdot \ahom_{l+1}(\cu_N) &\leqslant \Bigl( 1 - d2^{d-1}3^{N-m}\frac{|\cu_m|}{|\cu_N|} \Bigr) e \cdot \ahom_l(\cu_m) e+ K \frac{|\cu_m|}{|\cu_N|}d 2^{d-1}3^{N-m} + C 3^{-(N-m)}\\
	&\leqslant e \cdot \ahom_l (\cu_m) e + K d \Bigl(\frac{2}{3^{N-m}} \Bigr)^{d-1} + C 3^{-(N-m)}\,.
\end{align*}
Absorbing the error terms together by slightly increasing $C$ if necessary, this is the claimed inequality \eqref{e.interlace}. 
\end{proof}

We are ready to define the \emph{homogenized tensor} $\ahom$. 
\begin{definition}
	We define 
	\begin{equation} \label{e.ahomdef}
		\ahom := \lim_{\rv{m,l \to \infty: m > l}} \a_l(\cu_m) \indc_{\{ \cu_{m}\in \mathcal{G}_l \}}\,. 
	\end{equation}
\end{definition}
This limit clearly exists, as can be seen by combining the preceding lemma with the fact that the sequence $\{\a_l(\cu_m)\indc_{\{ \cu_{m}\in \mathcal{G}_l \}}\}_m$ is approximately monotone by Corollary \ref{c.order}.

\smallskip
 
We are now ready to prove the main theorem of this section.

\begin{theorem}
\label{t.homogenization}
Let~$\beta \in (0,1)$. For each~$m\in\N$, let~$\mathcal{E}(m)$ be the random variable defined by
\begin{equation*}
\mathcal{E}(m) := \sum_{n= \lceil \beta m\rceil}^m 3^{n-m} \sum_{i=1}^d \avsum_{z \in 3^n \Zd \cap \cu_m} J_{\lceil \beta m\rceil}(z + \cu_n, e_i, \ahom e_i) \indc_{\{z+\cu_n \in \mathcal{G}_{\lceil \beta m\rceil}\}}
\,.
\end{equation*}
There exist~$\alpha(d,\lambda),\sigma(\beta,d) \in (0,\beta]$, constants~$c(\beta,d,\lambda)>0$ and~$C(\beta,d,\lambda)<\infty$, and a nonnegative random variable~$\X$ satisfying 
\begin{equation}
\label{e.X.integrability}
\P [\X > 3^m ] \leqslant 2\exp \bigl( -c3^{m\sigma}\bigr)\,,
\end{equation}
such that, for every~$m\in \N$ with~$3^m \geqslant \X$, 
we have that 
\begin{equation}
\label{e.goodness.beta}
\cu_m \in \mathcal{G}_{\lceil \beta m\rceil }
\end{equation}
and~$\mathcal{E}(m)$ satisfies the bound
\begin{equation}
\label{e.E.convergence}
\mathcal{E}(m) \leq C 3^{-m\alpha}\,.
\end{equation}
\end{theorem}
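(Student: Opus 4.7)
The plan is to set $l = l_m := \lceil \beta m\rceil$, first produce an expectation bound on $\mathcal{E}(m)$ from Proposition~\ref{p.iteration}, then control the probability that $\cu_m$ fails to belong to $\mathcal{G}_{l_m}$ via Lemma~\ref{l.poincare}, and finally assemble these into the random scale $\X$.

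For the expectation bound, fix $n \in [l_m, m]$. The functional $J_l(\cu_n, e, q)$ is a quadratic polynomial in $q$ with matrix $\a_{*,l}^{-1}(\cu_n)$, which is uniformly bounded on $\{\cu_n \in \mathcal{G}_l\}$ by Lemmas~\ref{l.uppbound} and~\ref{l.lowbound}, so a direct expansion gives
\begin{equation*}
J_l(\cu_n, e, \ahom e)\indc_{\{\cu_n \in \mathcal{G}_l\}} \leq J_l(\cu_n, e, \ahom_{*,l}(\cu_n) e)\indc_{\{\cu_n \in \mathcal{G}_l\}} + C|\ahom - \ahom_{*,l}(\cu_n)|.
\end{equation*}
The Fenchel gap $|\ahom_l(\cu_n) - \ahom_{*,l}(\cu_n)|$ is controlled in expectation by $\E[J_l(\cu_n, e, \ahom_{*,l}(\cu_n) e)\indc]$ via the identity $J_l(\cu, p, \a_{*,l}(\cu) p) = \tfrac12 p\cdot(\a_l - \a_{*,l}) p$, and Lemma~\ref{l.indep_of_l} bounds $|\ahom_l(\cu_n) - \ahom|$ at rate $3^{-\alpha_0(n-l)}$ up to Penrose errors. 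Combining these with Proposition~\ref{p.iteration} yields
\begin{equation*}
\sup_{e \in B_1}\E[J_l(\cu_n, e, \ahom e)\indc_{\{\cu_n\in\mathcal{G}_l\}}] \leq C 3^{-\alpha_0(n-l)} + C 3^{(n-l)d}\exp(-c 3^{l\sigma_0}),
\end{equation*}
with $\sigma_0 := d/(2(d+1))$. Because $\P$ is $\Zd$-stationary, the inner average $\avsum_{z \in 3^n \Zd \cap \cu_m}$ does not affect the expectation, so summing the geometric series in $n \in [l_m, m]$ produces $\E[\mathcal{E}(m)] \leq C 3^{-\alpha_0(m-l_m)} \leq C 3^{-\alpha_0(1-\beta)m}$ once the Penrose contribution is absorbed using $l_m = \lceil \beta m\rceil$.

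For the good-cube event, $\cu_m \in \mathcal{G}_{l_m}$ iff every one of the $3^{d(m-l_m)}$ triadic subcubes of size $3^{l_m}$ in $\cu_m$ is good; by Lemma~\ref{l.poincare} each fails with probability $\leq C\exp(-c 3^{l_m\sigma_0})$, so a union bound delivers
\begin{equation*}
\P[\cu_m \notin \mathcal{G}_{l_m}] \leq C 3^{d(1-\beta)m}\exp(-c 3^{\beta m\sigma_0}) \leq \exp(-c' 3^{\beta m \sigma_0/2})
\end{equation*}
for $m$ sufficiently large, which is stretched-exponentially small.

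Finally, I set
\begin{equation*}
\X := \sup\bigl\{3^m : \cu_m \notin \mathcal{G}_{l_m} \text{ or } \mathcal{E}(m) > C 3^{-m\alpha}\bigr\}
\end{equation*}
for some $\alpha \in (0, \alpha_0(1-\beta))$ and $C$ slightly larger than the constant from the expectation bound, and estimate $\P[\X > 3^m] \leq \sum_{k \geq m}\bigl(\P[\cu_k \notin \mathcal{G}_{l_k}] + \P[\mathcal{E}(k) > C 3^{-k\alpha}]\bigr)$. The first sum is stretched-exponentially small by the preceding paragraph. The second is the main obstacle: Markov's inequality applied to the expectation bound, even combined with the a.s.\ bound $\mathcal{E}(k) \leq C$ from Lemma~\ref{l.uppbound}, only yields polynomial decay in $3^k$, which is insufficient to produce the stretched-exponential tail claimed for $\X$. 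To bridge this gap, I would invoke a concentration inequality for Poisson functionals, such as a modified log-Sobolev or Efron--Stein-type inequality, exploiting that $\mathcal{E}(k)$ depends only on the point configuration inside $\cu_k$ and is Lipschitz under single-point additions or deletions. Such an input delivers a bound of the form $\P[\mathcal{E}(k) - \E\mathcal{E}(k) > 3^{-k\alpha}] \leq \exp(-c 3^{k\sigma_{\mathrm{conc}}})$ for some $\sigma_{\mathrm{conc}} > 0$, and then the desired $\P[\X > 3^m] \leq 2\exp(-c 3^{m\sigma})$ follows with $\sigma := \min(\beta\sigma_0/2, \sigma_{\mathrm{conc}})$.
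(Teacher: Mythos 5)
You correctly compute the expectation bound $\E[\mathcal{E}(m)] \lesssim 3^{-\alpha_0(1-\beta)m}$ from Proposition~\ref{p.iteration}, and your union bound for the event $\cu_m \notin \mathcal{G}_{\lceil\beta m\rceil}$ via Proposition~\ref{p.Penrose} matches the paper. You also honestly identify the crux: Markov's inequality applied to the expectation yields only polynomial decay for $\P[\mathcal{E}(k) > C3^{-k\alpha}]$, which cannot produce the required stretched-exponential bound on $\P[\X > 3^m]$.

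The problem is that your proposed fix --- invoking a general concentration inequality for Poisson functionals (modified log-Sobolev or Efron--Stein) and asserting that $\mathcal{E}(k)$ is Lipschitz under single-point modifications --- is not what the paper does and would not be easy to carry out. Two obstructions are worth naming. First, the Lipschitz claim is genuinely delicate: adding or deleting a single Poisson point can change the identity and geometry of the cluster $\eta_*(\cdot)$ (merging or splitting components, changing which subcubes are ``good''), so the vertical derivative of the variational quantities $J_l$ is not obviously small; making the needed sensitivity analysis rigorous would require large-scale regularity and Green's function decay, which the paper only establishes later (Section~\ref{s.regularity} and the appendix, both of which rely on this theorem), so your route risks circularity. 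Second, even granting a good second-moment sensitivity bound, Efron--Stein gives only a variance estimate and hence polynomial decay after Chebyshev, not the $\exp(-c3^{k\sigma})$ tails you need.

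The paper's actual argument for the concentration is more elementary and exploits the subadditivity structure directly. For $\lceil\beta m\rceil \leq k < n \leq m$, the quantity $J_{\lceil\beta m\rceil}(\cu_n,\cdot,\cdot)$ is $\mathcal{F}(3\cu_n)$-measurable, nonnegative, uniformly bounded, and by Lemma~\ref{l.subbadditivity} it is dominated, up to a deterministic error of size $3^{-(n-\lceil\beta m\rceil)/2}$, by the average of $J_{\lceil\beta m\rceil}(z+\cu_k,\cdot,\cdot)$ over $z \in 3^k\Zd\cap\cu_n$. Those $3^{d(n-k)}$ summands are independent after a standard finite-range decomposition (Poisson counts in disjoint regions are independent), each is bounded by a constant, and each has mean controlled by Proposition~\ref{p.iteration}. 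A one-sided concentration estimate for averages of independent, bounded, nonnegative random variables, namely \cite[Lemma~A.7]{AKM19}, then gives
\begin{equation*}
J_{\lceil\beta m\rceil}(\cu_n,e,\ahom e)\,\indc_{\{\cu_n\in\mathcal{G}_{\lceil\beta m\rceil}\}} \leqslant 2\,\E\bigl[J_{\lceil\beta m\rceil}(\cu_k,e,\ahom e)\,\indc\bigr] + 3^{-(n-\lceil\beta m\rceil)/2} + \O_1\bigl(C3^{-d(n-k)}\bigr)\,,
\end{equation*}
where the $\O_1$ term encodes the stretched-exponential concentration of the independent average around its mean. Optimizing $k := \tfrac12(n+\lceil\beta m\rceil)$ balances the iteration/subadditivity decay against this concentration rate, giving $J_{\lceil\beta m\rceil}(\cu_n,e,\ahom e)\,\indc \leqslant C3^{-\alpha_0(n-\lceil\beta m\rceil)} + \O_1\bigl(C3^{-\frac{d}{2}(n-\beta m)}\bigr)$. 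Summing over $n$ yields $\mathcal{E}(m) = \O_1(C3^{-\alpha_0 m})$ with the exponential-moment integrability built in, from which the tail bound on $\X$ follows by a union bound. This is the missing idea in your proposal: the stretched-exponential tail comes from the subadditive/independence structure of $J_l$, not from a general-purpose Poisson concentration inequality.
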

\begin{proof}
We first find a random minimal scale~$\Y$ for which~$3^m\geq \Y$ implies that~\eqref{e.goodness.beta} holds.
We have that, by a union bound and~\eqref{e.good},
\begin{align}
\P\Bigl [ \exists z \in 3^{\lceil \beta m\rceil}\Zd \cap \cu_m\,, \ z+\cu_{\lceil \beta m\rceil} \ \mbox{is not good} \Bigr ]
&
\leq
3^{d (m -\lceil \beta m\rceil)}
\P\Bigl [ \cu_{\lceil \beta m\rceil} \ \mbox{is not good} \Bigr ]
\notag \\ &
\leq 
2\cdot 3^{d (m -\lceil \beta m\rceil)}
\exp \Bigl( -c 3^{ \frac{d}{2d+2}\lceil \beta m\rceil}\Bigr )
\notag \\ &
\leq
2\exp \Bigl( -c 3^{ \frac{d\beta }{2d+2}m }\Bigr )
\,.
\end{align}
Therefore, by taking a union bound over all~$k\geq m$ yields
\begin{align}
\label{e.X.says.its.good}
\lefteqn{
\P\Bigl [ \exists k \geq m, \  \exists z \in 3^{\lceil \beta k\rceil}\Zd \cap \cu_k\,, \ z+\cu_{\lceil \beta k\rceil} \ \mbox{is not good} \Bigr ]
} \qquad &
\notag \\ & 
\leq 
\sum_{k=m}^\infty
\P\Bigl [ \exists z \in 3^{\lceil \beta k\rceil}\Zd \cap \cu_k\,, \ z+\cu_{\lceil \beta k\rceil} \ \mbox{is not good} \Bigr ]
\notag \\ &
\leq 
\sum_{k=m}^\infty
2\exp \Bigl( -c 3^{ \frac{d\beta  }{2d+2}k }\Bigr )
\leq 
2\exp \Bigl( -c 3^{ \frac{d\beta }{2d+2}m }\Bigr )
\,.
\end{align}
Therefore, if we define~$\Y$ by 
\begin{equation*}
\Y := 
\sup \Bigl \{
3^m
\, :\,
\exists z \in 3^{\lceil \beta k\rceil}\Zd \cap \cu_k\,, \ z+\cu_{\lceil \beta k\rceil} \ \mbox{is not good}
\Bigr \}\,,
\end{equation*}
then clearly by its definition we see that 
\begin{equation*}
3^m \geq \Y 
\implies 
\cu_m \in \mathcal{G}_{\lceil \beta m\rceil }\,,
\end{equation*}
and meanwhile what we have shown in~\eqref{e.X.says.its.good} is that 
\begin{equation*}
\Y = \O_{\frac{d\beta }{2d+2}}(C)\,.
\end{equation*}

\smallskip

We next find a larger scale~$\X \geqslant \Y$ such that~$3^m\geq \X$ ensures that~\eqref{e.E.convergence} holds.
We recall that $J_{\lceil \beta m\rceil}(\cu_m,e,\ahom_{*,l}(\cu_m)e)$ is~$\mathcal{F}(3\cu_m)$-measurable, and satisfies the inequality  (by Lemma~\ref{l.subbadditivity}), for every~$m\in\N$ and~$n\in \N \cap (\lceil \beta m\rceil, m ]$,	
	\begin{equation*}
		J_{\lceil \beta m\rceil}(\cu_m,e,\ahom e) \indc_{\{ \cu_m \in \mathcal{G}_{\lceil \beta m\rceil} \}}
		\leq 
		\avsum_{z\in3^n\Zd \cap \cu_m}
		J_{\lceil \beta m\rceil}(z+\cu_n,e,\ahom e) \indc_{\{ z+\cu_n \in \mathcal{G}_{\lceil \beta m\rceil} \}}
		+  C 3^{-(n-{\lceil \beta m\rceil})/2}\,. 
	\end{equation*}
making it approximately subadditive; finally, it satisfies the uniform bound~
\[J_{\lceil \beta m\rceil}(\cu_m, e,\ahom e)\indc_{\{ \cu_m \in \mathcal{G}_{\lceil \beta m\rceil}\}} \leqslant C |e|^2.\]
 A slight modification of \cite[Lemma A.7]{AKM19} (which is stated for exactly subadditive quantities, but the proof gives the same estimates hold in our setting) yields that, for every $m \geqslant n \geqslant {\lceil \beta m\rceil},$ we have 
\begin{align*}
&J_{\lceil \beta m\rceil}(\cu_n,e,\ahom e)\indc_{\{ \cu_n \in \mathcal{G}_{\lceil \beta m\rceil} \}} \\
&\quad \quad \leqslant 
2 \E \bigl[ J_{\lceil \beta m\rceil} (\cu_k,e, \ahom e)\indc_{\{ \cu_k \in \mathcal{G}_{\lceil \beta m\rceil} \}}\bigr] 
+ 3^{-(n-{\lceil \beta m\rceil})/2}
+ \mathcal{O}_1(C3^{-d(n-k)})\,. 
\end{align*}
Combining this with Proposition~\ref{p.iteration}, yields 
\begin{align} \label{e.Jlbound}
&J_{\lceil \beta m\rceil}(\cu_n,e,\ahom e) \indc_{\{ \cu_n \in \mathcal{G}_{\lceil \beta m\rceil} \}} \\
\notag & \quad \leqslant 
C3^{-\alpha_0(n-{\lceil \beta m\rceil})} + C3^{(n-{\lceil \beta m\rceil})d}\exp (-c 3^{\frac{d}{2(d+1)}{\lceil \beta m\rceil}}) +   \mathcal{O}_1(C3^{-d(n-k)})\\
\notag & \quad \leqslant C3^{-\alpha_0(n-{\lceil \beta m\rceil})} +  \mathcal{O}_1(C3^{-d(n-k)})\,.
\end{align}
We now select~$k:= \frac12(n + \lceil \beta m\rceil )$ to obtain, after redefining~$\alpha_0$, 
\begin{equation*}
J_{\lceil \beta m\rceil}(\cu_n,e,\ahom e) \indc_{\{ \cu_n \in \mathcal{G}_{\lceil \beta m\rceil} \}}
\leqslant C3^{-\alpha_0 (n-\lceil \beta m\rceil)} +  \mathcal{O}_1(C3^{-\frac d2(n-\beta m)})
\end{equation*}
Plugging in \eqref{e.Jlbound} we obtain 
\begin{align*}
	\mathcal{E}(m) &\leqslant C\sum_{n=\lceil \beta m\rceil}^m 3^{n - m}  \bigl( C3^{-\alpha_0 (n-\lceil \beta m\rceil)} +  \mathcal{O}_1(C3^{-\frac d2(n-\beta m)}) \bigr)\\
	&\leqslant 
	C3^{-\alpha_0 (1-\beta)m} +  \mathcal{O}_1(Cm3^{-(1-\beta )m})
	\leq 
	\mathcal{O}_1 (C3^{-\alpha_0 m})\,,
\end{align*}
where in the last inequality we redefined~$\alpha_0$.
We deduce that
\begin{equation*}
\P\bigl [ \mathcal{E}(m) > C3^{-\frac12\alpha_0 m} \bigr ]
\leq 
2\exp\Bigl ( -c 3^{\frac12\alpha_0 m}  \Bigr )
\,.
\end{equation*}
Now we define the minimal scale~$\X$ by
\begin{equation*}
\X :=
\Y + 
\sup \bigl \{ 3^m \,:\,
\mathcal{E}(m) > C3^{-\frac12\alpha_0 m} \bigl \}
\,.
\end{equation*}
Cleary~$\X$ has the property that~$3^m\geq \X$ implies~\eqref{e.goodness.beta} and~\eqref{e.E.convergence}, by definition. 
To check~\eqref{e.X.integrability}, we use a union bound to obtain
\begin{align*}
\P\bigl [ \X - \Y \geq 3^m \bigr ]
\leq 
\sum_{k=m}^\infty
\P\bigl [ \mathcal{E}(k) > C3^{-\frac12\alpha_0 k} \bigr ]
\leq 
\sum_{k=m}^\infty
2\exp\Bigl ( -c 3^{\frac12\alpha_0 k}  \Bigr )
\leq 
C\exp\Bigl ( -c 3^{\frac12\alpha_0 m}  \Bigr )\,.
\end{align*}
This implies that~$\X - \Y = \O_{\frac12\alpha_0}(C)$ and thus
\begin{equation*}
\X = \Y + (\X - \Y)
\leq \O_{\frac{d\beta }{2d+2}}(C) + \O_{\frac12\alpha_0}(C) \leq \O_{\frac12\alpha_0}(C)\,.
\end{equation*}
This completes the proof.
\end{proof}

\section{Convergence of Dirichlet Problem} \label{s.homogenization}

In this section we prove quantitative estimates on the homogenization error for the Dirichlet problem. The estimate we obtain here is suboptimal, but it will be improved later. We need this preliminary version with small but positive~$\alpha$ in order to obtain the large-scale regularity result of Theorem~\ref{t.large-scale.C01.intro}. The latter result will then be used to improve the~$\alpha$, as we will see in the following sections. 

\smallskip

To this end, we let $U_0 \subseteq \Rd$ denote a bounded Lipschitz domain, and we let 
\begin{equation*}
	U_m := 3^m U_0\,. 
\end{equation*}
By rescaling suitably, we may assume that 
\begin{equation*}
	U_0 \subseteq \cu_0\,,
\end{equation*}
so that~$U_m \subseteq \cu_m$.

\begin{theorem}[Homogenization of the Dirichlet problem]
\label{t.Dirichlet} 
Suppose~$\lambda > \lambda_c$.
There exists a scalar matrix~$\ahom$, constants~$c(d,\lambda),s(d), \alpha(d,\lambda) \in (0,\frac12]$ 
and a nonnegative random variable~$\X$ satisfying 
\begin{equation*}
\P[ \X > t ] \leq 2 \exp \bigl( - c t^{s} \bigr ), \quad  \forall t \in [1,\infty)\,,
\end{equation*}
such that, for every~$m \in \N$ with~$3^m > \X$ and every function~\rv{$\uhom \in C^2(U_m) \cap H^1_0(U_m)$}, the Dirichlet problem \rv{for the function~$u: \eta_*(U_m) \to \R$ solving } 
	\begin{equation*}
	\left\{
	\begin{aligned}
		& \mathcal{L} u = \nabla \cdot \ahom \nabla \uhom & \mbox{on} & \ \rv{\eta_*( U_m) \setminus \{ x \in U_m : \dist(x,\rv{\partial U_m}) <2 \}} \,,
		\\ &
		u = 0 & \mbox{on} & \ \rv{\eta_* (U_m) \cap \{ x \in U_m : \dist(x,\rv{\partial U_m}) <2 \}} \,,
	\end{aligned}
	\right.
\end{equation*}
where the graph Laplacian~\rv{$\mathcal{L}$ is as defined in~\eqref{e.Lapdef} with~$U=U_m$}, has a unique solution~$u$ which satisfies: \rv{there exists a deterministic constant~$C<\infty$} which depends also on~$U_0$, the estimate
\begin{equation*}
\| u - \uhom \|_{{L}^2(\eta_*(U_m))} 
\leq  
C 3^{-m\alpha} 
\| \uhom \|_{{L}^2(\eta_*(U_m))} 
\,.
\end{equation*}
\end{theorem}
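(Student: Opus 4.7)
The plan is to follow the standard two-scale expansion strategy from Chapter 6 of~\cite{AKM19}, adapted to the coarse-grained graph setting, using Theorem~\ref{t.homogenization} as the key input providing an algebraic rate. Set $n := \lceil \beta m \rceil$ with $\beta \in (0,1)$ to be optimized. For each cube $z + \cu_n \in \mathcal{G}_n$ intersecting $U_m$ and each coordinate direction $e_i$, let $\xi_{z,i}$ denote the maximizer of $J_n(z+\cu_n, e_i, \ahom e_i)$ normalized to have mean zero on $z+\cu_n$. By the representation formula~\eqref{e.secondvarmaster}, the graph Dirichlet energy of $\xi_{z,i}$ on $z + \cu_n$ is exactly $2J_n(z+\cu_n, e_i, \ahom e_i)$, which the control of $\mathcal{E}(m)$ makes summable across the mesoscale partition; by the first variation formula~\eqref{e.firstvarmaster}, the coarse-grained flux associated to $\ell_{e_i} + (\xi_{z,i} - \ell_{e_i})$ agrees with $\ahom e_i$ up to a homogenization error controlled by the same quantity.

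The comparison function is the two-scale expansion
\begin{equation*}
	w(x) := \uhom(x) + \sum_{i=1}^d \sum_{z \in 3^n\Zd \cap U_m} \chi_z(x)\, \partial_i \uhom(z)\, \bigl(\xi_{z,i}(x) - \ell_{e_i}(x)\bigr)\,,
\end{equation*}
where $\{\chi_z\}$ is a smooth partition of unity subordinate to a cover of $U_m$ by cubes $z+\cu_n$, restricted to vanish in a boundary layer of thickness $\sim 3^n$ so that $w = 0$ on the boundary of the graph domain. Testing the equation $\mathcal{L}(u - w) = \mathcal{L}u - \mathcal{L}w$ against $u - w$ and expanding $\mathcal{L}w$ term by term, one sees that the ``interior" contributions of $\mathcal{L}w$ cancel with $\nabla \cdot \ahom \nabla \uhom$ up to three error families: (i) commutators where $\nabla$ falls on the cutoff $\chi_z$, controlled by $3^{-2n} \|\nabla \uhom\|_{L^2}^2 \cdot 3^{2n}$ using the Caccioppoli estimate (Lemma~\ref{l.caccioppoli}) on each $\xi_{z,i}$; (ii) Taylor remainders where $\partial_i \uhom(z)$ differs from $\partial_i \uhom(\cdot)$ inside $z+\cu_n$, giving $3^{2n} \|\nabla^2 \uhom\|_{L^2}^2$; and (iii) the intrinsic homogenization defect coming from the mismatch between the coarse-grained flux and $\ahom e_i$, which is precisely what $\mathcal{E}(m)$ controls.

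Combining these contributions, one obtains on the event $\{3^m \geq \X\}$ (with $\X$ from Theorem~\ref{t.homogenization}) the energy estimate
\begin{equation*}
	\|\nabla(u - w)\|_{\underline{L}^2(\eta_*(U_m))}^2 \leq C \bigl(3^{2n} + \mathcal{E}(m)\bigr) \|\uhom\|_{H^2(U_m)}^2 + \text{(boundary layer)}\,,
\end{equation*}
with the boundary term controlled by the volume fraction $3^{-(m-n)}$ of the thickened boundary together with the standard trace/Hardy inequality applied to $\uhom$. Applying the large-scale Poincar\'e inequality of Lemma~\ref{l.poincare} (which is available on~$\cu_m$ since $3^m \geq \X \geq \mathcal{M}$) converts this to an $\underline{L}^2$ bound with a gain of $3^{2m}$, and the elementary bound $\|w - \uhom\|_{\underline{L}^2} \leq C 3^n \|\nabla \uhom\|_{L^\infty}$ (using the mean-zero normalization of $\xi_{z,i} - \ell_{e_i}$) completes the triangle inequality. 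After rescaling the factor $3^m$ into the $L^2$ norm and balancing by choice of $\beta$ (so that $3^{-m(1-\beta)}$ and $\mathcal{E}(m)^{1/2} \leq C 3^{-m\alpha/2}$ are comparable), one concludes the claimed rate with a smaller but still positive exponent $\alpha$.

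The main obstacle will be the boundary-layer analysis: the correctors $\xi_{z,i}$ are only defined on good cubes, so cubes $z + \cu_n$ straddling $\partial U_m$ must be handled separately, and $w$ must be modified so that it is both graph-admissible and vanishes where required. A secondary technical nuisance is the careful bookkeeping between function values at graph points $x \in \eta_*$ and their coarsenings $[\cdot]_n$ on $\Zd$; the first variation formulas~\eqref{e.firstvarmu*}--\eqref{e.firstvarmaster} involve coarsened test functions, so each time one differentiates the cutoff $\chi_z$ against $\xi_{z,i}$ one must insert and control Poincar\'e-type errors at scale $3^n$, which fortunately aggregate to the same order as the other error terms via Lemma~\ref{l.poincare.2}.
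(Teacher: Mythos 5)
Your patchwork two-scale expansion---partitioning $U_m$ into mesoscopic cubes $z + \cu_n$, placing a local corrector on each, and gluing with a smooth partition of unity $\{\chi_z\}$---is a legitimate alternative to the paper's organization. The paper instead uses a single whole-cube corrector $\phi_{m,e_i} := \xi_l(\cdot,\cu_m,-e_i,0) - \ell_{e_i}$ paired with the continuously-varying coefficient $\partial_{x_i}\uhom(x)$, and funnels the corrector $\underline{L}^2$ size and the flux $\underline{H}^{-1}$ estimate through the multiscale Poincar\'e inequality (Lemmas~\ref{e.phime-est} and~\ref{l.fluxrates}). Your version trades that for cutoff commutators and Taylor remainders from freezing $\partial_i\uhom$ at cube centers; these aggregate to the same order and both routes can deliver a suboptimal algebraic rate.

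However, there is a concrete error in your choice of local corrector. You take $\xi_{z,i}$ to be the maximizer of $J_n(z+\cu_n, e_i, \ahom e_i)$ and, via~\eqref{e.secondvarmaster}, correctly observe that its Dirichlet energy equals $2J_n(z+\cu_n, e_i, \ahom e_i)$ and is hence \emph{small}. But this is precisely why it cannot serve as the building block of the two-scale expansion: after the mean-zero normalization, $\xi_{z,i}$ is approximately constant, so $\xi_{z,i} - \ell_{e_i} \approx -\ell_{e_i}$, and your ansatz collapses to
\begin{equation*}
w(x) \approx \uhom(x) - \sum_{i=1}^d \sum_z \chi_z(x)\,\partial_i\uhom(z)\,\ell_{e_i}(x) \approx \uhom(x) - x\cdot\nabla\uhom(x)\,,
\end{equation*}
which is not a small perturbation of $\uhom$. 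The estimate you invoke, $\|w - \uhom\|_{\underline{L}^2} \lesssim 3^n\|\nabla\uhom\|_{L^\infty}$, therefore fails: $\ell_{e_i}$ is not mean-zero on $z+\cu_n$, and $\|\ell_{e_i}\|_{\underline{L}^2(z+\cu_n)}\sim |z|$, giving an $O(3^m)$ contribution rather than $O(3^n)$. What you want is the maximizer of $J_n(z+\cu_n, -e_i, 0)$; by the first variation~\eqref{e.firstvarmaster} with $q=0$, its gradient represents $e_i$ against graph-harmonic test gradients, so that $\phi_{z,i} := \xi_n(\cdot, z+\cu_n, -e_i, 0) - \ell_{e_i}$ is a genuine corrector whose gradient has vanishing spatial average on $z+\cu_n$ and, after normalization, satisfies $\|\phi_{z,i}\|_{\underline{L}^2(z+\cu_n)} \lesssim 3^n$. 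In this corrected version $\mathcal{E}(m)$ enters not as the corrector energy itself but where it belongs: as control on the flux mismatch between $[\g_{e_i}(z+\cu_n)]$ and $\ahom e_i$, which is your error family (iii). Once this substitution is made, the remaining inventory of errors you list---commutators where $\nabla$ falls on $\chi_z$, Taylor remainders from $\partial_i\uhom(z)$ versus $\partial_i\uhom(\cdot)$, and the homogenization defect---is the right one, and the argument can be carried through.
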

Setting this up at the discrete level requires some preliminary notation. For homogenization we set up the discrete problem on a very large, Lipschitz domain $U_m.$ To prescribe a Dirichlet boundary condition we prefer to allow ourselves a mesoscopic parameter, i.e., a second length-scale to describe the size of the boundary: thus, we let $l \leqslant b \leqslant m,$ \rv{where, we recall that the minimal scale~$l$ at which we coarse-grain is as introduced at the end of Section 2.} The optimal choice for $b \in \N$, of course, will depend on the regularity of the given boundary datum. Therefore, we set
\begin{equation*}
\partial_*^{(b)} U_m := \bigcup\{ z + \cu_b: z \in 3^b \Zd \cap \cu_m,\, \mathrm{dist}(z + \cu_b, \partial U_m) = 0\}\,.
\end{equation*}
Essentially, this is a thickened boundary, and the length-scale $b$ is a free parameter that we will optimize in the end.  We fix a smooth datum function $u_b: U_m \to \R$ and seek to study the minimizer for the variational problem 
\begin{equation} \label{e.dirichlet_discrete}
	\min \Bigl\{  \avsum_{x \in \eta_*(U_m)} \Bigl [ \sum_{y \in \eta_*(U_m):y \sim x} \frac12 (u(x) - u(y))^2 + f(x) u(x) \Bigr] : u (x) = u_b(x) \mbox{ if } x \in \partial_*^{(b)} U_m\Bigr\}\,. 
\end{equation}

Our goal is to provide a two-scale convergence proof of quantitative stochastic homogenization for the Dirichlet problem. Toward this goal, for any~\rv{$e \in B_1,$} we recall the maximizer~$\xi_l(\cdot, \cu_m,-e,0)$ of~$J_l(\cu_m,-e,0)$ normalized such that~$\int_{\cu_m} \xi_l(x,\cu_m,-e,0)\,dx = 0$ \rv{(see Lemmas~\ref{l.J.rep.formula} and~\ref{l.variations})}. We set 
\begin{equation*}
	\phi_{m,e} (\cdot) := \xi_l(\cdot,\cu_m,-e,0) - \ell_e(\cdot)\,.
\end{equation*}
Note that $\phi_{m,e}$ depends on $l$ but we will suppress this dependence to aid readability. We let $\uhom $ denote the unique solution to the homogenized equation 
\begin{equation}
	\label{e.uhomdef}
	\begin{aligned} 
		\nabla \cdot \ahom \nabla \uhom &= f\quad \mbox{ in } U_m\\
		\uhom &= u_b \quad \mbox{ on } \partial U_m\,.
	\end{aligned}
\end{equation}
We next present some preliminary lemmas about the estimates for the finite volume correctors $\phi_{m,e}$.

\begin{lemma}
	\label{e.phime-est} Let $\X$ denote the minimal random scale from Theorem \ref{t.homogenization}, so that \eqref{e.X.integrability} holds.
Then, there exists $\alpha \in (0,\sfrac12],$ such that for every $e \in B_1,$ and for every $m \in \N$ with $3^m > \X,$  we have 
\begin{equation*} 
3^{-m}	\|\phi_{m,e}\|_{\underline{L}^2(\rv{\eta_*(\cu_m)})} \leqslant C3^{-m\alpha}\,.
\end{equation*}
\end{lemma}
\begin{proof}
First, by the triangle inequality we have 
\begin{equation} \label{e.triangleinequality-5.2}
	\|\phi_{m,e}\|_{\rv{\underline{L}^2(\eta_*(\cu_m))}} \leqslant \|[\phi_{m,e}]_l\|_{\underline{L}^2(\cu_m)} +  \|\phi_{m,e} - [\phi_{m,e}]_l\|_{\rv{\underline{L}^2(\eta_*(\cu_m))}}\,. 
\end{equation}
For the latter term, by Poincar\'e's inequality (see Lemma~\ref{l.Poincare}),  we find 
\begin{align*}
	  3^{-m}\|\phi_{m,e} - [\phi_{m,e}]_l\|_{\rv{\underline{L}^2(\eta_*(\cu_m))}} &\leqslant K^{\sfrac12} 3^{l-m} \|\nabla \phi_{m,e}\|_{\rv{\underline{L}^2(\eta_*(\cu_m))}} \leqslant C 3^{-(m-l)}\,.
\end{align*}
For the first term, we use the multiscale Poincar\'e inequality to estimate
\begin{align*}
&	\|[\phi_{m,e}]_l\|_{\underline{L}^2(\cu_m)} \leqslant C \sum_{k=l}^m 3^k \Biggl[ \avsum_{z \in 3^k \Zd \cap \cu_m} |(\nabla [\phi_{m,e}]_l)_{z + \cu_k}|^2\Biggr]^{\sfrac12}\\
	&\qquad \qquad= C \sum_{k=l}^m 3^k \Biggl[ \avsum_{z \in 3^k \Zd \cap \cu_m} \!\!\!\! \bigl \vert (\nabla [\xi (\cdot,\cu_m,-e,0)]_l)_{z {+} \cu_k} {-}(\nabla [\xi (\cdot,z+ \cu_k,-e,0)]_l)_{z + \cu_k}  \rv{\bigr\vert}\Biggr]^{\sfrac12}\\
	&\qquad \qquad \leqslant \sum_{k=l}^m 3^{k- \alpha k} \leqslant 3^{m(1-\alpha)}\,,
\end{align*}
\rv{where in the last step we used Theorem~\ref{t.homogenization}. 
Therefore, plugging the last two inequalities in~\eqref{e.triangleinequality-5.2}, and choosing~$l,m$ so that~$m-l > \frac23 m$, we obtain 
\begin{equation*}
3^{-m}	\|\phi_{m,e}\|_{\underline{L}^2(\eta_*(\cu_m))} \leqslant C3^{-m\alpha} + C3^{-(m-l)} \leqslant C3^{-m\alpha}\,.
\end{equation*}}
\end{proof}

We defined the coarse-grained fluxes as follows; these will play a central role in quantifying the convergence of the Dirichlet problem. For any $m  \in \N$ such that $3^m > \X$, where $\X$ is as in the statement of Theorem \ref{t.homogenization}, we define

\begin{equation}
	\label{e.fluxdef}
	[\g_{ e_i,j}(\cu_m)]_{l} (z) := \frac{1}{|\cu_l|}\sum_{x \in (z + \cu_l) \cap \eta_*(\cu_m)} \sum_{y \in \eta_*(\cu_m), \,y \sim x}  (x_j -y_j)(x_i - y_i + \phi_{m,e_i}(x) - \phi_{m,e_i}(y))\,,
\end{equation}
for any $z \in 3^l \Zd \cap \cu_m.$

\begin{lemma} \label{l.fluxrates}
Let~$\X$ be as in the statement of Theorem~\ref{t.homogenization}. There exists~$\alpha \in (0,\sfrac12]$ such that for every~$e \in B_1,$ and for every $m \in \N$ such that $3^m > \X,$ we have 
	\begin{equation*}
	3^{-m}	\|[\g_{e_i,j}(\cu_m)]_{l} - \a_{l,ij}(\cu_m)\|_{\underline{H}^{-1}(\cu_m)} \leqslant 3^{-m\alpha}\,.
	\end{equation*}
\end{lemma}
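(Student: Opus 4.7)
The plan is to estimate $F(z) := [\g_{e_i,j}(\cu_m)]_l(z) - \a_{l,ij}(\cu_m)$ in $\underline{H}^{-1}(\cu_m)$ by the multiscale Poincar\'e inequality (exactly as in the proof of Lemma~\ref{l.convex_duality_lemma}) and then controlling the subcube averages of $F$ at every triadic scale between $l$ and $m$. Throughout I write $u^{(i)} := \ell_{e_i} + \phi_{m,e_i} = \xi_l(\cdot,\cu_m,-e_i,0)$ for the global graph-harmonic corrector, so that by definition $[\g_{e_i,j}(\cu_m)]_l(z)$ is the average, over the $l$-cube at $z$, of the $e_j$-component of the flux of $u^{(i)}$.

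First I would verify that the spatial average of $[\g_{e_i,j}(\cu_m)]_l$ over $z \in 3^l\Zd\cap\cu_m$ is (essentially) equal to $\a_{l,ij}(\cu_m)$. Reducing to $\frac{1}{|\cu_m|}\sum(x_j-y_j)(u^{(i)}(x)-u^{(i)}(y))$ and testing the first-variation identity~\eqref{e.firstvarmaster} for $u^{(i)}$ (with $p=-e_i$, $q=0$) against the graph-harmonic function $w := u^{(j)}$ gives
\begin{equation*}
\tfrac{1}{|\cu_m|}\sum(e_i\cdot(x-y))(u^{(j)}(x)-u^{(j)}(y)) = \tfrac{1}{|\cu_m|}\sum(u^{(i)}(x)-u^{(i)}(y))(u^{(j)}(x)-u^{(j)}(y))\,.
\end{equation*}
Swapping $i,j$ and combining with polarization of~\eqref{e.secondvarmaster} identifies this common value as $e_j\cdot\a_l(\cu_m)e_i = \a_{l,ij}(\cu_m)$.

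Next I would apply the multiscale Poincar\'e inequality~\cite[Proposition A.2]{AD16} to $F$ to obtain
\begin{equation*}
3^{-m}\|F\|_{\underline{H}^{-1}(\cu_m)} \leq C\sum_{k=l}^m 3^{k-m}\biggl(\avsum_{z\in 3^k\Zd\cap\cu_m}|(F)_{z+\cu_k}|^2\biggr)^{\sfrac12}\,,
\end{equation*}
and split, for each scale $k$ and subcube $z+\cu_k$,
\begin{equation*}
(F)_{z+\cu_k} = \bigl[(\text{avg of }[\g_{e_i,j}(\cu_m)]_l\text{ on }z+\cu_k) - \a_{l,ij}(z+\cu_k)\bigr] + \bigl[\a_{l,ij}(z+\cu_k) - \a_{l,ij}(\cu_m)\bigr]\,.
\end{equation*}
The second bracket decays algebraically on the event $\{3^m\geq \X\}$ by combining Corollary~\ref{c.order}, Lemma~\ref{l.indep_of_l}, and Theorem~\ref{t.homogenization}. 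For the first bracket I would introduce the local corrector $\tilde u^{(i)} := \xi_l(\cdot, z+\cu_k, -e_i, 0)$, whose corresponding flux has subcube average exactly $\a_{l,ij}(z+\cu_k)$ by the identity just proved at the scale of $z+\cu_k$. The difference is therefore the average on $z+\cu_k$ of the flux of $u^{(i)}-\tilde u^{(i)}$, and the quadratic response~\eqref{e.secvarJ} (testing the variational problem for $\tilde u^{(i)}$ with the restriction of $u^{(i)}$) combined with Caccioppoli's inequality~\eqref{e.cacciop} controls this by an appropriate $L^2$-norm of $\nabla(u^{(i)}-\tilde u^{(i)})$, which in turn is bounded by an energy defect of the form $J_l(z+\cu_k,-e_i,\cdot) - J_l(\cu_m,-e_i,\cdot)$, plus a term proportional to $|\a_{*,l}(z+\cu_k)e_i - \ahom e_i|$ controlled by Lemma~\ref{l.variance}.

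Summing the squares over the subcubes at scale $k$ produces (a subsum of) the random variable $\mathcal{E}(m)$ from Theorem~\ref{t.homogenization}; on the event $3^m\geq \X$ (with $l$ chosen as $\lceil\beta m\rceil$ for a suitable small $\beta$) this is bounded by $C3^{-m\alpha}$, and a routine geometric-series computation $\sum_{k=l}^{m} 3^{k-m}\cdot 3^{-\alpha k/2}\lesssim 3^{-m\alpha/2}$ then delivers the claimed bound after possibly decreasing $\alpha$. The step I expect to be the main obstacle is the first bracket: carefully relating the subcube average of the \emph{globally} defined coarsened flux $[\g_{e_i,j}(\cu_m)]_l$ to the intrinsic local coarse-grained coefficient $\a_{l,ij}(z+\cu_k)$ requires using the first variation~\eqref{e.firstvarmaster} and the quadratic response~\eqref{e.secvarJ} of the master quantity $J_l$ \emph{simultaneously} at two different scales, with careful handling of the boundary layers $\partial_*^{(l)}(z+\cu_k)$ along which the restriction of $u^{(i)}$ is not the Dirichlet datum of the local problem.
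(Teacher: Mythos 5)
Your proposal follows the same route as the paper's proof: you identify the global average of $[\g_{e_i,j}(\cu_m)]_l$ with $\a_{l,ij}(\cu_m)$ via the first-variation formula~\eqref{e.firstvarmaster} and polarization of~\eqref{e.secondvarmaster}, apply multiscale Poincar\'e, split each subcube average into a flux-mismatch term and a coefficient-mismatch term $\a_{l,ij}(z+\cu_k)-\a_{l,ij}(\cu_m)$, control the former by quadratic response~\eqref{e.secvarJ} and the latter by the iteration bounds from Theorem~\ref{t.homogenization}, and close by a geometric sum. The only cosmetic difference is that the paper controls the flux-mismatch bracket by applying quadratic response directly to the averaged coarsened fluxes, with no intervening Caccioppoli step, and folds the final bound into the quantity $\mathcal{E}(m)$ from Theorem~\ref{t.homogenization} rather than citing Lemma~\ref{l.variance}; your version simply makes more of the estimation chain explicit.
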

\begin{proof}
	\emph{Step 1}. Recalling the definition of $\phi_{m,e},$ we use the first variation formula \eqref{e.firstvarmaster} with the choices $q = 0, p = -e_j,$ and $w(\cdot) = \xi_l(\cdot,\cu_m,-e_i,0);$ this yields that 
	\begin{align*}
		&\lefteqn{\frac1{|\cu_m|}\sum_{x,y \in \eta_*(\cu_m),\,y \sim x}  (x_j-y_j)(x_i - y_i + \phi_{m,e_i}(x) - \phi_{m,e_i}(y)) }\\
		&\quad = \frac1{|\cu_m|}\sum_{x,y \in \eta_*(\cu),\,y \sim x} (\xi_l(x,\cu_m,-e_i,0) - \xi_l(y,\cu_m, -e_i,0))(\xi_l(x,\cu_m,-e_j,0) - \xi_l(y,\cu_m, -e_j,0)) \\
		&\quad = e_j \cdot \a_l(\cu_m)e_i\,, 
	\end{align*}
where the last equality is obtained by polarizing \eqref{e.secondvarmaster}. 

Using the definition of the coarse-grained fluxes, the last identity rewrites as 
\begin{equation} \label{e.balance_of_fluxes}
 \avsum_{\zeta \in 3^l\Zd \cap \cu_m}  [\g_{e_i,j}(\cu_m)]_{l}(\zeta) = e_j \cdot \a(\cu_m)e_i\,.
\end{equation}

\smallskip
\emph{Step 2.} Therefore by the multiscale Poincar\'e inequality, and then triangle inequality for the rest of the page, we find that for any $j \in \{1,\ldots, d\},$ we have 
\begin{align*}
&{\lefteqn\| [\g_{e_i,j}(\cu_m)]_{l} - e_j \cdot \a(\cu_m) e_i\|_{\underline{H}^{-1}(\cu_m)} \leqslant C \|[\g_{m,e_i,j}]_{l} - e_j\cdot \a_l(\cu_m) e_i\|_{\underline{L}^2(\cu_m)} }
\\ &
\qquad \qquad \qquad 
+ C \sum_{n=l}^m 3^n \Biggl( \avsum_{z \in 3^n \Zd \cap \cu_m } \bigl| ([\g_{e_i,j}(\cu_m)]_{l})_{z+\cu_n} - e_j \cdot \a_l(\cu_m) e_i\bigr|^2 \Biggr)^{\!\!\sfrac12}
\\& \qquad \qquad \quad 
\leqslant 
 C \|[\g_{m,e_i,j}]_{l} - e_j\cdot \a_l(\cu_m) e_i\|_{\underline{L}^2(\cu_m)} \\
 &\qquad \qquad \qquad + C\sum_{n=l}^m 3^n \Biggl( \avsum_{z \in 3^n \Zd \cap \cu_m} \bigl| ([\g_{e_i,j}(\cu_m)]_l)_{z+\cu_n} - ([\g_{e_i,j}(z+\cu_n)]_l)_{z+\cu_n} \bigr|^2\Biggr)^{\!\!\sfrac12}\\
 &\qquad \qquad \qquad + C\sum_{n=l}^m 3^n \Biggl( \avsum_{z \in 3^n \Zd \cap \cu_m} \bigl|  ([\g_{e_i,j}(z+\cu_n)]_l)_{z+\cu_n} - e_j \cdot \a_l(\cu_m)e_i\bigr|^2\Biggr)^{\!\!\sfrac12}\,.
\end{align*}
Now, repeating the argument for \eqref{e.balance_of_fluxes}, we find 
\begin{equation*}
([\g_{e_i,j}(z+\cu_n)]_l)_{z+\cu_n} = \avsum_{\zeta \in 3^l\Zd \cap (z+\cu_n)} [\g_{e_i,j}(z+\cu_n)]_l(\zeta) = e_j \cdot \a_l(z+\cu_n)e_i\,.
\end{equation*}
Plugging this into the previous display for the third line, and bounding the middle line using quadratic response, we get 
\begin{align*}
&{\lefteqn\| [\g_{e_i,j}(\cu_m)]_{l} - e_j \cdot \a(\cu_m) e_i\|_{\underline{H}^{-1}(\cu_m)}}\\
&\quad \leqslant C + C \sum_{n=l}^m 3^n \Biggl(  \avsum_{z \in 3^n\Zd \cap \cu_m} |\a_l(z+\cu_n) - \a_l(\cu_m) \bigr|^2\Biggr)^{\!\!\sfrac12}\,. 
\end{align*}
The argument is then completed similarly to the previous lemma. 
\end{proof}
We begin with the following two-scale expansion computation, for which we must define a discrete notion of the~$H^{-1}$ norm by duality. 

Given~$f: \eta_*(\cu_m) \to \RR$ with $f (x) \equiv 0$ when~$x \in \partial_*^{(b)} \cu_m,$ we define 
\begin{align*}
	\|f\|_{\underline{H}^1(\eta_*(\cu_m))} := \Biggl(\frac1{|\cu_m|}\sum_{x,y\in \eta_*(\cu_m),\,y\sim x} (f(x) - f(y))^2 \Biggr)^{\!\!\sfrac12}\,.
\end{align*}
This norm is associated with an inner product defined in the natural way, namely, given two functions $f,g: \eta_*(\cu_m) \to \RR$ with $f (x) = g(x) = 0$ for $x \in \partial_*^{(b)}\cu_m,$ we find 
\begin{equation*}
	\langle f, g \rangle_{\underline{H}^1(\eta_*(\cu_m))} :=  \frac1{|\cu_m|}\sum_{x,y \in \eta_*(\cu_m),\,y\sim x} (f(x) - f(y))(g(x) - g(y))  \,.
\end{equation*}
The dual norm, i.e., the~$H^{-1}$ norm on the random graph~$\eta_*(\cu_m)$ of a function~$f: \eta_*(\cu_m) \to \R$ is then defined via 
\begin{equation*}
	\|f\|_{\underline{H}^{-1}(\eta_*(\cu_m))} := \sup\bigl\{ \langle f, v \rangle_{\underline{H}^1(\eta_*(\cu_m))} | v : \eta_*(\cu_m) \to \RR, v|_{\partial_*^{(b)} \cu_m} \equiv 0, \|v\|_{H^1(\eta_*(\cu_m))} \leqslant 1   \bigr\} \,.
\end{equation*}

\begin{lemma}
	\label{e.two-scale-exp}
Let~$\uhom \in C^2(\Rd),$ and let~$m \in \N$ be such that~$3^m > \X,$ where~$\X$ is the minimal random scale from Theorem \ref{t.homogenization}. Then, for any~$x \in \rv{\eta_*(\cu_m ) \setminus \partial_*^{(b)} \cu_m},$ we set
\begin{equation}
	\label{e.2scexp}
u(x) := \uhom(x) + \sum_{i=1}^d \partial_{x_i} \uhom (x)\phi_{m,e_i}(x)\,. 
\end{equation}
Then, there exists a constant~$C(d,\lambda)<\infty$ such that  
\begin{equation} \label{e.2scexp_plugin}
	\mathcal{L}u(x) = 	\sum_{i,j=1}^d \frac12 \partial_{x_i}\partial_{x_j} \uhom(x)
	\!\!	\sum_{y \in \eta_*(\cu_m) ,\,y\sim x}  
		( x_j {-} y_j )
		\bigl (  x_i {-} y_i {+} \phi_{m,e_i}(x) {-} \phi_{m,e_i}(y)\bigr )
		+ \rv{\mathcal{R}(x)}\,,
\end{equation}
with $\rv{\mathcal{R}(x)}$ satisfying 
\begin{align} \label{e.rem_bound}
\lefteqn{ 
3^{-m}\|\rv{\mathcal{R}(x)}\|_{\underline{H}^{-1}(\eta_*(\cu_m))} 
} \notag \qquad & \\ 
&
\leqslant C \! \sup_{|e|= 1} 3^{-m}\|\phi_{m,e}\|_{\underline{L}^2(\eta_*(\cu_m))}\Biggl(\sum_{x \in \eta_*(\cu_m)} \sup_{y \in B_1(x)} \!\!\Bigl( |D\uhom(y)|^2 {+} |D^2 \uhom(y)|^2\Bigr)\Biggr)^{\!\!\sfrac12} \,;
\end{align}
\rv{here, we extended~$\mathcal{R}(x) \equiv 0$ to the boundary layer~$\partial_*^{(b)} (\cu_m)$.}
\end{lemma}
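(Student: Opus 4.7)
The plan is a direct computation: apply the graph Laplacian termwise to the two‐scale expansion~\eqref{e.2scexp}, use the corrector equation satisfied by~$\xi_l(\cdot,\cu_m,-e_i,0) = \ell_{e_i}+\phi_{m,e_i}$, Taylor‐expand~$\uhom$ and its derivatives at~$x$, and collect every non‐main term into $\mathcal{R}$. First I apply the discrete Leibniz identity
\begin{equation*}
\partial_i\uhom(y)\phi_{m,e_i}(y)-\partial_i\uhom(x)\phi_{m,e_i}(x)
= \partial_i\uhom(x)\bigl(\phi_{m,e_i}(y)-\phi_{m,e_i}(x)\bigr) + \phi_{m,e_i}(y)\bigl(\partial_i\uhom(y)-\partial_i\uhom(x)\bigr),
\end{equation*}
weighted by $\a(|y-x|)$ and summed in~$y$, giving
\begin{equation*}
\mathcal{L}u(x) = \mathcal{L}\uhom(x) + \sum_{i}\partial_i\uhom(x)\,\mathcal{L}\phi_{m,e_i}(x) + \sum_{i}\sum_{y}\a(|y-x|)\phi_{m,e_i}(y)\bigl(\partial_i\uhom(y)-\partial_i\uhom(x)\bigr).
\end{equation*}

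The key structural input comes from Lemma~\ref{l.J.rep.formula}: the maximizer $\xi_l(\cdot,\cu_m,-e_i,0)$ lies in $\mathcal{A}(\eta_*(\cu_m))$, so it is graph‐harmonic. Hence for $x\in\eta_*(\cu_m\setminus\partial_*^{(l)}\cu_m)$ we get $\mathcal{L}\phi_{m,e_i}(x) = -\mathcal{L}\ell_{e_i}(x) = \sum_y\a(|y-x|)(x_i-y_i)$. Summing against $\partial_i\uhom(x)$ exactly cancels the first‐order Taylor piece of $\mathcal{L}\uhom(x)$, leaving the remainder $\sum_y\a(|y-x|)[\uhom(y)-\uhom(x)-\nabla\uhom(x)\cdot(y-x)]$. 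Expanding this to second order gives $\tfrac12\sum_{ij}\partial_i\partial_j\uhom(x)\sum_y\a(|y-x|)(y_i-x_i)(y_j-x_j)$ plus a pointwise Taylor error. In parallel, for the mixed‐difference term I expand $\partial_i\uhom(y)-\partial_i\uhom(x) = \nabla\partial_i\uhom(x)\cdot(y-x) + r_i(x,y)$; inserting and regrouping with the $D^2\uhom$ piece via the symmetry $\partial_i\partial_j\uhom = \partial_j\partial_i\uhom$ and the identity $(y_j-x_j)(\phi_{m,e_i}(y)-\phi_{m,e_i}(x)) = (x_j-y_j)(\phi_{m,e_i}(x)-\phi_{m,e_i}(y))$ assembles precisely the main term of~\eqref{e.2scexp_plugin}; all other contributions are absorbed into~$\mathcal{R}$.

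The remainder $\mathcal{R}$ splits into two types: (i)~pure Taylor remainders of $\uhom$, bounded pointwise by $C\sup_{B_1(x)}(|D^2\uhom|)$ (since $|y-x|\leq 1$ and we use the integral form of the remainder, comparing $D^2\uhom(x{+}t(y{-}x))$ to $D^2\uhom(x)$); and (ii)~products of $\phi_{m,e_i}$ with either Taylor remainders of $\partial_i\uhom$ or with $\mathcal{L}\partial_i\uhom$, of pointwise size $|\phi_{m,e_i}|\cdot\sup_{B_1(x)}|D^2\uhom|$. To bound $\|\mathcal{R}\|_{\underline{H}^{-1}(\eta_*(\cu_m))}$ I pair $\langle \mathcal{R},v\rangle_{\underline{H}^1}$ with an arbitrary admissible $v$ and estimate by Cauchy--Schwarz. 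For type (ii) terms, Cauchy--Schwarz directly factors out $\sup_e\|\phi_{m,e}\|_{\underline{L}^2}$ and the Hessian factor from the right side of \eqref{e.rem_bound}. For type (i) terms the difficulty is that a naive bound would involve $|D^3\uhom|$; this is avoided by writing the pure‐$\uhom$ Taylor remainders as an antisymmetric discrete flux $F(x,y) = -F(y,x)$ depending on $D\uhom$ and $D^2\uhom$, so that $\sum_x v(x)\sum_y\a(|y-x|)F(x,y) = \tfrac12\sum_{x,y}\a(|y-x|)(v(x)-v(y))F(x,y)$, transferring one difference onto $v$ and leaving only $D\uhom,D^2\uhom$ on~$\uhom$. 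Combining the resulting $L^2$ bound with $\|v\|_{\underline{H}^1}\leq 1$, and using that normalization in $\underline{H}^{-1}$ contributes a factor $3^{-m}$, yields~\eqref{e.rem_bound}.

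The main obstacle is the careful bookkeeping of which discrete Leibniz splitting and which order of Taylor expansion to use so that every surviving error either carries a factor of $\phi_{m,e_i}$ (and is handled by Cauchy--Schwarz) or has an antisymmetric flux form (so that summation by parts trades $D^3\uhom$ for $D^2\uhom$ after dualizing). Matching the exact form of the main term, with the factor $\tfrac12$ and the combination $x_i-y_i+\phi_{m,e_i}(x)-\phi_{m,e_i}(y) = \xi_l(x,\cu_m,-e_i,0)-\xi_l(y,\cu_m,-e_i,0)$, is where the identity $\mathcal{L}\xi_l^i = 0$ plays its decisive role: it is what converts the second‐order Taylor content plus the mixed‐difference term into the flux $\sum_y\a(|y-x|)(x_j-y_j)(\xi_l^i(x)-\xi_l^i(y))$ appearing in the coarse‐grained flux \eqref{e.fluxdef}, which is precisely the object whose expected value is $\a_{l,ji}(\cu_m)$ by Step~1 of Lemma~\ref{l.fluxrates}.
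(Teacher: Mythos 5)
Your high-level strategy is right and matches the paper's: plug the ansatz \eqref{e.2scexp} into the graph Laplacian, use that $\xi_l(\cdot,\cu_m,-e_i,0)\in\mathcal{A}(\eta_*(\cu_m))$ to cancel the first-order piece, Taylor expand, and absorb the remainders via divergence-form summation by parts against an $\underline{H}^1$ test function. But there is a concrete bookkeeping gap in the assembly of the main term that you should not paper over.

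You split the coupled difference with the \emph{asymmetric} Leibniz identity
\[
\partial_i\uhom(y)\phi_{m,e_i}(y)-\partial_i\uhom(x)\phi_{m,e_i}(x)
= \partial_i\uhom(x)\bigl(\phi_{m,e_i}(y)-\phi_{m,e_i}(x)\bigr) + \phi_{m,e_i}(y)\bigl(\partial_i\uhom(y)-\partial_i\uhom(x)\bigr),
\]
whose mixed term carries $\phi_{m,e_i}(y)$ \emph{alone}. After Taylor expanding $\partial_i\uhom(y)-\partial_i\uhom(x)$ at second order, the leading $D^2\uhom$ contribution of the mixed term is $\sum_{i,j}\partial_i\partial_j\uhom(x)\sum_y\a(|y-x|)(y_j-x_j)\phi_{m,e_i}(y)$. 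Combined with the second-order Taylor term of $\uhom$ this does \emph{not} equal the main term of \eqref{e.2scexp_plugin}; the discrepancy is
\[
-\frac12\sum_{i,j}\partial_i\partial_j\uhom(x)\sum_{y}\a(|y-x|)(x_j-y_j)\bigl(\phi_{m,e_i}(x)+\phi_{m,e_i}(y)\bigr),
\]
a term of exactly the same order as the $\phi$-difference part of the main term. The identity you invoke, $(y_j-x_j)(\phi(y)-\phi(x)) = (x_j-y_j)(\phi(x)-\phi(y))$, is an equality between identical quantities and performs no regrouping; it cannot convert $\phi(y)$ alone into the needed $\frac12(\phi(x)-\phi(y))$. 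So the claim that the pieces ``assemble precisely the main term'' is false as written.

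The paper instead uses the symmetrized product rule
\[
(fg)(x)-(fg)(y) = \tfrac12(f(x)+f(y))(g(x)-g(y)) + \tfrac12(g(x)+g(y))(f(x)-f(y)),
\]
which hands you the $\frac12$ on the $\phi$-difference \emph{automatically} (because the remaining coefficient is $\frac12(\partial_i\uhom(y)-\partial_i\uhom(x))$, not $(\partial_i\uhom(y)-\partial_i\uhom(x))$), and simultaneously isolates the divergence-form error in the manifestly antisymmetric flux $\frac12(\partial_i\uhom(x)-\partial_i\uhom(y))(\phi_{m,e_i}(x)+\phi_{m,e_i}(y))$, which is precisely the term you omitted. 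Your asymmetric route is salvageable---just further split $\phi_{m,e_i}(y) = \frac12(\phi_{m,e_i}(x)+\phi_{m,e_i}(y)) + \frac12(\phi_{m,e_i}(y)-\phi_{m,e_i}(x))$ before expanding, or explicitly identify the residual above as a type (ii) divergence-form error and bound it with the same antisymmetrization---but without one of these steps the derivation of \eqref{e.2scexp_plugin} is incomplete. The remainder bound strategy in your last two paragraphs is essentially sound and matches the paper once that missing term is accounted for.
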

\begin{proof}
Inserting the \rv{definition} \eqref{e.2scexp} into the graph Laplacian, 
and using the discrete product rule in the form 
\begin{equation*}
(fg)(x) - (fg)(y) = \frac12  ( f(x) + f(y) )  ( g(x) - g(y) ) + \frac12 ( g(x) + g(y) ) ( f(x) - f(y) )
\,,
\end{equation*}
we find at any~$\rv{x \in \eta_*(\cu_m) \setminus \partial_*^{(b)} \cu_m},$

\begin{align}
\label{e.Lu.1}
\mathcal{L} u (x) 
&
= \sum_{y \in \eta_*(\cu_m),y\sim x} \Biggl[ \uhom(x) - \uhom(y) + \sum_{i=1}^d (\partial_{x_i} \uhom \phi_{m,e_i})(x) - (\partial_{x_i} \uhom \phi_{m,e_i})(y) \Biggr] 
\notag \\&
=
\sum_{y \in \eta_*(\cu_m) ,y\sim x} \Biggl[ \uhom(x) - \uhom(y) + \sum_{i=1}^d \frac{1}{2}\bigl (\partial_{x_i}\uhom(y)+\partial_{x_i}\uhom(x)\bigr ) (\phi_{m,e_i}(x) - \phi_{m,e_i}(y))\Biggr]
\notag \\ & \qquad 
+ \sum_{y \in \eta_*(\cu_m),y\sim x} \sum_{i=1}^d \frac{1}{2}\bigl( \partial_{x_i} \uhom(x) - \partial_{x_i} \uhom (y)\bigr)
\bigl (\phi_{m,e_i}(x) +\phi_{m,e_i}(y) \bigr )
\,.
\end{align}
\rv{The second} line of~\eqref{e.Lu.1} can be rewritten as 
\begin{align}
\label{e.Lu.2}
\lefteqn{
\sum_{y \in \eta_*(\cu_m) ,y\sim x} \Biggl[ \uhom(x) - \uhom(y) + \sum_{i=1}^d \frac12\bigl (\partial_{x_i}\uhom(y)+\partial_{x_i}\uhom(x)\bigr )  (\phi_{m,e_i}(x) - \phi_{m,e_i}(y))\Biggr]
}
\qquad & 
\notag \\ &
=
\sum_{y \in \eta_*(\cu_m) ,y\sim x} 
\sum_{i=1}^d   \partial_{x_i}\uhom(x)( x_i - y_i + \phi_{m,e_i}(x) - \phi_{m,e_i}(y))
\notag \\ & \quad
+
\sum_{y \in \eta_*(\cu_m) ,y\sim x} \sum_{i,j=1}^d 
\frac12 \partial_{x_i}\partial_{x_j} \uhom(x)  ( x_j - y_j )
\bigl (  x_i - y_i + \phi_{m,e_i}(x) - \phi_{m,e_i}(y)\bigr )
\notag \\ & \quad
+
\sum_{y \in \eta_*(\cu_m) ,y\sim x} 
\biggl ( \uhom(x) - \uhom(y) - \sum_{i=1}^d 
\partial_{x_i} \uhom(x) 
( x_i - y_i ) 
\notag \\ &\qquad \qquad \qquad \qquad \qquad -\sum_{i,j=1}^d \frac12 \partial_{x_i}\partial_{x_j} \uhom(x) ( x_i - y_i ) ( x_j- y_j ) 
\biggr )
\notag \\ & \quad
+
\sum_{y \in \eta_*(\cu_m) ,y\sim x} \sum_{i=1}^d 
\frac12\Bigl  (\partial_{x_i}\uhom(y) -\partial_{x_i}\uhom(x)
\notag \\ &\qquad \qquad \qquad \qquad 
-\sum_{j=1}^d \partial_{x_i}\partial_{x_j} \uhom(x)  ( x_j - y_j )\Bigr  )
\bigl (  \phi_{m,e_i}(x) - \phi_{m,e_i}(y)\bigr )
\,.
\end{align}
The first term on the right side of~\eqref{e.Lu.2} vanishes by the corrector equation satisfied by~$\phi_{m,e_i}$.
With this in mind, we may insert~\eqref{e.Lu.2} into~\eqref{e.Lu.1} and rearrange some terms to obtain 
\begin{align}
\label{e.Lu.3}
\mathcal{L} u (x) 
&
=
\sum_{i,j=1}^d \frac12 \partial_{x_i}\partial_{x_j} \uhom(x)
\sum_{y \in \eta_*(\cu_m) ,y\sim x} 
( x_j - y_j )
\bigl (  x_i - y_i + \phi_{m,e_i}(x) - \phi_{m,e_i}(y)\bigr )
\notag \\ & \quad
+ \sum_{y \in \eta_*(\cu_m),y\sim x} \sum_{i=1}^d \frac{1}{2}\bigl( \partial_{x_i} \uhom(x) - \partial_{x_i} \uhom (y)\bigr)
\bigl (\phi_{m,e_i}(x) +\phi_{m,e_i}(y) \bigr )
\notag \\ & \quad
+
\sum_{y \in \eta_*(\cu_m) ,y\sim x}
\biggl ( \uhom(x) - \uhom(y) - \sum_{i=1}^d 
\partial_{x_i} \uhom(x) 
( x_i - y_i ) 
\notag \\ &\qquad \qquad \qquad -\sum_{i,j=1}^d \frac12 \partial_{x_i}\partial_{x_j} \uhom(x) ( x_i - y_i ) ( x_j- y_j ) 
\biggr )
\notag \\ & \quad
+
\sum_{y \in \eta_*(\cu_m) ,y\sim x} \sum_{i=1}^d 
\frac12\Bigl  (\partial_{x_i}\uhom(y) -\partial_{x_i}\uhom(x)
-\sum_{j=1}^d \partial_{x_i}\partial_{x_j} \uhom(x)  ( x_j - y_j )\Bigr  )\notag \\
&
\qquad \qquad \qquad \qquad \times \bigl (  \phi_{m,e_i}(x) - \phi_{m,e_i}(y)\bigr )
\,.
\end{align}
Of the four lines in the right side of~\eqref{e.Lu.3}, the first nearly vanishes, as it will be shown to be close to the macroscopic equation; the second is a ``divergence form'' homogenization error term, which is shown to be small using the estimates we have on the correctors; the third and fourth terms are due to the discreteness of the underlying random graph and will be shown to be small by Taylor expansion, if~$\uhom$ is sufficiently smooth.

\smallskip 
The last three terms are collected in the remainder $\mathcal{R},$ which we next proceed to estimate term-by-term. For the divergence form error, observe that for any test function $w \in \underline{H}^1(\eta_*(\cu_m))$ with $w \equiv 0$ in $\partial_*^{(b)}\cu_m,$ we find 
\begin{align*}
	&\frac1{|\cu_m|}\sum_{x,y \in \eta_*(\cu_m),\,y \sim x} \sum_{i=1}^d \frac12 (\partial_{x_i} \uhom(x) - \partial_{x_i} \uhom(y)) (\phi_{m,e_i}(x) + \phi_{m,e_i}(y)) w(x)\\
	&\quad = \frac1{2|\cu_m|} \sum_{x,y \in \eta_*(\cu_m),y\sim x} \sum_{i=1}^d \frac12 (\partial_{x_i} \uhom(x) - \partial_{x_i} \uhom(y)) (\phi_{m,e_i}(x) + \phi_{m,e_i}(y)) \bigl(w(x) - w(y)\bigr),
\end{align*}
so that, by Cauchy-Schwarz, twice,
\begin{equation}  \label{e.cookiedough}
	\begin{aligned}
	&\Biggl| \frac1{2|\cu_m|} \sum_{x,y \in \eta_*(\cu_m),y\sim x} \sum_{i=1}^d \frac12 (\partial_{x_i} \uhom(x) - \partial_{x_i} \uhom(y)) (\phi_{m,e_i}(x) + \phi_{m,e_i}(y)) \bigl(w(x) - w(y)\bigr) \Biggr|\\
&\leqslant C\sum_{i=1}^d\avsum_{x \in \eta_*(\cu_m)} \Biggl( \phi_{m,e_i}^2(x)\sum_{y \in \eta_*(\cu_m),\, y\sim x}   (\partial_{x_i}\uhom(x) - \partial_{x_i}\uhom(y))^2\Biggr)^{\!\!\sfrac12}\\
&\quad \quad \quad \quad \quad \quad \times \Biggl( \sum_{y \in \eta_*(\cu_m),\, y \sim x} (w(x) - w(y))^2 \Biggr)^{\!\!\sfrac12}\\
&\leqslant C \avsum_{x \in \eta_*(\cu_m)} |\phi_{m,e_i}(x)| \sup_{y \in B_1 (x)} |D^2\uhom(y)| \Biggl( \sum_{y \in \eta_*(\cu_m),\,y\sim x}  (w(x) - w(y))^2 \Biggr)^{\!\!\sfrac12}\\
&\leqslant C  \Biggl( \avsum_{x \in \eta_*(\cu_m)}\sum_{y \in \eta_*(\cu_m),\, y\sim x}  (w(x) - w(y))^2 \Biggr)^{\!\!\sfrac12} \avsum_{x \in \eta_*(\cu_m)} |\phi_{m,e_i}(x)| \sup_{y \in B_1 (x)} |D^2 \uhom(y)|\\
&\leqslant C \|w\|_{\underline{H}^1(\eta_*(\cu_m))}  \Biggl( \avsum_{x \in \eta_*(\cu_m)} \phi_{m,e_i}(x)^2 \Biggr)^{\!\!\sfrac12} \Biggl( \avsum_{x \in \eta_*(\cu_m)} \sup_{y \in B_1(x)} |D^2\uhom(y)|^2\Biggr)^{\!\!\sfrac12}\\
&\leqslant C \|w\|_{\underline{H}^1(\eta_*(\cu_m))}  \sup_{|e| = 1} \|\phi_{m,e}\|_{\underline{L}^2(\cu_m)} \Biggl( \avsum_{x \in \eta_*(\cu_m)} \sup_{y \in B_1(x)} |D^2\uhom(y)|^2\Biggr)^{\!\!\sfrac12} \,.
	\end{aligned}
\end{equation}

Taking the supremum over $w \in \underline{H}^1(\eta_*(\cu_m)),$ with $w \equiv 0$ on the thickened boundary $\partial_*^{(b)}(\cu_m),$ along with Lemma~\ref{e.phime-est} yield that
\begin{equation}
	\label{e.term1}
	\begin{aligned}
	&	\Biggl\| \sum_{y \in \eta_*(\cu_m),\,y\sim x}  \sum_{i=1}^d \frac12 (\partial_{x_i} \uhom(\cdot) {-} \partial_{x_i} \uhom(y)) (\phi_{m,e_i}(\cdot) {+} \phi_{m,e_i}(y))\Biggr\|_{\underline{H}^{-1} (\eta_*(\cu_m))} \!\!\!\!\! \!\!\!
	\\
	&\quad\quad \quad 	\leqslant 
		C  3^{m(1-\alpha)}  \Biggl( \avsum_{x \in \eta_*(\cu_m)} \sup_{y \in B_1(x)} |D^2\uhom(y)|^2\Biggr)^{\!\!\sfrac12}\,. 
	\end{aligned}
\end{equation}
We must estimate the remaining Taylor expansion terms in $H^{-1}.$ We consider the first of these two terms: by the prior symmetry arguments,
\begin{equation*}
	\begin{aligned}
	&	2\frac1{|\cu_m|}\sum_{x \in \eta_*(\cu_m)} \! w(x) \biggl[ \sum_{y \in \eta_*(\cu_m),y\sim x}  \uhom(x) {-} \uhom(y) {-} \sum_{i=1}^d 
		\partial_{x_i} \uhom(x) 
		( x_i {-} y_i ) 
	\\
		&\quad \quad \quad - \sum_{i,j=1}^d \frac12 \partial_{x_i}\partial_{x_j} \uhom(x) ( x_i {-} y_i ) ( x_j{-} y_j ) \biggr]
		\\
	&\quad =\frac1{|\cu_m|}\sum_{x,y \in \eta_*(\cu_m),\,y\sim x} (w(x) - w(y)) (\uhom(x) - \uhom(y))\\
	&\quad \quad - \frac1{|\cu_m|}\sum_{i=1}^d \sum_{x,y \in \eta_*(\cu_m),\,y\sim x} (w(x) - w(y))(\partial_{x_i} \uhom(x) - \partial_{x_i}\uhom(y))(x_i - y_i) \\
	&\quad \quad - \frac1{2|\cu_m|} \sum_{i,j=1}^d \sum_{x,y \in \eta_*(\cu_m),\,y\sim x} (w(x) - w(y)) (x_i - y_i)(x_j - y_j) \\
	&\qquad \qquad \qquad \qquad\times \bigl(  \partial_{x_i}\partial_{x_j} \uhom(x) - \partial_{x_i}\partial_{x_j} \uhom(y)\bigr)\,.
	\end{aligned}
\end{equation*}
The right hand side of the preceding display is a sum of three terms, the middle of which vanishes by skew-symmetry of the summand. So we must estimate the first and the third terms. Therefore, we must estimate the first and the third terms. The first one is easy: by Cauchy-Schwarz, we get
\begin{equation} \label{e.term2a}
	\Biggl| \frac1{|\cu_m|}\sum_{x,y \in \eta_*(\cu_m),\,y\sim x} (w(x) - w(y)) (\uhom(x) - \uhom(y))\Biggr| \leqslant \|\nabla w\|_{\underline{L}^2(\eta_*(\cu_m))} \|\nabla \uhom\|_{\underline{L}^2(\eta_*(\cu_m))}\,.
\end{equation}
For the third one, proceeding as in the chain of inequalities in \eqref{e.cookiedough}, we obtain 
\begin{equation} \label{e.term2}
	\begin{aligned}
		&\Biggl| \frac1{2|\cu_m|} \sum_{i,j=1}^d \sum_{x,y \in \eta_*(\cu_m),\,y\sim x} (w(x) - w(y)) (x_i - y_i)(x_j - y_j) \bigl(  \partial_{x_i}\partial_{x_j} \uhom(x) - \partial_{x_i}\partial_{x_j} \uhom(y)\bigr)\Biggr| \\
		&\quad \leqslant \frac{1}{2|\cu_m|} \sum_{i,j=1}^d \sum_{x \in \eta_*(\cu_m)} \bigl|\sup_{y \in B_1(x)} \partial_{x_i}\partial_{x_j} \uhom(y)\bigr| \sum_{y \in \eta_*(\cu_m),\,y\sim x}  |w(x) - w(y)|\\
		&\quad \leqslant \frac{1}{2} \Biggl(\frac1{|\cu_m|} \sum_{x \in \eta_*(\cu_m)}  \sup_{y \in B_1 (x)} |D^2 \uhom(y)|^2\Biggr)^{\!\!\sfrac12}  \|\nabla w\|_{\underline{L}^2(\eta_*(\cu_m))}\,.
	\end{aligned}
\end{equation}
It remains to estimate the last term in \eqref{e.Lu.3}, once again, in $\underline{H}^{-1}.$ Recalling that $w$ vanishes on the thickened boundary, and symmetrizing in $x$ and $y$ as before, we find  
\begin{align*}
&	\Biggl| 
	\frac1{|\cu_m|}\sum_{x \in \eta_*(\cu_m)} \! \! w(x) \!\! \sum_{y \in \eta_*(\cu_m),y\sim x } 
	 \sum_{i=1}^d 
	\bigl  (\partial_{x_i}\uhom(y) {-}\partial_{x_i}\uhom(x)\\
	&\quad \quad \quad 
	{-}\sum_{j=1}^d \partial_{x_i}\partial_{x_j} \uhom(x)  ( x_j {-} y_j )\bigr  ) \bigl (  \phi_{m,e_i}(x) {-} \phi_{m,e_i}(y)\bigr )
	\Biggr|\\
& = \frac1{2|\cu_m|} \Biggl|\sum_{x,y \in \eta_*(\cu_m),\,y\sim x} (w(x) - w(y)) \sum_{i=1}^d\bigl( \partial_{x_i}\uhom(y)- \partial_{x_i} \uhom(x) \bigr)\bigl(\phi_{m,e_i}(x) - \phi_{m,e_i}(y)\bigr)	\Biggr|\\
& + \frac1{2|\cu_m|}\Biggl| \sum_{x,y \in \eta_*(\cu_m),\,y \sim x}(w(x) - w(y)) \times \\
&\quad \quad \quad  \sum_{i,j=1}^d (x_j-y_j) \Bigl(\partial_{x_i}\partial_{x_j} \uhom(x) - \partial_{x_i} \partial_{x_j} \uhom(y) \Bigr)(\phi_{m,e_i}(x) - \phi_{m,e_i}(y))\Biggr|\,,
	\end{align*}
is a sum of two terms. Once again, the first of these is zero by skew symmetry of the summand with respect to $x,y,$ while for the second, we estimate as in the string of inequalities \eqref{e.cookiedough}, to obtain that this term is no more than 
\begin{equation} \label{e.term3}
	\leqslant C \Biggl( \avsum_{x \in \eta_*(\cu_m)} \sup_{y \in B_1(x)} |D^2\uhom(y)|^2\Biggr)^{\!\!\sfrac12} \|w\|_{\underline{H}^1(\eta_*(\cu_m))} \sup_{|e| = 1} \|\phi_{m,e}\|_{\underline{L}^2(\eta_*(\cu_m))}\,.
 \end{equation}  
Combining \eqref{e.term1}, \eqref{e.term2a}, \eqref{e.term2} and \eqref{e.term3} and plugging these in \eqref{e.Lu.3} completes the proof. 
\end{proof}

Our next lemma provides control on the first term on the right-hand side of \eqref{e.2scexp_plugin}, when $\uhom$ is a solution of the homogenized equation. Combining the two preceding lemmas will essentially complete the proof of our main result of this section: convergence of the solutions to the Dirichlet problem, with rates\rv{. The} only remaining ingredient will then be the control of the boundary layer terms. 

\begin{lemma} \label{l.firstterm_of_2scexp}
Let $m \in \N$ be such that $3^m > \X$, where $\X$ is the minimal random scale from Theorem \ref{t.homogenization}.	 Then, there exists a constant $C(d,\la)<\infty$ such that for each~$i,j \in \{ 1,\ldots, d\},$ we have
	\begin{equation}
		\label{e.H-1bound_uhom}
	3^{-m}	\Biggl\| 
		\sum_{y \in \eta_*(\cu_m) ,\, y \sim \cdot}  
		( (\cdot)_j {-} y_j )
		\bigl (  (\cdot)_i {-} y_i {+} \phi_{m,e_i}(\cdot) {-} \phi_{m,e_i}(y)\bigr ) - \ahom_{ij}
		\Biggr\|_{\underline{H}^{-1}(\eta_*(\cu_m))} \leqslant C3^{-m\alpha}\,,
	\end{equation}
where $\alpha \in (0,\frac12]$ is as in Lemma~\ref{e.phime-est}. 
\end{lemma}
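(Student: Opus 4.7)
The plan is to reduce the claim to the rate on coarsened fluxes from Lemma~\ref{l.fluxrates} and the algebraic convergence of the coarse-grained coefficients $\a_l(\cu_m)$ to $\ahom$. Denote by $F_{ij}(x)$ the pointwise flux appearing on the left of~\eqref{e.H-1bound_uhom}, and let $[F_{ij}]_l$ be its extension to $\eta_*(\cu_m)$ made piecewise constant on each $l$-cube $z+\cu_l$, $z \in 3^l\Zd \cap \cu_m$, by the value $[\g_{e_i,j}(\cu_m)]_l(z)$ from~\eqref{e.fluxdef}. I would decompose
\[
F_{ij}(x) - \ahom_{ij} = \bigl(F_{ij}(x) - [F_{ij}]_l(x)\bigr) + \bigl([F_{ij}]_l(x) - \a_{l,ij}(\cu_m)\bigr) + \bigl(\a_{l,ij}(\cu_m) - \ahom_{ij}\bigr),
\]
and pair each piece with an arbitrary $w$ in the unit ball of $\underline{H}^1(\eta_*(\cu_m))$ vanishing on the thickened boundary.

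The key cancellation is that the graph-average of $F_{ij}$ on $\eta_*(z+\cu_l)$ is exactly $[F_{ij}]_l(z)$, and the graph-average of $w$ is $[w]_l$. Consequently $\sum_{x \in \eta_*(z+\cu_l)} [w]_l(x)(F_{ij}(x) - [F_{ij}]_l(x)) = 0$ and $\sum_{x \in \eta_*(z+\cu_l)} (w(x) - [w]_l(x))([F_{ij}]_l(x) - \a_{l,ij}(\cu_m)) = 0$ on every $l$-cube. Applying these identities, the pairing of the first two summands with $w$ collapses to $\avsum_x (w - [w]_l)(F_{ij} - [F_{ij}]_l)$ and $\avsum_x [w]_l([F_{ij}]_l - \a_{l,ij}(\cu_m))$. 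For the former, Cauchy--Schwarz combined with the Poincar\'e inequality at scale $3^l$ from Lemma~\ref{l.poincare.2} gives $\|w - [w]_l\|_{\underline{L}^2} \leq C 3^l \|w\|_{\underline{H}^1}$; meanwhile, the uniform energy bound $\|\nabla \xi_l(\cdot,\cu_m,-e_i,0)\|_{\underline{L}^2}^2 = 2J_l(\cu_m,-e_i,0) \leq C$ from Lemma~\ref{l.uppbound} plus a local neighbor count on good cubes yields $\|F_{ij}\|_{\underline{L}^2} \leq C$, so this term is at most $C 3^l$.

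For the second summand, both factors being piecewise constant on $l$-cubes lets me transfer the graph pairing to a lattice pairing on $\Zd \cap \cu_m$ with a negligible error coming from fluctuations of $|\eta_*(z+\cu_l)|/(\theta(\lambda)|\cu_l|)$, which are quantitatively small on well-connected cubes. A coarsening estimate of the same flavour as Step~2 in the proof of Lemma~\ref{l.poincare} gives $\|[w]_l\|_{\underline{H}^1(\cu_m)} \leq C \|w\|_{\underline{H}^1(\eta_*(\cu_m))}$, and since $[w]_l$ vanishes on the boundary layer, Lemma~\ref{l.fluxrates} delivers a factor $C 3^{m(1-\alpha)}$. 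For the third, constant summand, Poincar\'e gives $\|w\|_{\underline{L}^2} \leq C 3^m \|w\|_{\underline{H}^1}$, and on the event $3^m \geq \X$ Theorem~\ref{t.homogenization} (combined with Corollary~\ref{c.order}, Lemma~\ref{l.order}, and the polarization of $J_l$) yields the pointwise bound $|\a_{l,ij}(\cu_m) - \ahom_{ij}| \leq C 3^{-m\alpha}$, so this piece is also of order $3^{m(1-\alpha)}$.

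Summing the three contributions, choosing $l = \lceil \beta m \rceil$ for $\beta > 0$ sufficiently small, and dividing by $3^m$ yields the desired bound $C 3^{-m\alpha}$ after a mild adjustment of $\alpha$. The main technical point will be the careful bridge between the graph and lattice $\underline{H}^{\pm 1}$ norms, specifically verifying the coarsening inequality $\|[w]_l\|_{\underline{H}^1(\cu_m)} \lesssim \|w\|_{\underline{H}^1(\eta_*(\cu_m))}$ and quantifying the fluctuation error in passing from the graph to the lattice pairing; this is precisely where the structure of well-connected cubes (Definition~\ref{d.wellconnected}) is essential.
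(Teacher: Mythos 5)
Your argument follows the paper's proof: both split the test function $w$ into its scale-$3^l$ coarsening $[w]_l$ and the fluctuation $w - [w]_l$, pair the coarse part against the flux defect so that Lemma~\ref{l.fluxrates} and the coarsening bound $\|[w]_l\|_{\underline{H}^1(\cu_m)} \lesssim \|w\|_{\underline{H}^1(\eta_*(\cu_m))}$ apply, and control the fluctuation piece by the scale-$3^l$ Poincar\'e inequality together with the uniform energy bound on the finite-volume correctors. Your extra three-way decomposition of the flux defect (inserting $[F_{ij}]_l$ and isolating $\a_{l,ij}(\cu_m) - \ahom_{ij}$) is a small, correct refinement that makes explicit one step the paper's proof elides, namely that Lemma~\ref{l.fluxrates} is anchored to the random matrix $\a_{l,ij}(\cu_m)$ rather than to $\ahom_{ij}$, so the deterministic gap has to be bounded separately via Theorem~\ref{t.homogenization}; as you rightly flag, passing through $J_l$ halves the exponent, which is absorbed into the final $\alpha$.
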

\begin{proof}
The indices $i,j$ as in the statement of the lemma are fixed throughout. In addition, we fix $w \in \underline{H}^1(\eta_*(\cu_m))$ with $w|_{\partial_*^{(b)} \cu_m} \equiv 0,$ and write
	\begin{align*}
		A &:= 
		\frac1{|\cu_m|}\sum_{x \in \eta_*(\cu_m)} w(x) \Biggl[ \sum_{y \in \eta_*(\cu_m) ,y \sim x} 
		( x_j {-} y_j )
		\bigl (  x_i {-} y_i {+} \phi_{m,e_i}(x) {-} \phi_{m,e_i}(y)\bigr ) - \ahom_{ij}\Biggr]
		\\
		&=: B + C\,,
	\end{align*}
with 
\begin{align*}
	B := \frac1{|\cu_m|}\sum_{x \in \eta_*(\cu_m) }
	(w(x) {-} [w]_l(x))
\Biggl[	\sum_{y \in \eta_*(\cu_m),y \sim x }  
	( x_j {-} y_j )
	\bigl (  x_i {-} y_i {+} \phi_{m,e_i}(x) {-} \phi_{m,e_i}(y)\bigr ) -\ahom_{ij} \Biggr]\,,
\end{align*}
and 
\begin{align*}
	C := 	\frac1{|\cu_m|}\sum_{x \in \eta_*(\cu_m) }
	[w]_l(x)
	\Biggl[	\sum_{y \in \eta_*(\cu_m), \,y \sim x }  
	( x_j {-} y_j )
	\bigl (  x_i {-} y_i {+} \phi_{m,e_i}(x) {-} \phi_{m,e_i}(y)\bigr ) - \ahom_{ij} \Biggr]\,. 
\end{align*}
Toward the goal of estimating $C,$ first, we rewrite it as follows: 
\begin{align*}
&C_{ij} =	 \frac1{|\cu_m|}\sum_{x \in \eta_*(\cu_m) }
		[w]_l(x)
	\Biggl[	\sum_{y \in \eta_*(\cu_m) ,\, y \sim x}  
		( x_j {-} y_j )
		\bigl (  x_i {-} y_i {+} \phi_{m,e_i}(x) {-} \phi_{m,e_i}(y)\bigr )-\ahom_{ij}\Biggr]
	 \qquad &
	\\ & 
	=
	\avsum_{z\in 3^l\Zd \cap \cu_m}
	[w]_l(z) 
	\notag \\ & \qquad \quad \times 
	\avsum_{x \in (z+ \cu_l )\cap \eta_*(\cu_m) }
	\!\Biggl[
	\sum_{y \in \eta_*(\cu_m),\,y \sim x }  
	( x_j {-} y_j )
	\bigl (  x_i {-} y_i {+} \phi_{m,e_i}(x) {-} \phi_{m,e_i}(y)\bigr ) {-} \ahom_{ij} \Biggr]
	\\ & 
	=
	\avsum_{z\in 3^l\Zd \cap \cu_m}
	[w]_l(z)
	\bigl( [\g_{e_i,j}(\cu_m) ]_l(z) - \ahom_{ij} \bigr) 
\,,
\end{align*}
By Lemma~\ref{l.fluxrates} we find 
\begin{equation*}
	\begin{aligned}
		\Biggl| 	\avsum_{z\in 3^l\Zd \cap \cu_m}
		[w]_l(z)
		\bigl( [\g_{e_i,j}(\cu_m) ]_l(z) - \ahom_{ij} \bigr) \Biggr| 
		&\leqslant 
		\|[\g_{e_i,j}]_l - \ahom_{l,ij}\|_{\underline{H}^{-1}(\cu_m)} \|[w]_l  \|_{\underline{H}^1(\cu_m)}\\
		&\leqslant 
			\|[\g_{e_i,j}]_l - \ahom_{l,ij}\|_{\underline{H}^{-1}(\cu_m)} \|w \|_{\underline{H}^1(\eta_*(\cu_m))}\,.
	\end{aligned}
\end{equation*}

\smallskip

\noindent Turning next to the term $B$, by Cauchy-Schwarz, and the energy bound, we have 
\begin{align*}
|B| 
&= \Biggl| \avsum_{z \in 3^l \Zd \cap \cu_m} \avsum_{x \in z + \cu_l \cap \eta_*(\cu_m) }
(w(x) {-} [w]_l(x))\\
&\quad \quad \quad \times
\Biggl[	\sum_{y \in \eta_*(\cu_m), y \sim x }  
( x_j {-} y_j )
\bigl (  x_i {-} y_i {+} \phi_{m,e_i}(x) {-} \phi_{m,e_i}(y)\bigr ) -\ahom_{ij} \Biggr]\Biggr| \\
&\leqslant C\avsum_{z \in 3^l \Zd \cap \cu_m} \Biggl( \avsum_{x \in \eta_*(\cu_m) \cap z + \cu_l} |w(x) - [w_l](z)|^2\Biggr)^{\!\!\sfrac12}\\
&\leqslant C3^l \|w\|_{\underline{H}^1(\eta_*(\cu_m))}\,.
\end{align*}
Therefore, by the triangle inequality, and by supremizing among ~$w$ with ~$\|w\|_{\underline{H}^1(\eta_*(\cu_m))} \leqslant 1,$ we complete the required estimate for $A.$ 
\end{proof}

The main ingredient in the proof of Theorem \ref{t.Dirichlet} is the following quantitative two-scale expansion. We recall that by scaling, if necessary, we are concerned with the Dirichlet problem in a large domain $U_m := 3^m U_0,$ where $U_0$ is a bounded Lipschitz domain, with $U_0 \subseteq \cu_0.$ In its statement we prefer to give ourselves the parameter $b > l$ which dictates the length-scale of the boundary layer for $U_m$, depending on the regularity of the boundary data (see \eqref{e.dirichlet_discrete}). 

Accordingly, we  let~$\xi \in C^\infty(\cu_m)$ be such that ~$\xi (x) \equiv 1$ if~$x \in \partial_*^{(b)} U_m$, and~$\xi(x) \equiv 0$ if~$x \in \cu_m \setminus \partial_*^{(2b)}(U_m),$ and is such that~$|\nabla \xi| \leqslant C3^{-b}.$

\begin{proposition}
	\label{t.DirichletProblem}
	Let~$m \in \N$ be such that~$3^m > \X,$ with~$\X$ as in Theorem \ref{t.homogenization}. 
	Let~$v$ denote the solution to the discrete variational problem, \eqref{e.dirichlet_discrete}, let $\uhom$ the solution to the homogenized problem~\eqref{e.uhomdef}, and define~$u$ to be the two-scale function with respect to~$\uhom$, as defined in~\eqref{e.2scexp}. Then, there exists a constant~$C(d,\lambda,U_0)<\infty$ such that we have the two-scale estimate  
		\begin{align}\label{e.2scexp_statement}
	\|v {-} u {-} \xi(u_b {-} u) \|_{{H}^1(\eta_*(U_m))} & \!\leqslant  C3^{m(1-\alpha)-b}  \Biggl( \sum_{x \in \eta_*(U_m\setminus \partial_*^{(2b)} U_m)} \sup_{y \in B_1(x)} |D\uhom(y)|^2 + |D^2\uhom(y)|^2\Biggr)^{\!\!\sfrac12}
		 \notag \\ & \qquad + C3^{b-\frac{m}{2}}  \|\nabla u_b\|_{{L}^2(\eta_*(\partial_*^{(2b)} U_m))} \,,
		\end{align}
	where $\alpha \in (0,\frac12]$ is as in Lemma~\ref{e.phime-est}.
\end{proposition}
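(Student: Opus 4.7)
The plan is a standard two-scale energy argument, adapted to the random graph. Let $w := v - u - \xi(u_b - u)$ denote the residual. Since $\xi \equiv 1$ on $\partial_*^{(b)} U_m$ and $v \equiv u_b$ there, we have $w \equiv 0$ on the thickened boundary, so $w$ is admissible as a test function in the discrete $\underline{H}^1_0$-framework developed in Section~\ref{s.subadd}. The self-adjointness of $\mathcal{L}$ yields
$$\|w\|_{\underline{H}^1(\eta_*(U_m))}^2 = \langle \mathcal{L}w,\, w\rangle \;\le\; \|\mathcal{L}w\|_{\underline{H}^{-1}(\eta_*(U_m))}\,\|w\|_{\underline{H}^1(\eta_*(U_m))},$$
so, after cancelling one factor, the task reduces to controlling $\mathcal{L}w$ in $\underline{H}^{-1}$. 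Writing $\mathcal{L}w = (f - \mathcal{L}u) - \mathcal{L}(\xi(u_b-u))$, where $\mathcal{L}v = f$ comes from the Euler--Lagrange equation of \eqref{e.dirichlet_discrete}, I will handle the two pieces separately.

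For the interior error $f - \mathcal{L}u$, I combine Lemmas~\ref{e.two-scale-exp} and~\ref{l.firstterm_of_2scexp}. The first expresses $\mathcal{L}u$ as a principal term $\tfrac12 \sum_{i,j}\partial_{x_i}\partial_{x_j}\uhom(x)\,\sum_y \a(|y-x|)(x_j-y_j)(x_i-y_i + \phi_{m,e_i}(x)-\phi_{m,e_i}(y))$ plus a remainder $\mathcal{R}$, whose $\underline{H}^{-1}$-norm is bounded by $\sup_e \|\phi_{m,e}\|_{\underline{L}^2}$ times a Sobolev-type sum of $\uhom$; by Lemma~\ref{e.phime-est} this corrector amplitude is of order $3^{m(1-\alpha)}$. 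Lemma~\ref{l.firstterm_of_2scexp} then matches the principal term to $\ahom_{ij}\partial_{ij}\uhom$ modulo the same order of error; since $\ahom$ is a scalar matrix and $\uhom$ satisfies $\nabla\cdot\ahom\nabla\uhom = f$ (the right-hand side reduces to $\ahom_{ij}\partial_{ij}\uhom$ up to the normalization absorbed into the graph Laplacian), this principal part cancels $f$. Restricting the resulting Sobolev sum to the interior region $U_m\setminus \partial_*^{(2b)}U_m$ where $\xi \equiv 0$, and accounting for the normalization between interior and full-volume averages, produces the first term on the right-hand side of \eqref{e.2scexp_statement}.

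For the boundary layer piece $\mathcal{L}(\xi(u_b-u))$, I use the duality bound $\|\mathcal{L}(\cdot)\|_{\underline{H}^{-1}} \le \|\nabla(\cdot)\|_{\underline{L}^2}$. The discrete product rule $\nabla(\xi g)(x,y) = \tfrac12(\xi(x)+\xi(y))\nabla g(x,y) + \tfrac12(g(x)+g(y))\nabla \xi(x,y)$ then splits the gradient into contributions of the form $\xi\nabla u_b$, $\xi\nabla u$, and $(u_b-u)\nabla\xi$. The cutoff is supported in $\partial_*^{(2b)}U_m$ with $|\nabla\xi|\le C 3^{-b}$, and the volume fraction of this strip is of order $3^{b-m}$. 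The pieces involving $u = \uhom + \sum_i \partial_i\uhom\,\phi_{m,e_i}$ are controlled using Lemma~\ref{e.phime-est} to bound $\|\phi_{m,e}\|_{\underline{L}^2}$ and the Caccioppoli estimate (Lemma~\ref{l.caccioppoli}) to convert corrector $L^2$ bounds into $H^1$ bounds on a slightly smaller cube. Balancing these factors against the boundary-strip volume yields the $3^{b-m/2}\|\nabla u_b\|_{L^2(\eta_*(\partial_*^{(2b)} U_m))}$ term in \eqref{e.2scexp_statement}.

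The main obstacle is precisely the boundary layer step: because the correctors $\phi_{m,e}$ do not vanish on $\partial U_m$, the cutoff $\xi$ introduces an $\underline{L}^2$ penalty of order $3^{-b}\|\phi_{m,e}\|_{\underline{L}^2} \sim 3^{m(1-\alpha)-b}$ in the strip, which must be balanced against the interior homogenization error. The optimal choice of the mesoscopic parameter $b$ (to be made in the proof of Theorem~\ref{t.Dirichlet}) is what fixes the final convergence rate; keeping $b$ as a free parameter in the proposition statement is what makes the two-term form of \eqref{e.2scexp_statement} natural.
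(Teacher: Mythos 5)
Your proposal follows essentially the same template as the paper's proof, just with the decomposition packaged differently. Where you write the residual as $w' := v - u - \xi(u_b - u)$ and split $\mathcal{L}w' = (f - \mathcal{L}u) - \mathcal{L}(\xi(u_b - u))$, the paper sets $w := \xi u_b + (1-\xi)u$ (so that $w - v = -w'$) and expands $\mathcal{L}w$ directly via the discrete product rule into a principal term $\sum_y \a(|y-x|)\bigl(1-\tfrac{\xi(x)+\xi(y)}{2}\bigr)(u(x)-u(y))$ carrying the cutoff factor, plus a commutator $\mathcal{B}$ supported in the boundary strip; these two splittings are algebraically equivalent. Both then invoke Lemmas~\ref{e.two-scale-exp}, \ref{l.firstterm_of_2scexp} and \ref{e.phime-est} for the interior homogenization error, and both handle the boundary layer via the volume-fraction estimate $|\partial_*^{(2b)}U_m|/|U_m| \lesssim 3^{b-m}$.

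There is one point worth tightening. In the paper's version, absorbing the cutoff into the principal term means the relevant test function for the interior error is $(1-\xi)(w-v)$ rather than $w-v$ itself; the discrete product rule then yields $\|\nabla((1-\xi)(w-v))\|_{\underline{L}^2} \leqslant 3^{-b}\|w-v\|_{\underline{L}^2} + \|w-v\|_{\underline{H}^1}$, and it is this step, combined with Poincar\'e, that produces the factor $3^{m-b}$ multiplying $\|\mathcal{L}u - f\|_{\underline{H}^{-1}}$, and hence the $3^{-b}$ appearing in the first term of~\eqref{e.2scexp_statement}. In your decomposition as stated, you would bound $|\langle f - \mathcal{L}u, w'\rangle| \leqslant \|f - \mathcal{L}u\|_{\underline{H}^{-1}}\|w'\|_{\underline{H}^1}$ directly, which does not by itself manufacture the $3^{-b}$; the phrase ``accounting for the normalization between interior and full-volume averages'' does not resolve this. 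To recover the paper's bookkeeping you would either need to re-insert the cutoff (splitting $\langle f - \mathcal{L}u, w'\rangle = \langle f - \mathcal{L}u, (1-\xi)w'\rangle + \langle f - \mathcal{L}u, \xi w'\rangle$ and routing the second piece to the boundary-layer estimate), or track carefully where the factor arises in your own accounting. This is not a conceptual obstruction, but it should be made explicit rather than left implicit.

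Finally, a small correction in sign/normalization: with the convention $\mathcal{L}u(x) = \sum_y \a(|y-x|)(u(y)-u(x))$ used in the paper, summing by parts gives $\sum_x w(x)\,\mathcal{L}w(x) = -\tfrac12\sum_{x,y}\a(|y-x|)(w(x)-w(y))^2$, so the identity you wrote should read $\|w\|^2_{\underline{H}^1} = -2\langle \mathcal{L}w, w\rangle$ (or similar, depending on how one normalizes the quadratic form), not $\|w\|^2_{H^1} = \langle\mathcal{L}w,w\rangle$. This does not affect the argument but would need fixing in a written-out proof.
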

\begin{proof}
\emph{Step 1.} 	We let~$v$ denote the unique solution to the variational problem \eqref{e.dirichlet_discrete}.	
	Let~$u$ be the function in~\eqref{e.2scexp}, and set 
	\begin{equation*}
			w(x) := \xi(x) u_b(x) + (1-\xi(x)) u(x)\,. 
		\end{equation*}
It follows that~$v(x) \equiv w(x)$ when~$x \in \partial_*^{(b)} U_m,$ the thickened Dirichlet boundary of $U_m$. We test the identity \eqref{e.2scexp_plugin} with $w - v$, i.e., we multiply both sides of \eqref{e.2scexp_plugin} by $(w(x) - v(x))$ and sum over~$x \in \eta_*(U_m).$ Recall we have~$\mathcal{L}v = f,$ so that 
\begin{equation} \label{e.testing}
	\sum_{x \in \eta_*(U_m)} (w(x) - v(x))\mathcal{L}(w - v)(x) = \sum_{x \in \eta_*(U_m)} (w(x) - v(x))\mathcal{L} w(x) - \sum_{x\in \eta_*(U_m)} (w(x) - v(x)) f(x)\,.
\end{equation}
We will first focus on the first term. Notice that  
\begin{align*}
	\mathcal{L}w(x) &= \sum_{y \in \eta_*(U_m),\, y \sim x} \bigl(w(y) - w(x) \bigr)\\
	&= \sum_{y \in \eta_*(U_m),\,y\sim x}  \bigl( (\xi u_b )(y) - (\xi u_b)(x) + ((1-\xi)u)(y) - ((1-\xi)u)(x)\bigr)\\
	&= \frac12\sum_{y \in \eta_*(U_m),\,y\sim x}  \biggl[ (\xi(x) + \xi(y))(u_b(x) - u_b(y)) + (u_b(x) + u_b(y))(\xi(x) - \xi(y))  \\
	&\quad \quad \quad \quad + (2 - (\xi(x) + \xi(y)))(u(x)- u(y)) + (\xi(y) - \xi(x))(u(x) + u(y))\biggr]\\
	&=: \sum_{y \in \eta_*(U_m),\,y\sim x}  \biggl(1- \frac{\xi(x)+\xi(y)}{2} \biggr)(u(x) - u(y)) + \mathcal{B}(x) \,,
\end{align*}
where the boundary layer terms $\mathcal{B}$ are given by 
\begin{equation}
	\label{e.bdrylayer}
	\begin{aligned}
	\mathcal{B}(x) &:= \!\!\! \!\!\sum_{y \in \eta_*(U_m),\,y\sim x} \!\!\!\! \Bigl[ (\xi(x) + \xi(y))(u_b(x) - u_b(y)) + (u_b(x) + u_b(y))(\xi(x) - \xi(y)) \\ &\quad \quad \quad  + (\xi(y) - \xi(x))(u(x) + u(y))\Bigr] \,. 
	\end{aligned}
\end{equation}

\smallskip
\noindent 
From these observations we compute that 
\begin{align*}
&	\frac1{|U_m|}\sum_{x \in \eta_*(U_m)} (w(x) - v(x))\mathcal{L} w(x) \\
	&\quad = \frac1{|U_m|}\sum_{x \in \eta_*(U_m)} (w(x) - v(x))  \Bigl[ \sum_{y \in \eta_*(U_m),\,y\sim x}  \biggl(1- \frac{\xi(x)+\xi(y)}{2} \biggr)(u(x) - u(y))  - f(x) \Bigr]\\
	&\quad \quad + \frac1{|\cu_m|}\sum_{x \in \eta_*(U_m)} (w(x) - v(x))\mathcal{B}(x)\,. 
\end{align*}
As $\mathcal{B}$ is in divergence form, we can rewrite the last term by summing by parts to  through the derivatives upon $w-v.$ Rearranging, the left hand side is the squared $L^2-$ norm of the discrete gradient of $w-v,$ and since $w -v \equiv 0$ on the boundary, it follows that 
\begin{equation} \label{e.endofstep1}
	\begin{aligned}
		&\frac1{|U_m|}\sum_{x,y \in \eta_*(U_m),\,y\sim x}  (w(x) - v(x) - (w(y) - v(y)))^2\\
		&\quad \quad   \leqslant \|\nabla ((1{-}\xi)(w{-}v)\|_{\underline{L}^2(\eta_*(U_m))} \|\mathcal{L}u {-} f\|_{\underline{H}^{-1}(\eta_*(U_m))} + \|w {-} v\|_{\underline{H}^1(\eta_*(U_m))} \|\mathcal{B}\|_{\underline{H}^{-1}(\eta_*(U_m))}\,.
	\end{aligned}
\end{equation}

\smallskip

\emph{Step 2.} We estimate the size of the boundary layer terms. Toward this goal, if~$\rho : \eta_*(U_m) \to \RR$ is any test function with~$\rho|_{\partial_*^{(b)} U_m} \equiv 0,$ then testing, and recalling that~$\xi \equiv 0$ in $\cu_m \setminus \partial_*^{(2b)}U_m,$ yields 
\begin{equation*}
	\begin{aligned}
		&\frac1{|U_m|}\sum_{x \in \eta_*(U_m)} \mathcal{B}(x) \rho(x) \\
		&\quad = \frac{1}{|U_m|}\sum_{x \in \eta_*(\partial_*^{(2b)} U_m) \setminus \partial_*^{(b)}U_m} \rho(x) \sum_{y \in \eta_*(U_m),\,y\sim x}  \Bigl[ (\xi(x) + \xi(y))(u_b(x) - u_b(y))  \\ &\quad \quad \quad  + (u_b(x) + u_b(y))(\xi(x) - \xi(y)) + (\xi(y) - \xi(x))(u(x) + u(y))\Bigr]\\
		&\quad = \frac{1}{|U_m|}\sum_{x \in \eta_*(\partial_*^{(2b)} U_m) \setminus \partial_*^{(b)}U_m} \sum_{y \in \eta_*(U_m),\,y\sim x}  (\rho(x) - \rho(y)) \Biggl[ (\xi(x) + \xi(y)) u_b(x) \\
		&\quad \quad \quad  + (u_b(x) + u_b(y))\xi(x) + (u(x) + u(y))\xi(x) \Biggr]\\
		&\quad= : \mathrm{I} +\mathrm{II} + \mathrm{III}\,.
		\end{aligned}
\end{equation*}
For the first term $\mathrm{I}$, symmetrizing in $x,y,$ then using Cauchy-Schwarz,  and recalling that $|\xi|\leq 1,$ we find
\begin{equation}
	\begin{aligned}
	|\mathrm{I}|& = 	\frac{1	}{|U_m|}\Biggl|\sum_{x \in \eta_*(\partial_*^{(2b)} U_m )\setminus \partial_*^{(b)}U_m}  \sum_{y \in \eta_*(U_m),\,y\sim x} (\rho(x) - \rho(y)) (\xi(x) + \xi(y))u_b(x) \Biggr|\\
	&\leqslant C\Biggl(\frac{|\partial_*^{(2b)}U_m|}{|U_m|}\Biggr)^{\!\!\sfrac12}\|\rho\|_{\underline{H}^1(\eta_*(U_m))}  \| \nabla u_b\|_{\underline{L}^2(\eta_*(\partial_*^{(2b)}U_m))}\,.
	\end{aligned}
\end{equation}

\noindent Next, for $\mathrm{II},$ by Poincar\'e,
\begin{equation*}
	\begin{aligned}
		|\mathrm{II}| &\leqslant \frac{1}{|U_m|} \Biggl| \sum_{x \in \eta_*(\partial_*^{(2b)} U_m) \setminus \partial_*^{(b)}U_m}  \sum_{y \in \eta_*(U_m),y\sim x} |\rho(x) - \rho(y)| |\nabla \xi||u_b(x) + u_b(y)|\Biggr| \\
		&\leqslant C 3^{-b} \Biggl(\frac{|\partial_*^{(2b)}U_m|}{|U_m|}\Biggr)^{\!\!\sfrac12}\|\rho\|_{\underline{H}^1(\eta_*(U_m))} \|u_b\|_{\underline{L}^2(\eta_*(\partial_*^{(3 b)} U_m )}\,. 
	\end{aligned}
\end{equation*}

\noindent Finally, for $\mathrm{III},$ we similarly estimate 
\begin{equation*}
	\begin{aligned}
		|\mathrm{III}| &\leqslant 3^{-b} |U_m|^{-\sfrac12}\|\rho\|_{\underline{H}^1(\eta_*(U_m))} \|u\|_{H^1(\eta_*(\partial_*^{(2b)}U_m))}\,\\
		&\leqslant C3^{-b} \Biggl(\frac{|\partial_*^{(2b)}U_m|}{|U_m|}\Biggr)^{\!\!\sfrac12}\|\rho\|_{\underline{H}^1(\eta_*(U_m))} \|\uhom\|_{\underline{H}^1(\eta_*(\partial_*^{(2b)}U_m))}\,. 
	\end{aligned}
\end{equation*}

Using the triangle inequality after adding the previous three displays, and supremizing over $\rho$ with $\|\rho\|_{\underline{H}^1(\eta_*(U_m))} \leqslant 1$ yields the estimate 
\begin{align}
	\label{e.B_bound}
	\lefteqn{
	\|\mathcal{B}\|_{\underline{H}^{-1}(U_m)} 
	} \ \  &
	\notag \\ & \leqslant C\biggl(\frac{|\partial_*^{(2b)}U_m|}{|U_m|}\biggr)^{\!\!\sfrac12} \Bigl( \|u_b\|_{\underline{H}^1(\eta_*(\partial_*^{(2b)} U_m))} + \la 3^{-b} \|u_b\|_{\underline{L}^2(\eta_*(\partial_*^{(3b)} U_m ))} + 3^{-b} \|\uhom\|_{\underline{H}^1(\eta_*(\partial_*^{(2b)}U_m))} \Bigr)\,. 
\end{align}

\smallskip
\emph{Step 3.} We are ready to conclude the theorem; this involves plugging in the estimates for the boundary layer error from Step 2 in \eqref{e.endofstep1}. We notice that for the first term on the right-hand side, by the discrete product rule, 
\begin{equation*}
	\begin{aligned}
		\|\nabla ((1-\xi)(w-v))\|_{\underline{L}^2(\eta_*(U_m))} \leqslant 3^{-b}\|w-v\|_{\underline{L}^2(\eta_*(U_m))}  + \|w-v\|_{\underline{H}^1(\eta_*(U_m))}\,. 
	\end{aligned}
\end{equation*}
Combining and simplifying, using Poincar\'e's inequality,
	\begin{align*}
	\|w-v\|_{\underline{H}^1(\eta_*(U_m))} 
	& \leqslant 
	C 3^{m-b} \|\mathcal{L}u - f\|_{\underline{H}^{-1}(\eta_*(U_m) \setminus \partial_*^{(2b)} U_m)}  \\ &\qquad + C\biggl (\frac{|\partial_*^{(2b)}U_m|}{|U_m|}\biggr )^{\!\!\sfrac12} \Bigl  ( \|u_b\|_{\underline{H}^1(\eta_*(\partial_*^{(2b)} U_m))}+ 3^{-b} \|\uhom\|_{\underline{H}^1(\eta_*(\partial_*^{(2b)}U_m))} \Bigr )\,.
	\end{align*}
Now we must insert the results of the preceding two lemmas, and conclude. Observe that by the triangle inequality, the definition of~$\eta_*(U_m)$, Lemmas \ref{e.two-scale-exp}, and \ref{l.firstterm_of_2scexp}, and that $\uhom$ solves the homogenized equation, we find 
\begin{align*}
\|w-v\|_{\underline{H}^1(\eta_*(U_m))}&\leqslant C3^{m(1-\alpha)-b}  \Biggl( \sum_{x \in \eta_*(U_m )\setminus \partial_*^{(2b)} U_m} \sup_{y \in B_1(x)} |D\uhom(y)|^2 + |D^2\uhom(y)|^2\Biggr)^{\!\!\sfrac12} \\ &\quad \quad + C\biggl(\frac{|\partial_*^{(2b)}U_m|}{|U_m|}\biggr)^{\!\!\sfrac12} \Biggl( \|u_b\|_{\underline{H}^1(\eta_*(\partial_*^{(2b)} U_m))}+ 3^{-b} \|\uhom\|_{\underline{H}^1(\eta_*(\partial_*^{(2b)}U_m))} \Biggr)\,.
\end{align*}
Finally, we observe that 
\begin{equation*}
\biggl(\frac{|\partial_*^{(2b)}U_m|}{|U_m|}\biggr)^{\!\!\sfrac12} \leqslant C(d,U_0) \sqrt{\frac{3^{m(d-1)}3^b}{3^{md}} } =  C(d,U_0)3^{\frac{b-m}2} \,,
\end{equation*}
completing the argument. 
	\end{proof}

\begin{proof}[Proof of Theorem~\ref{t.Dirichlet}]
	The proof of Theorem \ref{t.Dirichlet} is an immediate consequence of Proposition~\ref{t.DirichletProblem}. Indeed, choosing $b$ so that $m(1-\alpha) - b = \frac{b-m}{2}$ so as to balance the two error contributions, and noting by the interior estimates for harmonic functions, and Caccioppoli, that 
	\begin{align*}
		\Biggl( \avsum_{x \in \eta_*(U_m )\setminus \partial_*^{(2b)} U_m} \sup_{y \in B_1(x)} |D\uhom(y)|^2 + |D^2\uhom(y)|^2\Biggr)^{\!\!\sfrac12} & \leqslant C	\Biggl( \avsum_{x \in \eta_*(U_m) \setminus \partial_*^{(2b)} U_m} \|\uhom \|_{\underline{L}^2(B_2(x))}^2\Biggr)^{\!\!\sfrac12} \\
	&	\leqslant C\|\uhom\|_{\underline{L}^2(U_m)}\,,
	\end{align*}
completes the proof.

\end{proof}

\section{Large Scale Regularity} \label{s.regularity}
Our main goal in this section is to obtain large-scale regularity for graph-harmonic functions. We begin with the following lemma that provides a quantitative harmonic approximation. With this lemma at our disposal, an immediate application of the abstract argument in \cite[Lemma 6.3]{AK22} yields large-scale $C^{0,1}$ regularity. 

In its statement, once again we work with coarsened functions. Let $R > \X$, where $\X$ is the minimal scale from Theorem \ref{t.homogenization}. We let $\mathcal{Q}$ denote the partition of $\Rd$ into \emph{good cubes}, see the discussion after \eqref{e.good}. Then, as before, for any function $u : \eta_*(B_R) \to \R$ we define its coarse-grained function $[u]_{\mathcal{Q}}$ with respect to this partition $\mathcal{Q},$ via 

\begin{equation} \label{e.Qcoarse}
	[u]_{\mathcal{Q}}(x) := \avsum_{y \in \eta_*(\cu_x)} u(y)\,,
\end{equation}
where $\cu_x \in \mathcal{Q}$ is the unique cube in the partition $\mathcal{Q}$ that contains $x.$ For brevity, oftentimes we will drop the subscript $\mathcal{Q}.$ 

\begin{lemma}
	\label{l.verifhyp}
	Let $\X$ denote the random scale from Theorem \ref{t.homogenization}, and $\mathcal{M}$ that from Lemma~\ref{l.Poincare} and suppose that $\frac12R > \max(\X,\mathcal{M}).$  
	Let \rv{$u:\eta_*(B_R)\to \R$} satisfy 
	\begin{align} \label{e.Lu = f}
		\sum_{y \in \eta_*(B_R),\,y\sim x} (u(y) - u(x)) &= f(x) \quad \mbox{ for each } x\in \rv{\eta_*(B_R) \cap i_l(B_R)} \,.
	\end{align}
Then, there exists $R_1 \in (\frac12 R,R)$, a constant $C(d,\la)<\infty,$ and there exists a harmonic function $\uhom$, i.e., a function satisfying $-\nabla \cdot \ahom \nabla \uhom = 0$ in $B_{R_1}$, such that 
\begin{equation} \label{e.harmonicapprox}
\| [u] - \uhom\|_{L^2(B_{R_1})} \leqslant CR^{-\alpha}\|[u] - ([u])_{B_R}\|_{\underline{L}^2(B_R)} + C R \|f\|_{\underline{H}^{-1}(\eta_*(B_R))}\,.
\end{equation}
\end{lemma}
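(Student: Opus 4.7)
The plan is to reduce to the case $f \equiv 0$ and then invoke Proposition~\ref{t.DirichletProblem} (the quantitative two-scale expansion). First, decompose $u = u_h + u_f$ on $\eta_*(B_R)$, where $u_f$ solves $\mathcal{L} u_f = f$ with $u_f \equiv 0$ on the boundary layer, and $u_h$ is graph-harmonic with $u_h = u$ on that layer. The macroscopic graph-Poincar\'e inequality (Lemma~\ref{l.poincare}, iterated up to scale $R$) yields $\|u_f\|_{\underline{L}^2(\eta_*(B_R))} \leq C R \|f\|_{\underline{H}^{-1}(\eta_*(B_R))}$, which absorbs the second term in~\eqref{e.harmonicapprox}; hence it suffices to produce $\uhom$ approximating $[u_h]$.

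Second, choose a radius $R_1 \in (R/2, 3R/4)$ and a mesoscopic boundary scale $3^b$ (with $1 \leq b \leq \log_3 R$) by a pigeonhole/averaging argument: applying Caccioppoli's estimate (Lemma~\ref{l.caccioppoli}) to $u_h$ and averaging over candidate radii, $R_1$ may be selected so that the boundary-layer energy $\|\nabla u_h\|_{\underline{L}^2(\eta_*(\partial_*^{(2b)} B_{R_1}))}$ is controlled by $C R^{-1} \|[u_h] - ([u_h])_{B_R}\|_{\underline{L}^2(B_R)}$. Now define $\uhom$ as the unique solution of the continuum Dirichlet problem
\[
-\nabla \cdot \ahom \nabla \uhom = 0 \quad \text{in } B_{R_1}, \qquad \uhom = [u_h] \text{ on } \partial B_{R_1},
\]
using a standard mollification/extension of the coarsened trace. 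Since $\ahom$ is a constant positive-definite matrix, classical interior elliptic regularity supplies $\|\nabla^k \uhom\|_{L^\infty(B_{R_1/2})} \leq C R^{-k} \|[u_h]\|_{\underline{L}^2(B_R)}$ for $k = 1, 2$.

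Third, apply Proposition~\ref{t.DirichletProblem} with $U_m = B_{R_1}$, $v = u_h$, and boundary datum $u_b$ equal to (the trace of) $\uhom$. Writing $\mathcal{E}\uhom := \uhom + \sum_i \phi_{m,e_i}(\cdot) \partial_i \uhom(\cdot)$ for the two-scale expansion, the proposition bounds $\|u_h - \mathcal{E}\uhom\|_{\underline{H}^1(\eta_*(B_{R_1}))}$ by $C \bigl( 3^{m(1-\alpha) - b} + 3^{b - m/2}\bigr) \|[u_h]\|_{\underline{L}^2(B_R)}$, using the interior $H^2$-bound on $\uhom$ and the boundary energy bound of Step 2. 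Optimizing $b$ balances the two error terms and yields a net algebraic rate $C R^{-\alpha'}$ for some $\alpha' \in (0,\alpha]$. The triangle inequality
\[
\|[u_h] - \uhom\|_{\underline{L}^2(B_{R_1})} \leq \|[u_h] - u_h\|_{\underline{L}^2} + \|u_h - \mathcal{E}\uhom\|_{\underline{L}^2} + \Bigl\|{\textstyle \sum_i \phi_{m,e_i} \partial_i \uhom}\Bigr\|_{\underline{L}^2},
\]
combined with Lemma~\ref{l.poincare} at the minimal scale $\mathcal{M}$ (for the first term), the previous bound (for the second), and Lemma~\ref{e.phime-est} together with the interior Lipschitz estimate on $\uhom$ (for the third, which gives $\|\phi_{m,e_i} \partial_i \uhom\|_{\underline{L}^2} \leq C R^{-\alpha} \|[u_h]\|_{\underline{L}^2(B_R)}$), then delivers~\eqref{e.harmonicapprox}.

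The main obstacle is the delicate mesoscale balancing in Proposition~\ref{t.DirichletProblem}: the interior error $\sim 3^{m(1-\alpha) - b}$ and the boundary-layer error $\sim 3^{b - m/2}$ must be simultaneously controlled, which forces $b \sim m(1 - \alpha/2)$ and fixes the algebraic rate $\alpha'$. A secondary technical point is extending the discrete coarsened trace $[u_h]$ to a genuine continuum boundary condition on $\partial B_{R_1}$; a convolution-type mollification at a scale well below the corrector length handles this with negligible cost to the final estimate.
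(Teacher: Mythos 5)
Your overall architecture matches the paper's: reduce to $f\equiv 0$ via an energy/Poincar\'e bound on the part with zero boundary data, pigeonhole a radius $R_1$ with controlled boundary-layer energy, mollify the coarsened trace, solve the homogenized problem with that datum, and invoke Proposition~\ref{t.DirichletProblem}. However, there is a genuine logical gap in your Step~3: you ``apply Proposition~\ref{t.DirichletProblem} with $\ldots$ $v=u_h$ and boundary datum $u_b$ equal to the trace of $\uhom$,'' but the $v$ in that proposition must, by definition (see~\eqref{e.dirichlet_discrete}), be the graph-harmonic function equal to $u_b$ on $\partial_*^{(b)} B_{R_1}$. Your $u_h$ is instead equal to the \emph{original} boundary values of $u$ on that layer, which is not the same as the mollified coarsened trace $u_b$; so $u_h$ is not admissible as the discrete solution in the Proposition, and the bound you quote for $\|u_h-\mathcal{E}\uhom\|_{\underline H^1}$ does not follow.

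The missing ingredient is an intermediate discrete-harmonic function --- call it $v$ --- defined as the solution of the graph Laplacian in $B_{R_1}$ with $v\equiv \tilde u := [u]\ast\rho_{3^l}$ on $\partial_*^{(b)}B_{R_1}$, i.e., the same (mollified, coarsened) boundary datum that $\uhom$ carries. One then splits $\|[u]-\uhom\| \leq \|[u]-[v]\| + \|[v]-\uhom\|$. Proposition~\ref{t.DirichletProblem} applies to the second term, as $v$ and $\uhom$ now share the boundary datum $\tilde u$. The first term requires a separate argument: since both $u$ and $v$ are graph-harmonic in $B_{R_1}$, using the function equal to $\tilde u - u$ on the boundary layer and zero inside as an energy-comparison competitor, together with the Poincar\'e inequality at scale $3^l$ and the pigeonhole choice of $R_1$, one bounds $\|u-v\|_{\underline H^1(\eta_*(B_{R_1}))}$ by a boundary-layer volume factor times $R^{-1}\|[u]-([u])_{B_R}\|_{\underline L^2(B_R)}$. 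Without this comparison step, the proof is incomplete.
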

{\rv{\begin{remark}
We remind the reader that the interior~$i_l(B_R)$ by analogy with~$i_l(\cu)$ at the end of Section 2. 
\end{remark}}}
\begin{proof} 
\emph{Step 0.}
Let $V$ denote the solution to \eqref{e.Lu = f} with zero boundary conditions, i.e., with $V \equiv 0$ on $\eta_*(\partial_*^{(b)} B_R).$ Then, thanks to the energy estimate and Poincar\'e's inequality (Lemma~\ref{l.Poincare}), we find that 
\begin{equation*}
	\frac1{R} \|u - V\|_{\underline{L}^2(\eta_*(B_R))} + \|\nabla u - \nabla V\|_{\underline{L}^2(\eta_*(B_R))} \leqslant C \|f\|_{\underline{H}^{-1}(\eta_*(B_R))}\,.
\end{equation*} 
Thus, in the remaining of the argument, by working with $u - V$ instead of $u,$ we may assume that $f = 0.$ 
\smallskip 

\emph{Step 1.}	
Let~$t,b,l\in\N$ with $t \geq b > l$. These represent the size of boundary layers and will be selected later in the argument. 
By the pigeonhole principle, there exists~$R_1\in [\frac12R , R-3^t]$  with the property that the energy density of~$u$ in the thin annular region~$B_{R_1+3^t}\setminus B_{R_1}$ is not larger than its average in the whole ball~$B_R$:
\begin{align} \label{e.goodboundarylayer}
\| \nabla u  \|_{\underline{L}^2(\eta_*(B_{R_1+3^t}\setminus B_{R_1} ))} 
\leqslant 
C\|\nabla u \|_{\underline{L}^2(\eta_*(B_R))}\,.
\end{align}
We mollify~$u$ on scale~$3^l,$ with $l < m,$ by setting 
\begin{equation*}
\tilde{u} := 
[u] \ast \rho_{3^l}
\,.
\end{equation*}
We view the function $\tilde{u}$ as defined in the continuum. 

\smallskip
\emph{Step 2.} 
We let~$\uhom$ denote the solution of the Dirichlet problem
\begin{align*}
\left\{
\begin{aligned}
& -\nabla \cdot \ahom \nabla \uhom = 0
& \mbox{in} & \ B_{R_1} \,
\\ &
\uhom = \tilde{u}
& \mbox{on} & \  \partial B_{R_1} \,,
\end{aligned}
\right.
\end{align*}
and, at the discrete level, we let $v$ denote the solution to 
\begin{align*}
\left\{
\begin{aligned}
&	\sum_{y\in \eta_*(B_{R_1}),\, y \sim x}  (v(x) - v(y)) = 0 \quad \mbox{ in } \mathrm{int}(\eta_*(B_{R_1}))\\ &
	v(x) = \tilde{u}(x) \quad \mbox{ in } \partial_*^{(b)} (B_{R_1})\,. 
\end{aligned}
\right.
\end{align*}
The harmonic approximation that we are after will turn out to be $\uhom,$ and to quantify this we must estimate $\|u - \uhom\|_{\underline{L}^2(\eta_*(B_{R_1}))}\,.$ By the triangle inequality, 

\begin{equation} \label{e.triangleineq}
	\|u - \uhom\|_{\underline{L}^2(\eta_*(B_{R_1}))} \leqslant \|\uhom - v\|_{\underline{L}^2(\eta_*(B_{R_1}))} + \|v - u\|_{\underline{L}^2(\eta_*(B_{R_1}))}\,.
\end{equation}
We must argue that each of the two terms on the right-hand side are small; the first will be so by homogenization, and the second by energy comparison. We detail each of these estimates in turn. 

\smallskip

\emph{Step 3.} Starting with the second term, we show that $v$ is close to $u$. To see this, since $v-u$ is a discrete harmonic function in $B_{R_1},$ it is the unique minimizer of the discrete Dirichlet energy in $B_{R_1}$ subject to its own boundary conditions. As a competitor, consider the function which is identically equal to $v-u = \tilde{u} - u$ in $\partial_*^{(b)} (B_{R_1})$, and vanishes identically in $\mathrm{int}(\eta_*(B_{R_1})).$ Comparing energies, we obtain 
\begin{align*}
	\| v - u\|^2_{\underline{H}^1(\eta_*(B_{R_1}))} &\leqslant \frac1{|B_{R_1}|}\sum_{x,y \in \eta_*(B_{R_1}),\, y\sim x}   \bigl( v(x) - v(y) - (u(x) - u(y)) \bigr)^2\\
	&\leqslant  \frac{C}{|B_{R_1}|}\sum_{x \in \partial_*^{(b)} (B_{R_1})} \sum_{y \in \eta_*(B_{R_1}),\,y\sim x}  \bigl( \tilde{u}(x) - \tilde{u}(y) - (u(x) - u(y)) \bigr)^2\,,
	\end{align*}
so that, once again by the triangle inequality, using Poincar\'e with respect to the decomposition given by $\mathcal{Q}$ (see Lemma~\ref{l.Poincare} and recall that $R_1 > \frac12 R > \mathcal{M}$), and finally, the choice of a good boundary layer from \eqref{e.goodboundarylayer}, we find 
\begin{equation}
	\label{e.pinotnoir}
	\begin{aligned}
&\lefteqn{\frac{1}{|B_{R_1}|}\sum_{x \in \partial_*^{(b)} (B_{R_1})} \sum_{y \in \eta_*(B_{R_1}),\,y\sim x}  \bigl( \tilde{u}(x) - \tilde{u}(y) - (u(x) - u(y)) \bigr)^2
}   \\
&\quad \leqslant \frac{C}{|B_{R_1}|} \sum_{x \in \partial_*^{(b)} (B_{R_1})} \sum_{y \in \eta_*(B_{R_1}),\,y\sim x}  \bigl( \tilde{u}(x) - \tilde{u}(y) - ([u](x) - [u](y)) \bigr)^2\\
&\quad + \frac{C}{|B_{R_1}|} \sum_{x \in \partial_*^{(b)} (B_{R_1})} \sum_{y \in \eta_*(B_{R_1}),\,y\sim x}  \bigl( u(x) - u(y) - ([u](x) - [u](y)) \bigr)^2\\
&\quad \leqslant  \frac{|\partial_*^{(b)} (B_{R_1})|}{|B_{R_1}|} (C 3^{-2l} + C)
\|\nabla u \|_{\underline{L}^2(\eta_*(B_R))}^2
\\ & \quad \leqslant  C
 \frac{|\partial_*^{(b)} (B_{R_1})|}{|B_{R_1}|} 
\frac1{R^2} 
\|[u] - ([u])_{B_R} \|_{\underline{L}^2(B_R)}^2
\,,
	\end{aligned}
\end{equation}
by Caccioppoli in the last line. 
\smallskip

\emph{Step 4. }  Having controlled the second term from the right-hand side of \eqref{e.triangleineq}, it remains to control the first one using homogenization. For this, appealing to Proposition~\ref{t.DirichletProblem} yields (with $m \in \NN$ the smallest natural number such that $B_{R_1} \subseteq \cu_m$)
\begin{equation} \label{e.merlot}
	\begin{aligned}
		\|v - \uhom\|_{\underline{L}^2(\eta_*(B_{R_1}))}& \leqslant C3^{m(1-\al)-b} \Biggl(\sum_{x \in \eta_*(B_{R_1})} \sup_{y \in B_1(x)} |D\uhom(y)|^2 + |D^2 \uhom(y)|^2 \Biggr)^{\!\!\sfrac12}  \\ & \qquad  + C\Biggl(\frac{|\partial_*^{(2b)}(B_{R_1})|}{|B_{R_1}|}\Biggr)^{\!\!\sfrac12}  \|D \tilde{u}\|_{\underline{L}^2(\eta_*(\partial_*^{(2b)} (B_{R_1})))} \,.
	\end{aligned}
\end{equation}

The size of $\tilde{u}$ in the boundary layer is easy to estimate, and it is where we make use of the choice of the good boundary layer. Indeed, using the definition of $\tilde{u},$ and \eqref{e.goodboundarylayer}, and that $\partial_*^{(2b)} B_{R_1} \subseteq B_{R_1 + 3^t} \setminus B_{R_1},$ we find 
\begin{align*}
\|\nabla \tilde{u}\|_{\underline{L}^2(\eta_*(\partial_*^{(2b)} B_{R_1}))}
=	\|\nabla [u]* \rho_l\|_{\underline{L}^2(\eta_*(\partial_*^{(2b)} B_{R_1}))} 
	&  \leqslant C \|\nabla u\|_{\underline{L}^2(\eta_*(B_R))} \\ & \leqslant \frac{C}{R} \| [u] - ([u])_{B_R}\|_{\underline{L}^2(B_R)}\,, 
\end{align*}
where the last inequality follows from Caccioppoli's inequality (Lemma~\ref{l.caccioppoli}). By the interior estimates for harmonic functions, since $R_1 \leqslant \frac12 R < \frac34 R,$ we find
\begin{equation*}
	\|D^2\uhom\|_{L^\infty(B_{R_1})}\leqslant \frac{C}{R}\| D\uhom\|_{L^\infty(B_{\frac34 R})} \leqslant \frac{C}{R}\|\nabla \uhom\|_{\underline{L}^2 (B_R)} \,.
\end{equation*}
Using $\tilde{u}$ as a competitor, and then using Caccioppoli once again, we find that 
\begin{equation*}
\|\nabla \uhom\|_{\underline{L}^2 (B_R)} \leqslant \|\nabla u\|_{\underline{L}^2(\eta_*(B_R))} \leqslant \frac{C}{R} \|[u] - ([u])_{B_R}\|_{\underline{L}^2(B_R)} \,.
\end{equation*}
Inserting the foregoing displays in \eqref{e.merlot}, selecting $b$ so that $m-b - m \alpha = - \frac{m\alpha}{2},$ and then choosing $l = b,$ and finally, absorbing the second term into the first upon slightly increasing the prefactor, if needed, completes the proof. 
\end{proof}

The main result of this section is 
\begin{theorem}[Large-scale~$C^{0,1}$ regularity] \label{t.large-scale.C01} Let~$\X$ be as in Theorem~\ref{t.homogenization}. There exists a constant~$C(d,\la)<\infty$ such that for every~$R \geqslant \X$,~$f: \eta_*(B_R) \to \R$, and solution~$u$ of~\eqref{e.Lu = f} satisfies, for every~$r \in [\X, \frac12 R],$ 
\begin{equation} \label{e.gradbound.lsr}
	\sup_{t \in [r, \frac12 R]} \|\nabla u\|_{\underline{L}^2(\eta_*(B_t))} \leqslant \frac{C}{R}\|u - (u)_{B_R}\|_{\underline{L}^2(\eta_*(B_R))} + C \int_r^R \|f \|_{\underline{H}^{-1}(\eta_*(B_t))} \frac{\,dt}{t}\,.
\end{equation}
In particular, in the case~$f \equiv 0,$ we have
\begin{equation} \label{e.oscillations}
\sup_{r\in[\X,\frac12R]}	|(u)_{B_r} - (u)_{B_R}| \leqslant C \|u - (u)_{B_R}\|_{\underline{L}^2(\eta_*(B_R))} \,.
\end{equation}
\end{theorem}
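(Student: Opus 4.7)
The plan is to implement a standard Campanato-type iteration, following the abstract framework of \cite[Lemma 6.3]{AK22}, using the quantitative harmonic approximation of Lemma~\ref{l.verifhyp} as the single nontrivial input. I reduce first to the case $f\equiv 0$ by subtracting off the solution $V$ of $\mathcal{L} V = f$ with zero boundary conditions; the $V$-contribution yields, via the energy estimate and Poincar\'e (Lemma~\ref{l.poincare}), precisely the integral $\int_r^R \|f\|_{\underline{H}^{-1}(\eta_*(B_t))} \, dt/t$ on the right side of \eqref{e.gradbound.lsr}. Moreover, by Caccioppoli's inequality (Lemma~\ref{l.caccioppoli}), it suffices to prove the companion $L^2$-oscillation bound
\[
\sup_{t \in [r,R/2]} \inf_{c\in\R} \|u - c\|_{\underline{L}^2(\eta_*(B_t))}
\leq C \|u - (u)_{B_R}\|_{\underline{L}^2(\eta_*(B_R))},
\]
from which both \eqref{e.gradbound.lsr} and \eqref{e.oscillations} follow.

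Next, for each scale $r \in [\X, R/2]$, I define the excess
\[
\Phi(r) := \inf_{c \in \R} \frac{1}{r}\,\bigl\|\,[u] - c\,\bigr\|_{\underline{L}^2(B_r)},
\]
which by Lemma~\ref{l.poincare} is equivalent, modulo a negligible error at scale $\mathcal{M}$, to the rescaled oscillation of $u$ itself. For $\theta \in (0,\tfrac12)$ to be chosen, I apply Lemma~\ref{l.verifhyp} to produce a continuum harmonic function $\uhom$ on $B_{R_1}$ with $R_1 \in [r/2, r]$ satisfying
\[
\|[u] - \uhom\|_{L^2(B_{R_1})} \leq C r^{-\alpha}\,\|[u] - ([u])_{B_r}\|_{\underline{L}^2(B_r)}.
\]
Since $\uhom$ is a harmonic function of a constant-coefficient elliptic operator, it obeys the classical interior $C^{0,1}$ bound $\|\uhom - \uhom(0)\|_{\underline{L}^2(B_{\theta R_1})} \leq C \theta\, \|\uhom - (\uhom)_{B_{R_1}}\|_{\underline{L}^2(B_{R_1})}$; combining with the approximation estimate and the volume factor $\theta^{-d/2}$ incurred by restricting $L^2$-norms to the smaller ball yields the excess-decay relation
\[
\Phi(\theta r) \leq C_0\bigl(\theta + \theta^{-d/2-1} r^{-\alpha}\bigr)\, \Phi(r).
\]

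I then fix $\theta(d,\lambda) \in (0,\tfrac12)$ so small that $C_0 \theta \leq \tfrac14$, and note that for $r \geq \X$ large enough (possibly enlarging $\X$ by a deterministic multiplicative constant, which preserves the stochastic bound \eqref{e.X.integrability}) we also have $C_0 \theta^{-d/2-1} r^{-\alpha} \leq \tfrac14$, giving $\Phi(\theta r) \leq \tfrac12 \Phi(r)$. Iterating this contraction from scale $R$ down to scale $r \geq \X$ along a geometric sequence of scales produces $\sup_{t \in [r, R/2]} \Phi(t) \leq C \Phi(R)$, and summing the resulting geometric series of differences $|(u)_{B_{\theta^{k}R}} - (u)_{B_{\theta^{k+1}R}}|$ by the triangle inequality gives the oscillation bound \eqref{e.oscillations}. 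The main technical obstacle is the gap between $u$ and its coarsening $[u]$: controlling this cleanly requires invoking Lemma~\ref{l.poincare} on the partition $\mathcal{Q}$ of good cubes, which forces an additional enlargement of the random scale so that \emph{every} ball $B_t$ with $t \in [r,R]$ encountered in the iteration lies above the Poincar\'e minimal scale $\mathcal{M}$; this enlargement is harmless since $\mathcal{M}$ satisfies a comparable stochastic integrability to $\X$.
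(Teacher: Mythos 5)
Your reduction to $f\equiv 0$ via $V$, the use of Caccioppoli, and the invocation of Lemma~\ref{l.verifhyp} together with the abstract iteration of \cite[Lemma 6.3]{AK22} are exactly what the paper does; but in the place where the paper simply cites \cite[Lemma 6.3]{AK22} as a black box, you open it up and implement the iteration, and the iteration you write down does not work.

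Concretely, you claim the excess
\[
\Phi(r) := \inf_{c \in \R} \frac{1}{r}\,\bigl\|\,[u] - c\,\bigr\|_{\underline{L}^2(B_r)}
\]
obeys the decay relation
\[
\Phi(\theta r) \leq C_0\bigl(\theta + \theta^{-d/2-1} r^{-\alpha}\bigr)\, \Phi(r)\,,
\]
and you derive this from the one-sided interior bound $\|\uhom - \uhom(0)\|_{\underline{L}^2(B_{\theta R_1})} \leq C \theta\, \|\uhom - (\uhom)_{B_{R_1}}\|_{\underline{L}^2(B_{R_1})}$. But once you divide by the outer factor $\theta R_1$ in the definition of $\Phi(\theta r)$, the explicit factor $\theta$ from the interior estimate cancels: you only get
\[
\Phi(\theta r) \leq C\,\Phi(r) + \frac{C}{\theta r}\,\bigl\|\,[u]-\uhom\,\bigr\|_{\underline{L}^2(B_r)}\,,
\]
with $C$ bounded below by $1$, and no smallness in $\theta$. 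This is as it must be: an affine solution $u = p\cdot x$ has $\Phi(r) \approx |p|$ at every scale, so $\Phi$ is not a contracting quantity and any inequality of the form $\Phi(\theta r) \leq C_0\theta\,\Phi(r)$ is false. Fixing $\theta$ small as you propose then gives nothing.

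The actual content of \cite[Lemma 6.3]{AK22} (and of the standard argument behind it, cf.\ \cite[Theorem 3.8]{AKM19}) is different: one measures closeness to \emph{affine} functions, i.e.\ works with
\[
\Xi(r) := \inf_{p\in\Rd,\ c\in\R} \frac{1}{r}\,\bigl\|\,[u] - (p\cdot x + c)\,\bigr\|_{\underline{L}^2(B_r)}\,,
\]
and uses the $C^{1,1}$ (not $C^{0,1}$) interior estimate for the harmonic approximant $\uhom$, which gives $\inf_{p,c}\|\uhom - p\cdot x - c\|_{\underline{L}^2(B_{\theta r})} \leq C\theta^2 \inf_{p,c}\|\uhom - p\cdot x - c\|_{\underline{L}^2(B_{r})}$. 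This \emph{does} genuinely contract, one obtains $\Xi(\theta r) \leq \tfrac12 \Xi(r) + C r^{-\alpha}\Phi(r)$, and then the $C^{0,1}$ bound $\sup_t \Phi(t) \leq C\Phi(R)$ is recovered by tracking the bounded sequence of minimizing slopes $p_k$ across scales and summing the telescoping differences $|p_{k+1}-p_k|$. The step you present replaces this two-quantity bookkeeping with a naive one-quantity contraction, which is the gap. Everything else in your argument --- the $f$-reduction, the Caccioppoli step, the handling of the coarsening $[u]$ via the partition $\mathcal Q$ and the enlargement of the random scale --- matches the paper and is sound.
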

\begin{proof}
	The proof of Theorem \ref{t.large-scale.C01} relies on using \cite[Lemma 6.3]{AK22} in conjunction with the harmonic approximation \rv{Lemma  \ref{l.verifhyp}}. Indeed, given $R > \X$, we apply   \cite[Lemma 6.3]{AK22} with the choice $X = \X,$ and the function $[u]$ extended to be piecewise constant on the respective cubes. Naturally, $[u] \in L^2(B_R),$ and in Lemma~\ref{l.verifhyp} we showed that there is a harmonic function $\uhom$ such that $[u]$ is well-approximated by $\uhom$ in the sense of \eqref{e.harmonicapprox}. 
	
	\smallskip 
We apply the Caccioppoli inequality to obtain, for every $r \in (0,R),$  
	\begin{equation*}
		\|\nabla u\|_{\underline{L}^2(\eta_*(B_{\frac{r}{2}}))} \leqslant \frac{C}{r}\|[u] - ([u])_{B_r}\|_{\underline{L}^2(B_r)} + C \|f\|_{\underline{H}^{-1}(\eta_*(B_r))}\,.
	\end{equation*}
Next, by harmonic approximation (Lemma~\ref{l.verifhyp}) and~\cite[Lemma 6.3]{AK22}, for each $r \geqslant \X,$
\begin{equation*}
	\sup_{t \in [r, R]} \frac{1}{t} \| [u] - ([u])_{B_t}\|_{\underline{L}^2(B_t)} \leqslant \frac{C}{R} \|[u] - ([u])_{B_R}\|_{\underline{L}^2(B_R)} + C \int_r^R \| f \|_{\underline{H}^{-1}(\eta_*(B_t))} \frac{\,dt}{t}\,.
\end{equation*}
Combining these, it then follows that for each $r \in [\X, \frac{1}{2}R], $ we obtain 
\begin{equation*}
	\sup_{t \in [r, \frac12 R]} \|\nabla u\|_{\underline{L}^2 (B_t)} \leqslant  \frac{C}{R} \|[u] - ([u])_{B_R}\|_{\underline{L}^2(B_R)} + C \int_r^R \| f \|_{\underline{H}^{-1}(\eta_*(B_t))} \frac{\,dt}{t}\,;
\end{equation*}
this is the desired estimate \eqref{e.gradbound.lsr}.

\smallskip 
We turn next, to the proof of \eqref{e.oscillations}. Let $r \in [\X, \frac12R],$ and  let $n \in \N$ denote the maximum number such that $R2^{-n}  \geqslant r,$ and  we find that by the triangle inequality, and running down the scales dyadically up to the scale $r,$ we find using \eqref{e.gradbound.lsr} in the case~$f\equiv 0$, 
\begin{align*}
	|(u)_{B_r} - (u)_{B_R}| &\leqslant |(u)_{B_r} - (u)_{B_{R2^{-n}}}| +  \sum_{m=0}^n |(u)_{B_{R2^{-m+1}}} - (u)_{B_{R2^{-m}}}| \\
	&\leqslant   C \|u - (u)_{B_R}\|_{\underline{L}^2(\eta_*(B_R))} \sum_{m=0}^n \frac{1}{2^m} \leqslant C \|u - (u)_{B_R}\|_{\underline{L}^2(\eta_*(B_R))}\,.
\end{align*}
Taking a supremum concludes the proof of \eqref{e.oscillations}. 
\end{proof}

\section{Estimates on first-order correctors} 
\label{s.correctors}
The main result of this section is the existence of first-order correctors that are sublinear at infinity, and have stationary gradients, along with optimal bounds on their sizes. As in prior sections, we continue to work with a coarse-graining, and to this end we recall the partition $\mathcal{Q}$ of $\Rd$ into good cubes \rv{(see \eqref{e.good} and subsequent definition)}, and the associated coarsening operator defined via \eqref{e.Qcoarse}. The goal is to show the following bound on spatial averages of~$\nabla [\phi_e]$. Given a nonnegative smooth function~$\Psi$ with~$\int_{\Rd} \Psi = 1$ and~$r>0$, we denote
\begin{equation*}
	\Psi_r (x) :=
	r^{-d} \Psi (x/r)
	\,.
\end{equation*}

\smallskip

Thanks to the regularity theory, the existence of correctors $\phi_e$ (uniquely defined up to additive constants) is straightforward, and much of our efforts will lie in proving optimal bounds on their sizes. Granting their existence, for now, 
the optimal bounds for their sizes~\rv{(see Theorem \ref{t.corrector.bounds})} are based on applying a concentration inequality to the random vector 
\begin{equation*}
Z_e(\Psi_r)
:=
\int_{\Rd} 
\nabla [\phi_e] (x) 
\Psi_r(x)\,dx
\,.
\end{equation*}
This kind of argument, in which a concentration inequality is combined with improved regularity (or integrability of gradient) on large scales to yield optimal estimates on the first-order correctors, was introduced in the context of homogenization by Naddaf and Spencer~\cite{NS}. It later became the basis of the quantitative homogenization theory of Gloria and Otto and collaborators (see for instance~\cite{GO,GNO,DGO} and~\cite{AK22} for more references).

\smallskip

The concentration inequality we will apply can be found in~\cite[Lemma 3.3]{AK22} (see also \cite[Proposition 2.2]{AL17}). This inequality permits one to control the variance of $Z_e(\Psi_r)$ by the so-called \emph{vertical derivative} of the random variable $Z_e(\Psi_r),$  along with exponential moments, and reflects a sensitivity analysis of the point cloud on the underlying random environment. This random variable $Z_e(\Psi_r)$ depends on the point cloud~$\eta$, obviously, and so we may also write~$Z_e(\Psi,\eta)$ to indicate this dependence. 
The vertical derivative we wish to study is the difference
\begin{equation*}
	Z_e(\Psi,\eta) - Z_e(\Psi,\eta')
\end{equation*}
where~$\eta'$ is the same as~$\eta$ except all the points in a fixed unit cube~$\zeta+\cu_0$ have been deleted and resampled. 
We let~$\eta_*$ and~$\eta_*'$ denote the percolation clusters corresponding to the two point clouds~$\eta~$ and~$\eta'$, respectively. 

\smallskip

The difference~$ \phi_e -  \phi_e'$ solves the equation
with a right-hand side that looks essentially like~$\nabla \cdot \indc_{B_\X(\zeta)}$. 
We need to be more precise about what this means, since the graph has been obviously changed in the resampled unit cube, and so the two functions do not even share the same domain. However, the graph is unchanged in~$\Rd \setminus \cu$ where~$\cu\in\mathcal{P}$ is the cube in the partition~$\mathcal{P}$ which contains~$\zeta+\cu_0$. We will locally redefine $\phi_e'$ on the original graph by means of the coarsened function (which will in fact, be defined on the continuum), and then measure the sensitivity of the random variable $Z_e(\Psi_r,\eta)$ on the environment.

\begin{proof}[Proof of Theorem~\ref{t.corrector.bounds}]
The proof is structured in several steps. 

\smallskip 
\emph{Step 0.} In this step we obtain the existence of the correctors $\{\phi_e\}_{e \in \Rd}.$ Fix $e \in \Rd, e \neq 0.$ We observe that the functions $\{\phi_{m,e} - \phi_{m-1,e}\}$ are graph-harmonic in balls $B_r$ of radius $r,$ for any $r$ such that~$\X \leq r \leq 3^{m-2}$ (with $\X$ is in Theorem \ref{t.large-scale.C01}). Consequently, by \eqref{e.gradbound.lsr}, we find 
\begin{equation*}
	\| \nabla (\phi_{m,e} - \phi_{m-1,e} ) \|_{\underline{L}^2(\eta_*(B_r))}
	\leq
	C 3^{-m\alpha}
\end{equation*}
Therefore,~$\{ \nabla \phi_{m,e} \}_{m\in\N}$ is Cauchy in~$L^2(\eta_*(B_r))$ for every~$r\geq 1$. 
We can therefore define~$\nabla \phi_{e}$ as the limit, as~$m \to \infty$, of~$\nabla \phi_{m,e}$ and we have the estimate
\begin{equation*}
	\sup_{r \in [\X, 3^{m-1}]}
	\| \nabla \phi_{e} - \nabla\phi_{m,e}  \|_{\underline{L}^2(\eta_*(B_r))}
	\leq
	C 3^{-m\alpha}
	\,.
\end{equation*}
It is clear that~$\phi_{e}$, which is defined up to additive constants, is a solution of the corrector equation. 
This is stationary because we apply the regularity theory to the difference of a~$\phi_{m,e}$ and a similar function defined with respect to a cube shifted by a fixed translation, and repeat the above argument once again. 

\smallskip 
\emph{Step 1.} In this step we redefine the corrector $\phi_e'$ on the original graph, and estimate the difference. Toward this goal, we define the function~$\hat{\phi}'_e$ via
\begin{equation*}
	\hat{\phi}'_e(x)
	=
	\left\{
	\begin{aligned}
		& \phi_e'(x) & \mbox{if} & \ x \in \Rd \setminus \cu \,, \\ 
		& [\phi_e'](x) & \mbox{if} & \ x \in \cu \,.
	\end{aligned}
	\right.
\end{equation*}
We have that~$w_e := \phi_{e} - \hat{\phi}'_e$ solves the equation on the original cluster~$\eta_*$, away from the cube~$\cu$. In the cube~$\cu$ (plus neighboring vertices) it solves the equation with a right-hand side in divergence form which is bounded. 
To be precise, letting $\rho : \eta_* \to \R$ be any compactly supported test function, we find that the weak formulation of the equation satisfied by $w_e$ is
\begin{align*}
	&\sum_{x,y\in \eta_*,\, y\sim x} (w(y) - w(x)) \rho(x) \\
	&\quad =\frac12 \sum_{x,y\in \eta_*,\, y\sim x} (w(y) - w(x)) (\rho(x) - \rho(y))\\
	&\quad = \frac12\sum_{x \in\eta_*(\cu), \mathrm{dist}(x,\partial \cu) \leqslant 1} \sum_{y \not\in \eta_*(\cu),\,y\sim x} \bigl(e \cdot (x-y) - \phi_e'(y) + [\phi_e'](x)  \bigr)  (\rho(x) - \rho(y))\\
		&\quad + \frac12\sum_{x \not\in\eta_*(\cu), \mathrm{dist}(x,\partial \cu) \leqslant 1} \sum_{y \in \eta_*(\cu),\,y\sim x}  \bigl(e \cdot (x-y) - \phi_e'(x) + [\phi_e'](y)  \bigr)  (\rho(x) - \rho(y))\\
	&\quad =: \sum_{x \in \eta_*(\cu),y\in \eta_*,\, y\sim x}  \f(x,y) (\rho(x) - \rho(y))\,,
\end{align*}
where the vector field $\f$ satisfies $\f(x,y) = - \f(y,x),$ and moreover,  
\begin{equation} \label{e.fbound}
	\| \f \|_{{L}^2(\eta_*(\Rd))} \leq C\,.
\end{equation}
Using~$\hat\phi_e'$, we can write 
\begin{align} \label{e.diffofZ}
	Z_e(\Psi_r,\eta) - Z_e(\Psi_r,\eta')
	=
	\int_{\Rd} 
	\nabla\bigl ( [\phi_e] (x) - [\phi_e'] (x) \bigr )
	\Psi_r(x)\,dx
	=
	\int_{\Rd} 
	\nabla [w]  (x) 
	\Psi_r(x)\,dx
\end{align}
We can write a formula for $w$ in terms of the Green's function:
\begin{equation} \label{e.formula}
	w_e(x) = \sum_{z \in \eta_*} G(x,z) \sum_{y \in \eta_*,\, y\sim z} \f(z,y) \,,
\end{equation}
Since $\f$ is antisymmetric in its arguments, we find 
\begin{equation} \label{e.repformula}
	w(x) = \sum_{z \in \eta_*} \sum_{y \in \eta_*,\,y\sim z }  \f(z,y) \bigl( G(x,z) - G(x,y)\bigr)\,. 
\end{equation}
Towards estimating the integral on the right-hand side of \eqref{e.diffofZ}, we recall that $\zeta$ is the center of the cube $\cu,$ and let $\X(\zeta)$ denote the minimal scale from Proposition~\ref{p.greensfunction} below, and estimate our integral by splitting it in $B(\zeta, 3\X(\zeta))$, and $\Rd \setminus B(\zeta, 3\X(\zeta)).$ 

\smallskip 
If $x \in \Rd \setminus B(\zeta, 3\X(\zeta)),$ we let $R(x) := \frac12 |x - \zeta|,$ and let $\cu_{x} \in \mathcal{P}$ denote the unique cube in the partion $\mathcal{P}$ which contains $x.$ Then, by the large-scale regularity \rv{T}heorem \ref{t.large-scale.C01} we estimate 
\begin{align*}
	|\nabla [w](x)|&\leqslant |\cu_x|^{\sfrac12} \|\nabla [w]\|_{\underline{L}^2(\cu_x)} \\
	&\leqslant  \frac{C}{R(x)}|\cu_x|^{\sfrac12} \|[w] - ([w])_{B(x,R(x))}\|_{\underline{L}^2 (B(x, R(x)))}\\
	&\leqslant  \frac{C}{|x- \zeta|}|\cu_x|^{\sfrac12}\bigl \|[w] - ([w])_{A(\zeta, \frac12 R(x), \frac32 R(x))}\bigr \|_{\underline{L}^2 (A(\zeta, \frac12 R(x), \frac32 R(x)))}\,,
\end{align*}
where $A(\zeta, \frac12 R(x), \frac32 R(x))$ is the annulus with inner and outer radii $\frac12 R(x)$ and $\frac32 R(x),$ respectively, centered at $\zeta.$ Combining the representation formula \eqref{e.repformula} with the bound \eqref{e.fbound}, using the large-scale regularity statement in Theorem~\ref{t.large-scale.C01} on the Greens function, and then using the decay from Proposition~\ref{p.greensfunction} with the center $\zeta$ in place of $x,$ we obtain that 
\begin{equation*}
	|\nabla [w](x)| \leqslant \frac{C}{|x - \zeta|} |\cu_x|^{\sfrac12} \frac{\X(\zeta)^{\sfrac{d}2}}{|x - \zeta|^{d-1}} = \frac{C |\cu_x|^{\sfrac{1}2} \X(\zeta)^{\sfrac{d}2}}{|x - \zeta|^d}\,.
\end{equation*}
Thus, we find 
\begin{equation*}
	\Biggl|	\int_{\Rd \setminus B(\zeta, 3\X(\zeta))} \nabla [w](x) \Psi_r(x)\,dx \Biggr| \leqslant \int_{|x-\zeta| \geqslant 3 \X(\zeta)} C|\cu_x|^{\sfrac12} \frac{\X(\zeta)^{\sfrac{d}2}}{|x-\zeta|^d}\Psi_r(x)\,dx\,.
\end{equation*}
On the other hand, for the ``near field'', we observe that if $x \in B(\zeta,3\X(\zeta)),$ then, thanks to \eqref{e.sizeofcubes}, we estimate 
\begin{align*}
	|\nabla [w](x)| &\leqslant |\cu_x|^{\sfrac12} \|\nabla [w]\|_{\underline{L^2}(\cu_x)} \\
	& \leqslant \O_1(C) \bigl(\|\nabla [\phi_e]\|_{\underline{L}^2(B(\zeta,3\X(\zeta)))} + \|\nabla [\phi_e']\|_{\underline{L}^2(B(\zeta,3\X(\zeta)))} \bigr)	\leqslant \O_1(C)\,. 
\end{align*}
Combining the last two displays, and upon increasing the constant $C$ if needed, by \eqref{e.diffofZ}
\begin{equation} \label{e.vertder}
	|\partial_\zeta Z_e(\Psi_r)| = \Biggl|	\int_{\Rd} \nabla[w] (x) \Psi_r(x)\,dx \Biggr| \leqslant \int_{\Rd} \frac{C|\cu_x|^{\sfrac12} \X(\zeta)^{\sfrac{d}2}}{1 + |x-\zeta|^d}\Psi_r(x)\,dx\,. 
\end{equation}

\smallskip 

\emph{Step 2.} We are now ready to apply a nonlinear concentration inequality.  Applying \cite[Proposition 2.2]{AL17}, there exists a constant $C(d)<\infty$ such that, for every~$E>0$,  
\begin{equation}
	\label{e.exp.spectral.gap}
	\E
	\Biggl[ 
	\exp\biggl  (\biggl  (\frac {\sum_{z\in \Zd}|\partial_z Z_e(\Psi)|^2 }{E^2}\biggr )^{\!\!\frac {s}{2-s}} \biggr )
	\Biggr] \leq 2
	\quad \implies \quad
	\E
	\biggl[ 
	\exp\biggl  (\biggl  (\frac {|Z_e(\Psi)|}{CE}\biggr )^{s} \biggr )
	\biggr]
	\leq 
	2\,.
\end{equation}
Squaring  \eqref{e.vertder}, we estimate
\begin{align} \label{e.thingy}
	\sum_{\zeta\in\Zd}
	|\partial_\zeta Z_e(\Psi_r)|^2
	&
	\leq
	C
	\sum_{\zeta\in\Zd}
	\X(\zeta)^{2d}
	\biggl (\int_{\Rd} 
	\frac{|\cu_x|}{1 + |x - \zeta|^d}
 \Psi_r(x)\,dx\biggr )^{\!2}\,.
\end{align}
Thanks to \eqref{e.minscalesize}, we note that 
\begin{equation*}
	\int_{\Rd} 
 \frac{1}{1 + |x-\zeta|^d}
	|\cu_x| \Psi_r(x)\,dx
	= \O_{\frac{d}{2(d+1)}}\biggl  (C\int_{\Rd} 
 \frac{1}{1 + |x-\zeta|^d}
	\Psi_r(x)\,dx\biggr ) \,.
\end{equation*}
Since $\X = \O_1(C),$ it follows  by \cite[Lemma A.3]{AKM19} that 
\begin{equation*}
	\X(\zeta)^{2d}
	\biggl (\int_{\Rd} 
 \frac{1}{1 + |x-\zeta|^d}
	|\cu_x| \Psi_r(x)\,dx\biggr )^{\!2}
	= \O_{\frac{d}{3d+2}}\biggl  (C \biggl (\int_{\Rd} 
 \frac{1}{1 + |x-\zeta|^d}
	\Psi_r(x)\,dx\biggr )^{\!2}\,\biggr ) \,.
\end{equation*}
Summing this estimate over $\zeta \in \Zd$ yields 
\begin{equation*}
	\sum_{\zeta\in\Zd}\X(\zeta)^{2d}
	\biggl (\int_{\Rd} 
 \frac{1}{1 + |x-\zeta|^d}
|\cu_x|\Psi_r(x)\,dx\biggr )^{\!2}
	=  \O_{\frac{d}{3d+2}}\biggl  (C \sum_{\zeta \in \Zd} \biggl (\int_{\Rd} 
	\frac{1}{1 + |x-\zeta|^d}
	\Psi_r(x)\,dx\biggr )^{\!2}\,\biggr ) \,.
\end{equation*}
By Young's inequality for convolutions, we finally compute that
\begin{equation*}
	\sum_{\zeta \in \Zd} \biggl (\int_{\Rd} 
	\frac{1}{1 + |x-\zeta|^d}
	\Psi_r(x)\,dx\biggr )^{\!2} \leqslant C \int_{\Rd} \biggl(\int_{\Rd} 	\frac{1}{1 + |x-\zeta|^d}
	\Psi_r(x)\,dx \biggr)^{\!2} \,d\zeta
	\leqslant C r^{-d}\,,
\end{equation*}
so that, from \eqref{e.thingy} we have 
\begin{equation*}
	\sum_{\zeta\in\Zd}
	|\partial_\zeta Z_e(\Psi_r)|^2 =
	\O_{\frac{d}{3d + 2}} ( Cr^{-d})\,.
\end{equation*}
Applying~\eqref{e.exp.spectral.gap} lets us conclude
\begin{equation*}
Z_e(\Psi_r)  =\O_{\frac{2d}{4d+2}} ( Cr^{-\sfrac{d}2})\,.
\end{equation*}

\smallskip

\emph{Step 3.} We are ready to conclude the proof of the theorem, by proving the optimal sizes of the correctors, namely \eqref{e.correctorsize.opt}.  For this, we make special choices of the test function $\Psi$ in terms of the parabolic heat kernel in $\Rd$, and then appeal to well-known characterizations of Sobolev spaces in terms of convolution against the heat kernel.  To be precise, fixing $y \in \Rd$, we use~$\Psi_r(x) := (4\pi r^2)^{-\sfrac{d}2} \exp \Bigl( -\frac{|x-y|^2}{4r^2}\Bigr)\,, $ in the preceding steps.  Following \cite[Proof of Theorem 4.24]{AKM19}, we fix $\ep \in (0,\frac12],$ and $r \geqslant \ep,$ and for any $x \in \Rd,$ the prior steps imply that 
\begin{equation*}
	|(\g_\ep * \Psi_1(r,\cdot))(x)|\leqslant \O_s \Bigl( C (\ep r^{-1})^{\frac{d}2} \Bigr)\,. 
\end{equation*}
Proceeding, then, identically as in the proof of \cite[Proof of Theorem 4.24]{AKM19} completes the argument. 
\end{proof}

 We conclude with the proof of Theorem~\ref{t.homogenization.intro}.
 
\begin{proof}[Proof of Theorem~\ref{t.homogenization.intro}] The proof of Theorem~\ref{t.Dirichlet} essentially contains a significant bulk of the proof of Theorem~\ref{t.homogenization.intro}: in that argument we used the suboptimal algebraic rate on the sizes of correctors that we needed, in order to develop the large-scale regularity theory. The difference here is that we use the optimal bounds on the correctors, which are provided by Theorem~\ref{t.corrector.bounds}. Note that~\eqref{e.correctorsize.opt} implies that 
\begin{equation} \label{e.correctorsize.opt.nograd}
		\Bigl\|[\phi_e]\Bigl(\frac{\cdot}{\varepsilon}\Bigr)\Bigr\|_{L^{2}(B_1)} \leqslant 
		\begin{cases}
			\O_{s} (C)  & \mbox{if} \ d>2\,.
			\\
			\O_s\bigl( C |\log \ep|^{\frac12}\bigr) &  \mbox{if} \ d=2\,.
		\end{cases}
	\end{equation}
We can then use \rv{these} bounds in the two-scale expansion in Proposition~\ref{t.DirichletProblem}. Finally, in order to complete the proof of Theorem~\ref{t.Dirichlet}, it remains to make a careful boundary layer analysis, and we refer the reader to~\cite[Section 6.4]{AKM19} for details on how to execute this. It is this step which requires that $U_0$ is $C^{1,1}$ or convex, and is directly applicable here, since it only makes use of a delicate boundary layer estimate for the \emph{homogenized} equation (i.e., Poisson's equation). 
\end{proof}

\appendix

\section{Estimates on the Green's function} 

Let $G$ denote the Green's function associated to the graph Laplacian. Explicitly, for any $x \in \eta_*, $ let $G_x(\cdot) : \eta_* \to (0,\infty)$ be the unique solution to 
\begin{align*}
	\sum_{y \in \eta_*,\,y\sim z}  \bigl( G_x(y) - G_x(z) \bigr) =\left\{
	\begin{array}{cc}
		1 & z = x\\
		0 & z \neq x\,.
	\end{array}
	\right.
\end{align*}
It is convenient to write $G(x,z) := G_x(z).$ It can be shown that $G(x,z) = G(z,x)$ (and this is proven exactly as in the continuum).  The key property of the Green's function is the following. If $f : \eta_* \to \RR$ is a compactly supported function, then the unique $H^1(\eta_*)$ solution to 
\begin{equation*}
	\sum_{y \in \eta_*,\,y\sim x} (u(y) - u(x)) = f(x) \quad \mbox{ in } x \in \eta_*\,,
\end{equation*}
is given by the representation formula 
\begin{equation*}
	u(x) = \sum_{z \in \eta_*} G(x,z) f(z)\,. 
\end{equation*}
One can check that this is true by direct computation. 

Thus the formula above gives a solution to the preceding equation: since this equation has a variational formulation that is strictly convex in the gradient, it has a unique minimizer. So the associated equation on the graph has a unique solution, as given by the representation formula. 

Observe that the Green's function $G$ is a random object, though we do not indicate the dependence on $\eta$ in the notation. We will need the following bounds on the Green's function; in the statement of the following proposition, given $x \in \eta_*, $ and $0 < r_1 < r_2 \in \R,$  
\begin{equation*}
	A(x,r_1,r_2) := \{y \in \eta_* : r_1 < |y - x| < r_2\}\,. 
\end{equation*}

\begin{proposition} \label{p.greensfunction}
	 Let $\X$ denote the minimal scale from Theorem \ref{t.homogenization}. Then there exists a constant $C$  depending only on $d$ (and not on the particular instance $\eta_*$), such that 
	\begin{align*}
	\Biggl(\avsum_{z \in \eta_*(A(x,\frac{r}3, \frac{2r}3))} \Biggl( G(y,z)  -  \avsum_{w \in \eta_*(A(x,\frac{r}3, \frac{2r}{3}))} G(y,w) \Biggr)^{\!\!2}\,\Biggr)^{\!\!\sfrac12} \leqslant \frac{C\X^{\sfrac{d}2}}{|x-y|^{d-2}} \,. 
	\end{align*}
	for all $x,y \in \eta_*$ which are such that $r := |x-y| > 3\X.$
\end{proposition}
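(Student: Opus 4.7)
The plan is to observe that $u(z) := G(y, z)$ is graph-harmonic on $\eta_* \cap B(x, 5r/6)$ (since $|x{-}y|=r$ implies $y \notin B(x, 5r/6)$), invoke the large-scale $C^{0,1}$ regularity from Theorem~\ref{t.large-scale.C01} on this ball, convert the gradient bound into an oscillation estimate via Poincar\'e, and finally estimate the resulting $L^2$ average of $u$ using a pointwise decay bound for~$G$.

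Since $r > 3\X$, we have $5r/6 > (5/2)\X \geq \X$, so Theorem~\ref{t.large-scale.C01} applied to $u$ on $B(x, 5r/6)$ with $f \equiv 0$ yields
\begin{equation*}
\|\nabla u\|_{\underline{L}^2(\eta_*(B(x, 2r/3)))} \leq \frac{C}{r}\|u - (u)_{B(x, 5r/6)}\|_{\underline{L}^2(\eta_*(B(x, 5r/6)))}.
\end{equation*}
Combining this with the Poincar\'e inequality on $B(x, 2r/3)$, obtained from the good-cube decomposition of Lemma~\ref{l.poincare} applied at scale $\sim r$ (available since $r > 3\X$ ensures $r$ exceeds the minimal scale $\mathcal{M}$), together with the elementary fact that the annulus $A := A(x, r/3, 2r/3)$ occupies a fixed fraction of the ball $B(x, 2r/3)$, yields
\begin{equation*}
\Bigl(\avsum_{z \in \eta_*(A)} \bigl|u(z) - (u)_A\bigr|^2\Bigr)^{\!\!\sfrac12} \leq C \|u\|_{\underline{L}^2(\eta_*(B(x, 5r/6)))}.
\end{equation*}
To finish, I would invoke the pointwise Green's function bound
\begin{equation*}
G(y, z) \leq \frac{C \X^{\sfrac{d}{2}}}{|y-z|^{d-2}}, \qquad |y-z| \geq \X.
\end{equation*}
Since each $z \in B(x, 5r/6)$ satisfies $|y-z| \geq r/6 \geq \X/2$, this yields $\|u\|_{\underline{L}^2(\eta_*(B(x, 5r/6)))} \leq C\X^{\sfrac{d}{2}} r^{2-d}$, from which the proposition follows.

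The main obstacle is the pointwise decay estimate for~$G$. This is a classical fact for supercritical percolation clusters and should be obtainable here by Nash/Moser iteration: the Sobolev-Poincar\'e inequality on good cubes (Lemma~\ref{l.poincare}) yields on-diagonal heat-kernel bounds $p_t(y,z) \leq Ct^{-\sfrac{d}{2}}$ for $t \geq \X^2$, and the identity $G(y,z) = \int_0^\infty p_t(y,z)\,dt$ can then be estimated by splitting the integral at $t \sim |y-z|^2$. An alternative approach, more in the spirit of this paper, is to iterate the large-scale $C^{0,1}$ estimate over dyadic annuli centered at $y$, extracting geometric decay of $\mathrm{osc} \, G(y,\cdot)$ at each scale; here the overall size is fixed by the global energy identity $\|\nabla G(y,\cdot)\|_{L^2(\eta_*)}^2 = G(y,y)$ combined with a Sobolev bound on $G(y,y)$. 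Either route gives the required decay with the stated $\X^{\sfrac{d}{2}}$ prefactor.
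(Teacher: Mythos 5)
Your proof takes a genuinely different route from the paper, and it has a real gap: it relies on the pointwise Green's function decay
\begin{equation*}
G(y,z) \leq \frac{C\X^{\sfrac d2}}{|y-z|^{d-2}}\,, \qquad |y-z| \geq \X\,,
\end{equation*}
which you do not prove and which is not in the paper. You acknowledge this — ``the main obstacle is the pointwise decay estimate for $G$'' — but the two routes you sketch are not routine fills. The Nash/Moser route requires \emph{off-diagonal} heat-kernel upper bounds on the percolation cluster (on-diagonal bounds from the Sobolev inequality alone do not give the $|y-z|^{-(d-2)}$ profile without a Davies/Carlen--Kusuoka--Stroock-type iteration), and this machinery is developed nowhere in the paper. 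The dyadic-iteration route starts from the global energy identity $\|\nabla G(y,\cdot)\|_{L^2(\eta_*)}^2 = G(y,y)$, but bounding $G(y,y)$ quantitatively by the local random geometry near $y$ is itself a nontrivial claim equivalent to an on-diagonal Green's function bound, and the decay of $\mathrm{osc}\,G(y,\cdot)$ over dyadic annuli that you would extract from Theorem~\ref{t.large-scale.C01} is essentially the content of the proposition you are trying to prove, so this route is at risk of circularity.

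The paper's argument avoids pointwise bounds entirely by dualizing. It tests the $L^2$-oscillation of $G(y,\cdot)$ on the annulus $A(x,\sfrac r3,\sfrac{2r}3)$ against an arbitrary $f$ supported in that annulus, sets $u := \sum_w G(\cdot,w)f(w)$, observes that $u$ is graph-harmonic away from $\mathrm{supp}\,f$ and in particular near $y$, applies the oscillation bound~\eqref{e.oscillations} from Theorem~\ref{t.large-scale.C01} there (which is where the $\X^{\sfrac d2}$ enters, by comparing the point value to an average over a ball of radius~$\X$), then bounds the remaining $L^2$-oscillation of $u$ on the annulus by the Sobolev--Poincar\'e inequality in $\underline L^{2^*}$ on the coarsened function, and concludes with the basic energy estimate $\|\nabla u\|_{L^2} \leq Cr\|f\|_{L^2}$. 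Everything used is already established; no pointwise decay of $G$ is needed. To repair your proof you would either need to establish the pointwise bound (which is substantial additional work), or switch to the duality argument.
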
 
\begin{proof}
The proof follows very closely the associated argument in the continuum from \cite[Theorem 13]{AL}. 
Let $ x\neq y \in \eta_*,$ and set $r := |x-y|.$ We let $f : \eta_*(B_r(y)) \to \RR$ be such that $f (z) \equiv 0$ if $|z - y| \geqslant \frac{r}{3}.$   From the prior discussion, $u$ defined via 
\begin{equation*}
	u(z) := \sum_{w \in \eta_*} G(z,w) f(w)\,,
\end{equation*}
is the unique solution to the problem 
\begin{align*}
	\sum_{w \in \eta_*,\, w\in z}  (u(w) - u(z)) &= f(z)\quad z \mbox{ in } \eta(B_r(y))\,,
\end{align*}
which vanishes at infinity. 
As $f$ vanishes in $\eta_*(A(x,\frac{r}3,\frac{2r}{3}),$ large-scale regularity theory of $u$ is applicable. Thus, applying Theorem \ref{t.large-scale.C01} (see Eq. \eqref{e.oscillations}) in the ball $A(x,\frac{r}3,\frac{2r}{3})$, yields
\begin{align*}
|u(x) - (u)_{A(x,\frac{r}3,\frac{2r}{3})} | 
&
\leq 
\X^{\sfrac d2}
\|u - (u)_{A(x,\frac{r}3,\frac{2r}{3})}\|_{\underline{L}^2(\eta_*(B(x,\X)))}
\\
&\leqslant 
C\X^{\sfrac d2}
\|u - (u)_{A(x,\frac{r}3,\frac{2r}{3})}\|_{\underline{L}^2(\eta_*(A(x,\frac{r}3,\frac{2r}{3})))}
\,.
\end{align*}
It follows by the triangle inequality first, then by H\"{o}lder's inequality, and then Sobolev's imbedding theorem (applied to the coaresened function) and Poincar\'e's inequality, that  
\begin{align*}
&\Biggl|\avsum_{z \in \eta_*(A(x,\frac{r}3,\frac{2r}{3}))} \Biggl( G(y,z)  -  \avsum_{w \in \eta_*(A(x,\frac{r}3,\frac{2r}{3}))} G(y,w) \Biggr) f(z) \Biggr| \\ &
\quad \quad  = \frac{1}{r^d}|u(x) - (u)_{A(x,\frac{r}3,\frac{2r}{3})} | 
\leqslant C\frac{\X^{\sfrac d2}}{r^d}
\|u - (u)_{A(x,\frac{r}3,\frac{2r}{3})}\|_{\underline{L}^2(\eta_*(A(x,\frac{r}3,\frac{2r}{3})))}
\,.
\end{align*}
Now, we estimate by the triangle inequality that 
\begin{equation*}
	\|u - (u)_{A(x,\frac{r}3,\frac{2r}{3})}\|_{\underline{L}^2(\eta_*(A(x,\frac{r}3,\frac{2r}{3})))} \leqslant \|[u] - (u)_{A(x,\frac{r}3,\frac{2r}{3})}\|_{\underline{L}^2(A(x,\frac{r}3,\frac{2r}{3}))} +  \|u - [u]\|_{\underline{L}^2(\eta_*(A(x,\frac{r}3,\frac{2r}{3})))}  \,.
\end{equation*}
For the first term on the righthand side, by H\"{o}lder inequality, and then the Poincar\'e, we find that if $\frac{r}{3} > \X,$ then 
\begin{align*}
 \|[u] - (u)_{A(x,\frac{r}3,\frac{2r}{3})}\|_{\underline{L}^2(A(x,\frac{r}3,\frac{2r}{3}))}  
 &\leqslant \frac{C}{r^{\frac{d-2}{2}}} \|\nabla [u] \|_{{L}^{2}(A(x,\frac{r}3,\frac{2r}{3}))} \\
 &\leqslant \frac{C}{r^{\frac{d}{2}-1}}\|\nabla u\|_{{L}^{2}(\eta_*(A(x,\frac{r}3,\frac{2r}{3})))} \leqslant \frac{C}{r^{\frac{d}{2}-2}}\|f\|_{{L}^{2}(\eta_*(A(x,\frac{r}3,\frac{2r}{3})))}\,.
\end{align*}
For the second term in the triangle inequality we find 
\begin{align*}
	\|u - [u]\|_{\underline{L}^2(\eta_*(A(x,\frac{r}3,\frac{2r}{3})))} &\leqslant \frac{C}{|B_r|^{\sfrac12}} \sum_{\cu \in \mathcal{P}; \cu \cap A(x,\frac{r}3,\frac{2r}{3})) \neq \emptyset} \|u - [u]\|_{L^2(\eta_*(\cu))} \\
	&\leqslant \frac{C}{|B_r|^{\sfrac12}} \sum_{\cu \in \mathcal{P}; \cu \cap A(x,\frac{r}3,\frac{2r}{3}) \neq \emptyset} \size(\cu) \|\nabla u\|_{L^2(\eta_*(\cu))}\\
	&\leqslant \frac{C\O_1(C)}{|B_r|^{\sfrac12}} \sum_{\cu \in \mathcal{P}; \cu \cap A(x,\frac{r}3,\frac{2r}{3}) \neq \emptyset} \size(\cu) \|\nabla u\|_{L^2(\eta_*(\cu))}\\
	&= \O_1(C) \|\nabla u\|_{\underline{L}^{2}(\eta_*(A(x,\frac{r}3,\frac{2r}{3})))} \leqslant \O_1(C)r \|f\|_{\underline{L}^{2}(\eta_*(A(x,\frac{r}3,\frac{2r}{3})))}
	\,.
\end{align*}
Combining the last three displays, we find that 
\begin{equation*}
	\Biggl|\avsum_{z \in \eta_*(A(x,\frac{r}3,\frac{2r}{3}))} \Biggl( G(y,z)  -  \avsum_{w \in \eta_*(A(x,\frac{r}3,\frac{2r}{3}))} G(y,w) \Biggr) f(z) \Biggr| \leqslant C \X^{\sfrac d2} \frac{1}{r^{d-2}} \|f\|_{\underline{L}^{2}(\eta_*(A(x,\frac{r}3,\frac{2r}{3})))}\,.
\end{equation*}
Taking the supremum among functions $f$ such that $\|f\|_{\underline{L}^{2}(\eta_*(A(x,\frac{r}3,\frac{2r}{3})))} \leqslant 1$ completes the argument.
\end{proof}

{\small
\bibliographystyle{abbrv}
\bibliography{homogenization}
}

\end{document}